\newtheorem{thm}{Theorem}[section]
\newtheorem{prop}[thm]{Proposition}
\newtheorem{cor}[thm]{Corollary}
\newtheorem{lem}[thm]{Lemma}
\theoremstyle{definition}
\newtheorem{define}[thm]{Definition}
\theoremstyle{remark}
\newtheorem{rem}[thm]{Remark}
\newtheorem{example}[thm]{Example}
\newcommand{\ve}[1]{\boldsymbol{\mathbf{#1}}}
\newcommand{\R}{\mathbb{R}}
\newcommand{\Z}{\mathbb{Z}}
\newcommand{\N}{\mathbb{N}}
\newcommand{\Q}{\mathbb{Q}}
\newcommand{\C}{\mathbb{C}}
\renewcommand{\d}{\partial}
\renewcommand{\subset}{\subseteq}
\renewcommand{\tilde}{\widetilde}
\renewcommand{\bar}{\overline}
\newcommand{\iso}{\cong}
\newcommand{\then}{\Rightarrow}
\DeclareMathOperator{\can}{{can}}
\DeclareMathOperator{\Diff}{{Diff}}
\DeclareMathOperator{\End}{{End}}
\DeclareMathOperator{\gr}{{gr}}
\DeclareMathOperator{\Hom}{{Hom}}
\DeclareMathOperator{\id}{{id}}
\DeclareMathOperator{\im}{{im}}
\DeclareMathOperator{\join}{{Join}}
\DeclareMathOperator{\Spin}{{Spin}}
\DeclareMathOperator{\Sw}{{Sw}}
\DeclareMathOperator{\Sym}{{Sym}}
\DeclareMathOperator{\Sq}{{Sq}}
\DeclareMathOperator{\Tw}{{Tw}}
\newcommand{\std}{\mathrm{std}}
\newcommand{\sw}{\mathrm{sw}}
\newcommand{\lk}{\mathrm{lk}}
\newcommand{\bA}{\mathbb{A}}
\newcommand{\bB}{\mathbb{B}}
\newcommand{\bD}{\mathbb{D}}
\newcommand{\bE}{\mathbb{E}}
\newcommand{\bF}{\mathbb{F}}
\newcommand{\bH}{\mathbb{H}}
\newcommand{\bI}{\mathbb{I}}
\newcommand{\bJ}{\mathbb{J}}
\newcommand{\bT}{\mathbb{T}}
\newcommand{\bU}{\mathbb{U}}
\newcommand{\bX}{\mathbb{X}}
\newcommand{\bZ}{\mathbb{Z}}
\newcommand{\cA}{\mathcal{A}}
\newcommand{\cB}{\mathcal{B}}
\newcommand{\cC}{\mathcal{C}}
\newcommand{\cD}{\mathcal{D}}
\newcommand{\cE}{\mathcal{E}}
\newcommand{\cF}{\mathcal{F}}
\newcommand{\cG}{\mathcal{G}}
\newcommand{\cH}{\mathcal{H}}
\newcommand{\cK}{\mathcal{K}}
\newcommand{\cL}{\mathcal{L}}
\newcommand{\cM}{\mathcal{M}}
\newcommand{\cS}{\mathcal{S}}
\newcommand{\cX}{\mathcal{X}}
\newcommand{\cY}{\mathcal{Y}}
\newcommand{\frA}{\mathfrak{A}}
\newcommand{\frB}{\mathfrak{B}}
\newcommand{\frI}{\mathfrak{I}}
\newcommand{\fro}{\mathfrak{o}}
\newcommand{\frs}{\mathfrak{s}}
\newcommand{\scB}{\mathscr{B}}
\newcommand{\scC}{\mathscr{C}}
\newcommand{\scE}{\mathscr{E}}
\newcommand{\scH}{\mathscr{H}}
\newcommand{\scO}{\mathscr{O}}
\newcommand{\scT}{\mathscr{T}}
\newcommand{\scU}{W}
\newcommand{\scV}{Z}
\newcommand{\cCFL}{\mathcal{C\!F\!L}}
\newcommand{\cCFK}{\mathcal{C\hspace{-.5mm}F\hspace{-.3mm}K}}
\newcommand{\CF}{\mathit{CF}}
\newcommand{\HF}{\mathit{HF}}
\newcommand{\CFK}{\mathit{CFK}}
\newcommand{\xs}{\ve{x}}
\newcommand{\ys}{\ve{y}}
\newcommand{\zs}{\ve{z}}
\newcommand{\ws}{\ve{w}}
\newcommand{\ps}{\ve{p}}
\newcommand{\qs}{\ve{q}}
\newcommand{\as}{\ve{\alpha}}
\newcommand{\bs}{\ve{\beta}}
\newcommand{\gs}{\ve{\gamma}}
\newcommand{\ds}{\ve{\delta}}
\renewcommand{\a}{\alpha}
\renewcommand{\b}{\beta}
\newcommand{\g}{\gamma}
\newcommand{\dt}{\delta}
\newcommand{\veps}{\varepsilon}
\newcommand{\SL}{\mathit{SL}}
\DeclareMathOperator{\Cone}{{Cone}}
\newcommand{\Ss}[1]{\scriptstyle{#1}}
\newcommand{\Sss}[1]{\scriptscriptstyle{#1}}
\numberwithin{equation}{section}
\newcommand{\llsquare}{[\hspace{-.5mm}[}
\newcommand{\rrsquare}{]\hspace{-.5mm}]}
\newcommand\co{\colon}
\newcommand{\vopp}{{\tilde{v}}}
\def\dl {\bunderline{d}}
\newcommand{\bunderline}[1]{\underline{#1\mkern-2mu}\mkern2mu }
\def\du {\bar{d}}
\newcommand{\barcap}{\mathbin{\bar{\cap}}}
\newcommand{\diff}{\mathrm{diff}}
\newcommand{\inv}{\mathrm{inv}}
\newenvironment{usethmcounterof}[1]{%
  \thm}{\endthm\addtocounter{thm}{-1}}
\title{The link surgery formula and equivariant surgeries}
\author{Kristen Hendricks}
\thanks{KH was partially supported by NSF grant DMS-2019396.}
\address{Department of Mathematics, Rutgers University, New Brunswick, NJ, USA}
\email{kristen.hendricks@rutgers.edu}
\author{Abhishek Mallick}
\address{Department of Mathematics, Dartmouth College,
Hanover, NH, USA}
\email{abhishek.mallick@dartmouth.edu}
\author{Matthew Stoffregen}
\address{Department of Mathematics, Michigan State University, East Lansing, MI, USA}
\email{stoffre1@msu.edu}
\thanks{MS was partially supported by NSF grant DMS-2203828.}
\author{Ian Zemke}
\address{Department of Mathematics\\University of Oregon\\  Eugene, OR, USA}
\email{izemke@uoregon.edu}
\thanks{IZ was partially supported by NSF grant DMS-2204375 and a Sloan Research Fellowship.}
\begin{document}
\maketitle

\begin{abstract}
We prove an equivariant version of the Heegaard Floer link surgery formula. As a special case, this gives an equivariant knot surgery formula for equivariant knots in $S^3$. Our proof goes by way of a naturality theorem for certain bordered modules described by the last author. As a sample application, we prove the kernel of the forgetful map from the equivariant homology cobordism group to the homology cobordism group contains a $\Z^\infty$-summand.
\end{abstract}

\tableofcontents

\section{Introduction}

Heegaard Floer homology is an invariant of 3-manifolds due to Ozsv\'{a}th and Szab\'{o} \cite{OSDisks, OSProperties}. The version we are most interested in, called the minus version, associates a finitely generated graded module $\HF^-(Y,\frs)$ over $\bF[U]$ to each pair $(Y, \frs)$, where $Y$ is a closed oriented $3$-manifold and $\frs$ is a $\Spin^c$ structure on $Y$. Here $\mathbb F$ is the field of two elements and $U$ is a variable of degree $-2$. We typically write $\HF^-(Y)$ for the direct sum of $\HF^-(Y,\frs)$ over all $\frs\in \Spin^c(Y)$.

Symmetries of 3-manifolds are an important topic in low-dimensional topology. Given a diffeomorphism $\phi\colon Y\to Y$, there is an induced endomorphism
\[
\HF^-(\phi)\colon \HF^-(Y)\to \HF^-(Y).
\] 
Understanding this diffeomorphism map is important for a number of questions about 3-manifolds and 4-manifolds; studying it has been used to produce many topological applications (e.g.\cite{AKS,DHM-Corks,DMS:equivariant, dai20242, levine2023new}) in topics like exotic 4-manifolds, exotic surfaces and knot concordance.

In order to produce topological applications, it is important to have tools which can effectively perform computations of the maps $\HF^-(\phi)$. In this paper, we consider the case that $Y$ is described as Dehn surgery on a link $L\subset S^3$ and $\phi$ is induced by a symmetry on $L$. By the well-known Lickorish–Wallace theorem, every 3-manifold is surgery on a link in $S^3$. Moreover, work of Sakuma \cite{sakuma2001surgery} implies that any finite order orientation preserving diffeomorphism on a 3-manifold is surgery on a particular type of symmetric link, called a periodic link. Another well-known result, commonly known as the Montesinos trick \cite{Mdbc}, implies that any double-branched cover of a knot is surgery on a symmetric link, called a strongly invertible link, where the covering involution is induced from the symmetry of the link. This equivariant surgery description of the double branched cover of knots has been influential in many of the aforementioned applications.


 This motivates us to describe a Dehn surgery formula for $\HF^{-}(\phi)$. Our main results are stated in terms of the Heegaard Floer Dehn surgery formulas. These formulas were developed by Ozsv\'{a}th and Szab\'{o} \cite{OSIntegerSurgeries} for knots, and subsequently expanded to links by Manolescu and Ozsv\'{a}th \cite{MOIntegerSurgery}. Given a link $L\subset S^3$ with integral framing $\Lambda$, the authors construct a chain complex $\cC_{\Lambda}(L)$ which is chain homotopy equivalent to the completion of $\HF^-(S^3_{\Lambda}(L))$ at the ideal $(U)\subset \bF[U]$. We let $\ve{\HF}^-(S^3_{\Lambda}(L))$ denote this completion. These Dehn surgery formulas were extended by the last author to arbitrary Morse framed links in closed, oriented 3-manifolds \cite{ZemExactTriangle}. The complex $\cC_{\Lambda}(L)$ is built from the link Floer homology \cite{OSLinks} of the link $L$, as well as some additional data; it is frequently more computable than the Floer complex $\CF^-(S^3_{\Lambda}(L))$.

Under very general assumptions, if $\phi\colon (Y,L)\to (Y,L)$ is a diffeomorphism of pairs which preserves the framing $\Lambda$, then there is an induced diffeomorphism $\Phi\colon Y_{\Lambda}(L)\to Y_{\Lambda}(L)$. In this paper, we show that $\phi$ induces a diffeomorphism map on the link surgery complex
\[
\cC(\phi)\colon \cC_{\Lambda}(Y,L)\to \cC_{\Lambda}(Y,L).
\]
 We prove that, under very general hypotheses, the isomorphism $H_* \cC_{\Lambda}(Y,L)\iso \ve{\HF}^-(Y_{\Lambda}(L))$ intertwines the maps $\cC(\phi)$ and $\HF^-(\Phi)$. We additionally show that our equivariant surgery formula holds more generally for symmetries of Morse framed links in arbitrary 3-manifolds.
 
 For the purposes of defining a map on the link surgery complex associated to a diffeomorphism of pairs $\phi\colon (Y,L)\to (Y,L)$, we assume that $\phi$ is \emph{strongly framed}. Roughly speaking, this means that a tubular neighborhood of $L$ is parametrized with a diffeomorphism to a union of solid tori, and that $\phi$ respects this parametrization.  See Definition~\ref{def:strong-framing} for a precise definition.


In the case that $K$ is a knot in $S^3$, we are able to give several ``local'' versions of the surgery formula, described below in Section~\ref{subsec:local}, which allow for efficient computations. These local versions are modeled on analogous local formulas for the involutive knot surgery formula \cite{HHSZExact}. We also illustrate our knot surgery formula in a number of example computations.

When $K$ is a knot in a closed 3-manifold $Y$, we follow standard conventions and write $\bX_n(Y,K)$ for the surgery complex (denoted $\cC_n(Y,K)$ above). If $\phi\colon (Y,K)\to (Y,K)$ is a diffeomorphism of pairs, we will write 
\begin{equation}
\bX(\phi)\colon \bX_n(Y,K)\to \bX_n(Y,K)
\label{eq:intro-X(phi)}
\end{equation}
for the induced map.

\begin{thm}
\label{thm:equivariant-knot-surgery-formula}
Let   $\phi\colon (Y,K)\to (Y,K)$ be a diffeomorphism of strongly framed knots and let $\Phi$ denote the induced diffeomorphism on $Y_n(K)$. The map $\bX(\phi)$ from Equation~\eqref{eq:intro-X(phi)} is intertwined with $\ve{\CF}(\Phi)\colon \ve{\CF}^-(Y_n(K))\to \ve{\CF}^-(Y_n(K))$ by a natural homotopy equivalence $\Gamma\colon \ve{\CF}^-(Y_n(K))\to \bX_n(Y,K)$, i.e.,
\[
\bX(\phi)\circ \Gamma\simeq \Gamma\circ  \ve{\CF}(\Phi).
\]
\end{thm}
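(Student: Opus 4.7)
The plan is to view the link surgery formula as the pairing of two algebraic objects: a bordered-type module $\cD_{\Lambda}(Y\setminus \nu K)$ associated to the knot complement and a module $\cA_n$ encoding the Dehn filling with slope $n$, so that $\bX_n(Y,K) \simeq \cD_{\Lambda}(Y\setminus \nu K) \boxtimes \cA_n$, with $\Gamma$ realized as a pairing chain homotopy equivalence to $\ve{\CF}^-(Y_n(K))$. The strong framing hypothesis on $\phi$ is the technical condition that allows one to split $\phi$ as a diffeomorphism of $Y\setminus \nu K$ that acts on $\d \nu K$ through the parametrization used to build $\cD_{\Lambda}$, together with an action on the solid torus which is the identity relative to the framing. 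Under this splitting, the induced self-map on each tensor factor can be defined separately; the map on $\cA_n$ is the identity, while $\phi|_{Y\setminus \nu K}$ induces a self-map of $\cD_{\Lambda}(Y\setminus \nu K)$. Pairing these yields the map $\bX(\phi)$ in \eqref{eq:intro-X(phi)}.

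The proof then proceeds in three steps. First, I would write $\bX(\phi)$ explicitly in this pairing form, using the strong-framing assumption to verify that $\phi$ permutes the generators, $A_\infty$ operations, and compatibility maps that assemble $\bX_n(Y,K)$ from its link-Floer building blocks. Second, I would invoke the naturality theorem for the relevant bordered modules described by the last author; this states that any strongly framed self-diffeomorphism of a framed knot complement induces a self-map of its bordered module which, when paired with any Dehn-filling module, agrees up to prescribed homotopy with the Heegaard Floer map of the resulting closed self-diffeomorphism. Third, I would combine these: applying the naturality theorem to $\phi|_{Y\setminus \nu K}$ and pairing with $\id_{\cA_n}$ identifies $\bX(\phi)\circ \Gamma$ with $\Gamma \circ \ve{\CF}(\Phi)$ up to the homotopy supplied by that theorem.

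The main obstacle is step two, specifically the verification that Zemke's bordered naturality theorem applies in this generality and that its pairing with the surgery-filling module is compatible with the closed diffeomorphism map. The subtlety lies in the hyperbox/pentagon structure underlying $\bX_n$: one must track $\phi$ not just on generators but across the collection of Heegaard moves, stabilizations, and basepoint moves that witness the equivalence $\cD_{\Lambda}\boxtimes \cA_n \simeq \ve{\CF}^-(Y_n(K))$, and produce coherent homotopies at every level. Once the bordered naturality input is granted, the remaining arguments are essentially functoriality of the box tensor product and a chase through the definitions of $\Gamma$ and $\bX(\phi)$; the knot-case Theorem~\ref{thm:equivariant-knot-surgery-formula} then follows as a special case of the full equivariant link surgery formula proved by the same strategy.
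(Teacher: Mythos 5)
Your proposal follows essentially the same route as the paper: $\bX_n(Y,K)$ is realized as $\cX_{\lambda}(Y,K)^{\cK}\boxtimes {}_{\cK}\cD$, the map $\bX(\phi)$ is defined via the naturality theorem for the link surgery formula computed with meridional complete systems, and the intertwining with $\ve{\CF}(\Phi)$ is obtained by stacking the tautological hypercube for the diffeomorphism of Heegaard data with the hypercubes witnessing the Heegaard moves back to the original system, all compared against $\ve{\CF}^-(Y_n(K))$ via the polygon-counting equivalence $\Gamma$. The one caveat is that when $\phi$ reverses the orientation of $K$ the induced map on the solid-torus factor is not the identity but the morphism $\varpi$ absorbing the elliptic-involution bimodule ${}_{\cK}[E]^{\cK}$; otherwise your identification of the bordered naturality statement (and the coherence of homotopies across Heegaard moves) as the main technical burden matches the paper exactly.
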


\begin{rem} In Section~\ref{sec:knots} we show that if $Y=S^3$, the map $\bX(\phi)$ is determined completely by the naturality map on the knot Floer complex $\cCFK(\phi)\colon \cCFK(K)\to \cCFK(K)$. This map will also sometimes be denoted $\phi_K$.
\end{rem}

\begin{rem} An earlier result of the second author \cite{Mallick:surgery} establishes a version of the equivariant knot surgery formula when $n$ is sufficiently large (i.e. $n\ge 2g(K)-1$). See also \cite{DMZ_Corks}*{Lemma~2.3}.
\end{rem}

\subsection{Local formulas} \label{subsec:local} In the setting of actions on various flavors of Heegaard Floer homology, \emph{local equivalence} refers to an equivalence relation weaker than equivariant chain homotopy equivalence, whose details depend on the exact flavor of Heegaard Floer theory and type of action involved. In general, (equivariant) homology cobordisms and (equivariant) concordances induce local equivalences rather than chain homotopy equivalences. Moreover, much of the utility of surgery formulas for actions on Heegaard Floer homology has come not from studying the full surgery complex, but from replacing it with a tractable representative of its local equivalence class. In the context of surgeries and symmetries on 3-manifolds, various notions of local equivalence have been studied by many authors; see, e.g.,  \cite{HMZConnectedSum,HHSZExact,DHM-Corks,DMS:equivariant} for several examples.

For the special cases of strongly invertible and periodic symmetries on knots in $S^3$, we have tidy local formulas. In the case of strongly invertible knots, these take the following form, which closely resembles the local formulas of \cite[Theorem 1.6]{HHSZExact} for the involutive variant of Heegaard Floer homology. For the cases of periodic complexes, our local formulas take on a different form.

 Note that local equivalence is a relationship for pairs, here called $\phi$-complexes, consisting of chain complexes together with endomorphisms; in the cases below these are the chain complexes $\CF^-(S^3_n(K), [s])$, where $[s]$ denotes a $\Spin^c$ structure on the surgery in the conventional identification with $\bZ/n$. The endomorphisms are the induced maps $\CF^-(\phi)$, which in this context are called $\phi$ for brevity. We compare these to pairs consisting of suitable subcomplexes $A_s(K)$, treated as $\bF[U]$ complexes, of $\cCFK(K)$ together with the endomorphism $\phi_K$. Further discussion of the algebra of local equivalence may be found in Section~\ref{sec:local-algebra}.
 
 When $K\subset S^3$ is a strongly invertible knot and $n\in \Z$, with symmetry $\phi$, the induced symmetry on $S^3_n(K)$ acts on $\Spin^c(S^3_n(K))$ by conjugation. Therefore there are either one or two $\Spin^c$ structures on $S^3_n(K)$ which are fixed by the induced action, depending on whether $n$ is odd or even.  We prove the following:

\begin{thm}\label{thm:local-class}
 Suppose $K$ is a strongly-invertible knot in $S^3$, and $n >0$ is an integer.
 \begin{enumerate}
 \item The $\phi$-complex $(\CF^-(S^3_n(K),[0]),\phi)$ is locally equivalent to $(A_0(K),\phi_K)$, shifted upward in grading by $d(L(n,1),[0])$. 
 \item  The $\phi$-complex $(\CF^-(S^3_{2n}(K),[n]), \phi)$ is locally equivalent to the complex
  \[
 \begin{tikzcd}[column sep={1cm,between origins},labels=description] 
A_{-n}(K)
	\ar[dr, "h"]
& & A_n(K) \ar[dl,"v"]\\
& B_n(K)
\end{tikzcd}
\]
 shifted upward in grading by $d(L(2n,1),[n])$ with the involution coming from truncating the surgery complex.
 \end{enumerate}
\end{thm}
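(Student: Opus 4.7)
The plan is to deduce both parts from Theorem~\ref{thm:equivariant-knot-surgery-formula}, which translates the question about $\ve{\CF}^-(S^3_n(K))$ with its induced involution into a question about the link surgery complex $\bX_n(K)$ with the action $\bX(\phi)$. First I would decompose by $\Spin^c$ structures on $S^3_n(K)$: under the identification with $\Z/n$ coming from the Alexander grading, the involution on the base sends $s\mapsto -s$, so the fixed $\Spin^c$ structures are $[0]$ always, and $[n]$ when $n$ is even (relevant for part~(2)). On the surgery side, the $[s]$-summand is the Manolescu--Ozsv\'ath mapping cone built from the $A_{s+kn}(K)$ and $B_{s+kn}(K)$ for $k\in\Z$, and $\bX(\phi)$ is built from $\phi_K\colon\cCFK(K)\to\cCFK(K)$, which exchanges $A_s\leftrightarrow A_{-s}$ and $B_s\leftrightarrow B_{-s}$ and intertwines the $v$-maps with the $h$-maps.

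Next I would perform a $\phi$-equivariant truncation. For $|s|$ sufficiently large, either $v_s$ or $h_s$ is a quasi-isomorphism of $\bF[U]$-modules, so the outer $(A_{s+kn},B_{s+kn})$ pairs may be cancelled; because $\phi_K$ pairs $A_{s+kn}$ with $A_{-(s+kn)}$ and similarly for the $B$'s, these cancellations can be carried out in symmetric pairs, preserving the $\phi$-action and hence the local equivalence class. For part~(1), the only index in $[0]$ fixed by $\phi$ is $s=0$, so after symmetric truncation the remaining complex is a small neighborhood of $A_0(K)$; a further diagonal cancellation (identical to the non-equivariant setting) identifies the class with $(A_0(K),\phi_K)$, shifted by $d(L(n,1),[0])$, where the shift comes from the grading normalization of $\bX_n$ and is pinned down by the unknot model. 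For part~(2), the $\Spin^c$ structure $[n]$ contains no fixed index; the innermost pair $\{-n,n\}$ is swapped by $\phi$, and the symmetric truncation retains $A_{-n}$, $A_n$ together with the $B_{\pm n}$'s joined to them, with the involution swapping the two $A$'s and the two $B$'s. A further $\phi$-equivariant change of basis (available because $\phi_K$ is a chain isomorphism between the two $B$'s) collapses $B_n$ and $B_{-n}$ into a single copy of $B_n$, producing the three-term complex $A_{-n}\xrightarrow{h} B_n \xleftarrow{v} A_n$ displayed in the statement, shifted by $d(L(2n,1),[n])$.

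The main obstacle will be ensuring that the simplifications above really preserve equivariant local equivalence rather than only the underlying chain homotopy type. Local equivalence is weaker than $\phi$-equivariant homotopy equivalence, so one is free to discard acyclic direct summands; however, the cancellations must be done compatibly with $\phi$, which is why they are performed in symmetric pairs away from the fixed indices, and why the central piece is handled separately. The delicate point is the chain-level interaction of $\phi_K$ with the $v_s$ and $h_s$ maps in the surgery formula, especially in the identification of $B_{-n}$ with $B_n$ in part~(2); that this identification can be made $\phi$-equivariantly, preserving both gradings and all structure maps, is precisely the content supplied by Theorem~\ref{thm:equivariant-knot-surgery-formula}, so the remainder of the argument is an explicit algebraic manipulation within the surgery complex.
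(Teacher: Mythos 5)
Your reduction to the mapping cone via Theorem~\ref{thm:equivariant-knot-surgery-formula}, and your identification of the fixed $\Spin^c$ structures, match the paper's setup, but there is a genuine gap at the core of the argument. The local equivalence in part~(1) is not an equivariant homotopy equivalence --- it is not even a non-equivariant one, since $H_*(\bX_n(K,[0]))\iso \ve{\HF}^-(S^3_n(K),[0])$ in general has reduced Floer homology not accounted for by $H_*(A_0(K))$. So no scheme of discarding acyclic pieces, symmetric or otherwise, can reduce the surgery complex to $A_0(K)$: the pairs $(A_{kn},B_{kn})$ can only be cancelled via $v$ or $h$ when $|kn|\ge g(K)$, and what survives is the truncation $\bX_n(K,[0])\langle -b,b\rangle$, which still contains many columns and is not $A_0(K)$ plus an acyclic summand. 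The actual content of the theorem is the construction of a local map $F\colon A_0(K)\to \bX_n(K,[0])$ (equivalently, of one-step extension maps $\bX\langle a,b\rangle \to \bX\langle a-n,b+n\rangle$) together with the verification that $F\circ \phi_K \simeq \bX(\phi_K)\circ F$. This $F$ is not an inclusion: to be a chain map it must carry correction terms into the newly added outer columns, and those corrections are built from the symmetry itself --- components of the form $W^{2k+1}\phi_K\colon A_{-k}\to A_{-k-1}$ and $Z^{2k+1}\phi_K\colon A_k\to A_{k+1}$, plus a term $Jv$ where $J$ null-homotopes $(\phi_K^2+\id)v$. The homotopy-commutation with $\bX(\phi_K)$ is then checked by factoring the discrepancy through $v$ or $\tilde v$ and using $B_s\simeq \tilde B_s\simeq \bF\llsquare U\rrsquare$. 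Your phrase ``a further diagonal cancellation identifies the class'' is standing in for exactly this missing construction, and your stated ``main obstacle'' is not resolved by appealing to Theorem~\ref{thm:equivariant-knot-surgery-formula}, which only supplies the involution $\bX(\phi)$, not the local maps.

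The same gap affects part~(2). The displayed three-term complex is precisely the truncation $\bX_{2n}(K,[n])\langle -n,n\rangle$, which by the paper's conventions already contains only the single group $B_n$ (the $B_t$ retained satisfy $-n<t\le n$), so there is no step of collapsing $B_{-n}$ and $B_n$; the work again lies in the extension maps from the truncation into the full complex and their compatibility with the involutions. Note also that $\bX(\phi)$ does not preserve truncations on the nose --- it has components $\phi_{\bB}\colon B_s\to B_{2n-s}$ and $H_\phi$ landing in $B_{-s}$ (and your claim that $\phi$ pairs $B_s$ with $B_{-s}$ is off by the framing shift) --- so even endowing the truncation with a well-defined involution requires the local maps to intertwine the two actions up to homotopy, which is what the explicit null-homotopies in the paper's proof provide.
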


We may further refine the second statement.

\begin{thm}\label{thm:local-class-even-si}
Suppose $K$ is a strongly invertible knot in $S^3$, and $n>0$ is an integer. The $\phi$-complex $(\CF^-(S^3_{2n}(K),[n]),\phi)$ is locally equivalent to the complex
  \[
 \begin{tikzcd}[column sep={1cm,between origins},labels=description] 
A_{n}(K)
	\ar[dr, "v"]
& & A_n(K) \ar[dl,"v"]\\
& B_n(K)
\end{tikzcd}
\]
 shifted upward in grading by $d(L(2n,1),[n])$, where the involution which swaps the two copies of $A_n$, and fixes $B_n$. 
 \end{thm}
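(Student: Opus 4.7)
The plan is to refine the local representative from Theorem~\ref{thm:local-class}(2) by a change of basis that uses the naturality action $\phi_K=\cCFK(\tau)$ of the strong inversion $\tau$. The crucial property is that $\tau$ reverses the orientation of $K$ and thereby interchanges its two basepoints, so at the chain level $\phi_K$ implements the ``$U\leftrightarrow V$'' symmetry on $\cCFK(K)$. From this I extract three properties that drive the proof:
\begin{enumerate}
\item $\phi_K$ restricts to a chain homotopy equivalence $A_{-s}(K)\xrightarrow{\simeq} A_s(K)$ for every $s$;
\item under the identification in (1) for $s=n$, the horizontal map $h\co A_{-n}\to B_n$ corresponds to the vertical map $v\co A_n\to B_n$; equivalently, $h\simeq v\circ \phi_K$;
\item on $B_n\simeq \CF^-(S^3)$, $\phi_K$ is homotopic to the identity, since any orientation-preserving self-diffeomorphism of $S^3$ acts trivially on $\CF^-(S^3)$.
\end{enumerate}

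Given (1)--(3), I view the local representative of Theorem~\ref{thm:local-class}(2) as the mapping cone of $(h,v)\co A_{-n}\oplus A_n\to B_n$ and perform the change of basis $\phi_K\co A_{-n}\xrightarrow{\simeq} A_n$ on the left summand. By (2), $h$ becomes $v$ under this change of basis, producing the complex
\[
\begin{tikzcd}[column sep={1cm,between origins},labels=description]
A_n(K)\ar[dr,"v"] & & A_n(K)\ar[dl,"v"]\\
& B_n(K) &
\end{tikzcd}
\]
from the statement, with the same grading shift $d(L(2n,1),[n])$ preserved throughout.

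Next I track the involution through the change of basis. In Theorem~\ref{thm:local-class}(2) the $\phi$-action is the truncation involution on $\bX_{2n}(K)$, which swaps the two $A$-summands via $\phi_K$ and acts on $B_n$ via $\phi_K|_{B_n}$. A short linear-algebra computation, using only that $\phi_K\circ \phi_K\simeq \id$, shows that in the new basis the off-diagonal $\phi_K$ on $A_{-n}\oplus A_n$ becomes the literal transposition $(u,w)\mapsto (w,u)$ on $A_n\oplus A_n$; by (3), the induced action on $B_n$ is homotopic to the identity. Up to local equivalence this is exactly the involution in the statement.

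The main obstacle is item~(2). While the ``$U\leftrightarrow V$'' symmetry for strong inversions is classical, establishing at the chain level that $\phi_K$ intertwines $h$ with $v$ after the identification in (1) requires unpacking how $\cCFK(\tau)$ is constructed in the naturality framework of Theorem~\ref{thm:equivariant-knot-surgery-formula}, and in particular its interaction with the basepoint-change and free-stabilization maps that appear there. Nothing conceptually new is needed beyond Theorem~\ref{thm:equivariant-knot-surgery-formula} and the standard behavior of $\phi_K$ on $\cCFK$, but the bookkeeping must be done carefully and in a way compatible with the truncation producing the local representative.
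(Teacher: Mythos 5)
Your proposal is correct and is essentially the paper's argument: the two local maps the paper constructs ($F$ and $G$) are exactly your change of basis by $\phi_K$ on the left $A$-summand of the truncation from Theorem~\ref{thm:local-class}(2), with $G$ carrying an extra correction term $Mv$ because $\phi_K^2\simeq \id$ only up to a chosen homotopy $M$. One remark on emphasis: you locate the main obstacle in your item (2), but with the flip map chosen as $\cF=Z^{2s}\phi_K$ that identity holds on the nose by skew-equivariance ($h|_{A_{-n}}=\phi_K\, i_W=i_Z\,\phi_K=v\circ\phi_K$), and the real content is the equivariance check you compress into ``a short linear-algebra computation'' — one must produce explicit null-homotopies of the error terms (involving $M$ and the diagonal term $H_{\phi,-n}\tilde v$ of the truncated involution), which exist precisely because those errors are positively graded chain maps factoring through $v$ or $\tilde v$ into $B_n\simeq \bF[U]$.
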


Similar results, stated in Theorem~\ref{thm:local-class-si-rational}, hold for rational surgeries. From this, similarly to \cite[Proposition 1.7]{HHSZExact} and the well-known formula of Ni and Wu for the nonequivariant case \cite{NiWu}, one can derive an invariant for the correction terms of the surgery, which is stated in Section~\ref{sec:local-si}, after reviewing correction terms are reviewed in Section~\ref{sec:local-algebra}.

In the case of periodic knots, we have the following, which has a slightly different form. 

\begin{thm}\label{thm:periodic-local-class}
 Suppose $K$ is a $p$-periodic knot in $S^3$ with $p$ even, and $n>0$ is an integer. If $-n/2<s\le n/2$, then the $\phi$-complex $(\CF^-(S^3_n(K),[s]),\phi)$ is locally equivalent to $(A_s(K),\phi_K)$, shifted upward in grading by $d(L(n,1),[s])$.
\end{thm}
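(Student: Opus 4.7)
The plan is to combine the equivariant knot surgery formula (Theorem~\ref{thm:equivariant-knot-surgery-formula}) with a Ni--Wu style truncation argument, made equivariant in the presence of a periodic symmetry. The strategy mirrors that of Theorem~\ref{thm:local-class} in the strongly invertible case, but with the substantial simplification that a periodic symmetry acts trivially on $\Spin^c$ structures and preserves the Alexander grading on $\cCFK(K)$.

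First I would apply Theorem~\ref{thm:equivariant-knot-surgery-formula} to replace $(\ve{\CF}^-(S^3_n(K)), \ve{\CF}(\Phi))$ by the link surgery complex $(\bX_n(S^3,K), \bX(\phi))$ up to $\phi$-equivariant homotopy equivalence. A $p$-periodic diffeomorphism of $(S^3, K)$ preserves the meridian, longitude, and orientation of $K$, so the induced diffeomorphism $\Phi$ acts trivially on $H_1(S^3_n(K))$ and hence fixes $\Spin^c(S^3_n(K))$ pointwise. Consequently the $\Spin^c$ decomposition is preserved by the equivalence, and it suffices to work one $\Spin^c$ summand at a time, giving
\[
(\ve{\CF}^-(S^3_n(K),[s]), \ve{\CF}(\Phi)) \simeq (\bX_n(S^3,K,[s]), \bX(\phi)).
\]

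Second I would observe that $\bX(\phi)$ is diagonal on $\bX_n(S^3,K,[s])$ with respect to its presentation as (essentially) the mapping cone of $\prod_{k\in \Z} A_{s+kn}(K) \to \prod_{k\in \Z} B_{s+kn}(K)$. Indeed, $\phi$ preserves a Seifert surface of $K$ up to homology, so $\phi_K$ preserves the Alexander grading on $\cCFK(K)$ and restricts to an automorphism of each $A_{s+kn}(K)$ and $B_{s+kn}(K)$ individually. The map $\bX(\phi)$ therefore does not mix the factors.

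Third I would perform an equivariant truncation. For $-n/2 < s \le n/2$, the central factor $A_s(K)$ includes into $\bX_n(S^3,K,[s])$ as a subcomplex, and a standard projection yields the non-equivariant Ni--Wu local equivalence $A_s(K) \sim \bX_n(S^3,K,[s])$ with the stated grading shift $d(L(n,1),[s])$. Because $\bX(\phi)$ and $\phi_K$ are diagonal on the $A_\ast$, $B_\ast$ summands, both the inclusion and the projection commute with the symmetries on the nose, producing a local equivalence of $\phi$-complexes of the required form.

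The main obstacle is verifying that the truncation really does produce an equivariant local equivalence without generating higher homotopies involving $\bX(\phi)$. The simplification over the strongly invertible case is decisive here: since $\phi$ fixes $[s]$ pointwise rather than sending it to $[-s]$, there is no mixing between $A_s$ and $A_{-s}$, the $A_s$ summand is $\phi_K$-invariant without further averaging, and the local equivalence maps are $\phi$-equivariant directly. The hypothesis that $p$ is even is used at the first step, to put the periodic action into the strong framing form of Definition~\ref{def:strong-framing} required by Theorem~\ref{thm:equivariant-knot-surgery-formula}.
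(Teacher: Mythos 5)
There is a genuine gap at your second and third steps. You assert that $\bX(\phi)$ is ``diagonal'' and that the inclusion and projection of the truncation ``commute with the symmetries on the nose.'' Neither holds, and the failure is exactly where the work lies. Even though $\phi_K$ preserves the Alexander grading, the map $h_{n,s}$ is built from the flip map, which the paper takes to be $\cF_s=Z^{2s}\iota_K$, and $\phi_K$ commutes with $\iota_K$ only up to a chain homotopy $L$ (Lemma~\ref{lem:periodic-commutes}), not on the chain level. Consequently $h_{n,s}\phi_{\bA}+\phi_{\bB}h_{n,s}=Z^{2s+n}(\iota_K\phi_K+\phi_K\iota_K)\tilde{v}$ is merely null-homotopic, and $\bX(\phi)$ necessarily carries a nonzero component $H_{s,\phi}=Z^{2s+n}L\tilde{v}\colon A_s\to B_{s+n}$ (this is the map $j$ of Section~\ref{sec:equivariant-mapping-cone-statement}). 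So the endomorphism is not diagonal with respect to the mapping-cone decomposition.

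More seriously, the local map in the direction $A_s(K)\to \bX_n(K,[s])$ is not a subcomplex inclusion: to extend the identity on $A_s$ to a chain map into a larger truncation one must add correction terms of the form $W^{-2a+n}\iota_K$, $Z^{2b+n}\iota_K$ and $Jv$, where $J$ null-homotopes $\id+\iota_K^2$. These involve $\iota_K$ and hence fail to commute with $\bX(\phi_K)$ on the nose; the commutator has entries such as $W^{-2a+n}[\iota_K,\phi_K]$ and $L\tilde{v}\iota_K+[J,\phi_K]v$, and the paper's proof consists of constructing an explicit null-homotopy of it using $L$ together with a further homotopy $\alpha$ supplied by the fact that a $+1$-graded chain map factoring through $v$ between complexes quasi-isomorphic to $\bF[U]$ is null-homotopic. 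Only the projection $\bX_n(K,[s])\to A_s(K)$ commutes on the nose. A smaller point: the evenness of $p$ is not what makes the strong-framing reduction work (one can always compose with a fractional twist, as in Section~\ref{sec:equivariant-knots}); it appears in the statement because for odd $p$ the resulting local class is automatically trivial by Remark~\ref{rem:odd-order-triviality}.
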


\begin{rem}
\label{rem:odd-order-triviality} In fact the theorem above is only interesting if $p$ is even. If $(C,\phi)$ is a $\phi$-complex and $\phi^{p}\simeq \id$, where $p$ is odd, then $(C,\phi)$ is locally equivalent to $(C,\id)$. Indeed, we can define $F=\sum_{i=0}^{p-1} \phi^i$. This map satisfies $F \phi\simeq F \id$ and $\phi F\simeq F \id$ and therefore gives a local map from $(C,\phi)$ to $(C,\id)$ and from $(C,\id)$ to $(C,\phi)$. 
\end{rem}

Again similar results, stated in Theorem~\ref{thm:periodic-local-class-rational} hold for rational surgeries. We also derive a formula for the correction terms of the surgery, stated in Section~\ref{sec:periodic-local}. Note that Theorem~\ref{thm:periodic-local-class}(2) and its analog for rational surgeries differ from the corresponding formulas for involutive and strongly-invertible actions. 

In Section~\ref{subsec:example}, we illustrate our formulas by studying $\sfrac{+1}{2}$ surgery on the figure-eight knot with several of its natural symmetries.  The involutive local class of the surgery is trivial, as is the local class of the surgery with the action induced by either of the two inequivalent strong inversions on the figure eight knot, but the local class of the complex together with the action induced by the periodic symmetry on the knot is nontrivial.

\subsection{Applications}

We describe one application of the techniques of this paper to study the equivariant homology cobordism group.

Let $\Theta_{\Z}^3$ denote the \emph{homology cobordism group}. We recall that this is the set of homology 3-spheres $Y$, modulo the relation that $Y_0\sim Y_1$ if there is an integer homology cobordism $W$ from $Y_0$ to $Y_1$, to wit, a compact 4-manifold $W$ such that $\d W=Y_1\cup -Y_0$ and such that the inclusions $H_*(Y_i;\Z)\to H_*(W;\Z)$ are isomorphisms. 

There is a refinement $\Theta_\Z^{\mathrm{diff}}$ of the homology cobordism group for 3-manifolds equipped with a diffeomorphism, called the \emph{equivariant homology cobordism group}. This group is generated by pairs $(Y,\phi)$ where $Y$ is disjoint union of integer homology 3-spheres and $\phi\colon Y\to Y$ is an orientation preserving diffeomorphism which preserves each component of $Y$. Two pairs $(Y_0,\phi_0)$ and $(Y_1,\phi_1)$ are equivalent if there is a pair $(W,\Phi)$ such that $W$ is a cobordism from $Y_0$ to $Y_1$, and $\Phi\colon W\to W$ is a diffeomorphism which extends $\phi_0\sqcup \phi_1$. Here $W$ has $H_2(W; \mathbb{Z})=0$, and we require that $\Phi_{*}$ acts by the identity on $H_1(W , \partial W; \mathbb{Z})$. The group operation on $\Theta_{\Z}^{\diff}$ is disjoint union.  If one more restrictively requires that $\phi$ and its extensions be involutions, one obtains a corresponding group\footnote{In \cite{DHM-Corks}, the authors write $\Theta_{\Z}^{\tau}$ for what we write $\Theta_{\Z}^\inv$.} $\Theta_{\Z}^{\mathrm{inv}}$. There is a natural group homomorphism from $\Theta_{\Z}^{\mathrm{inv}}$ to $\Theta_{\Z}^{\mathrm{diff}}$. For more on the  motivation of this definition of $\Theta_{\Z}^{\mathrm{diff}}$, we refer readers to \cite[Section 2]{DHM-Corks}.

\begin{rem}For all the examples considered in this article, we will work with a connected 3-manifold with a diffeomorphism that comes with a fixed point. In this case, readers may think of the group operation between two such elements in $\Theta_{\Z}^{\diff}$ as equivariant connected sum, see \cite[Section 2]{DHM-Corks} for more discussion.
\end{rem}

It is not hard to see that every element of $\Theta_\Z^{\mathrm{diff}}$ has a connected representative, and that there is a forgetful map
  \[
 \cF\colon \Theta_{\Z}^{\mathrm{diff}}\to \Theta_{\Z}^3.
 \]
 In this paper, we study $\ker(\cF)$.   It is known that $\ker(\cF)$ contains a $\Z^\infty$-subgroup, by \cite{DHM-Corks}*{Theorem~1.3}. In this paper, we prove the following strengthening of that result:
 
 \begin{thm}\label{thm:application}
 The kernel of the forgetful map $\cF$ contains a $\Z^\infty$-summand.
 \end{thm}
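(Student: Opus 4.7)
The plan is to realize a $\Z^\infty$-summand in $\ker(\cF)$ by constructing an infinite family $\{(Y_n,\phi_n)\}_{n\ge 1}$ of strongly-framed symmetric $3$-manifolds in $\ker(\cF)$, arising as integer surgeries on symmetric knots in $S^3$, and then extracting numerical invariants of their $\phi$-complexes that realize an explicit splitting. First I would select symmetric knots $K_n\subset S^3$ (either strongly invertible or periodic of even order) along with integer surgery slopes $p_n$, such that $Y_n:=S^3_{p_n}(K_n)$ is an integer homology sphere which is moreover homology cobordant to $S^3$. Taking $K_n$ topologically slice with $p_n=\pm 1$, or using equivariant connected sums of a fixed building-block symmetric knot, will furnish such a family; each resulting pair $(Y_n,\phi_n)$, with $\phi_n$ induced by the strongly framed symmetry $\phi_{K_n}$, then lies in $\ker(\cF)$.

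The second step is to use the local class formulas of this paper to reduce the computation of $[(\CF^-(Y_n,[0]),\phi_n)]$ modulo local equivalence to a knot-level model. Applying Theorem~\ref{thm:local-class}(1) in the strongly invertible case, or Theorem~\ref{thm:periodic-local-class} in the periodic case, identifies this local class with $(A_0(K_n),\phi_{K_n})$, up to an overall grading shift. Since equivariant homology cobordisms induce local equivalences of $\phi$-complexes, any $\Z$-valued invariant of the local equivalence class descends to a homomorphism $\Psi_k\colon \Theta_\Z^{\diff}\to \Z$. Natural candidates are $\phi$-analogues of the involutive $\underline{d}$ and $\bar{d}$ invariants, together with ``$\phi$-torsion levels'' measuring the depth at which $\phi_{K_n}$ differs from the identity on truncations of $A_0(K_n)$. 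Restricting these to $\ker(\cF)$ yields a sequence of candidate splitting maps $\Psi_k\colon \ker(\cF)\to \Z$.

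The main obstacle — and the reason a $\Z^\infty$-summand is strictly harder than a $\Z^\infty$-subgroup as in \cite{DHM-Corks}*{Theorem~1.3} — is arranging the family $\{K_n\}$ and the invariants $\{\Psi_k\}$ so that the infinite matrix $M:=(\Psi_k(Y_n,\phi_n))_{k,n\ge 1}$ is lower triangular over $\Z$ with nonzero diagonal. Once such triangularity is established, inverting $M$ on its row span produces an explicit retraction $\ker(\cF)\twoheadrightarrow \Z^\infty$ splitting the inclusion of the subgroup generated by $\{(Y_n,\phi_n)\}$, and hence the desired summand. To produce triangularity I would take $K_n$ to be iterated equivariant satellites or cables of a single building-block symmetric knot, chosen so that successive iterates strictly increase the relevant $\phi$-torsion levels while leaving the lower-order invariants unchanged. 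Verifying this scaling precisely requires the full naturality statement of Theorem~\ref{thm:equivariant-knot-surgery-formula}, which identifies $\phi_n$ on $\ve{\CF}^-(Y_n)$ with $\bX(\phi)$ and hence with the knot-level naturality map $\cCFK(\phi)$ on $A_0(K_n)$; controlling how this knot-level map transforms under satellite operations is the subtlest piece of the argument.
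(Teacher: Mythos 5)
Your overall architecture (surger a smoothly trivial symmetric knot, reduce via Theorem~\ref{thm:local-class}(1) or Theorem~\ref{thm:periodic-local-class} to the knot-level pair $(A_0(K_n),\phi_{K_n})$, then detect a splitting with $\Z$-valued invariants) matches the paper's, but the detection step has a genuine gap. The invariants you propose --- $\phi$-analogues of $\underline{d}$ and $\bar{d}$, and ``$\phi$-torsion levels'' --- are not group homomorphisms on $\Theta_\Z^{\diff}$ (the $d$-type invariants are only sub/superadditive under tensor product, and truncation-depth invariants are not additive either), so no amount of triangularity of the matrix $(\Psi_k(Y_n,\phi_n))$ produces a retraction. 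This is exactly the difficulty that separates ``summand'' from ``subgroup,'' and the paper resolves it by passing to the \emph{almost} local equivalence group $\widehat{\frI}$ of almost $\phi$-complexes: the Dai--Hom--Stoffregen--Truong classification of $\widehat{\frI}$ by standard complexes supplies genuine homomorphisms $\varphi_i\colon \widehat{\frI}\to\Z$, and the paper computes that the almost $\phi$-class of $(S^3_{+1}(K_n),\phi)$ is the standard complex $C(-1,n)$, so that $\varphi_i(Y_n,\phi_n)=\delta_{in}$. Your proposal never introduces this (or any substitute) source of homomorphisms, which is the essential missing idea.

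Two further points. First, even granting homomorphisms, ``lower triangular over $\Z$ with nonzero diagonal'' only yields linear independence (a $\Z^\infty$-subgroup, which was already known from \cite{DHM-Corks}); to split off a summand you need the matrix to be invertible over $\Z$, i.e.\ diagonal entries $\pm 1$, which is why the paper arranges the matrix to be the identity. Second, ``topologically slice with $p_n=\pm1$'' does not place $(Y_n,\phi_n)$ in $\ker(\cF)$, since $\Theta_\Z^3$ is the smooth homology cobordism group; you need smoothly slice knots (the paper uses $K_n=2T_{2n,2n+1}\#-2T_{2n,2n+1}$ with the summand-swapping inversion, whose action on $\cCFK$ is computable by the connected-sum and swap formulas of \cite{DMS:equivariant} --- controlling iterated equivariant satellites, as you suggest, would be considerably harder and is not needed).
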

 
 There is similarly a forgetful map from $\Theta_{\Z}^{\inv}$ to $\Theta_{\Z}^3$, and our argument also shows that the kernel of the this map contains a $\Z^\infty$-summand.
 
 The 3-manifolds we study to prove Theorem~\ref{thm:application} are $S^3_{+1}(2 T_{2n,2n+1}\# -2T_{2n,2n+1})$ for odd $n$, equipped with a symmetry introduced in Section~\ref{sec:application-section}. Similar 3-manifolds have been studied before in related contexts, as in \cite{HHSZ-Infinite,DMS:equivariant}. The obstruction we use to prove Theorem~\ref{thm:application} is the $\phi$-local class of a pair $(Y,\phi)\in \Theta_\Z^{\mathrm{diff}}$ from \cite{DHM-Corks}. This amounts to understanding the induced action of $\CF(\phi)$ on $\CF^-(Y)$ up to local equivalence. Algebraically, this construction is similar to the notion of iota-local equivalence studied in \cite{HMZConnectedSum}. Additionally, we use the algebraic formalism of \emph{almost-iota complexes} (in our case denoted almost-phi complexes) developed in \cite{DHSThomcob}, wherein a $\Z^\infty$ summand of $\Theta_\Z^3$ is constructed; our strategy is similar to theirs.
 
 \begin{rem} Note also that by Remark~\ref{rem:odd-order-triviality}, our Theorem~\ref{thm:application} gives a $\Z^\infty$-summand in the quotient of $\Theta_{\Z}^{\diff}$ by the subgroup spanned by 3-manifolds with odd order diffeomorphisms.
 \end{rem}

\subsection{Naturality and the link surgery formula}

We also extend our proof of Theorem~\ref{thm:equivariant-knot-surgery-formula} to the link surgery complex. If $L\subset Y$ is a link with Morse framing $\Lambda$, and $\phi\colon (Y,L)\to (Y,L)$ is a strongly framed diffeomorphism which preserves $\Lambda$, then we describe a chain map
\[
\cC(\phi)\colon \cC_{\Lambda}(L)\to \cC_{\Lambda}(L). 
\]
We prove the following:

\begin{thm}
\label{thm:equivariant-surgery-links}
Let $\phi\colon (Y,L)\to (Y,L)$ be a diffeomorphism of strongly framed links and $\Phi \colon Y_{\Lambda}(L)\to Y_{\Lambda}(L)$ be the induced diffeomorphism. On the level of homology, the natural isomorphism $H_* \cC_{\Lambda}(Y,L)\iso \ve{\HF}^-(Y_{\Lambda}(L))$ intertwines the map $\cC(\phi)$ with the map 
\[
\ve{\HF}(\Phi)\colon \ve{\HF}^-(Y_{\Lambda}(L))\to \ve{\HF}^-(Y_{\Lambda}(L)).
\]
\end{thm}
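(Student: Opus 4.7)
The plan is to deduce Theorem~\ref{thm:equivariant-surgery-links} from the bordered naturality framework alluded to in the abstract, of which Theorem~\ref{thm:equivariant-knot-surgery-formula} is the single-component case. First I would construct $\cC(\phi)$ itself. The link surgery complex $\cC_{\Lambda}(Y,L)$ has the structure of a hypercube indexed by sublinks $M\subset L$, where the vertex at $M$ is (a completion of) a Floer complex obtained by surgering the components in $M$, and the edges are built from the link Floer complex of $L$ together with solid-torus bordered data along the remaining components. The strong framing hypothesis means $\phi$ acts equivariantly on the parametrized tubular neighborhood of each component as well as on the underlying link, so it simultaneously induces maps on every vertex which are compatible with all the edge and higher polygon maps defining the hypercube. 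Assembling these pieces gives the chain map $\cC(\phi)$.

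To identify the induced map on homology with $\ve{\HF}^{-}(\Phi)$, I would appeal to the fact, developed by the last author in earlier work, that $\cC_{\Lambda}(Y,L)$ arises as the pairing of a type-$D$ multimodule associated to the link exterior $Y\setminus \nu L$ with a collection of type-$A$ solid-torus modules encoding the Morse framings, one for each component. A strongly framed diffeomorphism $\phi$ restricts on the exterior to a diffeomorphism that respects the parametrization near each boundary torus, and so induces a naturality map on the type-$D$ multimodule; on the type-$A$ side it acts by the identity because strong framing identifies the solid tori with the standard model. The bordered pairing theorem is natural with respect to boundary-preserving diffeomorphisms, so the paired map agrees, up to homotopy, with the diffeomorphism map associated to the glued-up diffeomorphism $\Phi$ of $Y_{\Lambda}(L)$. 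Passing to homology then yields the theorem.

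The main obstacle is verifying the bordered-module naturality statement in the multi-component setting. One must check that $\phi$ induces well-defined maps on the link exterior's multimodule up to the appropriate notion of homotopy equivalence, that these maps intertwine correctly with the module structures coming from each boundary torus, and that the hypercube edge maps, which are themselves built out of choices of Heegaard diagrams and cycle representatives in the bordered modules, are natural under such diffeomorphisms. For a single component this is essentially the content of the proof of Theorem~\ref{thm:equivariant-knot-surgery-formula}; the multi-component generalization is mainly a matter of iterating the construction and tracking coherence under the full hypercube of higher polygon maps. Once naturality at the bordered level is in hand, the passage to the closed manifold is formal via the pairing theorem, and indeed one only needs the induced statement on homology, which is why Theorem~\ref{thm:equivariant-surgery-links} is stated at that level rather than at the chain level.
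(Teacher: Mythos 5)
Your overall architecture — build $\cC(\phi)$ from the hypercube/bordered data, reduce to naturality of the link surgery complex, then pass to the closed manifold — matches the paper's strategy in outline, and you correctly identify naturality of the bordered modules as the main technical burden (this is Theorem~\ref{thm:naturality-intro}, proved in Section~\ref{sec:naturality}). However, there are two genuine gaps.

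First, and most seriously, you have misdiagnosed why the theorem is stated only on homology. You write that ``one only needs the induced statement on homology,'' as if the homology-level statement were a convenient weakening of a chain-level fact that follows formally from the pairing theorem. It is not. The chain-level identification produced by the pairing lands in a \emph{multi-pointed} model of $\ve{\CF}^-(Y_{\Lambda}(L))$ carrying one variable $U_i$ per link component, and when $\phi$ permutes components the induced map permutes the $U_i$ and moves basepoints. This map cannot be identified at the chain level with the ordinary diffeomorphism map on the singly-pointed complex. Closing this gap is the content of Section~\ref{sec:pointed-diffeos}: one must show that the basepoint-swapping diffeomorphisms act by the identity on $\HF^-(Y,\ws)$ (Theorem~\ref{thm:swap-diffeo-on-homology}, proved via an explicit computation with the relative homology actions $A_\lambda$ and the maps $\Omega_w^{\sigma,\sigma'}$), and hence that multi-pointed and singly-pointed diffeomorphism maps agree on homology (Theorem~\ref{thm:multi-pointed-diffeos}). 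Your proposal contains no substitute for this step, and without it the right-hand side of your pairing argument is not the map $\ve{\HF}(\Phi)$ of the statement.

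Second, the passage to the closed manifold is not ``formal via the pairing theorem.'' The paper's argument constructs the comparison map explicitly as $\Gamma=\mu_2^{\Tw}(-,\Theta)$ for the canonical cycle $\Theta\colon \bs_{\Lambda}\to\scB_{\Lambda}$, and verifies the intertwining by stacking three hypercubes: the tautological pushforward square for $\phi$, a relabeling square handling orientation reversal via $\varpi_J$, and a compressed stack of squares for a sequence of meridional Heegaard moves carrying $r_J\phi\cH$ back to $\cH$ (this last step is exactly where Theorem~\ref{thm:naturality-intro} and Proposition~\ref{prop:meridional-diagram-moves} enter). Relatedly, your claim that $\phi$ ``acts by the identity'' on the type-$A$ side is incorrect when $\phi$ reverses orientations or permutes components: one must insert the bimodules $[\scE_\phi]$ and the nontrivial morphism $\Omega_\phi=\varpi_J\circ\varrho$ on the solid-torus modules, which is precisely how the map $\cC(\phi)$ is defined.
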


\begin{rem}
We in fact prove a chain level refinement of Theorem~\ref{thm:equivariant-surgery-links}. However, it is somewhat complicated by fact that Heegaard Floer homology is a functor of based 3-manifolds. On the chain level, $\cC_{\Lambda}(Y,L)$ is a completion of a free module over $\bF[U_1,\dots, U_\ell]$, with one $U_i$ variable for each link component. The chain map $\cC(\phi)$ interchanges the different $U_i$ variables if $\phi$ interchanges the corresponding link components. We will show that there is a natural homotopy equivalence between $\cC_{\Lambda}(L)$ and a multi-pointed version of $\ve{\CF}^-(Y_{\Lambda}(L))$ and $\cC_{\Lambda}(Y,L)$, and that this homotopy equivalence intertwines $\ve{\CF}(\Phi)$ with $\cC(\phi)$. Since this version of $\ve{\CF}^-(\Phi)$ permutes some of the $U_i$ variables, it is hard to identify on the chain level with the ordinary map on the singly pointed versions of Heegaard Floer homology, which have a single $U$ variable. We show that this subtlety vanishes on homology, leaving us with the statement above.
\end{rem}

The main technical result which underlies our proofs of Theorems~\ref{thm:equivariant-knot-surgery-formula} and ~\ref{thm:equivariant-surgery-links} is an extension of results on the naturality of Heegaard Floer homology due to Juh{\'a}sz, Thurston, and the last author \cite{JTNaturality} to the link surgery formula. To achieve this new naturality result, we consider a slightly restricted set of Heegaard diagrams with which to compute the link surgery formula, called \emph{meridional complete systems}. See Section~\ref{sec:meridional-diagrams}. We prove the following: 

\begin{thm}
\label{thm:naturality-intro} If $L$ is a strongly framed link in a 3-manifold $Y$, then the link surgery formula, computed with meridional complete systems of Heegaard diagrams, is natural. That is, if $L\subset Y$ is a strongly framed link (with underlying Morse framing $\Lambda$), then for each meridional complete system $\scH$ there is a model of the surgery formula $\cC_{\Lambda}(\scH)$. For each pair of meridional complete systems $\scH,$ $\scH'$, there is a homotopy equivalence $\Psi_{\scH\to \scH'}$, well defined up to chain homotopy, which satisfies the following:
\begin{enumerate}
\item $\Psi_{\scH'\to \scH''}\circ \Psi_{\scH\to \scH'}\simeq \Psi_{\scH\to \scH''}$;
\item $\Psi_{\scH\to \scH}\simeq \id$.
\end{enumerate}
\end{thm}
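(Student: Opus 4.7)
The plan is to follow the strategy of Juh\'asz--Thurston--Zemke \cite{JTNaturality}, adapted to the hypercube of chain complexes that makes up the link surgery formula. First I would organize meridional complete systems into a groupoid: objects are meridional complete systems $\scH$ for the strongly framed link $L\subset Y$, and morphisms are formal compositions of a small generating list of elementary moves (isotopies of attaching curves supported away from the meridional basepoint regions, handleslides of $\alpha$- and $\beta$-curves that preserve the meridional structure, index-1/2 stabilizations of Heegaard surfaces, index-0/3 stabilizations adjusting the free basepoints, diffeomorphisms of the surface restricting to the identity on marked meridional regions, and compatible changes of the polygon-counting almost complex structure and coherent system of areas). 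Because any two meridional complete systems differ, component-by-component, by a sequence of such moves carried out compatibly across all sublinks, the content of the theorem reduces to (i) producing chain-homotopy equivalences for each elementary move, and (ii) verifying a finite list of relations, i.e.\ showing that all closed loops in the groupoid of elementary moves induce maps canonically chain homotopic to the identity.

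For step (i), each elementary move on a component of the hypercube induces a transition map built from holomorphic polygon counts on a diagram that interpolates between the two Heegaard diagrams; this is precisely the mechanism already used to define the hypercube maps in $\cC_{\Lambda}(\scH)$ itself. The upgraded transition map between surgery complexes is obtained by coupling these polygon-count maps into a morphism of hypercubes, which is exactly the setting that the last author's framework for hypercubes of chain complexes is designed to handle. The $A_\infty$-associativity of holomorphic polygons (together with the algebra of hyperboxes of Heegaard diagrams in the sense of Manolescu--Ozsv\'ath \cite{MOIntegerSurgery}) gives the chain maps, and a one-parameter family of such interpolating data gives the chain homotopy between any two choices, so each elementary transition map is well defined up to chain homotopy and is a homotopy equivalence.

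Step (ii) is the main obstacle. One must check all the standard two-cell relations of JTZ, but now enhanced to relations in the full hypercube: commutation of disjoint moves, triviality of trivial isotopies and of zigzag stabilize-then-destabilize sequences, the pentagon/hexagon-type relation coming from three handleslides, compatibility of stabilization with handleslides, and the global relation asserting independence of the choice of interpolating hyperbox between two fixed meridional systems. In the non-surgery setting, these were established by constructions internal to a single Heegaard diagram; here each relation must be promoted to a statement about a whole hypercube of diagrams indexed by sublinks of $L$, and the verification must be done compatibly across the hypercube. The key technical input is that a hyperbox of Heegaard diagrams that realizes each elementary move induces hypercube morphisms whose composition is, up to canonical chain homotopy, the hypercube morphism associated to the product hyperbox, because of higher polygon associativity. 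This reduces every loop relation to a relation that can be checked after a single global hyperbox is constructed for the loop, at which point it follows from the usual polygon associativity. The meridional condition is used in two places: it ensures that stabilization and basepoint moves can be performed without disturbing the Alexander--Maslov grading data that defines the hypercube differentials, and it controls the compatibility of the interpolating diagrams with the framing $\Lambda$ so that the surgery complex is preserved under moves.

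Finally, properties (1) and (2) of the theorem follow formally from the groupoid presentation: the composition law is the definition of the morphism in the groupoid, and the identity property holds because the loop consisting of zero moves is trivially a relation. The result is that, for each ordered pair $(\scH,\scH')$ of meridional complete systems, the map $\Psi_{\scH\to \scH'}$ is defined as the value of this groupoid functor on any morphism $\scH\to \scH'$, well defined up to chain homotopy by step (ii), and with the required composition and identity properties built into the construction.
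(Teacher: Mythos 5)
Your overall architecture --- organize meridional systems into a groupoid of elementary moves, define transition maps by polygon counts coupled into hypercube morphisms, and verify the JTZ loop relations --- is the same skeleton the paper uses. However, your claim that ``every loop relation \ldots follows from the usual polygon associativity'' after constructing a global hyperbox is where the argument breaks down, because two of the required axioms are precisely the ones that do \emph{not} reduce to associativity. First, you omit \emph{handleswap invariance} entirely from your list of relations to check; in the JTZ framework this is an independent axiom (trivial monodromy around a simple handleswap loop), and it is established not by associativity but by a neck-stretching argument on a genus-2 stabilization region. Second, the \emph{continuity} axiom (a surface diffeomorphism isotopic to the identity induces the identity up to homotopy) is genuinely hard here: the standard small-translate approach runs into admissibility problems for the surgery hypercube, and the paper instead builds continuation maps with dynamic Lagrangian boundary conditions ($H$-skewed polygons), identifies them with the polygon-counting transition maps, and then reduces arbitrary isotopies to compositions of Hamiltonian flows using the simplicity of $\Diff_0(\Sigma,\ps)$ (Thurston). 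Your one sentence about one-parameter families of interpolating data does not supply this.

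The other substantive gap is that you treat the compatibility of the moves with the local systems as a side remark, whereas it is the main new technical content. The hypercube $\scB_{\Lambda}$ is a diagram of Lagrangians equipped with local systems built from knot shadows, and to fill in the higher morphisms of a transition map one needs a well-defined Alexander multi-grading on the morphism spaces $\ve{\CF}^-(\bs_\veps^{E_\veps},\bs_{\veps'}^{E_{\veps'}})$ when the two systems carry \emph{different} shadows. The paper constructs this grading using relative $2$-chains bounding the difference of shadows and a relative Chern class / $\Spin$-bordism argument, proves it is additive under polygon maps, and computes the top-degree homology so that the inductive filling of hypercube morphisms goes through. Without some version of this input, your step (i) does not actually produce the hypercube morphisms between surgery complexes built from different shadow data, and step (ii) has nothing to compose.
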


\begin{rem} Unfortunately, the restriction to meridional complete systems in the previous theorem does not seem well suited to studying the $\Spin^c$ conjugation action $\iota$ (cf. \cite{HMInvolutive}) in terms of the link surgery formula. 
\end{rem}

Finally, we note that all of the results described above naturally extend to the bordered perspective on the link surgery formula developed in \cite{ZemBordered, ZemExactTriangle}. Therein, an associative algebra $\cK$ was described. Given a knot $K\subset Y$ with Morse framing $\lambda$, the last author described a type-$D$ module $\cX_{\lambda}(Y,K)^{\cK}$ which naturally encodes the surgery formula for $(Y,K)$. There is also a type-$A$ module for the solid torus ${}_{\cK} \cD$, and the last author showed that
\[
\bX_{\lambda}(Y,K)\simeq \cX_{\lambda}(Y,K)^{\cK}\boxtimes {}_{\cK} \cD.
\]
Our techniques for Theorems~\ref{thm:equivariant-knot-surgery-formula}, ~\ref{thm:equivariant-surgery-links} and~\ref{thm:naturality-intro} extend to prove naturality of the type-$D$ modules $\cX_{\lambda}(Y,K)^{\cK}$. Similar statements hold for the link surgery formula. See Theorem~\ref{thm:naturality-type-D}. We refer the reader to Section~\ref{sec:background-surgery-algebra} for more background on this perspective. 

We remark that Guth and Kang \cite{GKNaturality} have recently proven a naturality statement for the hat flavor of Lipshitz, Ozsv\'{a}th and Thurston's bordered Floer homology \cite{LOTBordered}.

\subsection{Further directions}

There are some additional directions that could be useful for applications, which we do not address, but hope to address in future work.

\begin{enumerate}
\item The link surgery formula has natural gluing formulas, which can be interpreted in terms of either gluing bordered manifolds with torus boundary together or in terms of connected sums of knots. See \cite{ZemBordered}*{Appendix~A} for the topological interpretation of connected sums and gluing, and see \cite{ZemBordered}*{Section~12} for several connected sum formulas, interpreted as gluing theorems. It would be natural to try to prove that the naturality maps behave tensorially under such gluings. 
\item It would also be interesting to extend our naturality theorem to non-meridional systems. This could have applications to involutive refinements of the link surgery formula. 
\end{enumerate}

\subsection{Organization}

This paper is organized as follows. In Sections~\ref{sec:topological-background} and~\ref{sec:algebraic-background}, we review topological and algebraic background. In Section~\ref{sec:background-knot-surgery} we review the knot surgery formula, with an emphasis on notation. In Section~\ref{sec:equivariant-mapping-cone-statement}, we state the equivariant knot surgery formula and consider several simple examples. In Section~\ref{sec:local-formulas}, we prove several local equivalence class formulas. In Section~\ref{sec:application-section} we prove Theorem~\ref{thm:application} and prove our application to the equivariant homology cobordism group. In Section~\ref{sec:background-link-surgery} we review background on the link surgery formula. In Section~\ref{sec:equivariant-link-surgery-statement}, we state the equivariant link surgery formula. In Section~\ref{sec:pointed-diffeos}, we prove several results about basepoint swapping maps on Heegaard Floer homology, which come up naturally when considering symmetries of links which swap link components. In Sections~\ref{sec:homology-action-homotopies} and~\ref{sec:hypercubes-of-surgery-complexes}, we develop tools for building homotopy commutative diagrams of link surgery complexes. Finally, in Section~\ref{sec:naturality} we prove naturality of the link surgery formula. In Section~\ref{sec:proof-equivariant-surgery-formula}, we complete our proof of the equivariant link surgery formula.

\subsection{Acknowledgments} We thank Irving Dai, Matt Hedden, and Jen Hom for helpful conversations, in some cases over many years. Portions of this project were carried out while the first and fourth authors were in attendance at the ``2024 MATRIX-IBSCGP Workshop on symplectic and low-dimensional topology.'' Portions were also carried out at the 2025 Georgia International Topology Conference, where all four authors were in attendance. We thank the organizers for their hospitality.

\section{Topological background}
\label{sec:topological-background}

\subsection{Diffeomorphisms of links and link complements}

In this section, we describe the class of diffeomorphisms of pairs $(Y,L)$ for which we will prove functoriality of the Heegaard Floer link surgery formula.


\begin{define} 
We say that a link $L\subset Y$ is \emph{strongly framed} if each component of $L$ has a tubular neighborhood equipped with a diffeomorphism with $S^1\times D^2$.
\end{define}

\begin{rem} 
Note that a strong framing on a link induces a Morse framing. However, a strong framing contains more information than a Morse framing. For example, a strongly framed link is equipped with a preferred identification of the underlying link with a disjoint union of several copies of $S^1$, whereas a Morse framed link is not. 
%
\end{rem}

We regard the disk $D^2$ as the subset $\{z:|z|\le 1\}\subset \C$ of the complex plane, with $S^1 = \d D^2$. We refer to the map
\[
e\colon S^1\times D^2\to S^1\times D^2\]
given by
\[
e(x_1,x_2)=(\bar x_1, \bar x_2)\]
as the \emph{elliptic involution}. Note that $e$ is orientation preserving. If we instead identify $S^1$ with $\R/\Z$, then the map $e$ restricts to the map $x\mapsto -x$ on $\d (S^1\times D^2)$.

\begin{define}
\label{def:strong-framing}
 A \emph{diffeomorphism of strongly framed links} $\phi\colon (Y_1,L_1)\to (Y_2,L_2)$ consists of a diffeomorphism $\phi\colon Y_1\to Y_2$ which maps $L_1$ to $L_2$ and which satisfies one of two conditions on each link component $K\subset L_1$:
\begin{enumerate}
\item $\phi$ maps the framing of $K$ to the framing of $\phi(K)$.
\item $\phi$ maps the framing of $K$ to the framing of $\phi(K)$ composed with the elliptic involution.
\end{enumerate}
\end{define}

We say that a diffeomorphism of strongly framed links $\phi$ preserves the orientation of a component $K\subset L_1$ if the first condition is satisfied, and we say that $\phi$ reverses the orientation of $K$ if the second condition is satisfied. We are mostly interested in the case that $(Y_1,L_1)=(Y_2,L_2)$.

\begin{rem}
 Not all diffeomorphisms of a link complement $Y\setminus \nu(L)$ can be described via a diffeomorphism of strongly framed links  of $L$. For example the complement of the Hopf link $H$ is diffeomorphic to $\bT^2\times I$, so any element of $\SL_2(\Z)$ induces a diffeomorphism of the link complement. Most of these diffeomorphisms are not induced by any diffeomorphism of the Hopf link.  Indeed, a diffeomorphism of $S^3\setminus H$ that extends over $S^3$ must send meridians to meridians.
\end{rem}

\subsection{Modifying non-strong diffeomorphisms of links}

A general diffeomorphism $\phi\colon (Y,L)\to (Y,L)$ will not be a diffeomorphism of strongly framed links, as in Definition~\ref{def:strong-framing}. We will need to perform an isotopy of $\phi$, supported in a neighborhood of $L$, to obtain a strongly framed diffeomorphism. This choice may affect the resulting endomorphism of $\cX_{\Lambda}(L)^{\cL}$. Any two choices can be related, up to isotopy, by composition with Dehn twists on the boundary of a tubular neighborhood of $L$.

We note that a Dehn twist on a 2-torus which is parallel to the boundary of the solid torus $S^1\times D^2$ is isotopic to the identity rel boundary in the solid torus. Therefore, in general the choice of strongly framed representative of $\phi$ does not affect the resulting diffeomorphism on the Dehn surgered manifold. Nonetheless, the choice of strongly framed representative of a diffeomorphism $\phi$ of $(Y,L)$ may affect the resulting map on Heegaard Floer homology, because we view the solid torus as containing a basepoint, and Heegaard Floer homology is a functor of pointed 3-manifolds.

\begin{rem} Let $K\subset Y$ be a knot with Morse framing $\lambda$. Suppose that $\phi$ is a strongly framed diffeomorphism of $(Y,K)$ and $\phi'$ is obtained by performing an $(n\mu+m \lambda)$-framed Dehn twist to a boundary of a tubular neighborhood of $K$. Let $\Phi$ and $\Phi'$ be the two induced diffeomorphisms on $(Y_{\lambda}(K),w)$, where $w$ is a basepoint along the core of the surgery solid torus. Then $\Phi'$ is obtained from $\Phi$ by composing with the point pushing diffeomorphism which moves $w$ along the core of the surgery solid torus $n$ times. 
\end{rem}

\subsection{Equivariant knots} \label{sec:equivariant-knots}

We now recall several important types of symmetries on knots that we consider in this paper. For our purposes, a \emph{symmetric knot} $(K,\phi)$ is a pair where $K\subset S^3$ is a knot and $\phi\colon (S^3,K)\to (S^3,K)$ is a diffeomorphism of pairs.

\begin{define}
 We say a symmetric knot $(K,\phi)$ is \emph{strongly invertible} if $\phi$ preserves the orientation of $S^3$, reverses the orientation of $K$, and satisfies $\phi^2=\id$ on $S^3$. 
\end{define}

The connected sum operation on strongly invertible knots is not canonically defined. Rather, the connected sum operation is only well defined if we specify additional information. If $\phi$ is a strong inversion, then the fixed set $F\subset S^3$ of $\phi$ is an unknot by the Smith conjecture. This unknot intersects the knot $K$ in two points. A \emph{directed half axis} consists of a choice of a component of $F\setminus K$, equipped with an orientation. There is a well-defined connected sum operation on strongly invertible knots equipped with directed half axes. Furthermore, strongly invertible knots equipped with directed half axes form an equivariant concordance group, which we denote $\tilde{\cC}$ \cite{Sakumainvertible}. This group is known to be non-abelian \cite{DiPrisa:nonabelian} and contain the ordinary concordance group  in its center \cite{Sakumainvertible}.

If $(K,\phi)$ is strongly invertible, then we may pick a strong framing of $K$ with respect to which $\phi$ acts by the elliptic involution on a tubular neighborhood of $K$.

\begin{define} If $p\in \N$,  we say that a symmetric knot $(K,\phi)$, is \emph{$p$-periodic} if $\phi$ has order $p$ (that is, $\phi^p=\id$ and $\phi^i\neq \id$ if $i\in \{1,\dots, p-1\}$), $\phi$ is orientation preserving on $S^3$ and $K$, and $\phi$ has fixed set an unknot disjoint from $K$.
\end{define}

If $(K,\phi)$ is $p$-periodic, then $\phi$ will not fix any strong framing of $K$. Typically we will need to compose $\phi$ with  a twist $\rho$ of order $-j/p$ on a tubular neighborhood of $K$, where $\phi$ acts on $K\iso S^1$ by multiplication by $e^{2\pi i j/p}$. This ensures that $\rho\circ \phi$ preserves a strong framing of $K$.

\section{Algebraic background}
\label{sec:algebraic-background}

\subsection{Hypercubes and hyperboxes}

\label{sec:hypercubes}

We recall the formalism of hypercubes and hyperboxes, following the terminology of Manolescu and Ozsv\'{a}th \cite{MOIntegerSurgery}. Here, we write $\bE_n$ for $\{0,1\}^n$.

\begin{define} An $n$-dimensional \emph{hypercube of chain complexes} $\cC=(C_{\veps}, D_{\veps,\veps'})$ consists of a collection of vector spaces $C_{\veps}$ as well as a collection of linear maps $D_{\veps,\veps'}\colon C_{\veps}\to C_{\veps'}$ so that for all pairs $\veps$ and $\veps''$ with $\veps\le \veps''$, we have
\[
\sum_{\veps'|
\veps\le \veps'\le \veps''} D_{\veps',\veps''}\circ D_{\veps,\veps'}=0. 
\]
\end{define}

We will also make use of Manolescu and Ozsv\'{a}th's notion of a \emph{hyperbox}. If $\ve{d}=(d_1,\dots, d_n)\in \N_{>0}^n$ is a collection of positive integers, we write 
\[
\bE(\ve{d})=\{0,1,\dots, d_1\}\times \cdots \times \{0,1,\dots, d_n\}.
\]

\begin{define}
 If $\ve{d}=(d_1,\dots, d_n)$ is a tuple of positive integers, then a \emph{hyperbox of size $\ve{d}$} consists of a collection of vector spaces $(C_{\veps})_{\veps\in \bE(\ve{d})}$ as well as a collection of maps $D_{\veps,\veps'}\colon C_{\veps}\to C_{\veps'}$ whenever $\veps\le \veps'$ and $|\veps'-\veps|_{L^\infty}\le 1$. Furthermore, if $\veps\le \veps''$ are points such that $|\veps''-\veps|_{L^\infty}\le 1$, then the following compatibility condition is satisfied
\[
\sum_{\veps'| \veps\le \veps'\le \veps''} D_{\veps',\veps''}\circ D_{\veps,\veps'}=0.
\]
\end{define}

Manolescu and Ozsv\'{a}th define an operation on hyperboxes called \emph{compression} \cite{MOIntegerSurgery}*{Section~5.2}. This operation has input a hyperbox $(C_{\veps}, D_{\veps,\veps'})_{\veps\in \bE(\ve{d})}$ of size $\ve{d}=(d_1,\dots, d_n)$, and has output equal to a hypercube $(C'_{\veps}, D'_{\veps,\veps'})_{\veps\in \bE_n}$. Furthermore
\[
C'_{(\veps_1,\dots, \veps_n)}= C_{(d_1\veps_1,\dots, d_n \veps_n)}. 
\]
 We refer the reader to \cite{MOIntegerSurgery}*{Section~5.2} for more background on this construction.

It is natural to consider more generally hypercubes in the Fukaya category:

\begin{define} We say that $\cL_{\a}=(\as_{\veps}, \theta_{\veps,\veps'})_{\veps,\veps'\in \bE_n}$ is a \emph{hypercube of alpha attaching curves} if each $\as_{\veps}$ is a set of attaching curves on $\Sigma$, and $\theta_{\veps',\veps}\in \CF^-(\Sigma,\as',\as)$ is a choice of chain for each $\veps<\veps'$. Furthermore, the following compatibility condition is satisfied whenever $\veps<\veps'$:
\[
\sum_{\veps=\veps_1<\cdots<\veps_n=\veps'} f_{\a_{\veps_n},\dots, \a_{\veps_1}}(\theta_{\veps_{n}, \veps_{n-1}},\dots, \theta_{\veps_2,\veps_1})=0,
\]
where $f_{\a_{\veps_n},\dots, \a_{\veps_1}}$ counts rigid holomorphic $n$-gons.
\end{define}

Hypercubes of beta attaching curves are defined similarly, except that the chain $\theta_{\veps,\veps'}$ lies in $\CF^-(\Sigma,\bs,\bs')$.

\subsection{Type-$A$ and $D$ modules}

We now recall the algebraic formalism of type-$D$ and $A$ modules of Lipshitz, Ozsv\'{a}th and Thurston \cite{LOTBordered, LOTBimodules}.

We begin with type-$A$ modules, which are simply $A_\infty$-modules. More concretely, suppose that $A$ is an associative algebra over a ring $\ve{i}$. We assume that $\ve{i}$ has characteristic 2, and we write $\mu_2$ for multiplication on $A$. A left type-$A$ module ${}_A X$ consists of a left $\ve{i}$-module $X$ equipped with $\ve{i}$-linear maps
\[
m_{n+1}\colon \underbrace{A\otimes_{\ve{i}} \cdots \otimes_{\ve{i}} A}_{n} \otimes_{\ve{i}} X\to X
\]
which satisfy the compatibility condition that
\[
\begin{split}
&\sum_{j=0}^{n} m_{n-j+1}(a_n,\dots, m_{j+1}(a_j,\dots, a_1,x))\\
+&\sum_{i=1}^{n-1}m_n(a_n,\dots, a_{i+1}a_i,\dots, a_1,x)=0.
\end{split}
\]

A \emph{right type-$D$ module}, denoted $X^{\cA}=(X,\delta^1)$, consists of a right $\ve{i}$-module $X$, equipped with a map
\[
\delta^1\colon X\to X\otimes_{\ve{i}} A.
\]
The map $\delta^1$ is required to satisfy
\[
(\bI_X\otimes \mu_2)\circ (\delta^1\otimes \bI_A)\circ \delta^1=0.
\]

We recall from \cite{LOTBordered}*{Section~2.4} that there is a convenient model of the derived tensor product between a type-$D$ module and a type-$A$ module, called the \emph{box tensor product}. If $X^A$ is a type-$D$ module and ${}_{A} Y$ is a type-$A$ module, then there is a chain complex
\[
X^A\boxtimes {}_A Y
\]
defined as follows. The underlying vector space is $X\otimes_{\ve{i}} Y$ (where $A$ is an algebra over $\ve{i}$, which is typically an idempotent ring). We write $\delta^n$ for the $n$-fold composition of $\delta^1$, viewed as a map
\[
\delta^n\colon X\to X\otimes \underbrace{A\otimes \cdots \otimes A}_n.
\]
We define the differential on $X^A\boxtimes {}_A Y$ as the sum over $n\in \N$ of the following compositions:
\[
\begin{tikzcd}
X\otimes Y\ar[r, "\delta^n\otimes \bI"]& X\otimes \underbrace{A\otimes \cdots \otimes A}_n\otimes Y \ar[r, "\bI\otimes m_{n+1}"] & X\otimes Y.
\end{tikzcd}
\]

We additionally will need Lipshitz, Ozsv\'{a}th and Thurston's notion of a $DA$-bimodule. We refer the reader to \cite{LOTBimodules}*{Section~2} for an extensive introduction, and here recall briefly the definition. Suppose that $A$ and $B$ are algebras over rings $\ve{i}$ and $\ve{j}$. Then a $DA$-bimodule ${}_A X^B$ is an $(\ve{i},\ve{j})$-bimodule $X$ equipped with $(\ve{i},\ve{j})$-linear maps
\[
\delta_{j+1}^1\colon \underbrace{A\otimes\cdots \otimes A}_j\otimes X\to X\otimes B,
\]
which satisfy the $A_\infty$-associativity relation:
\[
\begin{split}
&\sum_{j=0}^{n}(\bI_X\otimes \mu_2) \circ (\delta_{n-j+1}^1\otimes \bI_B)\circ \delta_{j+1}^1 \\
+&\sum_{i=1}^{n-1} \delta_{n}^1\circ(\bI_{A^{\otimes n-i-1}}\otimes \mu_2\otimes \bI_{A^{\otimes i-1}})=0.
\end{split}
\]

\subsection{Twisted complexes}

We now recall the notion of a twisted complex. The reader may consult \cite{Bondal-Kapranov-TC} or \cite{KontsevichICM} for original references, or \cite{SeidelFukaya}*{Section~3k,l} for an exposition.

We suppose that $\cA$ is a (possibly non-unital) $A_\infty$-category. We now describe two $A_\infty$-categories. The first is the \emph{additive enlargement} $\Sigma \cA$ of $\cA$, and the second is the $A_\infty$-category of \emph{twisted complexes} over $\cA$, denoted $\Tw(\cA)$. Throughout, we assume that the Hom spaces of $\cA$ are vector spaces over $\bF=\Z/2$.

We begin by defining the \emph{additive enlargement} $\Sigma \cA$. Objects consist of formal (finite) direct sums of pairs $(X_i,V_i)$, where $X_i$ is an object of $\cA$ and $V_i$ is a graded vector space. A morphism between two objects consists of a matrix of pairs $(\theta_{ij}, f_{ij})$ where $\theta_{ij}\colon X_i\to X_j$ is a morphism in $\cA$ and $f_{ij}\colon V_i\to V_j$ is a linear map.  Composition in $\Sigma \cA$ is defined in the expected way, combining composition of the linear maps $f_{ij}$ with composition in the category $\cA$. It is not hard to show that $\Sigma \cA$ is an $A_\infty$-category. 

A \emph{twisted complex} in $\cA$ consists of a pair $(\cX,\delta)$ where $\cX$ is an object of $\Sigma \cA$ and $\delta\colon \cX\to \cX$ is a morphism which satisfies the Mauer-Cartan equation
\[
\sum_{n\ge 1} \mu_{n}^{\Sigma \cA}(\underbrace{\delta,\dots, \delta}_n)=0.
\]
Note that one needs some additional assumption on $(\cX,\delta)$ to ensure that the above sum involves only finitely many elements. A common assumption (cf. \cite{SeidelFukaya}) is to ask that $\delta$ be strictly upper triangular with respect to some ordering of its basis objects. This is sufficient for hypercubes of attaching curves.

\begin{example}
If $\cF$ is the Fukaya category of $\Sym^g(\Sigma)$, then $\Tw(\cF)$ contains the hypercubes of attaching curves considered in Section~\ref{sec:hypercubes}. Furthermore, if $\cA$ and $\cB$ are two hypercubes of attaching curves, then $\CF^-(\cA,\cB)$ is identical to the subcomplex of $\Hom_{\Tw(\cF)}(\cA,\cB)$ consisting of morphisms which strictly increase the cube filtrations.
\end{example}

\section{Background on the knot surgery formula}

\label{sec:background-knot-surgery}

In this section, we recall some background on Heegaard Floer homology and the knot surgery formula of Ozsv\'{a}th and Szab\'{o} \cite{OSIntegerSurgeries}.

\subsection{Heegaard Floer homology and knot Floer homology}

We begin by briefly recalling the basics of Heegaard Floer homology and knot Floer homology, mostly to establish notation.

Heegaard Floer homology is due to Ozsv\'{a}th and Szab\'{o} \cite{OSDisks,OSProperties}.  To a closed oriented 3-manifold $Y$ equipped with a $\Spin^c$ structure $\frs$, the theory associates a finitely generated graded $\bF[U]$ module, denoted $\HF^-(Y,\frs)$. Here $\bF$ is the field of two elements, and $U$ is a variable of degree $-2$. For our purposes, we are interested in the completed module 
\[
\ve{\HF}^-(Y,\frs):=\HF^-(Y,\frs)\otimes_{\bF[U]} \bF\llsquare U\rrsquare.
\]
The completed modules are non-trivial for only finitely many $\frs\in \Spin^c(Y)$, and therefore it is natural to consider
\[
\ve{\HF}^-(Y):=\bigoplus_{\frs\in \Spin^c(Y)} \ve{\HF}^-(Y,\frs).
\]

Knot Floer homology is a refinement of the above theory due to Ozsv\'{a}th and Szab\'{o} \cite{OSKnots} and Rasmussen \cite{RasmussenKnots}. There are a number of equivalent algebraic perspectives on the invariant. We will consider the description as a free chain complex over a two variable polynomial ring $\bF[W,Z]$. We denote this version of knot Floer homology by $\cCFK(Y,K)$. When $Y$ is a rational homology sphere, the chain complex $\cCFK(Y,K)$ is finitely generated. More generally, it decomposes as a direct sum over elements of $\Spin^c(Y)$ of finitely generated chain complexes.

When $Y$ is a rational homology 3-sphere, there are two Maslov gradings $\gr_{\ws}$ and $\gr_{\zs}$ on $\cCFK(Y,K)$. These have the  property that
\[
(\gr_{\ws},\gr_{\zs})(W)=(-2,0)\quad \text{and} \quad (\gr_{\ws}, \gr_{\zs})(Z)=(0,-2).
\]
The \emph{Alexander grading} $A$ is defined to be
\[
A=\frac{1}{2}(\gr_{\ws}-\gr_{\zs}). 
\]
Note that
\[
(\gr_{\ws},\gr_{\zs},A)(\d)=(-1,-1,0).
\]
In this paper, we write $U$ for the product $WZ$. Note that $W$ and $Z$ shift the Alexander grading, whereas $U$ preserves the Alexander grading. 

When $Y$ is an integer homology 3-sphere and $s\in \Z$, we write
\[
A_s(Y,K)\subset \cCFK(Y,K)
\]
for the subspace in Alexander grading $s$. The subspace $A_s(Y,K)$ is preserved by the differential and the action of $U$, but not by the actions of $W$ or $Z$.

\begin{rem} Many conventions exist in the literature, but it is more common to write $U$ and $V$, or alternately $\mathcal{U}$ and $\mathcal{V}$, for the variables we write $W$ and $Z$ for, respectively.
\end{rem}

\subsection{The mapping cone formula}

In this section, we recall Ozsv\'{a}th and Szab\'{o}'s mapping cone formula for a null-homologous knot $K$ in $Y$ with integer framing $\lambda$. Ozsv\'{a}th and  Szab\'{o} describe a chain complex $\bX_{\lambda}(Y,K)$ which computes $\ve{\CF}^-(Y_{\lambda}(K))$. We focus our exposition on the case that $Y$ is an integer homology 3-sphere.

The complex $\bX_{\lambda}(Y,K)$ decomposes as a mapping cone:
\[
\bX_{\lambda}(Y,K)=\Cone(v+h_\lambda\colon \bA(Y,K)\to \bB(Y,K)).
\]
The complexes $\bA(Y,K)$ and $\bB(Y,K)$ take the form
\[
\begin{split}
\bA(Y,K)&=\prod_{s\in \Z} \bm{A}_s(Y,K)\iso \ve{\cCFK}(Y,K)
\\
 \bB(Y,K)&=\prod_{s\in \Z}
 \bm{B}_s(Y,K)\iso \prod_{s\in \Z} \ve{\CF}^-(Y).
\end{split}
\]
In the above, $\ve{\cCFK}(Y,K)$ denotes the completion $\cCFK(Y,K)\otimes_{\bF[W,Z]} \bF\llsquare W,Z\rrsquare$, and $\ve{\CF}^-(Y)$ likewise denotes $\CF^-(Y)\otimes_{\bF[U]} \bF\llsquare U\rrsquare$. In particular, each $B_s(Y,K)$ denotes a copy of $\ve{\CF}^-(Y)$.  Similarly, we have $\bm{A}_s(Y,K)=A_s(Y,K)\otimes_{\bF[U]}\bF \llsquare U \rrsquare$ and $\bm{B}_s(Y,K)=B_s(Y,K)\otimes_{\bF[U]}\bF \llsquare U \rrsquare$.  

It is frequenty helpful to think of $\bB(Y,K)$ as a completion of 
\[
\CF^-(Y)\otimes_{\bF}  \bF[T,T^{-1}],
\]
where we set $B_s(Y,K)=\CF^-(Y)\otimes T^s$. 

The maps $v$ and $h_{\lambda}$ may be described as follows. In the construction of the complex $\cCFK(Y,K)$, we pick two basepoints $w,z\in K$. To construct $v$ and $h_{\lambda}$ we additionally pick a basepoint $p$ along $K$ in the complement of $w$ and $z$. For concreteness, we pick $p$ in the subarc of $K$ oriented from $z$ to $w$. This choice corresponds to the choice of an alpha-parallel arc system from \cite{ZemBordered}.

The map $v$ is obtained by first composing the inclusion map 
\[
i_Z\colon \cCFK(K)\to Z^{-1} \cCFK(K)
\]
with a canonical isomorphism
\[
\theta_w\colon Z^{-1}\cCFK(K)\iso \bigoplus_{s\in \Z} \CF^-(Y,w).
\]
The isomorphism $\theta_w$ is an isomorphism of chain complexes over $\bF[U]$, where $U$ acts on $\cCFK(K)$ by $U=WZ$. The map $\theta_w$ preserves the Alexander grading, assuming that we view the Alexander grading on the right hand side as given by the index $s$. There is a similar map
\[
\theta_z\colon W^{-1} \cCFK(K)\iso \bigoplus_{s\in \Z} \CF^-(Y,z).
\]

We then push $w$ to the point $p$ along a subarc of $K$. There is an induced diffeomorphism of $Y$, which we denote $f_{w\to p}$. The map $v$ is then given by the composition
\[
v=f_{w\to p}\circ \theta_w\circ i_Z.
\]
By construction, the map $v$ sends $A_s(Y,K)$ to $B_s(Y,K)$.

The map $h_{\lambda}$ is defined similarly, except that the roles of $Z$ and $W$ swapped, and furthermore $h_{\lambda}$ is defined to increase the Alexander grading by $\lambda$ by multiplying by an additional factor of $T^{\lambda}$. We write
\[
h_{\lambda}=T^{ \lambda} \circ f_{z\to p} \circ \theta_{z} \circ i_W.
\]
 By construction, the map $h_{\lambda}$ sends $A_{s}(Y,K)$ to $B_{s+\lambda}(Y,K)$.

Finally, the mapping cone formula $\bX_{\lambda}(Y,K)$ also requires a particular choice of algebraic completions. There are several ways of performing the completions. One version of completion is to first work over the power series ring $\bF\llsquare U\rrsquare$ and then to replace each of the direct sums with direct products. This is the perspective from \cite{MOIntegerSurgery}. Another choice is to complete the above descriptions with respect to the ideal $(U)$. Both of these perspectives are considered in \cite{ZemExactTriangle}. Therein the completions with direct products and power series is referred to as the \emph{chiral completion}, while completing at $(U)$ is referred to as the \emph{$U$-adic completion}. The surgery formula holds for both theories, but has slightly different properties. However, the results of this paper since all hold regardless of the choice of completions, as a result of which we will typically ignore the choice of completion in our exposition. 

\begin{rem} In \cite{OSIntegerSurgeries}, Ozsv\'{a}th and Szab\'{o} give a different description of $\bA(Y,K)$. They consider a $\bF[U,U^{-1}]$-chain complex $\CFK^\infty(Y,K)$, which is generated by tuples $[\xs,i,j]$ where $A(\xs)=j-i$. The variable $U$ acts by $U\cdot [\xs,i,j]=[\xs,i-1,j-1]$. They then define the subcomplex $A_s(Y,K)$ to be the $\bF[U]$-subcomplex of $\CFK^\infty(Y,K)$ spanned by generators $[\xs,i,j]$ for which $i\le 0$ and $j\le s$. To translate between versions, observe that the element $[\xs,i,j]$ corresponds in our notation to $\xs W^{-i}Z^{-j}$, and that the map $Z^{s}$ gives an isomorphism from the Ozsv{\'a}th-Szab{\'o} description of $A_s(Y,K)$ to our description of $A_s(Y,K)$. 
\end{rem}

\subsection{$\sigma$-basic models}
\label{sec:sigma-basic-models}

In this section, we describe some maximally asymmetric descriptions of the mapping cone formula, which are helpful for computations. These models will be familiar to the reader acquainted with the original Heegaard Floer surgery formula; we call these $\sigma$-basic models. The terminology is derived from \cite{MOIntegerSurgery} notion of a \emph{basic system of Heegaard diagrams} which are the underlying Heegaard diagrams used in the construction.

The $\sigma$-basic models are obtained by selecting the point $p$ to be the same as the point $w$. When we do this, the map $f_{w\to p}$ becomes the identity, and the map $v$ becomes $\theta_w \circ i_Z$. Typically, we also make the identification
\[
\bB(K)=Z^{-1} \cCFK(K),
\]
which is given by the map $\theta_w$. With respect to this identification, $v$ is equal to the map for localization
\[
v=i_Z\colon \cCFK(Y,K)\to Z^{-1} \cCFK(Y,K).
\]

With respect to the choice $p=w$, the map $h_\lambda$ takes the form
\[
h_\lambda=\theta_{z}^{-1} \circ T^\lambda\circ f_{z\to w} \circ \theta_w \circ i_W.
\]
The map
\[
\cF:=\theta_z^{-1}\circ T^\lambda\circ f_{z\to w} \circ \theta_w\colon W^{-1} \cCFK(K)\to Z^{-1} \cCFK(K)
\]
is typically referred to as the \emph{flip-map}. With the above notation, we can write
\[
h_\lambda=\cF\circ i_W.
\]

Sometimes the subspace of $W^{-1} \cCFK(K)$ in Alexander grading $s$ is denoted $\tilde{B}_{s}(Y,K)$. The map $\cF$ sends $\tilde{B}_s(Y,K)$ to $B_{s+\lambda}(Y,K)$. When $Y=S^3$, both $\tilde{B}_s(Y,K)$ and $B_{s+\lambda}(Y,K)$ are homotopy equivalent to $\bF\llsquare U\rrsquare$, so the map $\cF$ is determined up to chain homotopy. In computations, we can therefore pick any convenient choice for $\cF$. Frequently, one takes $\cF_s$, the restriction of $\cF$ to $\tilde{B}_s(Y,K)$, to be the map 
\begin{equation}
\cF_s=Z^{2s+\lambda} \iota_K
\end{equation}
where $\iota_K$ is the skew-graded and skew-equivariant map on $\cCFK(K)$ defined by the first author and Manolescu \cite{HMInvolutive}. (This map is discussed further in Section~\ref{sec:local-algebra}.)

\subsection{The surgery algebra}
\label{sec:background-surgery-algebra}
We now recall the reinterpretation from \cite{ZemBordered} of the surgery formula in terms of modules over an algebra. We first recall the algebra, denoted $\cK$. It is an algebra over the idempotent ring with two idempotents:
\[
\ve{I}=\ve{I}_0\oplus \ve{I}_1.
\]
Here $\ve{I}_\veps\iso \bF$ for $\veps=0,1$. We write $i_{\veps}$ for the generator of $\ve{I}_{\veps}$. We then set
\[
\ve{I}_{0}\cdot \cK\cdot \ve{I}_0=\bF[W,Z],\quad  \ve{I}_0\cdot \cK\cdot \ve{I}_1=0
\]
\[
\ve{I}_1\cdot \cK\cdot \ve{I}_0=\bF[U,T,T^{-1}]\otimes_{\bF} \langle \sigma,\tau \rangle\quad \text{and} \quad \ve{I}_1\cdot \cK\cdot \ve{I}_1=\bF[U,T,T^{-1}].
\]
The relations in the algebra are given as follows. Let
\[
\phi^\sigma,\phi^\tau \colon \bF[W,Z]\to \bF[U,T,T^{-1}]
\]
be the maps given by
\[
\phi^\sigma(W^i Z^j)=U^iT^{j-i}\quad \text{and} \quad \phi^\tau(W^i Z^j)=U^j T^{j-i}.
\]
If $a\in \ve{I}_0\cdot \cK\cdot \ve{I}_0$, then we have the relations
\[
\sigma \cdot a=\phi^\sigma(a)\cdot \sigma\quad \text{and} \quad \tau \cdot a=\phi^\tau(a)\cdot \tau.
\]

If $Y$ is a closed 3-manifold and $K\subset Y$ is a knot with Morse framing $\lambda$, then there is a type-$D$ module $\cX_{\lambda}(Y,K)^{\cK}$. For a rationally null-homologous knot, this can be computed by reformulating the data of the surgery formula of Ozsv\'{a}th and Szab\'{o} \cite{OSIntegerSurgeries, OSRationalSurgeries}, as we now describe. As a vector space, $\cX_{\lambda}(Y,K)^{\cK}\cdot \ve{I}_0$ is the $\bF$-span of a free $\bF[W,Z]$-basis of $\cCFK(Y,K)$.  As a vector space, $\cX_{\lambda}(Y,K)^{\cK}\cdot \ve{I}_1$ is the $\bF$-span of a free $\bF[U]$-basis of $\CF^-(Y)$.

The differential $\delta^1$ on $\cX_{\lambda}(Y,K)^{\cK}$ is constructed as follows. The terms of the differential weighted by elements of $\ve{I}_0\cdot \cK\cdot \ve{I}_0$ encode the differential of $\cCFK(Y,K)$. The terms weighted by $\ve{I}_1\cdot \cK\cdot \ve{I}_1$ encode the differential of $\CF^-(Y)$. Finally, the terms weighted by $\sigma$ and $\tau$ encode $v$ and $h_{\lambda}$, respectively. In more detail, if $\xs$ is a generator of $\cCFK(Y,K)$, and $v(\xs)$ has a summand of $\ys U^i T^j$, then $\delta^1(\xs)$ has a summand of $\ys\otimes  U^i T^j\sigma$. Similarly if $h_{\lambda}(\xs)$ has a summand of $\ys U^i T^j$, then $\delta^1(\xs)$ has a summand of $\ys\otimes U^i T^j \tau$. 

It is also possible to define the type-$D$ module $\cX_{\lambda}(Y,K)^{\cK}$ for homologically essential knots $K\subset Y$. We refer the reader \cite{ZemExactTriangle} for the details of this case.

The mapping cone chain complex $\bX_\lambda(Y,K)$ may be recovered from the above type-$D$ module by tensoring the type-$A$ module ${}_{\cK} \cD$ for a 0-framed solid torus:
\[
\bX_{\lambda}(Y,K)\iso \cX_{\lambda}(Y,K)^{\cK}\boxtimes {}_{\cK} \cD.
\]

We now recall the type-$A$ module  ${}_{\cK} \cD$ in more detail. As vector spaces, we have
\[
i_0\cdot \cD\iso \bF[W,Z]\quad \text{and} \quad i_1\cdot \cD\iso \bF[U,T,T^{-1}].
\]
The actions of $i_0\cdot \cK\cdot i_0$ and $i_1\cdot \cK\cdot i_1$ are given by polynomial multiplication. Additionally, the action of $i_1 \cdot \cK \cdot i_0$ is given by
\[
m_2(\sigma, W^i Z^j)=U^{i} T^{j-i}\quad \text{and} \quad m_2(\tau, W^iZ^j)=U^j T^{j-i}.
\]
The module ${}_{\cK} \cD$ has $m_j=0$ for $j>2$.

\subsection{The elliptic involution}
\label{sec:elliptic-involution}

We now recall that the elliptic involution of the torus induces a bimodule over the surgery algebra. We denote this bimodule by ${}_{\cK}[E]^{\cK}$. We recall that this bimodule is rank 1 over the idempotent ring $\ve{I}$, and has structure map
\[
\delta_2^1(a,1)=1\otimes E(a),
\]
where $E\colon \cK\to \cK$ is the algebra morphism specified by
\[
E(W^iZ^j)=W^j Z^i,\quad E(U)=U, \quad E(\sigma)=\tau,\quad E(\tau)=\sigma, \quad E(T)=T^{-1}.
\]

\begin{rem} The bimodule ${}_{\cK} [E]^{\cK}$ is described in \cite{ZemBordered}*{Remark~6.2}. In \cite{CZZSatellites}*{Theorem~1.6}, it is shown that in the chiral topology, this bimodule is homotopy equivalent to the $DA$-bimodule associated to the mapping cylinder of the elliptic involution on $\bT^2$.
\end{rem}

In Lemma~\ref{lem:change-orientation}, below, we will we see that the effect of reversing the orientation of a knot component of a link $L\subset Y$ is to tensor with the bimodule ${}_{\cK} [E]^{\cK}$.

There is a morphism
\[
\varpi\colon {}_{\cK}[E]^{\cK} \boxtimes {}_{\cK} \cD\to {}_{\cK} \cD,
\]
given by
\[
\varpi(W^iZ^j)=W^jZ^i\quad \text{and} \quad \varpi(U^iT^j)=U^i T^{-j}. 
\]

\section{The equivariant knot surgery formula}
\label{sec:equivariant-mapping-cone-statement}

In this section, we state our equivariant surgery formulas in the case of knots. Additionally we present several examples. The goal of this section is to be a user's guide to the techniques.

\subsection{Main statement}

We now introduce our equivariant surgery formulas. We first state them in terms of the surgery algebra formulation from \cite{ZemBordered}. In the subsequent section we recast them in terms of the mapping cone formula and carry out several simple examples.

Let  $
\phi\colon (Y,K)\to (Y,K)$ be a strongly framed diffeomorphism.  The statement of the equivariant surgery theorem takes one of two forms, depending on whether the diffeomorphism preserves or reverses the orientation of $K$. When $\phi$ is orientation preserving, there is an induced type-$D$ morphism
\[
\cX(\phi)\colon \cX_{\lambda}(Y,K)^{\cK}\to \cX_{\lambda}(Y,K)^{\cK}.
\]
If instead $\phi$ is orientation reversing, then there is an induced diffeomorphism
\[
\cX(\phi)\colon \cX_{\lambda}(Y,K)^{\cK}\to \cX_{\lambda}(Y,K)^{\cK}\boxtimes{}_{\cK} [E]^{\cK}.
\]

\begin{rem} Naturality of the link surgery complexes, proven in Section~\ref{sec:naturality}, implies that the maps $\cX(\phi)$ are well-defined up to chain homotopy.
\end{rem}

The equivariant knot surgery formula takes the following form:

\begin{thm}
\label{thm:equivariant-knot-surgery} Let $K\subset Y$ be a strongly framed knot with Morse framing $\lambda$ and let $\phi\colon (Y,K)\to (Y,K)$ be a diffeomorphism of strongly framed knots. Let $\Phi\colon Y_{\lambda}(K)\to Y_{\lambda}(K)$ denote the induced diffeomorphism. There is a homotopy equivalence
\[
\Gamma\colon \bX_{\lambda}(K)=\cX_{\lambda}(Y,K)^{\cK}\boxtimes {}_{\cK} \cD\to \ve{\CF}^-(Y_{\lambda}(K))
\]
satisfying the following:
\begin{enumerate}
\item If $\phi$ preserves the orientation of $K$, then $\Gamma$ intertwines the map $\ve{\CF}^-(\Phi)$ with the map $\cX(\phi)\boxtimes \bI_{\cD_0}$.
\item If $\phi$ reverses the orientation of $K$, then $\Gamma$ intertwines the map $\ve{\CF}^-(\Phi)$ with the composition shown below:
\[
\begin{tikzcd}[column sep=1.5cm]
{\cX_{\lambda}(Y,K)^{\cK}\boxtimes {}_{\cK} \cD}
	\ar[r, "{\cX(\phi)\boxtimes \bI}"]
&
{\cX_{\lambda}(Y,K)^{\cK} \boxtimes {}_{\cK} [E]^{\cK} \boxtimes {}_{\cK} \cD}
	\ar[r, "{\bI\boxtimes \varpi}"]
&
{ \cX_{\lambda}(Y,K)^{\cK}\boxtimes {}_{\cK} \cD}.
\end{tikzcd}
\]
\end{enumerate}
The statements also hold if we view ${}_{\cK} \cD$ as a bimodule ${}_{\cK} \cD_{\bF[U]}$. 
\end{thm}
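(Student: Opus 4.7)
The plan is to derive this theorem from the naturality of the link surgery formula (Theorem~\ref{thm:naturality-intro}) together with a compatibility between naturality isomorphisms and diffeomorphism-induced maps on the underlying Heegaard Floer complexes. First I would define $\cX(\phi)$ as follows. Given a meridional complete system $\scH$ for the strongly framed knot $(Y,K)$, the pushforward $\phi_*\scH$ is again a meridional complete system precisely because $\phi$ is strongly framed; in the orientation-reversing case the meridian parametrization gets composed with the elliptic involution $e$, which is why the target of $\cX(\phi)$ must be twisted by ${}_{\cK}[E]^{\cK}$. There is a tautological identification $\phi_\sharp \colon \cX_\lambda(\scH) \to \cX_\lambda(\phi_*\scH)$ (possibly after tensoring with ${}_{\cK}[E]^{\cK}$), and post-composing with the naturality map $\Psi_{\phi_*\scH \to \scH}$ defines $\cX(\phi)$. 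By Theorem~\ref{thm:naturality-intro}, this is well-defined up to chain homotopy.

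Next, the homotopy equivalence $\Gamma$ is furnished by the Ozsv\'ath--Szab\'o surgery theorem \cite{OSIntegerSurgeries}, reinterpreted via the bordered decomposition $\bX_\lambda(Y,K) \simeq \cX_\lambda(Y,K)^{\cK} \boxtimes {}_{\cK}\cD$ from \cite{ZemBordered}. To verify the intertwining when $\phi$ preserves the orientation of $K$, I would splice $\phi$ into the multi-step construction of $\Gamma$ so that a single coherent Heegaard datum simultaneously computes $\cX(\phi) \boxtimes \bI_{\cD}$ on the surgery side and $\ve{\CF}^-(\Phi)$ on the 3-manifold side. The homotopy $\bX(\phi)\circ \Gamma \simeq \Gamma \circ \ve{\CF}(\Phi)$ then follows by combining Juh\'asz--Thurston--Zemke naturality \cite{JTNaturality} on the surgered manifold with naturality of the surgery complex, applied to the two Heegaard data at either end of this construction.

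For the orientation-reversing case, when $\phi$ is composed with $e$ on a tubular neighborhood of $K$, the meridian of the solid torus is reversed. At the level of the bordered algebra this is encoded exactly by the bimodule ${}_{\cK}[E]^{\cK}$, since $E$ is the algebra automorphism induced by $e$ on $\partial(S^1 \times D^2)$. After gluing in the surgery solid torus via $\cD$, the orientation-reversing diffeomorphism of the meridian extends canonically across the glued solid torus to an orientation-preserving diffeomorphism of $Y_\lambda(K)$; the morphism $\varpi$ encodes this extension at the algebraic level. The main obstacle will be bookkeeping of basepoints under these identifications: $e$ interchanges the two basepoints on the meridian used to construct $\cD$, so one must check that the resulting basepoint-moving diffeomorphism on $Y_\lambda(K)$ is precisely what $\varpi$ implements. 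Once this compatibility is established, the statement for ${}_{\cK}\cD_{\bF[U]}$ follows immediately, since the $\bF[U]$-action on the right is inherited from the action on the single basepoint along the core of the surgery solid torus, which is preserved by both maps being compared.
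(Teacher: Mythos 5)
Your proposal is correct and follows essentially the same route as the paper: define $\cX(\phi)$ as the tautological pushforward composed with the naturality map, take $\Gamma$ to be the chain-level surgery equivalence (a polygon-counting map against a canonical cycle $\Theta$), and establish the intertwining by stacking the tautological commuting square for $\phi$ with the squares coming from naturality on both the surgered-manifold side and the surgery-complex side for each Heegaard move relating $\phi_*\scH$ back to $\scH$. The orientation-reversing case is likewise handled as you describe, with the paper implementing the ${}_{\cK}[E]^{\cK}$/$\varpi$ step concretely as a relabeling of the $w$ and $z$ basepoints, which leaves the closed-manifold side unchanged since those basepoints are adjacent there.
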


\subsection{The equivariant mapping cone formula}

We now translate the statement of Theorem~\ref{thm:equivariant-knot-surgery} into a more traditional statement about the mapping cone formula. We focus on the case of a knot $K$ in an integer homology 3-sphere $Y$. 

We first recall the more basic isomorphism
\[
\bX_{\lambda}(Y,K)\iso \cX_{\lambda}(Y,K)\boxtimes {}_{\cK} \cD.
\]
In this case, we write
\[
\bX_{\lambda}(Y,K)\iso \Cone(v+h_{\lambda}\colon \bA(Y,K)\to \bB(Y,K)).
\]
Here, as previously, 
\[
\bA(Y,K)\iso \cCFK(Y,K)\iso \prod_{s\in \Z} A_s(K)\quad \text{and} \quad \bB(Y,K)\iso \prod_{s\in \Z} B_s(Y,K)\iso \prod_{s\in \Z} \CF^-(Y).
\]
In particular we recall that $A_s(K)$ denotes the $\bF[U]$-subcomplex of $\cCFK(K)$ in Alexander grading $s$.  The map $v$ maps $A_s(K)$ to $B_s(K)$ while $h_{\lambda}$ maps $A_s(K)$ to $B_{s+\lambda}(K)$.

We now describe the structure of the map $\bX(\phi)$ on the mapping cone formula. As in the statement of Theorem~\ref{thm:equivariant-knot-surgery}, the precise form depends on whether $\phi$ preserves or reverses the orientation of $K$. 

We consider first the case that $\phi$ preserves the orientation of $K$. In this case, the map $\bX(\phi)$ takes the form of the vertical direction in the following diagram.
\begin{equation}
\begin{tikzcd} \bX_{\lambda}(Y,K) \ar[d, "\bX(\phi)"] \\ \bX_{\lambda}(Y,K)
\end{tikzcd}
=
\begin{tikzcd} \bA(Y,K)
	\ar[d, "\phi_0"] 
	\ar[r, "v+h_{\lambda}"]
	\ar[dr, "g+j"]
& 
\bB(Y,K)
	\ar[d, "\phi_1"]
	\\
\bA(Y,K)
	\ar[r, "v+h_{\lambda}"]
& \bB(Y,K)
\end{tikzcd}
\label{eq:X(phi)-schematic}
\end{equation}
In the above:
\begin{enumerate}[label=($p$-\arabic*), ref=$p$-\arabic*]
\item The map $\phi_0$ is an $\bF[W,Z]$-equivariant chain map, and maps $A_s$ to $A_s$.
\item The map $\phi_1$ is an $\bF[U,T,T^{-1}]$-equivariant chain map, and maps $B_s$ to $B_s$.
\item The map $g$ sends $A_s$ to $B_s$. It satisfies
\[
\d(g)=v \circ \phi_0+\phi_1\circ v.
\]
The map $g$ satisfies $g(a\cdot x)=\phi^\sigma(a)\cdot g(x)$ for $a\in \bF[W,Z]$ and $x\in \bA(Y,K)$.
\item The map $j$ sends $A_s$ to $B_{s+\lambda}$. It satisfies
\[
\d(j)=h_{\lambda}\circ \phi_0+\phi_1\circ h_{\lambda}. 
\]
The map $j$ satisfies $j(a\cdot x)=\phi^\tau(a)\cdot j(x)$ for $a\in \bF[W,Z]$ and $x\in \bA(Y,K)$.
\end{enumerate}

We now consider the case when $\phi$ reverses the orientation of the knot $K$. We recall some terminology first. If $f$ is a map between $\bF[W,Z]$-modules, we say $f$ is \emph{skew-equivariant} if $f(Wx)=Z f(x)$ and $f(Zx)=Wf(x)$ for all $x$. Similarly if $f$ is a map between $\bF[U,T,T^{-1}]$-modules, we say that $f$ is \emph{skew-equivariant} if $f(U x)=U  f(x)$ and $f(T^i x)=T^{-i} f(x)$, for all $x$ and $i\in \Z$.

 When $\phi$ reverses the orientation of the knot $K$, the map $\bX(\phi)$ takes the same form as in Equation~\eqref{eq:X(phi)-schematic}. In this case, we have:
\begin{enumerate}[label=($n$-\arabic*), ref=$n$-\arabic*]
\item The map $\phi_0$ is an $\bF[W,Z]$-skew-equivariant chain map, and maps $A_s$ to $A_{-s}$.
\item The map $\phi_1$ is an $\bF[U,T,T^{-1}]$-skew-equivariant chain map, and maps $B_s$ to $B_{-s+\lambda}$.
\item The map $g$ sends $A_s$ to $B_{-s+\lambda}$. It satisfies
\[
\d(g)=h_{\lambda} \circ \phi_0+\phi_1\circ v.
\]
The map $g$ satisfies $g(a\cdot x)=(E\circ\phi^\sigma)(a)\cdot g(x)$ for $a\in \bF[W,Z]$ and $x\in \bA(Y,K)$.
\item The map $j$ sends $A_s$ to $B_{-s}$. It satisfies
\[
\d(j)=v\circ \phi_0+\phi_1\circ h_{\lambda}. 
\]
The map $j$ satisfies $j(a\cdot x)=(E\circ \phi^\tau)(a)\cdot j(x)$ for $a\in \bF[W,Z]$ and $x\in \bA(Y,K)$.
\end{enumerate}

\subsection{Knots in $S^3$}
\label{sec:knots}

As a general principle, the structural properties of the surgery formula allow for efficient computations for knots in $S^3$. As an example, the entire surgery complex $\bX_{\lambda}(K)$ is determined by  $\cCFK(K)$ when $K\subset S^3$. This principle also extends to refinements of the surgery formula, such as the involutive knot surgery formula \cite{HHSZExact}.

We begin by introducing some terminology concerning gradings. Note that the idempotent 0 subspace of $\cX_n(K)^{\cK}$ can naturally be given a $(\gr_{\ws},\gr_{\zs})$-bigrading. We additionally view the idempotent 1 subspace of $\cX_n(K)^{\cK}$ as having a $(\gr,A)$-bigrading, where, $\gr$ denotes the Maslov grading and $A$ denotes the Alexander grading. In the special case of a knot in $S^3$, the idempotent 1 subspace of $\cX_n(K)^{\cK}$ can be chosen to have a single generator $\xs$, for which we declare $\gr(\xs)=A(\xs)=0$. We set $\gr(U)=-2$, $A(U)=0$ and $A(T^{\pm 1})=\pm 1$.

\begin{rem}Note that when using the $\sigma$-basic construction of $\cX_{n}(K)^{\cK}$, the $\sigma$-labeled terms of the differential preserve the Alexander grading, while the $\tau$-labeled terms shift the Alexander grading by $n$. In general, it is helpful to relax this assumption, and consider modules where the $\sigma$-labeled differentials shift the Alexander grading by some $k\in \Z$, while the $\tau$-labeled differentials shift the Alexander grading by $n+k$. For example, the two type-$D$ modules below are both homotopy equivalent to the $+1$ framed invariant of the unknot:
\[
\begin{tikzcd} \xs_1 \ar[r, "\sigma+T \tau"] & \xs_0
\end{tikzcd}
\qquad \text{and} \qquad
\begin{tikzcd} \xs_1 \ar[r, "T^{-1}\sigma+\tau"] & \xs_0
\end{tikzcd}.
\]
\end{rem}

We make the following definition about morphisms between type-$D$ modules for knots in $S^3$:

\begin{define} Suppose that $K,K'\subset S^3$ are knots, and $\cX_n(K)^{\cK}$ and $\cX_n(K')^{\cK}$ are their modules representing their bordered invariants. Suppose that the $\sigma$ arrows of $\cX_n(K)^{\cK}$ shift the Alexander grading by $A_\sigma$, and the $\sigma$ arrows of $\cX_n(K')^{\cK}$ shift the Alexander grading by $A_\sigma'$. We say that a map
\[
f^1\colon \cX_n(K)^{\cK} \to \cX_n(K')^{\cK}
\]
is \emph{grading preserving} if the following hold:
\begin{enumerate}
\item The component which preserves idempotent 0 preserves the $(\gr_{\ws},\gr_{\zs})$-bigrading.
\item The component which preserves idempotent 1 preserves the Maslov grading $\gr$, and shifts the Alexander grading by $A_\sigma'-A_\sigma$.
\item The components which are weighted by $\tau$ map $\gr_{\zs}$-grading $n$ to $\gr$-grading $n+1$. Furthermore, they shift the Alexander grading by $A_\sigma'-A_\sigma+n$.
\item The components which are weighted by $\sigma$ map $\gr_{\ws}$-grading $n$ to $\gr$-grading $n+1$. Furthermore, these components shift the Alexander grading by $A_\sigma'-A_\sigma$. 
\end{enumerate}
\end{define}

\begin{prop}\label{prop:X(phi)-from-CFK(phi)} Let $K,K'\subset S^3$ be knots, and let
\[
f^1\colon \cCFK(K)^{\bF[W,Z]}\to \cCFK(K')^{\bF[W,Z]}
\]
be a grading preserving chain map. Then, up to chain homotopy, there is a unique grading preserving type-$D$ morphism
\[
\cX(f^1)\colon \cX_n(K)^{\cK}\to \cX_n(K')^{\cK}
\]
whose induced map on $\cCFK$ is $f^1$. 
\end{prop}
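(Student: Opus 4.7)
The plan is to work in the $\sigma$-basic model of Section~\ref{sec:sigma-basic-models}, in which the idempotent-$1$ sector of $\cX_n(K)^{\cK}$ is identified with $Z^{-1}\cCFK(K)$, the $\sigma$-weighted differential is the inclusion $i_Z$, and the $\tau$-weighted differential is $\cF\circ i_W$ for a choice of flip map $\cF$. An idempotent-preserving type-$D$ morphism $\cX(f^1)$ then has four pieces to specify: an id$_0$-to-id$_0$ component, an id$_1$-to-id$_1$ component, and two id$_0$-to-id$_1$ components weighted by $\sigma$ and $\tau$ respectively. The strategy is to construct these explicitly from $f^1$, then prove uniqueness using the $\bF\llsquare U\rrsquare$-rigidity of $\ve{\CF}^-(S^3)$.

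For existence, I would take the id$_0$-to-id$_0$ piece to be $f^1$ itself and the id$_1$-to-id$_1$ piece to be $Z^{-1}f^1$, the localization of $f^1$ at $Z$, which becomes $\bF[U,T,T^{-1}]$-equivariant under the algebra morphism $\phi^\sigma$ via the identification $\theta_w$. The $\sigma$-compatibility $i_Z \circ f^1 = (Z^{-1}f^1)\circ i_Z$ then holds on the nose, so we take the $\sigma$-weighted component to be zero. For the $\tau$-relation we must produce $f^1_\tau$ with
\[
\d f^1_\tau = \cF' \circ W^{-1}f^1 \circ i_W + Z^{-1}f^1 \circ \cF \circ i_W ;
\]
both $\cF'\circ W^{-1}f^1$ and $Z^{-1}f^1 \circ \cF$ are grading-preserving chain maps from $W^{-1}\cCFK(K)$ to $Z^{-1}\cCFK(K')$, and since the target in each Alexander grading is quasi-isomorphic to $\bF\llsquare U\rrsquare$, the grading hypotheses force these two maps to agree on homology, hence to be chain homotopic; any such homotopy serves as $f^1_\tau$.

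For uniqueness, given two grading-preserving extensions $\Psi$ and $\Psi'$ of $f^1$, consider their difference $\Delta=\Psi+\Psi'$. Its id$_0$ component vanishes, and its id$_1$ component is a grading-preserving $\bF[U,T,T^{-1}]$-linear chain map whose restriction to each Alexander-$s$ sector is a grading-preserving endomorphism of $\ve{\CF}^-(S^3)\simeq \bF\llsquare U\rrsquare$; such a map is chain homotopic to a scalar, and the grading-preservation hypothesis forces this scalar to vanish. Build a homotopy $H_1$ null-homotoping the id$_1$ component and replace $\Delta$ by $\Delta + \d_{\mathrm{Mor}}(H_1)$ so that the id$_1$ component also vanishes. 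The $\sigma$- and $\tau$-weighted pieces of the resulting $\Delta$ are then chain maps into the same rigid target and are individually null-homotopic; the null-homotopies assemble into a type-$D$ homotopy between $\Psi$ and $\Psi'$.

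The main obstacle is the careful bookkeeping of equivariances---$\bF[W,Z]$ on the left, $\bF[U,T,T^{-1}]$ on the right via $\phi^\sigma$ for $\sigma$-components and via $\phi^\tau$ for $\tau$-components---together with the grading hypotheses that prevent a nontrivial polynomial factor in $T$ or $U$ from appearing in the obstruction classes. These grading constraints are what make the argument go through: they concentrate every obstruction in a single $(\gr,A)$-bigraded summand where the target is essentially $\bF\llsquare U\rrsquare$, so that the grading-preserving maps and homotopies are essentially unique.
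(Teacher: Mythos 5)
Your existence argument is sound and is essentially the paper's argument transported to the unreduced $\sigma$-basic model: you exploit the fact that each Alexander-graded piece of $Z^{-1}\cCFK(K')$ is homotopy equivalent to $\bF\llsquare U\rrsquare$ to produce the $\tau$-weighted homotopy. One small imprecision there: gradings alone do not force $\cF'\circ W^{-1}f^1$ and $Z^{-1}f^1\circ\cF$ to agree on homology --- each is some scalar in $\bF$, and the reason the two scalars coincide is that both compositions localize to the same map $U^{-1}f^1$ on $U^{-1}H_*$ (equivalently, $f^1$ is null-homotopic after inverting $W$ if and only if it is after inverting $Z$, as the paper notes). This is easy to repair.

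The uniqueness argument, however, has a genuine gap. You write that the id$_1$-to-id$_1$ component $\Delta_1$ of the difference $\Delta=\Psi+\Psi'$ is, on each Alexander-$s$ sector, a grading-preserving endomorphism of $\bF\llsquare U\rrsquare$, hence homotopic to a scalar, and that ``the grading-preservation hypothesis forces this scalar to vanish.'' It does not: the identity (or $T^{A_\sigma'-A_\sigma}$ times the identity) is a perfectly good grading-preserving endomorphism with scalar $1$. Gradings cannot see the difference between $0$ and $\id$ here, so nothing in your argument rules out $\Delta_1\simeq\id$. The constraint that actually kills $\Delta_1$ comes from the $\sigma$-weighted component of the cycle condition for $\Delta$: since $\Delta_0=0$, this reads $\d(\Delta_\sigma)=\Delta_1\circ v$, so $\Delta_1\circ v$ is null-homotopic; because $v$ (equivalently, the paper's $v^1\colon \cCFK(K)\boxtimes[\phi^\sigma]\to[1]$) is a homotopy equivalence onto each $\bm{B}_s$ after inverting $U$, this forces the scalar of $\Delta_1$ to be zero, and hence $\Delta_1\simeq 0$. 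With that step inserted, the rest of your uniqueness argument (correcting by $\d_{\mathrm{Mor}}(H_1)$ and then null-homotoping the remaining $\sigma$- and $\tau$-weighted pieces, which are degree-$1$ maps into a rigid target) goes through.
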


\begin{proof} We first recast the standard proof that the complex $\bX_{\lambda}(K)$ is determined by the complex $\cCFK(K)$. We phrase this in terms of the type-$D$ module $\cX_{\lambda}(K)^{\cK}$. We define the idempotent 0 subspace of $\cX_{\lambda}(K)^{\cK}$ to be $\cCFK(K)$ (viewed as a type-$D$ module over $\bF[W,Z]$), and we view idempotent 1 as consisting of a single generator, with vanishing differential. Let us write $[1]^{\bF[U,T,T^{-1}]}$ for this complex (with one generator and vanishing differential). In idempotent 1, there is a Maslov grading, which we denote by $\gr$, and an Alexander grading $A$.
 We give the generator of the module $[1]$ Maslov and Alexander grading 0.

 The terms weighted by $\sigma$ in the differential of $\cX_{\lambda}(K)^{\cK}$ are determined by a homotopy equivalence
\[
v^1\colon \cCFK(K)^{\bF[W,Z]}\boxtimes {}_{\bF[W,Z]} [\phi^\sigma]^{\bF[U,T,T^{-1}]}\to [1]^{\bF[U,T,T^{-1}]},
\]
and the map $h_{\lambda}$ is determined by a homotopy equivalence
\[
h^1\colon \cCFK(K)^{\bF[W,Z]}\boxtimes {}_{\bF[W,Z]} [\phi^\tau]^{\bF[U,T,T^{-1}]}\to [1]^{\bF[U,T,T^{-1}]}.
\]
The map $v^1$ intertwines the $\gr_{\ws}$ grading of $\cCFK$ with the $\gr$ grading, and is homogeneously graded with respect to the Alexander grading. Write $A_{\sigma}$ for the shift in the Alexander grading of $v^1$.

 The map $h^1$ intertwines $\gr_{\zs}$ with $\gr$, and is homogeneously graded with respect to the Alexander grading. We write $A_\tau$ for the shift in Alexander grading of $h^1$. By construction
 \[
 A_\tau-A_\sigma=n.
 \]
  We observe that $[1]^{\bF[U,T,T^{-1}]}$ has a unique self-homotopy equivalence in each Alexander grading. Therefore, if $A_\sigma$ is fixed, the maps $v^1$ and $h^1$ are then determined up to chain homotopy. The modules obtained from different choices of $A_\sigma$ are related by a homotopy equivalence.
   This implies that the type-$D$ module $\cX_{\lambda}(K)^{\cK}$ is itself determined by $\cCFK(K)$ up to homotopy equivalence.

We now move on to the type-$D$ morphisms $\cX(f^1)$.  There are two cases to consider. The first is that $f^1$ is not null-homotopic after localizing at $W$ or $Z$. The second is that $f^1$ becomes null-homotopic after localizing at $Z$ or $W$. (It is straightforward to see that $f^1$ is null-homotopic after localizing at $W$ if and only $f^1$ is null-homotopic after localizing at $Z$). 

We consider first the case that $f^1$ is not-null-homotopic after localizing at $W$ or $Z$. We now define the type-$D$ morphism $\cX(f^1)$ as follows. We define the component of $\cX(f^1)$ which preserves idempotent 0 to be $\cX(f^1)$. Write
\[
A_\sigma, A_\tau,  A_\sigma',A_\tau'\in \Z
\]
for the shifts in the $\sigma$ and $\tau$ arrows of $\cX_n(K)^{\cK}$ and $\cX_n(K')^{\cK}$, respectively.

We define the component of $\cX(f^1)$ which preserves idempotent 1 to be multiplication by $T^{A'_\sigma-A_\sigma}$
 We define the $\sigma$ weighted component of $\cX(f^1)$ by picking a suitably graded map $g^1$ mapping the following diagram a hypercube of chain complexes:
\[
\begin{tikzcd}[column sep=1.5cm, row sep=1.5cm]
\cCFK(K)\boxtimes {[\phi^\sigma]}
	\ar[d, "v^1"]
	\ar[r, "f^1\boxtimes \bI"] 
	\ar[dr, "g^1"]
&
	\cCFK(K')\boxtimes {[\phi^\sigma]} 
	\ar[d, "v^1"]
\\
{[1]}^{\bF[U,T,T^{-1}]} 
	\ar[r, "T^{A_\sigma'-A_\sigma}"] &
{[1]}^{\bF[U,T,T^{-1}]}
\end{tikzcd}
\]
We can see that $g^1$ exists because the complexes at each vertex of the cube are homotopy equivalent to $[1]^{\bF[U,T,T^{-1}]}$, and $v^1\circ f^1\boxtimes \bI$ and $T^{A_\sigma'-A_\sigma}\circ v^1$ are homotopy equivalences with the same grading. Note that any two choices of $g^1$ which are suitably graded are chain homotopic since there are no non-zero endomorphisms of $[1]^{\bF[U,T,T^{-1}]}$ which increase the Maslov grading by $1$.  The $\tau$ weighted components of $\cX(f^1)$ are constructed similarly.  It is straightforward to see that any two suitably graded choices give chain homotopic type-$D$ morphisms.

Next, we consider the case that $f^1$ becomes null-homotopic after localizing at $W$ or $Z$. In this case, the map
\[
f^1\boxtimes \bI \colon \cCFK(K)\boxtimes [\phi^\sigma]\to \cCFK(K)\boxtimes [\phi^\tau]
\]
is null-homotopic, and similar when we replace $\phi^\sigma$ with $\phi^\tau$. We define the $\sigma$ weighted component of $\cX(f^1)$ by picking a map $g^1$ which makes the following diagram a hypercube:
\[
\begin{tikzcd}[column sep=1.5cm, row sep=1.5cm]
\cCFK(K)\boxtimes {[\phi^\sigma]}
	\ar[d, "v^1"]
	\ar[r, "f^1\boxtimes \bI"] 
	\ar[dr, "g^1"]
&
	\cCFK(K)\boxtimes {[\phi^\sigma]} 
	\ar[d, "v^1"]
\\
{[1]}^{\bF[U,T,T^{-1}]} 
	\ar[r, "0"] &
{[1]}^{\bF[U,T,T^{-1}]}
\end{tikzcd}.
\]
We define the $\tau$-weighted components of $\cX(f^1)$ similarly.  As before, any two choices are chain homotopic.
\end{proof}

One immediate corollary is that if $K\subset S^3$ is a knot, then the type-$D$ module $\cX_n(K)^{\cK}$ is uniquely determined up to homotopy equivalence by $\cCFK(K)$. Another helpful corollary is the following extension for morphisms:

\begin{cor}
\label{cor:CFK-to-XK}
Let $\phi\colon (S^3,K)\to (S^3,K)$ be a diffeomorphism of a strongly framed knot $K\subset S^3$. Then the type-$D$ morphism $\cX(\phi)$ is determined up to chain homotopy by the induced chain map
\[
\cCFK(\phi)\colon \cCFK(K)\to \cCFK(K).
\]
\end{cor}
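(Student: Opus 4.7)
The plan is a direct application of Proposition \ref{prop:X(phi)-from-CFK(phi)} with $K = K' = K$ and $f^1 = \cCFK(\phi)$, once one checks that $\cCFK(\phi)$ satisfies the required grading hypothesis. When $\phi$ preserves the orientation of $K$ in $S^3$, the induced map $\cCFK(\phi)$ is an $\bF[W,Z]$-equivariant chain self-map of $\cCFK(K)$ preserving the $(\gr_{\ws},\gr_{\zs})$-bigrading (since $\phi$ is an orientation-preserving diffeomorphism of $(S^3,K)$ preserving the basepoints $w,z$). The proposition then produces a grading-preserving type-$D$ endomorphism $\cX(\phi)$ whose idempotent-$0$ component is $\cCFK(\phi)$, and any two such endomorphisms are chain homotopic.

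When $\phi$ reverses the orientation of $K$, the induced map is $\bF[W,Z]$-skew-equivariant and interchanges $\gr_{\ws}$ with $\gr_{\zs}$, matching the structure in Theorem~\ref{thm:equivariant-knot-surgery}, where $\cX(\phi)$ is regarded as a morphism $\cX_n(K)^{\cK} \to \cX_n(K)^{\cK}\boxtimes {}_{\cK}[E]^{\cK}$. I would repackage $\cCFK(\phi)$ as an $\bF[W,Z]$-equivariant map $\cCFK(K)\to \cCFK(K)\boxtimes {}_{\bF[W,Z]}[E]^{\bF[W,Z]}$ and apply the evident variant of Proposition~\ref{prop:X(phi)-from-CFK(phi)} with a $[E]$-twist on the target. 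The proof of the proposition carries over verbatim, since $[E]$ is rank one over the idempotent ring and preserves the homotopy equivalence of each idempotent-$1$ vertex with $[1]^{\bF[U,T,T^{-1}]}$, whose only chain-homotopy class of self-maps raising Maslov grading is trivial.

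I expect no real obstruction: the work was already done in Proposition~\ref{prop:X(phi)-from-CFK(phi)}. The only bookkeeping is to translate "grading preserving" to the skew setting in the orientation-reversing case, and to verify that the shifts $A_\sigma, A_\tau, A'_\sigma, A'_\tau$ coming from $\phi$ satisfy $A'_\sigma=A_\tau$ and $A'_\tau=A_\sigma$ (so that the degree-matching hypotheses of the proposition hold after the $[E]$-twist). Once that is in place, uniqueness of $\cX(\phi)$ up to chain homotopy is inherited directly from the uniqueness clause of the proposition.
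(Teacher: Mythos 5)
Your proposal is correct and follows essentially the same route as the paper: the orientation-preserving case is an immediate application of Proposition~\ref{prop:X(phi)-from-CFK(phi)}, and the orientation-reversing case is handled by viewing the skew-equivariant, skew-graded map $\cCFK(\phi)$ as a grading-preserving type-$D$ morphism into $\cCFK(K)^{\bF[W,Z]}\boxtimes {}_{\bF[W,Z]}[E_0]^{\bF[W,Z]}$ and rerunning the proposition with a suitably regraded target $\cX_n(K)^{\cK}\boxtimes {}_{\cK}[E]^{\cK}$. The paper makes the regrading of the $[E]$-twisted module explicit (swapping $\gr_{\ws}$ and $\gr_{\zs}$ in idempotent $0$ and negating the Alexander grading in idempotent $1$), which is exactly the bookkeeping you flag; no gap.
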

\begin{proof}
When $\phi$ preserves the orientation of the knot $K$, then $\cCFK(\phi)$ is grading preserving so the claim follows immediately from Proposition~\ref{prop:X(phi)-from-CFK(phi)}.

When $\phi$ reverses the orientation of $K$, then $\cCFK(\phi)$ is skew-equivariant and skew-graded. Recall that a skew equivariant morphism $f\colon \cCFK(K)\to \cCFK(K)$ is the same as a type-$D$ morphism 
\[
f^1\colon \cCFK(K)^{\bF[W,Z]}\to \cCFK(K)^{\bF[W,Z]} \boxtimes {}_{\bF[W,Z]} [E_0]^{\bF[W,Z]}. 
\]
We can naturally define a grading on $\cX_n(K)^{\cK}\boxtimes {}_{\cK} [E]^{\cK}$ as follows. If $\xs$ is a generator in $\cX_n(K)$, we write $\bar{\xs}$ for the corresponding generator of $\cX_n(K)\boxtimes [E]$.  If $\xs_0$ is a generator of $\cX_n(K)\cdot i_0$ (i.e. a basis element of $\cCFK(K)^{\bF[W,Z]}$), we declare the corresponding generator $\bar{\xs}_0$ of $\cX_n(K)\boxtimes [E]\cdot i_0$ to have bigrading $(\gr_{\zs}(\xs_0), \gr_{\ws}(\xs_0))$. If $\xs_1$ is a generator of $\cX_n(K)\cdot i_1$, we declare the corresponding generator $\bar{\xs}_1$ of $\cX_n(K)\boxtimes [E]\cdot i_1$ to have grading $(\gr,A)(\bar{\xs}_1)=(\gr(\xs_1),-A(\xs_1))$. With these conventions, the map $\cCFK(\phi)$ is a grading preserving type-$D$ morphism from $\cCFK(K)^{\bF[W,Z]}$ to $\cCFK(K)^{\bF[W,Z]}\boxtimes {}_{\bF[W,Z]} [E_0]^{\bF[W,Z]}$, so Proposition~\ref{prop:X(phi)-from-CFK(phi)} extends to give a type-$D$ morphism
\[
\cX(\phi)\colon \cX_n(K)^{\cK}\to \cX_n(K)^{\cK}\boxtimes {}_{\cK}[E]^{\cK},
\]
which is unique up to chain homotopy. 
\end{proof}

\begin{rem}
A similar idea is used to show that a model of the involutive surgery complex $(\bX_n(S^3,K),\iota_{\bX})$ is determined algebraically by the $\iota_K$-complex $(\cCFK(K),\iota_K)$. See \cite{HHSZExact}*{Section~3.5}. 
\end{rem}

\subsection{Examples}

We now explain how to use the equivariant surgery formula in practice. 

We begin by considering the strong inversion on the trefoil knot $T_{2,3}$. We will write $\phi$ for this diffeomorphism. Before we begin the computation, we explain a helpful convention when working with the elliptic bimodule. Namely, we write $\theta$ for the generator of ${}_{\cK} [E]^{\cK}$. We view $\theta$ as satisfying $a\cdot \theta= \theta\cdot E(a)$. Schematically,
\[
f^{1}\colon \cX^{\cK}\to \cX^{\cK}\boxtimes {}_{\cK} [E]^{\cK}
\]
is a type-$D$ morphism such that $\ys\otimes a$ is a summand of $f^1(\xs)$, then we will indicate in our diagrams with an arrow
\[
\begin{tikzcd}
 \xs \ar[r, "\theta a",red] & \ys.
\end{tikzcd}
\]

\begin{rem}
\label{rem:theta-algebra-element}
It is helpful to think of $\theta$ as an algebra element which satisfies 
\[
\theta W=Z \theta,\quad \theta Z=W\theta, \quad \theta T^{\pm 1}=T^{\mp 1} \theta, \quad \theta \tau=\sigma \theta,\quad  \theta \tau= \sigma \theta, \quad \text{and} \quad U \theta=\theta U.
\]
\end{rem}

It is straightforward to use the grading constraints on the map induced by the strong inversion to see that the map induced on the knot Floer complex switches $\xs$ and $\zs$ and fixes $\ys$.

Applying the procedure from Corollary~\ref{cor:CFK-to-XK} gives the following model for the induced type-$D$ module morphism on $\cX_{+1}(T_{2,3})^{\cK}$:
\[
\begin{tikzcd}[labels=description]
& \ys
	\ar[dl, "W"]
	\ar[dr, "Z"]
	\ar[loop above, red, looseness=30, "\theta"]
\\
\xs
	\ar[rr, leftrightarrow, "\theta",red]
	\ar[dr, "T\sigma+UT^2 \tau ",labels=left]
&& 
\zs
	\ar[dl, "UT^{-1}\sigma+\tau", labels=right]
\\[1.5cm]
&\ve{p}
	\ar[loop below,looseness=30, "T\theta",red]
\end{tikzcd}
\]

Using the above, we now compute the equivariant mapping cone $(\bX_{+1}(T_{2,3}), \bX(\phi))$. We will compute the induced diffeomorphism map $\bX(\phi)$ on the truncation taking the following shape:
\[
\begin{tikzcd}[column sep={1.25cm,between origins}, labels=description] A_{-1} \ar[dr, "h_1"]  & A_0\ar[d, "v"] \ar[dr, "h_{1}"] & A_1 \ar[d, "v"]\\
&B_0& B_1
\end{tikzcd}
\]

We recall that we view $\bX_{+1}(T_{2,3})$ as the box tensor product of $\cX_{+1}(T_{2,3})^{\cK}$ with the solid torus module ${}_{\cK} \cD$. This means that we can decompose elements of $A_s(T_{2,3})$ into sums of elementary tensors $\xs|a$, where $\xs$ is a free $\bF[W,Z]$-generator of $\cCFK(K)$, $a\in \bF[W,Z]$, and $A(\xs)+A(a)=s$. (Here we write $|$ for the tensor product). The modules $B_s(T_{2,3})$ have very simple $\bF[U]$-module generators; they are of the form $\ve{p}|T^s$ for $s\in \Z$.  With this notation, the truncation of $\bX_{+1}(T_{2,3})$ and the map $\bX(\phi)$ are shown below:
\[
\begin{tikzcd}[column sep={1.4cm,between origins}, labels=description]
& 
{\ys|W}
	\ar[dl, "1"]
	\ar[dr, "U"]
	\ar[rrrrrr,red, bend left, leftrightarrow, "1"]
&&&
\ys|1
	\ar[dl, "1"]
	\ar[dr, "1"]
	\ar[loop above, red, "1"]
&&& 
\ys |Z
	\ar[dl, "U"]
	\ar[dr, "1"]
\\
\xs |W^2
	\ar[drrrr, "U"]
&&
\zs|1
	\ar[drr, "1"]
&
\xs |W
	\ar[dr, "U"]
	\ar[drrrr, "U"]
	\ar[rr, leftrightarrow, red, "1"]
&&
\zs |Z
	\ar[dl, "U",crossing over]
	\ar[drr, "U"]
&
\xs|1
	\ar[dr, "1"]
	\ar[llll,red, bend left=18,crossing over, leftrightarrow, "1"]
&&
 \zs |Z^2
	\ar[dl, "U"]
	\ar[llllllll,crossing over,bend left=15, leftrightarrow,red, "1"]
\\[1.6cm]
&&&\,&
\ve{p}|T^0
	\ar[rrr, leftrightarrow, red, "1"]
&&\,& \ve{p}|T
\end{tikzcd}
\]

The red arrows denote $\bX(\phi)$ while the black arrows denote the differential of $\bX_{+1}(T_{2,3})$. We illustrate how $\bX(\phi)$ is computed with a few simple examples. We recall that $\bX(\phi)$ is defined as the composition:
\[
\begin{tikzcd}
\cX_{+1}(T_{2,3})^{\cK}\boxtimes {}_{\cK} \cD\ar[r, "\cX(\phi)\boxtimes \bI"] & \cX_{+1}(T_{2,3})^{\cK}\boxtimes {}_{\cK} [E]^{\cK} \boxtimes {}_{\cK} \cD \ar[r, "\bI\boxtimes \varpi"] & \cX_{+1}(T_{2,3})^{\cK} \boxtimes {}_{\cK} \cD. 
\end{tikzcd}
\]
As an example of how the computation is performed, $\bX(\phi)(\xs W^2)$ is computed as below:
\[
\begin{tikzcd}[column sep=2cm]
\xs | W^2
	\ar[r, "\cX(\phi)\boxtimes \bI",mapsto]
&
 \zs| \theta|W^2
	\ar[r, "\bI\boxtimes \varpi",mapsto]
&
\zs |Z^2.
\end{tikzcd}
\]

\section{Local formulas for equivariant surgeries on knots}
\label{sec:local-formulas}
In this section we establish formulas for the equivariant surgery complex on a knot, in analogy with the local formulas for the involutive surgery formula of \cite[Theorem 1.6 and Proposition 22.9]{HHSZExact}. The cases of strongly invertible and two-periodic knots are quite different, and we therefore address them separately.

\subsection{The algebra of local equivalence} \label{sec:local-algebra}

In this section we review the algebra of local equivalence for chain complexes with endomorphisms.

\subsubsection{Phi-complexes}

\begin{define} A \emph{generalized $\phi$-complex} (or \emph{generalized phi-complex}) is pair $(C,\phi)$ such that the following hold:
\begin{enumerate}
\item $C$ is a finitely-generated free $\mathbb F[U]$-complex equipped with a relative $\mathbb Z$-grading and absolute $\mathbb Q$-grading $\gr$, such that the differential $\partial$ has $\gr(\partial)=-1$ and the action of $U$ has $\gr(U)=-2$.
\item There is an isomorphism $U^{-1}H_*(C) \simeq \mathbb F[U, U^{-1}]$ up to an overall grading shift.
\item $\phi$ is a grading-preserving chain map which induces an isomorphism on $H_*(C)$.
\end{enumerate}
If in addition $\phi^2$ is chain homotopic to the identity, we say that $(C,\phi)$ is a \emph{$\phi$-complex} or a \emph{true $\phi$-complex}.
\end{define}

Note that a true $\phi$-complex is algebraically the same as an $\iota$-complex, in \cite{HMZConnectedSum}*{Definition~8.1}. 

Given two generalized $\phi$-complexes $(C, \phi)$ and $(C', \phi')$, a homogeneously-graded $\mathbb F[U]$-chain map $f \co C \to C'$ is said to be a \emph{$\phi$-homomorphism} if $f \circ \phi \simeq \phi' \circ f$. Two generalized $\phi$-complexes are said to be \emph{$\phi$-equivalent}, or \emph{strongly equivalent}, if there is a chain homotopy equivalence between them which is also a $\phi$-homomorphism. 

Given two generalized $\phi$-complexes, there is a `product' operation given by 
\begin{equation} \label{eq:group-relation-I} (C,\phi) \times (C', \phi') \simeq (C\otimes_{\mathbb F[U]} C', \phi \otimes \phi').
\end{equation}

\begin{define} Let $(C,\phi)$ and $(C',\phi')$ be generalized $\phi$-complexes. 
\begin{itemize}
\item A \emph{local map} from $(C,\phi)$ to $(C',\phi')$ is a grading-preserving $\phi$-homomorphism $F\co C\rightarrow C'$ which induces a graded isomorphism between $U^{-1}H_*(C)$ and $U^{-1}H_*(C')$.
\item We say that $(C, \phi)$ and $(C',\phi')$ are \emph{locally equivalent} if there is a local map from $(C,\phi)$ to $(C',\phi')$ and a local map from $(C',\phi')$ to $(C,\phi)$.
\end{itemize}
\end{define}

The set of local equivalence classes of true $\phi$-complexes forms an abelian group $\frI$, with product given by the relationship $\times$ in \eqref{eq:group-relation-I}. Inverses are given by dualizing both $C$ and $\phi$ with respect to $\mathbb F[U]$; we write $-(C,\phi)$ for this dual generalized $\phi$-complex. To a rational homology sphere $Y$ together with a $\Spin^c$ structure $\frs$ and a self-diffeomorphism $\phi$ of $Y$ preserving $\frs$, Heegaard Floer homology associates a $\phi$-complex $(\CF^-(Y), \CF(\phi))$. Ordinarily, a choice of basepoint would be required to associate a diffeomorphism map to $\CF^-(Y)$, but since $Y$ is a rational homology sphere, the choice of basepoint does not affect the diffeomorphism map by \cite{ZemGraphTQFT}*{Theorem~D}. See \cite{DHM-Corks}*{Section~4.1} for a detailed discussion. Where it will occasion no confusion we also write $\phi$ to denote $\CF(\phi)$. If $\phi$ is an orientation-preserving involution, $(\CF^-(Y), \phi)$ is a true $\phi$-complex. By \cite[Theorem 1.2]{DHM-Corks} this construction induces a homomorphism
\[ \Theta^{\mathrm{inv}}_{\Z} \rightarrow \frI.
\]
Similarly, there is a group consisting of local equivalence classes of generalized $\phi$-complexes, which is the target of a homomorphism from $\Theta_{\Z}^{\mathrm{diff}}$.

There is an additional, weaker, equivalence relation between iota-complexes, introduced in \cite{DHSThomcob} (see also \cite[Section 3.3]{HHSZExact}).
 
 \begin{define}[\cite{DHSThomcob}*{Definition 3.1}]
 	Let $C_1$ and $C_2$ be free, finitely generated chain complexes over $\bF[U]$,  such that each $C_i$ has an absolute $\Q$-grading and a relative $\Z$-grading with respect to which $U$ has grading $-2$. Two grading-preserving $\bF[U]$-module homomorphisms
 	\[ 
 	f, g 
 	\colon
 	C_1 \rightarrow C_2
 	\]
 	are \emph{homotopic mod $U$}, denoted $f \simeq g \mod U$, if there exists an $\bF[U]$-module homomorphism $H \colon C_1 \rightarrow C_2$ such that $H$ increases grading by one and
 	\[
 	f + g + H \circ \d + \d \circ H \in \im U.
 	\]
 \end{define}

\begin{define}[\cite{DHSThomcob}*{Definition 3.2}]
 An \emph{almost phi-complex} (or \emph{almost $\phi$-complex}) $\cC=(C,\bar{\phi})$ consists of the following data:
 \begin{itemize}
 \item A free, finitely-generated chain complex $C$ over $\bF[U]$ with an absolute $\mathbb Q$ grading and relative $\mathbb Z$ grading, with
 \[
U^{-1} H_*(C)\iso \bF[U,U^{-1}]. 
 \]
 \item A grading-preserving $\bF[U]$-module homomorphism $\bar{\phi}\colon C\to C$ such that
 \[
\bar{\phi}\circ \d+\d \circ \bar{\phi}\in \im U\qquad \text{and} \qquad \bar{\phi}^2\simeq \id \mod U. 
 \]
 \end{itemize}
 \end{define}
 
Of course, any phi-complex induces an almost phi-complex. The definition of tensor product of almost phi-complexes is the same as equation~\eqref{eq:group-relation-I}.

In analogy with the terminology above, an \emph{almost $\phi$-homomorphism} from $(C_1, \bar{\phi}_1)$ to $(C_2, \bar{\phi}_2)$ is a homogeneously-graded, $\bF[U]$-equivariant chain map $f \colon C_1 \rightarrow C_2$ such that $f \circ \bar{\phi}_1 \simeq \bar{\phi}_2 \circ f \mod U.$ We then have the following new relation between almost $\phi$-complexes.

\begin{define}[\cite{DHSThomcob}*{Definition~3.5}]Suppose $(C_1,\bar{\phi}_1)$ and $(C_2,\bar{\phi}_2)$ are almost $\phi$-complexes.
\begin{enumerate}
\item An \emph{almost local map} from $(C_1,\bar{\phi}_1)$ to $(C_2,\bar{\phi}_2)$ is a grading-preserving almost $\phi$-homomorphism $F\colon C_1\to C_2$, which induces an isomorphism from $U^{-1} H_*(C_1)$ to $U^{-1} H_*(C_2)$.
\item We say that $(C_1,\bar{\phi}_1)$ are $(C_2,\bar{\phi}_2)$ are \emph{almost locally equivalent} if there is an almost local map from $(C_1,\bar{\phi}_1)$ to $(C_2,\bar{\phi}_2)$, as well as an almost local map from $(C_1,\bar{\phi}_1)$ to $(C_2,\bar{\phi}_2)$.
\end{enumerate}
\end{define} 

Using the definitions above, one may construct an almost local equivalence group $\widehat{\frI}$ of almost $\phi$-complexes. It is a non-trivial result that $\widehat{\frI}$ can be parametrized explicitly \cite{DHSThomcob}*{Theorem~6.2}, as we now describe.  To a sequence $(a_1, b_2, a_3, b_4, \dots, a_{2m-1}, b_{2m})$, where $a_i \in \{ \pm\}$ and $b_i \in \Z \setminus \{0\}$, we may associate an almost $\phi$-complex 
 \[
 C(a_1, b_2, a_3, b_4, \dots, a_{2m-1}, b_{2m}),
 \]
  called the \emph{standard complex of type $(a_1, b_2, a_3, b_4, \dots, a_{2m-1}, b_{2m})$}, as follows.  The standard complex is freely generated over $\bF[U]$ by $t_0, t_1, \dots, t_{2m}$. For each symbol $a_i$, we introduce an $\omega=(1+\phi)$-arrow between $t_{i-1}$ and $t_i$ as follows:
 \begin{itemize}
 	\item If $a_i = +$, then $\omega t_{i} = t_{i-1}$.
 	\item If $a_i = -$, then $\omega t_{i-1} = t_i$. 
 \end{itemize}
 For each symbol $b_i$, 
 we introduce a $\d$-arrow between $t_{i-1}$ and $t_i$ as follows:
 \begin{itemize}
 	\item If $b_i>0$, then $\d t_{i} = U^{|b_i|} t_{i-1}$.
 	\item If $b_i<0$, then $\d t_{i-1} = U^{|b_i|} t_{i}$.
 \end{itemize}
 
In computations with standard complexes, it will frequently be convenient to represent the group operation with $+$ instead of $\otimes$. The dual of the standard complex $C(a_1, b_2, a_3, b_4, \dots, a_{2m-1}, b_{2m})$ is the standard complex $-C(a_1, b_2, a_3, b_4, \dots, a_{2m-1}, b_{2m}) = C(-a_1, -b_2, -a_3, -b_4, \dots, -a_{2m-1}, -b_{2m})$, where if $a_i$ is $+$ then $-a_i$ is $-$ and vice versa.

Every element of $\widehat{\frI}$ is locally equivalent to a unique standard complex \cite{DHSThomcob}*{Theorem~6.2}.  Thus, in spite of $\widehat{\frI}$ being infinitely-generated, its elements are easy to describe.

Using this characterization, Dai, Hom, Stoffregen, and Truong define a sequence of homomorphisms $\varphi_n \co \hat{\frI} \to \mathbb Z$ for $n>1$, defined as follows. Given $(C,\bar{\phi})$ an almost $\phi$-complex, let $C(a_1, b_2, a_3, b_4, \dots, a_{2m-1}, b_{2m})$ be a standard complex representing it. Then
\[ \varphi_n(C,\bar{\phi}) = (\text{\# of parameters } b_i =n) - (\text{\# of parameters } b_i =-n).\]

We conclude by recalling two numerical invariants associated to a generalized $\phi$-complex, called its \emph{correction terms}. We follow the formulation \cite[Lemma 2.12]{HMZConnectedSum}, which is a reformulation of the original definitions from \cite{HMInvolutive}. The construction was originally for true phi-complexes; the observation that it generalizes is esse

\begin{define}
Let $(C,\phi)$ be a generalized phi-complex. Then $\dl(C,\phi)$ is the maximum grading of a homogeneous cycle $a \in C$ such that $[U^{n}a] \neq 0$ for all positive $n$ and furthermore there exists an element $b$ such that $\partial b= (\mathrm{id} + \phi)a$.
\end{define}

\begin{define}
Let $(C,\phi)$ be a generalized phi-complex. Consider triples $(x,y,z)$ consisting of elements of $C$, with at least one of $x$ or $y$ nonzero such that $\partial y = (\mathrm{id} + \phi)x$, $\partial z = U^{m} x$, for some $m \geq 0$ and $[U^n(U^{m}y + (\mathrm{id} + \phi)z)] \neq 0$ for all $n \geq 0$. If $x \neq 0$, assign this triple the value $\gr(x)+1$; if $x=0$, assign this triple the value $\gr(y)$. Then $\du(C,\phi)$ is defined as the maximum of these grading values across all valid triples $(x,y,z)$.
\end{define}

The two correction terms are exchanged with a negative sign under dualizing; in particular, $\dl((C,\phi))=-\du(-(C,\phi))$ and vice versa. We may treat the correction terms as topological invariants as follows.

\begin{define}
If $Y$ is a rational homology sphere with an orientation-preserving diffeomorphism $\phi$ and $\frs \in \Spin^c(Y)$ is fixed by $\phi$ then 
\[\dl^{\phi}(Y,\frs) = \dl(\CF^-(Y, \frs), \phi) \qquad \qquad \qquad \du^{\phi}(Y,\frs) = \du(\CF^-(Y, \frs), \phi).\] \end{define}

\subsubsection{$\phi_K$-complexes} 
\label{sec:phi_K-complexes}

Let $(C_K, \d)$ be a bigraded, free, finitely generated complex over $\bF[W,Z]$, where the two terms in the bigrading are $\gr(x) = (\gr_{\ws}(x), \gr_{\zs}(x))$. We say $C_K$ is an \emph{abstract knot complex} if the gradings are such that $\gr(\d)=(-1,-1)$ while $\gr(W)=(-2,0)$ and $\gr(Z)=(0,-2)$, and moreover $C_K \otimes \bF[W,Z, W^{-1},Z^{-1}]$ is chain homotopy equivalent to $\bF[W,Z,W^{-1},Z^{-1}]$.  We refer to  $A=\tfrac{1}{2}(\gr_{\ws}-\gr_{\zs})$ as the \emph{Alexander grading}.

Given an abstract knot complex, there are two naturally associated maps
 \[
 \Phi,\Psi\colon C_K\to C_K,
 \]
 as follows. We write $\d$ as a matrix with respect to a free $\bF[W,Z]$-basis of $C_K$. We define $\Phi$ to be the endomorphism obtained differentiating each entry of this matrix with respect to $W$. We define $\Psi$ to be the endomorphism obtained by differentiating each entry with respect to $Z$. The maps $\Phi$ and $\Psi$ are independent of the choice of basis, up to $\bF[W,Z]$-equivariant chain homotopy \cite{ZemConnectedSums}*{Corollary~2.9}. These maps naturally appear in the context of knot Floer homology, see \cites{SarkarMaslov, ZemQuasi, ZemCFLTQFT}; in particular the \emph{Sarkar map} $\xi_K$ associated to the complex is given by the formula
\[ \xi_K = \mathrm{Id} + \Psi \circ \Phi.\]
It is a standard fact that $\xi_K^2 \simeq \mathrm{Id}$.

We say an $\bF$-linear map $F\colon C_K\to C_K'$ is \emph{skew-$\bF[W,Z]$-equivariant} if 
 \[
 F\circ Z=W\circ F\quad \text{and} \quad F\circ W=Z\circ F.
 \]
 Similarly, we say a map $F$ is \emph{skew-graded} if $F$ interchanges $\gr_{\ws}$ and $\gr_{\zs}$.
 

As discussed in Section~\ref{sec:equivariant-knots}, there are two types of symmetries arising from knots that we wish to consider. We begin with the strongly invertible symmetries.

\begin{define} A strongly invertible $\phi_K$-complex consists of a pair $(C_K, \phi_K)$ such that
\begin{itemize}
\item $C_K$ is an abstract knot complex;
\item $\phi_K$ is a skew-graded, skew-$\bF[W,Z]$-equivariant chain map such that $\phi_K^2 \simeq \mathrm{Id}$.
\end{itemize}
\end{define}

A \emph{$\phi_K$-homomorphism} from $(C_K,\phi_K)$ to $(C_{K'},\phi_{K'})$ is a chain map $f\colon C_K\to C_{K'}$ such that
\[
f\circ \phi_K \simeq \phi_{K'}\circ f.
\]
A $\phi_K$-homomorphism is called a \emph{local map} if it induces an isomorphism from $U^{-1} H_*(C_K)$ to $U^{-1} H_*(C_{K'})$. We say that two $\phi_K$-complexes are locally equivalent if there are local maps in both directions. 

There is a composition operation
\begin{equation} (C_K, \phi_K) \times (C_{K'}, \phi_{K'}) = (C_K\otimes_{\bF[W,Z]} C_{K'}, \phi_K \otimes \phi_K').\end{equation}
which preserves local equivalence. Given this, one may define a group $\frI_K$ of local equivalence classes of strongly-invertible $\phi_K$-complexes, again with inverses given by dualizing.

If $K \subseteq S^3$ is a strongly invertible knot, then by \cite[Theorem 1.7]{DMS:equivariant}, if $\phi_K \co \cCFK(K) \rightarrow \cCFK(K)$ is the map induced by the symmetry, the pair $(\cCFK(K), \phi_K)$ has the structure of a strongly invertible $\phi_K$-complex. Indeed, the authors show \cite[Theorem 1.8]{DMS:equivariant} there is a homomorphism
\[\widetilde{\cC} \rightarrow \frI_K.\]
where as in Section~\ref{sec:equivariant-knots} the group $\widetilde{\cC}$ is the concordance group of directed strongly invertible knots.

We now turn to the case of periodic symmetries. 

\begin{define} A $p$-periodic $\phi_K$-complex consists of a pair $(C_K, \phi_K)$ such that
\begin{itemize}
\item $C_K$ is an abstract knot complex;
\item $\phi_K$ is a grading-preserving, $\bF[W,Z]$-equivariant chain map such that $\phi_K^p \simeq \xi_K$.
\end{itemize}
\end{define}

In the same fashion as before, a \emph{local map} from $(C_K,\phi_K)$ to $(C_{K'},\phi_{K'})$ consists of a chain map $f\colon C_K\to C_{K'}$ such that
\[
f\circ \phi_K \simeq \phi_{K'}\circ f,
\]
and we say that two $p$-periodic $\phi_K$-complexes are locally equivalent if there are local maps in both directions.

A straightforward extension of the work of \cite[Section 8]{DMS:equivariant} shows that if $K$ is a $p$-periodic knot, then if $\phi_K \co \cCFK(K) \rightarrow \cCFK(K)$ is the map induced by the symmetry, the pair $(\cCFK(K), \phi_K)$ has the structure of a $p$-periodic $\phi_K$-complex; equivariant concordance preserves the local equivalence class of the complex. As there is no notion of equivariant connect sum for periodic knots, we do not attempt to form a group of complexes in this case.

Before continuing we recall an important property of the maps induced by periodic symmetries on knots. For a knot $K$ in $S^3$, let $\iota_K$ be the endomorphism induced by the conjugation action on $\cCFK(K)$ defined by the first author and Manolescu \cite[Section 6.1]{HMInvolutive}. The map $\iota_K$ is a skew-graded $\bF[W,Z]$-skew equivariant map such that $\iota_K^2 \simeq \xi_K$, and therefore $\iota_K^4 \simeq \mathrm{Id}$. We have the following (in \cite[Theorem 8.1]{DMS:equivariant}, this is stated for $p=2$; the proof is the same for general $p$).

\begin{lem} \cite[Theorem 8.1]{DMS:equivariant} \label{lem:periodic-commutes} Let $K$ be a $p$-periodic knot. Then the map $\phi_K$ induced by the action of $K$ on $\cCFK(K)$ has
\[\phi_K \circ \iota_K \simeq \iota_K \circ \phi_K.\]
\end{lem}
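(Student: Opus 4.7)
My plan is to argue that the proof of \cite[Theorem 8.1]{DMS:equivariant}, which is stated there for $p=2$, goes through verbatim for every even $p$. Inspection of that argument reveals that involutivity of $\phi$ is never used in the commutation; all that is required is that $\phi$ be an orientation-preserving diffeomorphism of $S^3$ which preserves the oriented knot $K$. This is exactly the data of a $p$-periodic symmetry.

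First I would unpack the two maps at the level of Heegaard diagrams. Fix a doubly-pointed diagram $\cH=(\Sigma,\as,\bs,w,z)$ for $(S^3,K)$, let $\bar\cH=(-\Sigma,\bs,\as,z,w)$ be its conjugate, and let $\phi(\cH)=(\phi(\Sigma),\phi(\as),\phi(\bs),\phi(w),\phi(z))$ denote the pushforward diagram. Both $\iota_K$ and $\phi_K$ are defined as a composition of a tautological chain isomorphism between complexes for abstractly identical diagrams with a naturality equivalence back to $\cCFK(\cH)$,
\[
\iota_K\simeq \Psi_{\bar\cH\to \cH}\circ \eta,\qquad \phi_K\simeq \Psi_{\phi(\cH)\to \cH}\circ \phi_\ast,
\]
where $\eta$ and $\phi_\ast$ are the tautological chain isomorphisms. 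The crucial observation is that $\phi$ preserves the orientation of $\Sigma$ (since it is orientation-preserving on $S^3$) and fixes the labels of $w$ and $z$ (since it is orientation-preserving on $K$). Consequently the decorated diagrams $\phi(\bar\cH)$ and $\overline{\phi(\cH)}$ coincide on the nose, and the tautological maps $\eta$ and $\phi_\ast$ strictly commute.

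Finally I would assemble the pieces using the naturality composition law $\Psi_{\cH_2\to \cH_3}\circ \Psi_{\cH_1\to \cH_2}\simeq \Psi_{\cH_1\to \cH_3}$, together with the fact that the naturality maps commute up to chain homotopy with chain maps induced by diffeomorphisms acting on the defining data. Both $\phi_K\circ \iota_K$ and $\iota_K\circ \phi_K$ can then be reduced, up to chain homotopy, to a single expression of the form $\Psi_{\overline{\phi(\cH)}\to \cH}\circ (\eta\circ \phi_\ast)$, yielding the desired chain homotopy. The main technical point is the careful bookkeeping of naturality equivalences and the appeal to the appropriate uniqueness-up-to-chain-homotopy statement from the Juh\'asz--Thurston--Zemke naturality package. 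This is the same subtlety addressed in \cite[Theorem 8.1]{DMS:equivariant}, and the argument given there transfers to general even $p$ without modification.
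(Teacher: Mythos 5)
Your proposal matches the paper's treatment: the paper gives no independent argument but simply cites \cite[Theorem 8.1]{DMS:equivariant} and remarks that the proof there, stated for $p=2$, applies verbatim for general $p$ --- which is exactly the reduction you carry out, and your sketch of why the conjugation data and the pushforward data strictly commute (because $\phi$ preserves the orientations of $S^3$ and of $K$, hence does not exchange $w$ and $z$) is the correct underlying reason, including for the basepoint-moving half-twist hidden in $\Psi_{\bar\cH\to\cH}$. One small correction: the lemma holds for all $p$, not only even $p$; nothing in your argument uses parity, and evenness enters the paper elsewhere only because odd-order actions are locally trivial, not because this commutation would fail.
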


When $K$ is instead strongly invertible, a more complicated relation is satisfied between $\iota_K$ and $\phi_K$. See \cite[Theorem 1.7]{DMS:equivariant}.


\subsubsection{Almost $\phi_K$-complexes}

It will be helpful for our purposes to have a notion of \emph{almost $\phi_K$-complexes}. This is a straightforward amalgamation of the notions described above. We will view $U$ as being an element of $\bF[W,Z]$ by setting $U=WZ$. We make the following definition:
\begin{define} A \emph{almost $\phi_K$-complex} consists of a pair $(C,\bar{\phi}_K)$ satisfying the following:
\begin{enumerate}
\item   $C$ is an abstract knot complex.
\item $\bar{\phi}_K\colon C\to C$ is an $\bF[W,Z]$-skew equivariant map such that 
\[
\bar{\phi}_K\circ \d+\d \circ \bar{\phi}_K=0\mod U, \quad \text{and} \quad \bar{\phi}_K^2+\id \simeq 0 \mod U.
\]
 Furthermore, we assume that $\bar{\phi}_K$ interchanges $\gr_{\ws}$ and $\gr_{\zs}$.
\end{enumerate}
\end{define}

A \emph{local map} from $(C_1,\bar{\phi}_1)$ to $(C_2,\bar{\phi}_2)$ consists of a grading preserving, $\bF[W,Z]$-equivariant chain map $f\colon C_1\to C_2$ such that
\[
f\circ \bar{\phi}_1+\bar{\phi}_2\circ f\simeq 0\mod U,
\]
and such that $f$ induces an homotopy equivalence from $U^{-1} C_1$ to $U^{-1} C_2$. 
As in the previous cases, we say that two almost $\phi_K$-complexes are \emph{locally equivalent} if there are local maps in both directions.

If $(C,\bar{\phi}_K)$ is an almost $\phi_K$-complex, then there is an almost-$\phi$ complex
\[
(A_0(C),\bar{\phi}_K)
\]
where $A_0(C)\subset C$ denotes the subspace in Alexander grading 0.

The following is straightforward to prove:
\begin{lem}
\label{lem:almost-phi_K-local-to-almost-phi-local}
The map
\[
 (C,\bar{\phi}_K)\mapsto (A_0(K), \bar{\phi}_K)
\]
is well-defined on almost $\phi_K$-local classes.
\end{lem}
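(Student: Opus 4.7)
The plan is to verify directly that $(A_0(C),\bar{\phi}_K|_{A_0(C)})$ satisfies the axioms of an almost $\phi$-complex and that restriction to $A_0$ carries almost $\phi_K$-local maps to almost $\phi$-local maps. All the data will be obtained by restriction; the only nontrivial input is understanding what the abstract knot complex hypothesis implies after inverting $U$.

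First I would show that $\bar{\phi}_K$ restricts to $A_0(C)$ with the correct algebraic properties. Because $\bar{\phi}_K$ interchanges $\gr_{\ws}$ and $\gr_{\zs}$, it sends $A_s(C)$ to $A_{-s}(C)$, hence preserves $A_0(C)$; on $A_0(C)$ the two Maslov gradings coincide, so $\bar{\phi}_K|_{A_0(C)}$ is grading preserving for the common Maslov grading. The $\bF[W,Z]$-skew equivariance of $\bar{\phi}_K$ yields $\bF[U]$-equivariance on $A_0(C)$, since $U=WZ$ gives $\bar{\phi}_K(WZ x) = ZW\bar{\phi}_K(x) = U\bar{\phi}_K(x)$. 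Finite generation of $A_0(C)$ over $\bF[U]$ follows from the fact that a homogeneous $\bF[W,Z]$-basis $x_1,\dots,x_n$ of $C$ in Alexander gradings $s_1,\dots,s_n$ produces an $\bF[U]$-basis $\{W^{\max(0,s_i)}Z^{\max(0,-s_i)}x_i\}$ of $A_0(C)$. The condition $\bar{\phi}_K\d+\d\bar{\phi}_K\in\im U$ restricts to $A_0(C)$ since $U$ preserves Alexander grading; likewise, decomposing a homotopy $H$ realizing $\bar{\phi}_K^2+\id\simeq 0 \mod U$ into its Alexander-shift-homogeneous pieces and keeping only the summand that preserves Alexander grading yields the required homotopy on $A_0(C)$.

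The key step I expect to be the main obstacle is verifying $U^{-1}H_*(A_0(C))\iso \bF[U,U^{-1}]$. For this I would use the elementary identity $\bF[W,Z][U^{-1}]=\bF[W^{\pm 1},Z^{\pm 1}]$, which holds because, e.g., $W^{-1}=Z\cdot(WZ)^{-1}$ lies on the left. Under this identification, the assumption that $C$ is an abstract knot complex says precisely that $U^{-1}C\simeq\bF[W^{\pm 1},Z^{\pm 1}]$ as graded chain complexes. Since $U$-localization commutes with extracting the Alexander-grading-zero subcomplex (the natural map $U^{-1}A_0(C)\to A_0(U^{-1}C)$ is an isomorphism, as clearing denominators by a power of $U$ does not change Alexander grading), and since the Alexander-grading-zero part of $\bF[W^{\pm 1},Z^{\pm 1}]$ is a free rank-one $\bF[U^{\pm 1}]$-module, the desired conclusion follows.

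Finally, for the well-definedness statement, given an almost $\phi_K$-local map $f\co (C_1,\bar{\phi}_1)\to(C_2,\bar{\phi}_2)$, the map $f$ is graded and $\bF[W,Z]$-equivariant, hence preserves Alexander grading and restricts to a grading-preserving $\bF[U]$-equivariant map $f|_{A_0}\co A_0(C_1)\to A_0(C_2)$; the relation $f\bar{\phi}_1+\bar{\phi}_2 f\in\im U$ restricts identically. The hypothesis that $f$ induces a homotopy equivalence $U^{-1}C_1\to U^{-1}C_2$ implies, via the Alexander decomposition $U^{-1}H_*(C_i)=\bigoplus_s U^{-1}H_*(A_s(C_i))$ as $\bF[U]$-modules, that $f|_{A_0}$ is an isomorphism on $U$-localized homology. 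Thus $f|_{A_0}$ is an almost $\phi$-local map. Applying this in both directions shows that restriction carries almost $\phi_K$-local equivalences to almost $\phi$-local equivalences, which proves the lemma. Aside from the localization identity in the third paragraph, every step is routine.
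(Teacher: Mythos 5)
The paper offers no proof of this lemma—it is introduced with ``The following is straightforward to prove''—so there is nothing to compare your argument against except the implicit claim that the verification is routine. Your writeup is a correct and complete filling-in of that omission: the restriction of $\bar{\phi}_K$ to $A_0$ is handled properly (preservation of $A_0$ via the grading swap, $\bF[U]$-equivariance from skew-equivariance, finite generation via the explicit basis $W^{\max(0,s_i)}Z^{\max(0,-s_i)}x_i$), the mod-$U$ conditions descend correctly because you take the Alexander-degree-zero components of the homotopies (the one place where a careless reader could stumble), and the localization step $U^{-1}A_0(C)\iso A_0(U^{-1}C)$ together with $\bF[W,Z][U^{-1}]=\bF[W^{\pm 1},Z^{\pm 1}]$ gives $U^{-1}H_*(A_0(C))\iso\bF[U,U^{-1}]$ exactly as needed. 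The argument for well-definedness on local classes, via the Alexander-graded direct sum decomposition of the localized homology, is likewise sound. No gaps.
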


\subsection{Local formulas for strongly-invertible knots} \label{sec:local-si}In this section we prove Theorem~\ref{thm:local-class} and Theorem~\ref{thm:local-class-even-si}. We recall the first theorem. 

\begin{usethmcounterof}{thm:local-class}
 Suppose $K$ is a strongly-invertible knot in $S^3$, and $n >0$ is an integer.
 \begin{enumerate}
 \item The $\phi$-complex $(\CF^-(S^3_n(K),[0]),\phi)$ is locally equivalent to $(A_0(K),\phi_K)$, shifted upward in grading by $d(L(n,1),[0])$. Here, $[0]$ is the self-conjugate $\Spin^c$ structure of $S^3_n(K)$ corresponding to $[0]\in \Z/n$ under the standard correspondence $\Spin^c(S^3_n(K))\iso \Z/n$.
 \item  The $\phi$-complex $(\CF^-(S^3_{2n}(K),[n]), \phi)$ is locally equivalent to the complex
  \[
 \begin{tikzcd}[column sep={1cm,between origins},labels=description] 
A_{-n}(K)
	\ar[dr, "h"]
& & A_n(K) \ar[dl,"v"]\\
& B_n(K)
\end{tikzcd}
\]
 shifted upward in grading by $d(L(2n,1),[n])$, with the involution obtained by truncating the involution $\bX(\phi_K)$ on the surgery complex.
 \end{enumerate}
\end{usethmcounterof}

Our strategy for proving Theorem~\ref{thm:local-class} is similar to  \cite{HHSZExact}*{Section~3}, though we use slightly different notation. We use the $\sigma$-basic notation from Section~\ref{sec:sigma-basic-models}, so that we identify 
\[
\bA(K)=\cCFK(K)\quad \text{and} \quad \bB(K)=Z^{-1} \cCFK(K).
\]
The map $v$ is the canonical inclusion, while $h_{n}$ is $Z^{n} \cF \circ i_W$, where $i_W$ is the inclusion from $\cCFK(K)$ to $W^{-1}\cCFK(K)$. Following standard notation, we write $\tilde{v}$ for the map $i_W$. 

We also write $B_s(K)\subset Z^{-1}\cCFK(K)$ for the subcomplex in Alexander grading $s$, while we write $\tilde{B}_s(K)\subset W^{-1} \cCFK(K)$ for the subcomplex in Alexander grading $s$.

 Let $K$ be a strongly invertible knot, so that the induced map $\phi_K$ on $\cCFK(K)$ is skew-graded.  
 We pick a flip map $\cF\colon \tilde{B}_s\to B_{s}$ and set
 \[
 h_{n}=Z^{n} \cF \tilde{v}.
 \]
 Note that
 \begin{equation}
 \cF \circ W^i= Z^{-i} \circ\cF \quad \text{and} \quad \cF\circ Z^i=U^i Z^i \circ \cF.
 \end{equation}
 
 With respect to these choices, the map $\bX(\phi_K)$ takes the following form:
 \[
 \begin{tikzcd}[column sep=1cm, row sep=1cm] \bA(K) \ar[r, "\phi_{\bA}"]\ar[d, "v+h_n"] \ar[dr, "H_\phi\tilde v"] & \bA(K) \ar[d, "v+h_n"]
 \\
 \bB(K) \ar[r, "\phi_{\bB}"] & \bB(K)
 \end{tikzcd}
 \]
 The map $\phi_{\mathbb A}$ sends $A_s$ to $A_{-s}$ via
\[ \phi_{\mathbb A} = \phi_K \]
%
and the map $\phi_{\mathbb B}$ sends $B_s$ to $B_{n-s}$ via
\[\phi_{\mathbb B} = Z^{n} \cF\phi_K. \]
We observe firstly that
\[
\phi_{\bB}v_s+h_{n,-s} \phi_{\bA}=0.
\]

The map  $H_\phi$ maps $\tilde{\bB}(K)$ to $\bB(K)$, and decomposes over Alexander gradings as
 \[
 H_\phi = \sum_{s\in \Z} H_{\phi,s}.
 \] 
The map $H_{\phi,s}$ sends $\widetilde{B}_s $ to $ B_{-s}$ and has the property that $H_{\phi,s}\vopp_s \co A_s \rightarrow B_{-s}$ is a null-homotopy of
\[
\phi_{\mathbb B}  h_{n,s} + v_{-s} \phi_{\mathbb A}= \cF \phi_K \cF \tilde{v}+ v\phi_K.
\]
Note that such a null-homotopy exists because the above map can be factored as
\[
(\cF \phi_K \cF+  \phi_K) \tilde{v}
\]
and the map $\cF \phi_K \cF+  \phi_K$ is zero on homology, since $H_*(\tilde{B}_s)\iso H_*(B_s)=\bF[U]$.

Note that for computational purposes, we can choose each $H_{\phi,s}$ separately (for each $s$), to be any suitably graded null-homotopy of $(\cF \phi_K \cF+  \phi_K) \tilde{v}$. Compare \cite{HHSZExact}*{}.

\begin{proof}[Proof of Theorem~\ref{thm:local-class}] Our argument is essentially the same as the proof of \cite[Proposition 3.21(a)]{HHSZExact}, wherein a local equivalence is constructed between the involutive mapping cone formula $(\bX_{+1}(K),\iota_{\bX})$ and the $\iota$-complex $(A_0(K),\iota_K)$. For the argument, it is helpful to 
set
 \[
 \cF=Z^{2s} \phi_K
 \]
 so that
 \[
 h_{n}=Z^{2s+n} \phi_K \tilde{v}.
\]

The map  in \cite{HHSZExact} from $\bX_{+1}(K)$ to $A_0(K)$ is projection, which commutes with the involution $\bX(\phi_K)$ on the nose. A map $F$, in the other direction, is described in \cite[Proposition 3.21(a)]{HHSZExact}, wherein it is proven that $F$ commutes with the conjugation involution up to chain homotopy.

In our present setting, the projection still gives a local map from $(\bX_{+1}(K), \bX(\phi_K))$ to $(A_0(K),\phi_K)$. We can construct an analogous map $F$ by replacing $\iota_K$ with $\phi_K$. In our present notation, when $n=1$, the map $F$ takes the following form:
\begin{equation} \label{fig:local-map-F-schematic-1st}
\begin{tikzcd}[row sep=2.5cm]
\bX_{+1}(K) \langle k\rangle \ar[d, "F"] \\ \bX_{+1}(K) \langle k+1\rangle
\end{tikzcd}
=
 \begin{tikzcd}[column sep={1cm,between origins},labels=description] 
 &&A_{-k}
 	\ar[dr,dashed, "h"]
 	\ar[dd,"\id"]
 	\ar[dddl, bend right=5, "Jv"]
 	\ar[ddll,bend right=10, "W^{2k+1}\phi_K"]
 &
 &\cdots & & 
 A_{k}
 	\ar[dl,dashed,"v" ]
 	\ar[dd,"\id"]
 	\ar[ddrr,bend left=10, "Z^{2k+1}\phi_K"]
&\, &
 \\ 
 &&&
B_{-k+1}
 	\ar[dd,"\id"]
&\cdots 
& B_{k}
	\ar[dd,"\id"]
&\, 
\\[1cm]
A_{-k-1}
	\ar[dr,"h",dashed]&
&A_{-k}
  	\ar[dl, "v",dashed]
  	\ar[dr,dashed, "h"]
 &
 &\cdots & & 
A_{k}
	\ar[dl,dashed,"v"]
	\ar[dr,dashed, "h"]&&
    A_{k+1}\ar[dl,dashed, "v"]
 \\ 
& B_{-k}&& B_{-k+1} &\cdots & B_{k}&&B_{k+1} 
 \end{tikzcd}
\end{equation}
The map $J\colon B_{-k}\to B_{-k}$ is chosen so that $Jv$ is a null-homotopy of
\[
hW^{2k+1}  \phi_K+v=(\phi_K^2+\id)v
\]
When $n>1$, the map $F$ is defined by a similar diagram. We leave the details to the reader.

The proof that $F\circ \bX(\phi_K)+\bX(\phi_K)\circ F$ is null-homotopic follows from the same argument as in \cite[Proposition 3.21(a)]{HHSZExact}. The argument therein is obtained by explicitly writing out $F\circ \bX(\phi_K)+\bX(\phi_K)\circ F$, and factoring components through either $v$ or $\tilde{v}$. One uses the fact that $B_s\simeq \tilde{B}_s\simeq \bF[U]$ for all $s$ to show that certain maps factoring through $v$ or $\tilde{v}$ are null-homotopic. We leave the details to the reader since they are identical to \cite[Proposition 3.21(a)]{HHSZExact}.
 \end{proof}
 
 \begin{rem} Our map $F$ in Equation~\eqref{fig:local-map-F-schematic-1st} looks slightly different than the map in \cite{HHSZExact}*{Proposition~3.21}. For example the maps on the left and right sides are $W^{2k+1} \phi_K$ and $Z^{2k+1}\phi_K$, instead of $U^{k+1}\iota_K$ and $U^k \iota_K$, respectively. The difference in appearance between these two terms is due to our different conventions on the mapping cone formula. Recall that we define $A_s(K)$ to be the Alexander grading $s$ summand inside of $\cCFK(K)$, whereas in \cite{HHSZExact} one defines $A_s(K)$ to be the span of generators $[\xs,i,j]$ where $A(\xs)=j-i$, $i\le 0$ and $j\le s$. Thinking of $[\xs,i,j]$ as $\xs W^{-i} Z^{-j}$, we note that multiplication by $Z^s$ gives an isomorphism from our $A_s(K)$ to the version of $A_s(K)$ considered in \cite{HHSZExact}. This isomorphism intertwines the two definitions of the map $F$, defined above.
 \end{rem}

We now turn our attention to Theorem \ref{thm:local-class-even-si}, which we now restate:

\begin{usethmcounterof}{thm:local-class-even-si}
Suppose $K$ is a strongly invertible knot in $S^3$, and $n>0$ is an integer. The $\phi$-complex $(\CF^-(S^3_{2n}(K),[n]),\phi)$ is locally equivalent to the complex
  \[
 \begin{tikzcd}[column sep={1cm,between origins},labels=description] 
A_{n}(K)
	\ar[dr, "v"]
& & A_n(K) \ar[dl,"v"]\\
& B_n(K)
\end{tikzcd}
\]
 shifted upward in grading by $d(L(2n,1),[n])$, where the involution which swaps the two copies of $A_n$, and fixes $B_n$. 
\end{usethmcounterof}

\begin{proof}[Proof of Theorem~\ref{thm:local-class-even-si}] As in the proof of Theorem~\ref{thm:local-class}, we choose the flip map to be
\[
\cF=Z^{2s}\phi_K.
\]
We then have $h_{2n,s} = Z^{2s+2n} \phi_K \widetilde{v}_s$ so that the map $\phi_{\mathbb A}$ sends $A_s \rightarrow A_{-s}$ via $\phi_K$ and the map $\phi_{\mathbb B}$ sends $B_s \rightarrow B_{2n-s}$ via $Z^{2s+2n}\phi_K^2$.

As above, we let $H_{\phi,s}: \widetilde{B}_s \rightarrow B_{-s}$ be a choice of map such that $H_{\phi,s}\vopp_s : A_s \rightarrow B_{-s}$ is a null-homotopy between $\phi_{\mathbb B} h_{2n,s} + v_{-s} \phi_{\mathbb A}$. We set $H_\phi=\sum_{s\in \Z} H_{\phi,s}$ and
\[
\bX(\phi_K)=\phi_{\bA}+\phi_{\bB}+H_\phi.
\]

We now choose a null-homotopy $M$ of $\id+\phi_K^2$ on $\cCFK(K)$, and refer to its restrictions to various subcomplexes also as $M$.

\begin{rem} For the reader comparing this proof to the arguments of \cite[Section 3.7]{HHSZExact}, we note that the availability of the homotopy $M$ on $\cCFK(K)$ substantially simplifies what follows; in particular, in the case of the involutive surgery formula, it was necessary to construct homotopies between $\id + \iota_K^2$ and the identity on each of $A_s$ and $B_s$ and relate them appropriately. \end{rem}

The truncation of the second part of Theorem \ref{thm:local-class} goes through for this choice sans incident, so that the surgery complex is locally equivalent to the truncation
  \[
 \begin{tikzcd}[column sep={1cm,between origins},labels=description] 
A_{-n}
	\ar[dr, "h"]
& & A_n \ar[dl,"v"]\\
& B_n
\end{tikzcd}
\]
with involution coming from truncating the surgery complex. Now, following the structure of \cite[Proposition 3.21(2)]{HHSZExact}, we construct two chain maps $F$ and $G$ as shown in \eqref{eq:fg}. Here, the solid arrows are $F$ and $G$. Recall that we use $M$ for the restriction of the global homotopy, so that $[\partial, M] = \mathrm{id} + \phi_K^2$ as maps from $B_n \rightarrow B_n$.

\begin{equation} \label{eq:fg}
F=
\begin{tikzcd} [column sep={1.5cm,between origins},labels=description] 
A_{-n}
	\ar[dr, "h",dashed]
	\ar[dd," \phi_K"]
& & A_n \ar[dl,"v",dashed]
	\ar[dd, "\id"]	
\\
& B_n
	\ar[dd, "\id"]
\\
A_{n}
\ar[dr, "v",dashed]
& & A_n \ar[dl,"v",dashed]\\
& B_n
\end{tikzcd} \qquad G=
\begin{tikzcd} [column sep={1.5cm,between origins},labels=description] 
A_{n}
	\ar[dr, "v",dashed]
	\ar[dd,"\phi_K"]
	\ar[dddr, "Mv"]
& & A_n \ar[dl,"v",dashed]
	\ar[dd, "\id"]	
\\
& B_n
	\ar[dd, "\id"]
\\
A_{-n}
\ar[dr, "h",dashed]
& & A_n \ar[dl,"v",dashed]\\
& B_n
\end{tikzcd}
\end{equation}
We now compute $[F,\bX(\phi_K)]$ and $[G,\bX(\phi_K)]$, obtaining \eqref{eq:fgcommutator}. In these diagrams, the two copies of $A_n$ in the bottom complex are drawn in reversed order: the copy which was previously on the right is now on the left, and vice versa, to simplify the picture.
\begin{equation} \label{eq:fgcommutator} 
[F,\bX(\phi_K)]=
\begin{tikzcd} [column sep={1.5cm,between origins},labels=description] 
A_{-n}
	\ar[dr, "h",dashed]
	\ar[dddr,"H_{\phi,-n}\vopp"]
& & A_n \ar[dl,"v",dashed]
	\ar[dd, "\id+\phi_K^2"]	
\\
& B_n
	\ar[dd, "\id+\phi_K^2"]
\\
A_{n}
	\ar[dr, "v",dashed]
& & A_n \ar[dl,"v",dashed]\\
& B_n
\end{tikzcd} \qquad [G,\bX(\phi_K)]=
\begin{tikzcd} [column sep={1.5cm,between origins},labels=description] 
A_{n}
	\ar[dr, "v",dashed]
	\ar[dd,"\id+\phi_K^2"]
	\ar[dddr,"Lv"]
& & A_n \ar[dl,"v",dashed]
	\ar[dddl,"Mv"]
\\
& B_n
\ar[dd, "\id+\phi_K^2"]
\\
A_{n}
\ar[dr, "v",dashed]
& & A_{-n} \ar[dl,"h",dashed]\\
& B_n
\end{tikzcd}
\end{equation}
Here, using the fact that $\vopp \phi_K = \phi_K v$, the map $L$ is given by
\[ L = H_{\phi,-n}\phi_K + \phi^2_K M.\]
We now construct null-homotopies of these two commutators. These will be of the following form.

\begin{equation} \label{eq:fghomotopies}
H_F=
\begin{tikzcd} [column sep={1.5cm,between origins},labels=description] 
A_{-n}
	\ar[dr, "h",dashed]
	\ar[dddr,"\alpha"]
& & A_n \ar[dl,"v",dashed]
	\ar[dd, "M"]	
\\
& B_n
	\ar[dd, "M"]
\\
A_{n}
	\ar[dr, "v",dashed]
& & A_n \ar[dl,"v",dashed]\\
& B_n
\end{tikzcd}
\qquad H_G=
\begin{tikzcd} [column sep={1.5cm,between origins},labels=description] 
A_{n}
	\ar[dr, "v",dashed]
	\ar[dd,"M"]
	\ar[dddr,"\gamma"]
& & A_n 
	\ar[dl,"v",dashed]
\\
& B_n
	\ar[dd,"M"]
\\
A_{n}
	\ar[dr, "v",dashed]
& & A_{-n} \ar[dl,"h",dashed]\\
& B_n
\end{tikzcd}
\end{equation}
We now check that we can produce a suitable $\alpha$ and $\gamma$ such that $H_F$ and $H_G$ are nullhomotopies of $[F,\bX(\phi_K)]$ and $[G,\bX(\phi_K)]$ respectively. We see that the relation $[F,\phi] = [H_F, \partial_{\bX}]$ is equivalent to the facts that $\mathrm{id}+ \phi_K^2 = [\partial, M]$ on all of $A_n$, $A_{-n}$, and $B_n$  as appropriate, we have $Mv=vM$, and furthermore that 
\[ M h + H_{\phi,-n} \widetilde{v} = [\partial, \alpha] \]
Since $h = Z^{2s+2n} \phi_K \vopp$, the left-hand side is a +1 graded chain map into $B_s$ which factors through $\vopp$, hence nullhomotopic. So an appropriate choice of $\alpha$ exists. Likewise the relation $[G,\phi] = [H_G, \partial_{\bX}]$ is equivalent to the fact that $\mathrm{id}+ \phi_K^2 = [\partial, M]$ throughout, that $Mv=vM$, and furthermore that
\begin{align*}
(L+M)v = [\partial, \gamma].
\end{align*}
Again, because the left side factors through $v$ and is a $+1$-graded chain map, a suitable choice of $\gamma$ exists. This concludes the argument. \end{proof}

\begin{rem} We remark that Theorem~\ref{thm:local-class} extends without complication when $\phi_K$ is a strongly framed diffeomorphism which reverses the orientation of $K$. Our proof of Theorem~\ref{thm:local-class-even-si} does not extend, since we used non-trivially that $\phi_K^2\simeq \id$ on $\cCFK(K)$.
\end{rem}

The situation of rational surgeries is similar. If $p,q>0$ are coprime, following the convention of \cite{OSRationalSurgeries} and \cite{NiWu} for identifying $\Spin^c$ structures of $S^3_{p/q}(K)$ with $\mathbb Z/p$, the $\Spin^c$ structures preserved by the action of $\phi$ are $\left[ \frac{q-1}{2}\right]$ if $q$ is odd and $\left[ \frac{p+q-1}{2}\right]$ if one of $p$ or $q$ is even. We then have the following.

\begin{thm}\label{thm:local-class-si-rational}
 Suppose $K$ is a strongly-invertible knot in $S^3$, and $p,q >0$ are coprime integers.
 \begin{enumerate}
 \item If $q$ is odd, the $\phi$-complex $\left(\CF^-\left(S^3_{p/q}(K),\left[ \frac{q-1}{2}\right]\right),\phi\right)$ is locally equivalent to $(A_0(K),\phi_K)$, shifted upward in grading by the $d$-invariant of $L(p,q)$ in the corresponding $\Spin^c$ structure.
 \item If one of $p$ or $q$ is even, the $\phi$-complex $\left(\CF^-\left(S^3_{p/q}(K),\left[\frac{p+q-1}{2}\right]\right), \phi\right)$ is locally equivalent to the complex
  \[
 \begin{tikzcd}[column sep={1cm,between origins},labels=description] 
A_{0}(K)
	\ar[dr, "v"]
& & A_0(K) \ar[dl,"v"]\\
& B_0(K)
\end{tikzcd}
\]
 shifted upward in grading by the $d$-invariant of $L(p,q)$ in the corresponding $\Spin^c$ structure, with the involution obtained by exchanging the two factors of $A_0(K)$ and acting by the identity on $B_0(K)$.
 \end{enumerate}
\end{thm}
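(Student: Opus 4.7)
The plan is to adapt the arguments of Theorems~\ref{thm:local-class} and~\ref{thm:local-class-even-si} to the rational surgery setting. First I would recall the rational surgery mapping cone formula of Ozsv\'ath and Szab\'o \cite{OSRationalSurgeries}: the complex $\bX_{p/q}(K) = \Cone(v+h_{p/q}\colon \bA(K) \to \bB(K))$, where $\bA(K) = \prod_{s \in \Z} \widehat{A}_s$ and $\bB(K) = \prod_{s\in \Z} \widehat{B}_s$, with $\widehat{A}_s \cong A_{\lfloor s/q\rfloor}(K)$ and $\widehat{B}_s \cong B_{\lfloor s/q\rfloor}(K)$, together with edge maps $v_s\colon \widehat{A}_s \to \widehat{B}_s$ and $h_s\colon \widehat{A}_s \to \widehat{B}_{s+p}$. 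The summand corresponding to a $\Spin^c$ structure $[i]\in \Z/p$ is obtained by restricting $s$ to the residue class $s\equiv i \pmod p$.

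Next I would write down the induced action $\bX(\phi)$ using the strongly invertible structure on $K$. Taking the flip map $\cF = Z^{2\lfloor s/q\rfloor}\phi_K$, exactly as in the integer case of Section~\ref{sec:local-si}, the action of $\phi$ on each $\widehat{A}_s$ reduces to $\phi_K$, and the induced permutation of the $s$-indices is an involution of the form $s\mapsto c-s$ for a constant $c$ determined by $p$ and $q$. The constant $c$ can be read off so that the $\Spin^c$ structures $[\tfrac{q-1}{2}]$ (when $q$ is odd) and $[\tfrac{p+q-1}{2}]$ (when $p+q$ is odd) correspond precisely to the cosets preserved by $s\mapsto c-s$.

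I would then run the local equivalence analysis in each case. In case~(1), odd $q$ forces the involution to have a unique fixed index $s_0$ in the coset $[\tfrac{q-1}{2}]$, and $\widehat{A}_{s_0}\cong A_0(K)$; projection onto this $A_0$ factor is one local map, and the reverse local map is the analog of the map $F$ appearing in Equation~\eqref{fig:local-map-F-schematic-1st}, which interpolates via $\phi_K$ together with a null-homotopy of $\id+\phi_K^2$. In case~(2), the parity hypothesis forces the involution on the coset $[\tfrac{p+q-1}{2}]$ to have no fixed index but to exchange two ``central'' indices $s_0,s_0'$ with $\widehat{A}_{s_0}\cong \widehat{A}_{s_0'}\cong A_0(K)$, whose $v$- and $h$-images both land in a common $\widehat{B}\cong B_0(K)$; truncation isolates the two-copies-of-$A_0$ model, and the chain maps $F,G$ together with the null-homotopies $H_F,H_G$ from \eqref{eq:fg} and \eqref{eq:fghomotopies} transfer verbatim.

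The main technical obstacle is the index bookkeeping: verifying that the involution $s\mapsto c-s$ on each relevant coset has exactly the claimed fixed point or exchanged-pair structure, and that the cumulative grading shifts recover the correct $d$-invariants of $L(p,q)$ in the corresponding $\Spin^c$ structures. Once this reduction to a central subcomplex is established, the chain-level local equivalence arguments are essentially those of the integer case, since the hypothesis $\phi_K^2\simeq \id$ on $\cCFK(K)$ supplies the same null-homotopies $J$ and $M$ used in the proofs of Theorems~\ref{thm:local-class} and~\ref{thm:local-class-even-si}.
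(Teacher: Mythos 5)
Your overall strategy is the same as the paper's: the paper gives no argument beyond declaring the proof ``identical mutatis mutandis'' to Theorems~\ref{thm:local-class} and~\ref{thm:local-class-even-si}, and your sketch (rational mapping cone, involution $s\mapsto q-1-s$ on the $\widehat{A}$-indices, truncation to the central piece, then the $F,G,H_F,H_G$ arguments) is exactly the intended adaptation. Case~(1) of your sketch is fine: the unique fixed index of $s\mapsto q-1-s$ in the coset $[\tfrac{q-1}{2}]$ is $s_0=\tfrac{q-1}{2}\in[0,q-1]$, so $\widehat{A}_{s_0}=A_{\lfloor s_0/q\rfloor}=A_0$ for all coprime $p,q$.

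However, the one step you defer as ``index bookkeeping'' is where the actual content lies, and your assertion in case~(2) that ``the parity hypothesis forces\dots two central indices $s_0,s_0'$ with $\widehat{A}_{s_0}\cong\widehat{A}_{s_0'}\cong A_0(K)$'' is not correct in general. The central exchanged pair in the coset $[\tfrac{p+q-1}{2}]$ is $\bigl\{\tfrac{q-1-p}{2},\tfrac{q-1+p}{2}\bigr\}$, so the two central complexes are $A_{\lfloor (q-1+p)/(2q)\rfloor}$ and its mirror index, and these equal $A_0$ only when $p<q$. The cleanest sanity check is $q=1$, $p=2n$: there the central pair is $\{n,-n\}$, giving $A_{\pm n}(K)$, exactly as in Theorems~\ref{thm:local-class}(2) and~\ref{thm:local-class-even-si} --- not two copies of $A_0(K)$. (The correction-term proposition that follows, which asserts $\dl=d(S^3_{p/q}(K),[\cdot])=d(L(p,q),[\cdot])-2V_{\lfloor i/q\rfloor}$ by Ni--Wu, is likewise consistent with the wedge of two copies of $A_{\lfloor (p+q-1)/(2q)\rfloor}$ rather than of $A_0$.) So either you must restrict to $p<q$, or you should prove --- and state --- the result with $A_{\lfloor (p+q-1)/(2q)\rfloor}$ in place of $A_0$; as written, the claimed identification cannot be derived from parity alone, and a proof that insists on $A_0$ for all $p,q$ would fail at the truncation step. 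Once the central indices are correctly identified, the remainder of your outline (the analogues of $E=F\circ G$, the null-homotopies supplied by $\phi_K^2\simeq\id$, and the grading shift by the lens-space $d$-invariant) does transfer as you describe.
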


The proof is identical mutatis mutandis to that of Theorems~\ref{thm:local-class} and \ref{thm:local-class-even-si}. Note that this result strongly resembles \cite[Proposition 22.9]{HHSZExact}, except that we use a different convention for the identification of the $\Spin^c$ structures; for more on this, see \cite[Pages 204-205]{HHSZExact}. 

From the above, we extract the following formula for the correction terms of rational surgeries on strongly invertible knots, which follows from the local class computations above in the same way as \cite[Proposition 1.7]{HHSZExact}. If $\phi$ is a symmetry on a knot, following \cite{DMS:equivariant} we set 
\begin{align*}
\underline{V}_0^{\phi}(K) &= -\frac{1}{2}\underline{d}(A_0(K), \phi_K) \\
\overline{V}_0^{\phi}(K) &= -\frac{1}{2}\overline{d}(A_0(K), \phi_K).
\end{align*}

\begin{prop}\label{thm:local-class-si-rational}
 Suppose $K$ is a strongly-invertible knot in $S^3$, and $p,q >0$ are coprime integers.
 \begin{enumerate}
 \item If $q$ is odd, then
 \begin{align*}
\dl^{\phi}\left(\CF^-\left(S^3_{p/q}(K),\left[ \frac{q-1}{2}\right]\right)\right) &= d\left(L(p,q),\left[ \frac{q-1}{2}\right]\right) - 2\underline{V}_0^{\phi}(K) \\
	\du^{\phi}\left(\CF^-\left(S^3_{p/q}(K),\left[ \frac{q-1}{2}\right]\right)\right) &= d\left(L(p,q),\left[ \frac{q-1}{2}\right]\right) - 2\overline{V}_0^{\phi}(K). 		\end{align*}
\item 	If one of $p$ or $q$ is even, then 
\begin{align*}
\dl\left(\CF^-\left(S^3_{p/q}(K),\left[\frac{p+q-1}{2}\right]\right) \right) &= d\left(S^3_{p/q}(K),\left[\frac{p+q-1}{2}\right]\right) \\
\overline{d}\left(\CF^-\left(S^3_{p/q}(K),\left[\frac{p+q-1}{2}\right]\right)\right) &= d\left(L(p,q),\left[\frac{p+q-1}{2}\right]\right).
\end{align*}	
\end{enumerate}
In particular,
\[\dl^{\phi}(S^3_{+1}(K)) = -2\underline{V}_0^{\phi}(K) \qquad \qquad \du^{\phi}(S^3_{+1}(K)) = -2\overline{V}_0^{\phi}(K).\]
\end{prop}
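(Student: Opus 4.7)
The plan is to deduce the proposition from Theorem~\ref{thm:local-class-si-rational}, using the fact that $\dl$ and $\du$ depend only on the local equivalence class of a generalized $\phi$-complex and shift additively under overall grading shifts. With this reduction, it suffices to compute these invariants on each of the two model complexes described in that theorem and then add the lens-space correction.

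For case (1), where $q$ is odd, the model is $(A_0(K),\phi_K)$ shifted upward by $d(L(p,q),[(q-1)/2])$. By the definitions $\underline{V}_0^\phi(K) = -\tfrac{1}{2}\dl(A_0(K),\phi_K)$ and $\overline{V}_0^\phi(K) = -\tfrac{1}{2}\du(A_0(K),\phi_K)$, both claimed formulas follow immediately once one absorbs the grading shift.

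For case (2), where one of $p$ or $q$ is even, the model is
\[
C=\Cone\bigl((v,v)\co A_0(K)\oplus A_0(K)\to B_0(K)\bigr),
\]
equipped with the involution $\phi$ swapping the two copies of $A_0(K)$ and fixing $B_0(K)$. The crucial observation is that $\phi$ acts trivially on the $U$-nontorsion part of $H_*(C)$: the $\phi$-fixed diagonal $D=\{(a,a,0)\}\iso A_0(K)$ lies in the kernel of $(v,v)$ in characteristic two, and $C/D\simeq \Cone(v\co A_0(K)\to B_0(K))$ is $U$-torsion, so the map $H_*(D)\to H_*(C)$ realizes the unique $U$-tower. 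Consequently $\id+\phi$ annihilates every cycle representing a tower class. Thus in the definition of $\dl$ one may take $b=0$, and $\dl(C,\phi)$ equals the maximum grading of a $U$-nontorsion cycle in $C$, namely $-2V_0(K)$; after the shift this yields $d(L(p,q),[\cdot])-2V_0(K)=d(S^3_{p/q}(K),[\cdot])$ by Ni--Wu.

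For $\du$ in case (2), I would exhibit a triple attaining the value $0$ (before the shift). Let $b_{\mathrm{top}}$ be the tower generator of $B_0(K)$, which sits at grading $-1$ in $C$ because of the mapping-cone shift, and let $a_{\mathrm{top}}\in A_0(K)$ be a tower generator chosen so that $v(a_{\mathrm{top}})=U^{V_0(K)}b_{\mathrm{top}}$. Taking $x=b_{\mathrm{top}}$, $y=0$, $z=(a_{\mathrm{top}},0,0)$, and $m=V_0(K)$, we have $\partial z=U^m x$, $\partial y=(\id+\phi)x=0$, and $U^m y+(\id+\phi)z=(a_{\mathrm{top}},a_{\mathrm{top}},0)$ is a $U$-nontorsion cycle. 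Hence $\du(C,\phi)\geq \gr(x)+1=0$, giving $\du\geq d(L(p,q),[\cdot])$ after the shift; the matching upper bound comes from the fact that any $x$ occurring in a valid triple must represent a $U$-torsion class (since $U^m x$ is a boundary), together with a grading-parity argument forcing $\gr(x)\leq -1$. The ``in particular'' statement for $S^3_{+1}(K)$ follows from case (1) with $p=q=1$ using $d(L(1,1),[0])=0$. The principal technical step is the upper bound for $\du$ in case (2), requiring careful grading bookkeeping in the mapping cone.
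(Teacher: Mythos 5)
Your overall strategy---reduce to the local-equivalence representatives of Theorem~\ref{thm:local-class-si-rational} and evaluate $\dl$ and $\du$ on the two model complexes, then add the lens-space shift---is exactly the route the paper intends (the paper simply cites the analogous computation for the involutive correction terms). Your treatment of case (1), your computation of $\dl$ in case (2) via the $\phi$-fixed diagonal, and your explicit triple realizing the lower bound $\du\geq 0$ in case (2) are all correct.

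The gap is in your upper bound for $\du$ in case (2). The observation that $[x]$ is $U$-torsion in $H_*(C)$ places no a priori bound on $\gr(x)$: the torsion of $H_*(C)$ contains the image of the torsion of $H_*(A_0\oplus A_0)$, which can occupy arbitrarily high gradings for suitable $K$, and the parity constraint only says $\gr(x)+1-2m$ is even, which is vacuous for large $m$. The argument must instead exploit the nonvanishing hypothesis on $U^m y+(\id+\phi)z$. Writing $x=(x_1,x_2,\xi)$, $y=(y_1,y_2,\eta)$, $z=(z_1,z_2,\zeta)$ with respect to $C=A_0\oplus A_0\oplus B_0$, the first component $w_1=U^m y_1+z_1+z_2\in A_0$ is a cycle which is nonzero in $U^{-1}H_*(A_0)$ (since $U^{-1}H_*(C)$ injects into the diagonal of $U^{-1}H_*(A_0\oplus A_0)$), so $[v(w_1)]\neq 0$ in $H_*(B_0)\iso\bF[U]$. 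On the other hand the relation $v(z_1)+v(z_2)=U^m\xi+\partial\zeta$ coming from $\partial z=U^m x$ shows $v(w_1)=U^m\bigl(v(y_1)+\xi\bigr)+\partial\zeta$, so $[v(w_1)]\in U^m H_*(B_0)$ and hence $\gr(v(w_1))\leq -1-2m$; since $\gr(v(w_1))=\gr(x)-2m$, this forces $\gr(x)\leq -1$, as needed. A parallel argument (now using that $v(z_1+z_2)$ is exact because $z$ is a cycle) rules out triples with $x=0$, $y\neq 0$ of value $\gr(y)>0$, a case your sketch does not address. With these repairs the proof goes through.
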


\subsection{Local formulas for periodic knots} \label{sec:periodic-local} 

In this section we prove Theorem \ref{thm:periodic-local-class}, which we restate below.


\begin{usethmcounterof}{thm:periodic-local-class}
Suppose $K$ is a $p$-periodic knot in $S^3$ with $p$ even, and $n>0$ is an integer. If $-n/2< s\le n/2$, then the $\phi$-complex $(\CF^-(S^3_n(K),[s]),\phi)$ is locally equivalent to $(A_s(K),\phi_K)$ shifted upward in grading by $d(L(n,1),[s])$.
\end{usethmcounterof}

Let $K$ be $p$-periodic, so that the induced map $\phi_K$ on $\cCFK(K)$ is grading preserving. We choose the flip map to be 
\[
\mathcal F_s = Z^{2s} \iota_K.
\]
The map $\phi_{\mathbb A}$ sends $A_s$ to itself via $\phi_K$. We also pick the map $\phi_{\mathbb B}$ on $B_s$ to be $\phi_K$.
We let  $L\colon \cCFK(K)\to \cCFK(K)$ be a skew equivariant and skew graded map which satisfies
\[
\iota_K \circ \phi_K+\phi_K\circ \iota_K=[\d, L].
\]
Note that $L$ maps $A_s$ to $A_{-s}$. 

We take the map $H_{s,\phi}\co \tilde{B}_s \rightarrow B_{s+n}$ to be $Z^{2s+n} L_s \tilde{v}$, so that $H_{s,\phi}$ is a null-homotopy of 
\[
h_{n,s} \phi_{\bA}+\phi_{\bB} h_{n,s}=Z^{2s+n}(\iota_K \phi_K + \phi_K\iota_K)\tilde{v},
\]
 We set $H_{\phi} = \sum_{s\in \Z} H_{s,\phi}$, so that the total involution on the surgery complex is $\bX(\phi_K) = \phi_{\mathbb A} + H_{\phi} + \phi_{\mathbb B}$.

 \begin{proof}[Proof of Theorem \ref{thm:periodic-local-class}] Once again, we use a similar truncation map to the one defined in \cite[Proposition 3.21(a)]{HHSZExact}. We will again prove that this is a $\phi$-local equivalence by using the fact that any chain map of positive grading which maps between complexes which are homotopy equivalent to $\bF[U]$ must be nullhomotopic. The map $\phi_K$ shares fewer structural properties with $\iota_K$, so we spell out the details here. Additionally, the truncation we use is slightly different than in the case of the conjugation symmetry.
 
 Let $n>0$ and suppose that $-n/2<s\le n/2$. We recall that $\CF^-(S^3_n(K),[s])$ is homotopy equivalent to the summand of $\bX_n(K)$ generated by $A_t(K)$ and $B_t(K)$ with $t\equiv s \mod n$. We write $\bX_n(K, [s])$ for this complex. If $a\le b$ are integers, we will write $\bX_n(K,[s])\langle a,b\rangle$ for the quotient complex of $\bX_n(K)$ spanned by
  \[
  A_t(K) \text{ for } a\le t\le b\quad \text{and} \quad B_j(K) \text{ for } a<t\le b.
  \]
 We write $\bX_n(K,[s])\langle a,b\rangle$ for the summand of the above generated by $A_t$ and $B_t$ which additionally satisfy $t\equiv s\mod n$. We typically assume that $a,b\equiv s\mod n$ when considering $\bX_n(K,[s])\langle a,b\rangle$.
 
 We will show that if $b\ge -n/2$, then there is an inclusion map
 \[
 F\colon \bX_n(K,[s])\langle a,b\rangle \to \bX_n(K,[s])\langle a,b+n\rangle,
 \]
 and this map is local. Similarly, we will show that if $a\le n/2$, then there is an inclusion map
 \[
G\colon \bX_n(K,[s])\langle a,b\rangle \to \bX_n(K,[s])\langle a-n,b\rangle 
 \]
 which is also a local map. When $-n/2\le s\le n/2$, we can compose these local maps to give a map from $\bX_n(K)\langle s,s\rangle=A_s(K)$ to $\bX_n\langle a,b\rangle$ for $a\ll 0$ and $b\gg 0$. Note that projection gives a map in the opposite direction, which always commutes with $\bX(\phi_K)$ on the nose.

  Construction of these local maps follows from essentially the same reasoning as in the previous theorem. For completeness, we will describe the construction of $E:=F\circ G$, which increases both ends of the truncation. This map is shown below:
  \begin{equation} \label{fig:local-map-F0-schematic}
  E= \begin{tikzcd}[column sep={1cm,between origins},labels=description] 
   &&A_{a}
   	\ar[dr,dashed, "h"] 
   	\ar[dd,"\id"]
   	\ar[dddl, bend right=5, "Jv"]
   	\ar[ddll,bend right=25, "W^{-2a+n}\iota_K"]
   &
   &\cdots & & 
   A_{b}
   	\ar[dl,dashed,"v" ]
   	\ar[dd,"\id"]
   	\ar[ddrr,bend left=25, "Z^{2b+n}\iota_K"]
  &\, &
   \\ 
   &&&
  B_{a+n}
   	\ar[dd,"\id"]
  &\cdots 
  & B_{b}
  	\ar[dd,"\id"]
  &\, 
  \\[1cm]
  A_{a-n}
  	\ar[dr,"h",dashed]&
  &A_{a}
    	\ar[dl, "v",dashed]
    	\ar[dr,dashed, "h"]
   &
   &\cdots & & 
  A_{b}
  	\ar[dl,dashed,"v"]
  	\ar[dr,dashed, "h"]&&
    A_{b+n}\ar[dl,dashed, "v"]
   \\ 
  & B_{a}&& B_{a+n} &\cdots & B_{b}&&B_{b+n} 
   \end{tikzcd}
  \end{equation}
 In the above, dashed arrows denote the internal differential and solid arrows denote the map itself. The map $J$ is a choice of nullhomotopy of $\id+\iota_K^2$ on $B_{a}$. Note that $E$ can be defined for
 \[
 b\ge -n/2\quad \text{and} \quad a\le n/2,
 \]
 since this is where the powers of $W$ and $Z$ will be nonnegative. These constraints on $a$ and $b$  coincide with the constraints described above for the maps $F$ and $G$.
 
  Now we observe that $E\bX(\phi_K) + \bX(\phi_K) E =0$ on all summands of $\bX_{n}(K)\langle a,b\rangle$ except for $A_{a}$ and $A_b$. On these summands, we can expand out the expressions:

  \begin{equation}
  [E,\bX(\phi_K)]= \begin{tikzcd}[column sep={1cm,between origins},labels=description] 
   &&A_{a}
   	\ar[dr,dashed, "h"]
   	\ar[dddl, bend right=5, "\substack{L\tilde v \iota_K\\ +[J,\phi_K] v }"]
   	\ar[ddll,bend right=10, "{W^{-2a+n}[\iota_K,\phi_K]}", labels=left]
   &
   &\cdots & & 
   A_{b}
   	\ar[dl,dashed,"v" ]
   	\ar[ddrr,bend left=10,labels=right, "{Z^{2b+n}[\iota_K,\phi_K]}"]
   	\ar[dddr, bend left=5, " Z^{2b+n} L \tilde{v}"]
  &\, &
   \\ 
   &&&
  B_{a+n}
  &\cdots 
  & B_{b}
  &\, 
  \\[1cm]
  A_{a-n}
  	\ar[dr,"h",dashed]&
  &A_{a}
    	\ar[dl, "v",dashed]
    	\ar[dr,dashed, "h"]
   &
   &\cdots & & 
  A_{b}
  	\ar[dl,dashed,"v"]
  	\ar[dr,dashed, "h"]&&
      A_{b+n}\ar[dl,dashed, "v"]
   \\ 
  & B_{a}&& B_{a+n} &\cdots & B_{b}&&B_{b+n} 
   \end{tikzcd}
  \end{equation}
 
 A null-homotopy of $[E,\bX(\phi_K)]$ is given by the following diagram: 
  \begin{equation}
   \begin{tikzcd}[column sep={1cm,between origins},labels=description] 
   &&A_{a}
   	\ar[dr,dashed, "h"]
   	\ar[dddl, bend right=5, "\a v"]
   	\ar[ddll,bend right=10, "{W^{-2a+n}L}", labels=left]
   &
   &\cdots & & 
   A_{b}
   	\ar[dl,dashed,"v" ]
   	\ar[ddrr,bend left=10,labels=right, "{Z^{2b+n}L}"]
  &\, &
   \\ 
   &&&
  B_{a+n}
  &\cdots 
  & B_{b}
  &\, 
  \\[1cm]
  A_{a-n}
  	\ar[dr,"h",dashed]&
  &A_{a}
    	\ar[dl, "v",dashed]
    	\ar[dr,dashed, "h"]
   &
   &\cdots & & 
  A_{b}
  	\ar[dl,dashed,"v"]
  	\ar[dr,dashed, "h"]&&
      A_{b+n}\ar[dl,dashed, "v"]
   \\ 
  & B_{a}&& B_{a+n} &\cdots & B_{b}&&B_{b+n} 
   \end{tikzcd}
  \end{equation}
  In the above, $\a$ is a $+2$ graded map from $B_{a}$ to itself, which is a null-homotopy of
  \[
   [\iota_K, L] +[J,\phi_K] 
  \]
  The above map is null-homotopic because it is a chain map which has $+1$ Maslov grading. This concludes the proof.
  \end{proof}

As previously, similar formulas hold for rational surgeries, as follows. Let $K$ be $p$-periodic, so that we may consider $n/m$ surgery for $n$ and $m$ positive and coprime as usual. 

\begin{thm}\label{thm:periodic-local-class-rational}
Suppose $K$ is an $p$-periodic knot in $S^3$ with $p$ even, and $n,m>0$ are coprime integers. If $0 \leq s\le n-1$, then the $\phi$-complex $(\CF^-(S^3_{n/m}(K),[s]),\phi)$ is locally equivalent to $(A_{\lfloor \frac{s}{m} \rfloor}(K),\phi_K)$, shifted upward in grading by $d(L(p,q), [s])$.
\end{thm}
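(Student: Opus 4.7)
The plan is to adapt the proof of Theorem~\ref{thm:periodic-local-class} to the rational surgery setting. Recall that for $n/m$ surgery (with $n, m > 0$ coprime), the Ozsv\'{a}th--Szab\'{o} rational mapping cone formula $\bX_{n/m}(K)$ takes the form
\[
\prod_{i\in \Z} A_{\lfloor i/m\rfloor}(K) \xrightarrow{\, v \,+\, h_n\,} \prod_{i\in \Z} B_i(K),
\]
where $v$ sends the $i$-th copy of $A_{\lfloor i/m \rfloor}$ to $B_i$ and $h_n$ sends it to $B_{i+n}$. Under the identification $\Spin^c(S^3_{n/m}(K))\iso \Z/n$, the summand $\bX_{n/m}(K,[s])$ is generated by those terms with $i\equiv s\pmod n$, and for $0\le s\le n-1$ its ``central'' $A$-complex is $A_{\lfloor s/m\rfloor}$.

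First, I would define the involution $\bX(\phi_K)$ by making the same choices as in Section~\ref{sec:periodic-local}: the flip map $\cF_t=Z^{2t}\iota_K$, the component $\phi_{\bA}$ acting on each $A_{\lfloor i/m\rfloor}$ by $\phi_K$, the component $\phi_{\bB}$ acting on each $B_i$ by $\phi_K$, and the diagonal homotopy $H_{\phi}$ built from a fixed chain homotopy $L$ satisfying $[\d,L]=\iota_K\phi_K+\phi_K\iota_K$, whose existence is guaranteed by Lemma~\ref{lem:periodic-commutes}. Next, following the proof of Theorem~\ref{thm:periodic-local-class}, I would define quotient truncations $\bX_{n/m}(K,[s])\langle a,b\rangle$ for integers $a\le b$ with $a,b\equiv s\pmod n$, and construct iterated inclusion-type maps $E\colon \bX_{n/m}(K,[s])\langle a,b\rangle\to \bX_{n/m}(K,[s])\langle a-n,b+n\rangle$ using the schematic of Equation~\eqref{fig:local-map-F0-schematic}, with diagonal correction terms of the form $W^{-2\lfloor a/m\rfloor+n}\iota_K$ and $Z^{2\lfloor b/m\rfloor+n}\iota_K$, along with a $Jv$ term built from a null-homotopy $J$ of $\id+\iota_K^2$. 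Projection in the opposite direction yields a strictly equivariant local map down to $A_{\lfloor s/m\rfloor}$ once $a\ll 0$ and $b\gg 0$.

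The verification that $[E,\bX(\phi_K)]$ is null-homotopic proceeds exactly as in Theorem~\ref{thm:periodic-local-class}: the commutator is supported on the boundary $A_a$ and $A_b$ summands, and every error term either factors through $v$ or $\tilde v$ into a copy of $B_i\simeq \bF[U]$ or is a $+1$-graded chain map between complexes each chain homotopy equivalent to $\bF[U]$, and therefore admits the desired null-homotopy. Composing the local equivalence with the grading-shifting isomorphism $\CF^-(S^3_{n/m}(K),[s])\simeq \bX_{n/m}(K,[s])$ produces the stated upward grading shift by $d(L(n,m),[s])$.

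The main obstacle will be the bookkeeping of positivity constraints on the exponents $-2\lfloor a/m\rfloor+n$ and $2\lfloor b/m\rfloor+n$, which must remain nonnegative for the diagonal correction maps to be well-defined. In the integer case ($m=1$) this forced the range $-n/2<s\le n/2$; for rational $n/m$, one must verify that beginning the iteration from the minimal truncation $\bX_{n/m}(K,[s])\langle s,s\rangle$ and extending outward never violates positivity. When $\lfloor s/m\rfloor>n/2$, it may be necessary to reparametrize the $[s]$ summand by choosing as its ``center'' the index $i\equiv s\pmod n$ of smallest $|\lfloor i/m\rfloor|$ before iterating $E$; one then argues that the resulting central complex is still locally equivalent (as a $\phi$-complex) to $A_{\lfloor s/m\rfloor}$, since both are quotients of the same sub-mapping-cone by contractible pieces. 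This final bookkeeping step is the only nontrivial difference from the integer case.
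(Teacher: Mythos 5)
Your overall strategy---running the truncation argument of Theorem~\ref{thm:periodic-local-class} on the rational mapping cone, with the same choices of flip map, $\phi_{\bA}$, $\phi_{\bB}$, and the homotopy $L$ supplied by Lemma~\ref{lem:periodic-commutes}---is exactly what the paper intends (it gives no separate proof, only the remark that the integer argument carries over), and on the range of $s$ where the extension maps $E$ can be iterated starting from the single vertex $A_{\lfloor s/m\rfloor}$ your argument goes through.

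The genuine gap is in your final paragraph. When $\lfloor s/m\rfloor$ is not the index of minimal absolute Alexander grading in the $[s]$-summand, you recenter at the index $i_0\equiv s\pmod n$ minimizing $|\lfloor i_0/m\rfloor|$ and then assert that $A_{\lfloor i_0/m\rfloor}$ is locally equivalent to $A_{\lfloor s/m\rfloor}$ because ``both are quotients of the same sub-mapping-cone by contractible pieces.'' This is false in general. The only relation available is $(A_t,\phi_K)\sim(A_{-t},\phi_K)$ via $\iota_K$ (which is a $\phi$-local equivalence by Lemma~\ref{lem:periodic-commutes}), so the truncation argument identifies the local class with $A_t$ for $t=\min\{\lfloor s/m\rfloor,\lceil (n-s)/m\rceil\}$; when $\lfloor s/m\rfloor>\lceil(n-s)/m\rceil$ the two candidate complexes are not locally equivalent---their localized $U$-towers sit in different gradings, which is precisely why the Ni--Wu formula involves $\max\{V_{\lfloor s/m\rfloor},H_{\lfloor(s-n)/m\rfloor}\}$ rather than $\max\{V_{\lfloor s/m\rfloor},H_{\lfloor s/m\rfloor}\}$. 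Already for $m=1$ and $s=n-1$ the statement would give $A_{n-1}$, whereas Theorem~\ref{thm:periodic-local-class} applied to the representative $s-n=-1$ gives $A_{-1}\sim A_1$, and these differ at the level of $d$-invariants for most knots. So no argument along these lines can cover the full range $0\le s\le n-1$ with conclusion $A_{\lfloor s/m\rfloor}$: the correct fix is to restrict to $\lfloor s/m\rfloor\le\lceil(n-s)/m\rceil$ (the analogue of $-n/2<s\le n/2$ in the integer case) or to replace $\lfloor s/m\rfloor$ by the minimum above, not to recenter and claim an equivalence that does not hold.
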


We extract the following computation of the correction terms. Again following \cite{DMS:equivariant}, for a periodic symmetry $\phi$ let
\begin{align*} \underline{V}^{\phi}_s(K) &= -\frac{1}{2}\dl(A_s, \phi_K) \\
			\overline{V}^{\phi}_s(K) &= -\frac{1}{2}\dl(A_s, \phi_K) \end{align*}
\begin{prop}Suppose $K$ is a $p$-periodic knot in $S^3$ with $p$ even, and $n,m>0$ are coprime integers. If $0 \leq s\le n-1$, then 
\begin{align*}
\dl^{\phi}(\CF^-(S^3_{n/m}(K),[s]) &= d(L(n,m),[s]) - 2\underline{V}^{\phi}_{\lfloor \frac{s}{m} \rfloor}(K) \\
\du^{\phi}(\CF^-(S^3_{n/m}(K),[s]) &= d(L(n,m),[s]) - 2\overline{V}^{\phi}_{\lfloor \frac{s}{m} \rfloor}(K). 
\end{align*}
\end{prop}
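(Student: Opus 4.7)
The plan is to derive this as a direct consequence of the local equivalence established in Theorem~\ref{thm:periodic-local-class-rational}, combined with the observation that the correction terms $\dl$ and $\du$ are invariants of local equivalence classes of generalized $\phi$-complexes.

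First, I would verify the general principle that $\dl$ and $\du$ descend to local equivalence classes. This is essentially built into their definition via the characterization recalled in Section~\ref{sec:local-algebra}: both $\dl(C,\phi)$ and $\du(C,\phi)$ are defined in terms of extremal gradings of homogeneous cycles satisfying equations involving $\partial$, $(\id+\phi)$, and powers of $U$, and these properties are preserved under local maps inducing isomorphisms on $U^{-1}H_*$. Since local equivalence provides local maps in both directions, the gradings match on the nose.

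Next, I would note that grading shifts behave correctly: if $(C',\phi') = (C,\phi)[r]$ denotes the same $\phi$-complex with all gradings shifted upward by $r$, then $\dl(C',\phi') = \dl(C,\phi) + r$ and similarly for $\du$, straight from the definitions.

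Combining these with Theorem~\ref{thm:periodic-local-class-rational}, we obtain
\begin{align*}
\dl^{\phi}\bigl(\CF^-(S^3_{n/m}(K),[s])\bigr) &= \dl\bigl(A_{\lfloor s/m \rfloor}(K),\phi_K\bigr) + d(L(n,m),[s]),\\
\du^{\phi}\bigl(\CF^-(S^3_{n/m}(K),[s])\bigr) &= \du\bigl(A_{\lfloor s/m \rfloor}(K),\phi_K\bigr) + d(L(n,m),[s]).
\end{align*}
Substituting the definitions $\underline{V}^{\phi}_t(K) = -\tfrac{1}{2}\dl(A_t(K),\phi_K)$ and $\overline{V}^{\phi}_t(K) = -\tfrac{1}{2}\du(A_t(K),\phi_K)$ with $t = \lfloor s/m\rfloor$ yields the claimed formulas. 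There is no real obstacle here; the only point requiring any care is ensuring the grading conventions in Theorem~\ref{thm:periodic-local-class-rational} match those implicit in the definitions of $\underline{V}^\phi$ and $\overline{V}^\phi$, which can be checked against the analogous statement in Proposition~\ref{thm:local-class-si-rational} for strongly invertible knots, proved identically from Theorem~\ref{thm:local-class-si-rational} in the manner of \cite[Proposition~1.7]{HHSZExact}.
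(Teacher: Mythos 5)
Your proposal is correct and coincides with the paper's (essentially unstated) argument: the paper simply "extracts" the formula from Theorem~\ref{thm:periodic-local-class-rational} in the same way that Proposition~\ref{thm:local-class-si-rational} follows from the strongly invertible local class computation, i.e.\ by invoking the local-equivalence invariance of $\dl$ and $\du$, the grading shift, and the definitions of $\underline{V}^{\phi}_s$ and $\overline{V}^{\phi}_s$. The only point worth noting is that the paper's displayed definition of $\overline{V}^{\phi}_s(K)$ contains a typo (it repeats $\dl$ where $\du$ is intended), and you have correctly used the intended definition.
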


\subsection{Example: $\sfrac{1}{2}$-surgery on the figure-eight knot} \label{subsec:example}

We now consider the example of $S^3_{\sfrac{1}{2}}(4_1)$. The figure eight knot admits two strong inversions relating by mirroring, whose actions on $\cCFK(4_1)$ we will denote $\sigma_K$ and $\sigma'_K$. Moreover it has a two-periodic symmetry, whose action on $\cCFK(4_1)$ we denote $\phi_K$. Each of these induces a symmetry on $\CF^-(S^3_{\sfrac{1}{2}}(4_1))$, denoted $\sigma$, $\sigma'$, and $\phi$ respectively. More specifically, following the computations of \cite[Example 2.26]{DMS:equivariant} and \cite[Theorem 1.7 and Figure 3]{Mallick:surgery}, we see that if $\cCFK(4_1)$ is generated over $\bF[W, Z]$ by $x_0, a, b, c, e$ as in Figure~\ref{fig:4_1},  we have that
\begin{align*}
\iota_K x_0 &= x_0 + e   & \qquad \sigma_K x_0 &= x_0 +e  & \qquad \sigma'_K x_0 &= x_0   & \qquad \phi_K x_0 &= x_0+e \\
\iota_K a &= a+ x_0    & \qquad \sigma_K a &= a    & \qquad \sigma'_K a &= a+e   &  \qquad \phi_K a &= a+x_0 \\
\iota_K b &= c    & \qquad \sigma_K b &= c    & \qquad \sigma'_K b &= c   &  \qquad \phi_K b &= b \\
\iota_K c &= b    & \qquad \sigma_K c &= b    & \qquad \sigma'_K c &= b   &  \qquad \phi_K c &= c \\
\iota_K e &= e    & \qquad \sigma_K e &= e    & \qquad \sigma'_K e &= e   &  \qquad \phi_K e &= e \\
\end{align*}

\begin{figure}[h]
\includegraphics{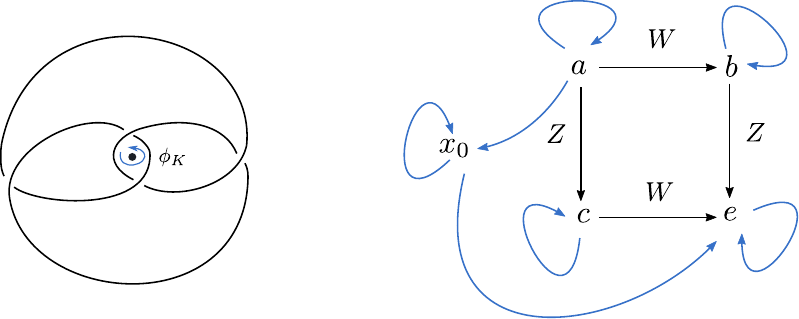}
\caption{Left: The periodic involution $\phi_K$ on the knot $4_1$ with the axis of symmetry coming out the page. Right: The knot Floer complex of $4_1$ with the induced action of $\phi_K$.} 
\label{fig:4_1}
\end{figure}

\noindent Proposition 3.21(2) of \cite{HHSZExact} shows that $(\CF^-(S^3_{\sfrac{1}{2}}(4_1)), \iota_K)$ is involutively locally equivalent to
  \begin{equation} \label{eqn:vee}
 \begin{tikzcd}[column sep={1cm,between origins},labels=description] 
A_{0}
	\ar[dr, "v"]
& & A_0 \ar[dl,"v"]\\
& B_0
\end{tikzcd}
\end{equation}
where $B_0$ may be taken to be a copy of $\mathbb F[U]$; the involution exchanges the two copies of $A_0$ via the identity and acts by the identity on $B_0$. Now, recall that $V_0(4_1)=0$, implying in particular that after replacing $B_0$ with $\mathbb F[U]$ we have that $v$ is surjective. Indeed, there exists $a \in A_0$ such that $\partial a =0$ and $U^n[a] \neq 0$ for any $n$ with $v(a) = 1 \in \mathbb F[U]$. Let $a_0$ be the copy of $a$ in the left-hand copy of $A_0$ and $a_1$ be the copy in the right-hand $A_0$ Of necessity $[a] \notin \im(U)$. Then there is a local equivalence between $(\mathbb F[U], \mathrm{Id})$ and \eqref{eqn:vee} via sending $f(1) = a_0 + a_1$ in one direction and projecting onto the summand $\{U^n(a_0+a_1)\}$ in the other direction. By Theorem~\ref{thm:local-class-si-rational} the same is true for $\sigma_K$ and $\sigma_K'$. Theorem~\ref{thm:periodic-local-class-rational}, however, $(\CF^-(S^3_{\sfrac{1}{2}}(4_1)), \phi_K)$ is locally equivalent to $(A_0, \phi_K)$, which is not locally trivial.

\section{An application to the equivariant homology cobordism group}
\label{sec:application-section}

In this section, we prove the following:

\begin{thm}\label{thm:Z-infinity-summand}
The kernel of the forgetful maps from $\Theta^{\mathrm{inv}}_{\Z}$ and $\Theta^{\mathrm{diff}}_{\Z}$ to $\Theta^3_\Z$ both contain $\Z^\infty$-summands.
\end{thm}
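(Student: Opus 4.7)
The strategy is to construct an infinite family $\{(Y_n,\Phi_n)\}$ in $\ker\cF\subset\Theta^{\mathrm{inv}}_\Z$ whose almost $\phi$-local classes are detected by linearly independent homomorphisms $\varphi_{k(n)}\colon \widehat{\frI}\to\Z$, and to use these homomorphisms to split off a $\Z^\infty$-summand, following the template of \cite{DHSThomcob}.

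As indicated in the paragraph following Theorem~\ref{thm:application}, I would take $J_n = 2T_{2n,2n+1}\#-2T_{2n,2n+1}$ for odd $n$, equipped with the involution $\phi_n$ exchanging the two connected summands, and set $(Y_n,\Phi_n) = (S^3_{+1}(J_n),\Phi_n)$ with $\Phi_n$ induced by $\phi_n$. Since $J_n$ is smoothly slice, the manifold $Y_n$ bounds a smooth $\Z$-homology ball, obtained by blowing down the $+1$-framed 2-handle along a slice disk for $J_n$; hence $[Y_n] = 0 \in \Theta^3_\Z$ and $(Y_n,\Phi_n)\in\ker\cF$.

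Next, I would apply the relevant local formula from Section~\ref{sec:local-formulas} (either Theorem~\ref{thm:local-class} for a strong inversion or Theorem~\ref{thm:periodic-local-class} for a period, depending on the exact geometric nature of $\phi_n$) to identify the $\phi$-local equivalence class of $(\CF^-(Y_n,[0]),\Phi_n)$ with $(A_0(J_n),(\phi_n)_K)$ up to a grading shift. Applying Lemma~\ref{lem:almost-phi_K-local-to-almost-phi-local} produces a well-defined class in the almost local equivalence group $\widehat{\frI}$, which by the classification \cite{DHSThomcob}*{Theorem~6.2} is represented by a unique standard complex. The main computational step is to identify this standard complex: using the staircase structure of $\cCFK(2T_{2n,2n+1})$, the tensor factorization $\cCFK(J_n)\simeq\cCFK(2T_{2n,2n+1})\otimes\cCFK(-2T_{2n,2n+1})$, and the way the swap involution acts on tensor factors, one expects the resulting standard complex to contain a parameter $b_i = \pm 2n$ (or some $n$-dependent exponent) with a prescribed nonzero multiplicity, while no such parameter appears for $(Y_m,\Phi_m)$ when $m\neq n$.

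With this computation in hand, evaluating the homomorphisms $(\varphi_{2n})_n$ on the family $\{(Y_n,\Phi_n)\}$ yields a map $\ker\cF\to\bigoplus_n\Z$ which sends our family to a basis of $\bigoplus_n\Z$. This exhibits $\{(Y_n,\Phi_n)\}$ as spanning a $\Z^\infty$-summand of $\ker\cF\subset\Theta^{\mathrm{inv}}_\Z$; composing with the natural map $\Theta^{\mathrm{inv}}_\Z\to\Theta^{\mathrm{diff}}_\Z$ transfers the summand to $\ker\cF\subset\Theta^{\mathrm{diff}}_\Z$, since the almost $\phi$-complex depends only on the underlying diffeomorphism. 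The principal obstacle is the explicit identification of the standard complex for $(A_0(J_n),(\phi_n)_K)$ modulo $U$, which requires careful analysis of the swap involution on tensor products of staircase complexes, in the spirit of the computations of \cite{DHSThomcob} and \cite{DMS:equivariant}.
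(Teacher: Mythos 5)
Your plan has the same architecture as the paper's proof: the same manifolds $S^3_{+1}(2T_{2n,2n+1}\#-2T_{2n,2n+1})$ for odd $n$, membership in $\ker\cF$ via sliceness of $K\#-K$, reduction to $(A_0,\phi_K)$ via the local formula of Section~\ref{sec:local-si}, passage to almost local equivalence via Lemma~\ref{lem:almost-phi_K-local-to-almost-phi-local}, and detection by the homomorphisms $\varphi_n$ on standard complexes. However, there are two substantive gaps. First, the choice of symmetry is not a detail you can leave vague: the paper does \emph{not} use ``the involution exchanging the two connected summands,'' but rather $\phi_{\std}\#\phi_{\std}\#\phi_{\sw}$ --- the standard strong inversions on the two copies of $T_{2n,2n+1}$ together with the swap on the two mirror summands. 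This choice is engineered so that, by Lemma~\ref{lem:almost-phi-K-class-K_n}, the almost $\phi_K$-class is the box complex $(\cB_n,\phi)$, whose $A_0$ yields the standard complex $C(-1,n)$. A more naive choice can kill the invariant: by Corollary~\ref{cor:almost-local-class-swap} the swap class on $2T_{2n,2n+1}$ is almost locally a staircase with its standard inversion, so taking the swap on both pairs would produce $(\cD_n,\phi)\otimes(\cD_n,\phi)^\vee$, which is almost locally trivial and detects nothing.

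Second, what you call ``the principal obstacle'' --- identifying the standard complex --- is in fact the entire content of the paper's Section~\ref{sec:application-section}. It requires the explicit splitting $\cC_n\otimes\cC_n\cong\cY_n\oplus\cG_n$ with its sixteen families of generators, the verification that the projection and inclusion are $\phi_{\sw}$-local equivalences, and the combination with the local class computation of $(\cC_n,\phi_{\std})^{\otimes 2}$ from \cite{DMS:equivariant} to land on $(\cB_n,\phi)$ and hence on $C(-1,n)$ (so the detecting parameter is $n$, not $\pm 2n$). Without this computation the argument is a strategy, not a proof. Your final step --- transferring the summand from $\Theta^{\mathrm{inv}}_\Z$ to $\Theta^{\mathrm{diff}}_\Z$ because the detecting homomorphisms factor through the underlying diffeomorphism invariant --- is fine and matches the paper's reasoning.
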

This is stated as Theorem~\ref{thm:application} in the introduction.

The summand we describe is generated by certain symmetries on $S_{+1}^3(K_n)$ for odd $n>1$, where
\[
K_n:=2 T_{2n,2n+1}\# -2T_{2n,2n+1}.
\]
These symmetries were studied in \cite{DMS:equivariant}, though the local class could not be completely computed due to the lack of a surgery formula for $+1$ surgeries. 
 
\subsection{Background}

We now describe some background on the knot Floer homology of equivariant knots. 

We recall from Section~\ref{sec:equivariant-knots} that a diffeomorphism $\phi\colon (S^3,K)\to (S^3,K)$ is called a strong inversion if $\phi$ preserves the orientation of $S^3$, reverses the orientation of $K$, and satisfies $\phi^2=\id$ on $S^3$.

Given a strong inversion $\phi$, there is an induced map on knot Floer homology. To define this map, we assume that basepoints $w$ and $z$ are interchanged by $\phi$. The map $\phi$ induces a map on the knot Floer complex
\[
\cCFK(\phi)\colon \cCFK(K)\to \cCFK(K)
\]
which interchanges $W$ and $Z$.  Alternatively, we can think of $\phi_*$ as a type-$D$ module morphism map from
\[
\cCFK(\phi) \colon \cCFK(K)^{\bF[W,Z]}\to \cCFK(K)^{\bF[W,Z]}\boxtimes {}_{\bF[W,Z]} [E_0]^{\bF[W,Z]},
\]
where $E_0\colon \bF[W,Z]\to \bF[W,Z]$ is the algebra morphism $E_0(W^i Z^j)=W^j Z^i$. 

We will typically abuse notation and write $\phi$ for $\cCFK(\phi)$.

We recall from Section~\ref{sec:phi_K-complexes} that Dai and the second and third author proved a connected sum formula for the map induced by a strong inversion \cite{DMS:equivariant}*{Theorem~4.1}. They prove that if $(K_1,\phi_1)$ and $(K_2,\phi_2)$ are strongly invertible knots in $S^3$, then the strong inversion $\phi_\#$ on the connected sum has induced map 
\[
\phi_\#\simeq \phi_1\otimes \phi_2
\]
with respect to a homotopy equivalence $\cCFK(K_1\#K_2)\simeq \cCFK(K_1)\otimes \cCFK(K_2)$, for all of the equivariant connected sums of $K_1,K_2$.

Next, we consider the summand swapping strong inversion $\phi_{\sw}$ on $K\# rK$, where $rK$ denotes the string reversal of $K$. We abuse notation and write $\phi_{\sw}$ both for the diffeomorphism, and the induced map on the knot Floer complex. If $K$ is a knot, there is a strong inversion on $K\# r K$ which swaps the two summands, as in Figure~\ref{fig:41}. Here $rK$ denotes $K$ with its string orientation reversed. 

\begin{figure}[h]
\begingroup%
  \makeatletter%
  \providecommand\color[2][]{%
    \errmessage{(Inkscape) Color is used for the text in Inkscape, but the package 'color.sty' is not loaded}%
    \renewcommand\color[2][]{}%
  }%
  \providecommand\transparent[1]{%
    \errmessage{(Inkscape) Transparency is used (non-zero) for the text in Inkscape, but the package 'transparent.sty' is not loaded}%
    \renewcommand\transparent[1]{}%
  }%
  \providecommand\rotatebox[2]{#2}%
  \newcommand*\fsize{\dimexpr\f@size pt\relax}%
  \newcommand*\lineheight[1]{\fontsize{\fsize}{#1\fsize}\selectfont}%
  \ifx\svgwidth\undefined%
    \setlength{\unitlength}{164.53739196bp}%
    \ifx\svgscale\undefined%
      \relax%
    \else%
      \setlength{\unitlength}{\unitlength * \real{\svgscale}}%
    \fi%
  \else%
    \setlength{\unitlength}{\svgwidth}%
  \fi%
  \global\let\svgwidth\undefined%
  \global\let\svgscale\undefined%
  \makeatother%
  \begin{picture}(1,0.42888651)%
    \lineheight{1}%
    \setlength\tabcolsep{0pt}%
    \put(0,0){\includegraphics[width=\unitlength,page=1]{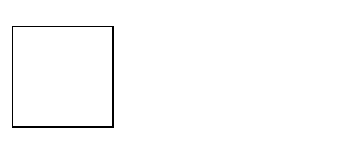}}%
    \put(0.18615536,0.18210157){\color[rgb]{0,0,0}\makebox(0,0)[t]{\lineheight{1.25}\smash{\begin{tabular}[t]{c}$K$\end{tabular}}}}%
    \put(0,0){\includegraphics[width=\unitlength,page=2]{fig_naturality_41.pdf}}%
    \put(0.81039696,0.18292529){\color[rgb]{0,0,0}\makebox(0,0)[t]{\lineheight{1.25}\smash{\begin{tabular}[t]{c}$rK$\end{tabular}}}}%
    \put(0,0){\includegraphics[width=\unitlength,page=3]{fig_naturality_41.pdf}}%
  \end{picture}%
\endgroup%

\caption{The summand swapping strong inversion $\phi_{\sw}$ on $K\# r K$.}
\label{fig:41}
\end{figure}

 Dai, and the second and third author \cite{DMS:equivariant} compute the induced map by the swapping involution. The computation is similar in flavor to earlier work of Juh\'{a}sz and the last author, which computes the map by the periodic swapping involution on $K\# K$ \cite{JZStabilizationDistance}*{Theorem~8.2}.

The map $\phi_{\sw}$ is as follows. Firstly, write $\bar{\cCFK}(K)$ for $\cCFK(K)^{\bF[W,Z]}\boxtimes {}_{\bF[W,Z]} [E_0]^{\bF[W,Z]}$; i.e., switch $W$ and $Z$ in the complex $\cCFK(K)$. We write
\[
\Sw\colon \cCFK(K)\otimes \bar{\cCFK}(K)\to \bar{\cCFK}(K)\otimes \cCFK(K)
\]
for the map which switches the two factors. Then we define
\[
\Sq\colon \bar{\cCFK}(K)\otimes \cCFK(K)\to \cCFK(K)\otimes \bar{\cCFK}(K)
\]
for the $\bF[W,Z]$ skew-equivariant map which switches $W$ and $Z$ everywhere.

\begin{thm}[\cite{DMS:equivariant}*{Theorem~4.3}] \label{thm:swapp-computation}
There is a homotopy equivalence of $\cCFK(K\# r K)$ with $\cCFK(K)\otimes \bar{\cCFK}(K)$ which intertwines $\phi_{\sw}$ with 
\[
(\id\otimes \id+\Phi\otimes \Psi)\circ \Sq\circ \Sw.
\]
\end{thm}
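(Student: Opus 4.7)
The plan is to decompose $\phi_{\sw}$ into elementary pieces, compute the induced map of each piece on knot Floer homology, and then compose. I begin by applying the connected sum formula for knot Floer homology to identify
\[
\cCFK(K \# rK)\simeq \cCFK(K)\otimes \cCFK(rK)=\cCFK(K)\otimes \bar{\cCFK}(K),
\]
where the identification $\cCFK(rK)=\bar{\cCFK}(K)$ is the standard $W\leftrightarrow Z$ swap associated to string reversal. I would choose basepoints and Heegaard data adapted to a connect sum decomposition so that this identification is concrete.

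Next, up to isotopy rel basepoints I would factor $\phi_{\sw}$ as a composition of three diffeomorphisms: (i) an ambient orientation-preserving diffeomorphism which swaps the two summands as unparameterized balls, (ii) within each summand a rotation realizing the string reversal, and (iii) a basepoint pushing diffeomorphism returning the basepoints to their prescribed positions. Step (i) induces the tensor-swap map $\Sw$ on $\cCFK(K)\otimes \bar{\cCFK}(K)$, and step (ii) induces the $\bF[W,Z]$-skew equivariant swap $\Sq$. So if step (iii) were trivial, the induced map would be $\Sq\circ\Sw$; the theorem asserts that the correction arising from (iii) is $\id\otimes\id+\Phi\otimes \Psi$.

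The core of the proof is then identifying this correction. After performing (i) and (ii), the two basepoints $w$ and $z$ end up on the wrong sides of the connect sum: pushing them back to their original locations requires moving one basepoint once around the knot through one connect-sum summand and the other basepoint once around through the other summand. The basepoint-moving maps induced by pushing $w$ or $z$ once around a knot are given by the Sarkar-type formulas of \cite{SarkarMaslov,ZemQuasi,ZemCFLTQFT}, where a full point-pushing map acts as $\id+\Psi\circ\Phi$. In the split tensor product setting one factor of the composite $\Psi\circ\Phi$ acts on the first tensor factor while the other acts on the second, producing a correction of the form $\id\otimes\id+\Phi\otimes\Psi$. This closely mirrors the argument for the periodic swapping involution on $K\#K$ given in \cite{JZStabilizationDistance}*{Theorem~8.2}.

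The main obstacle will be tracking the basepoint trajectories carefully enough to pin down which operator ($\Phi$ versus $\Psi$) acts on which tensor factor, and to rule out extra contributions such as $\Phi\otimes\Phi$ or $\Psi\otimes\Psi$. The asymmetry between the two factors is forced by the orientation reversal in the second summand: on $\bar{\cCFK}(K)$ the roles of $W$ and $Z$ are interchanged, so a basepoint push that looks like $\Phi$ on the first factor transports to $\Psi$ on the second. Making this precise requires a careful local model of the point-pushing near the connect-sum region together with the naturality of the action of $\Phi$ and $\Psi$ under the $W\leftrightarrow Z$ identification, after which the formula $(\id\otimes\id+\Phi\otimes\Psi)\circ \Sq\circ \Sw$ falls out.
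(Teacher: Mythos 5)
First, a point of comparison: this paper does not prove Theorem~\ref{thm:swapp-computation} at all --- it is quoted from \cite{DMS:equivariant}*{Theorem~4.3}, and the surrounding text only remarks that the proof there is similar in flavor to \cite{JZStabilizationDistance}*{Theorem~8.2}. So your proposal is being measured against the cited proof rather than anything in this paper. At the level of strategy you have identified the correct template: split $\cCFK(K\# r K)\simeq \cCFK(K)\otimes\bar{\cCFK}(K)$ via the connected sum formula, observe that the rotation itself induces $\Sq\circ\Sw$ up to an isotopy relocating the basepoints, and compute the correction from that isotopy using the basepoint-moving formulas of Sarkar and Zemke.

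The gap is in the one step you defer to ``the main obstacle,'' and the heuristic you offer for it does not survive scrutiny. You argue that the correction is governed by the full point-pushing formula $\id+\Psi\circ\Phi$, and that ``one factor of the composite acts on the first tensor factor while the other acts on the second,'' yielding $\id\otimes\id+\Phi\otimes\Psi$. But $\Phi$ and $\Psi$ are global operators on $\cCFK(K\# rK)$ (derivatives of the differential with respect to $W$ and $Z$), and under the connected sum identification they satisfy $\Phi_{K\# rK}\simeq \Phi\otimes\id+\id\otimes\Phi$ and $\Psi_{K\# rK}\simeq \Psi\otimes\id+\id\otimes\Psi$. A genuine full point-push would therefore contribute $\id\otimes\id+(\Psi\otimes\id+\id\otimes\Psi)\circ(\Phi\otimes\id+\id\otimes\Phi)$, i.e.\ four quadratic terms rather than the single cross term $\Phi\otimes\Psi$; the two halves of the composite $\Psi\circ\Phi$ do not localize to the two summands according to which portion of the isotopy is being performed. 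The single surviving cross term arises precisely because the correction isotopy is \emph{not} a full push of the pair $(w,z)$ around the knot: it must be decomposed into elementary moves (half-twists of one basepoint past the other, or equivalently quasi-(de)stabilization moves) whose induced maps each contribute a single $\Phi$ or $\Psi$ supported on one tensor factor, with the orientation reversal on the second summand converting a $\Phi$-type contribution there into a $\Psi$-type one. Carrying out that decomposition and the accompanying bookkeeping is the actual content of the proof; as written, your argument asserts the answer rather than deriving it.
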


\subsection{Local class computations}

In this section, we perform several local class computations. We begin by introducing some notation. We will write
\[
\cC_n=\cCFK(T_{2n,2n+1})
\]
and we write $\bar{\cC}_n$ for $\bar{\cCFK}(T_{2n,2n+1})$. In particular, this means that $\cC_n$ denotes the $\bF[\scU,\scV]$ complex in Figure \ref{fig:Cn} generated by elements $x_{k}$ such that $-2n+2 \leq k \leq 2n-2$ with $k$ even and $y_{\ell}$ such that $-2n+1 \leq \ell \leq 2n-1$ with $\ell$ odd, with nonzero differentials given by
\begin{align*}
\partial(x_{k}) = \scV^{c_{2n-1+k}}y_{k-1} + \scU^{c_{2n+k}}y_{k+1}
\end{align*}
\noindent determined by the symmetric sequence of positive integers $(c_1,c_2,\dots, c_{4n-3},c_{4n-2}) = (1, 2n-1, 2, 2n-2, \dots, 2n-2,2,2n-1,1)$.

\begin{figure}
\begin{tikzcd}[column sep={.8cm,between origins}, labels=description, row sep=1cm]
y_{1-2n}&& y_{3-2n}&& \cdots&& y_{-1}&& y_{1}&&\cdots&& y_{2n-3}&&y_{2n-1}\\
&x_{2-2n}\ar[ul, "\scV"]\ar[ur, "\scU^{2n-1}"]&&x_{4-2n}\ar[ul, "\scV^2"] \ar[ur, "\scU^{2n-2}"]&& \cdots\ar[ur] \ar[ul]  && x_{0} \ar[ul, "\scV^{n}"] \ar[ur, "\scU^{n}"]&& \cdots \ar[ur] \ar[ul]  && x_{2n-4} \ar[ul, "\scV^{2n-2}"] \ar[ur, "\scU^{2}"]&& x_{2n-2} \ar[ul, "\scV^{2n-1}"] \ar[ur, "\scU"]
\end{tikzcd}
\caption{The complex $\cC_n$.}\label{fig:Cn}
\end{figure}

We will study the swapping inversion on $T_{2n,2n+1}\# r T_{2n,2n+1}$. Since torus knots are reversible, we can view this as a strong inversion on $T_{2n,2n+1}\# T_{2n,2n+1}$.

We now describe a small model for the local class of
\[
(\cC_n\otimes \bar{\cC}_n, \phi_{\sw}).
\]
  We recall from the previous section that we can take $\phi_{\sw}$ to be $(\id\otimes \id+\Phi\otimes \Psi)\circ \Sq\circ \Sw$.
  
  We find it convenient to identify $\cC_n$ and $\bar{\cC}_n$.   There is an identification of $\cC_n$ with $\bar{\cC}_n$. Due to grading constraints, this identification is canonical, and identifies $x_i$ with $x_{-i}$ and $y_i$ with $y_{-i}$. Therefore, we can canonically identify
  \[
 \cC_n\otimes \bar{\cC}_n\iso \cC_n\otimes \cC_n.
  \]
Under this identification,   
\[
(\Sq\circ \Sw)(x_i|x_j)=x_{-j}|x_{-i}
\]
 and similarly for tensors involving $y_i$. 
  
    In Figure~\ref{fig:Yn}, we describe a subcomplex $\cY_n$ of $\cC_n\otimes \cC_n$. The map $\phi_{\sw}$ is reflection on this subcomplex, with the exception of $x_0x_0$, which is mapped to $x_0x_0+U^{n-1}y_{-1}y_1$. We now show $\cY_n$ admits a complement as a $\phi_K$-complex. We encourage the reader to compare the following to \cite{DMS:equivariant}*{Lemma~6.11}, which gives partial information about the local class.

  \begin{figure}[h]
  \[
  \begin{tikzcd}[column sep={1cm,between origins}, labels=description, row sep=1cm]
  y_{1-2n}y_{1-2n}
  	&& y_{1-2n}y_{3-2n}
  	&& y_{3-2n}y_{3-2n}
  	&& y_{3-2n}y_{5-2n}
  	&& \cdots
  	&& y_{-3}y_{-1}
  	&& y_{-1} y_{-1}
  	&&\,\\
  & y_{1-2n} x_{2-2n} 
  	\ar[ul, "\scV"]
  	\ar[ur, "\scU^{2n-1}"]
  && x_{2-2n}y_{3-2n}
  	\ar[ul, "\scV"]
  	\ar[ur, "\scU^{2n-1}"]
  	&&
  	y_{3-2n}x_{4-2n}
  		\ar[ul, "\scV^2"]
  		\ar[ur, "\scU^{2n-2}"]
  	&& \cdots
  		\ar[ur]
  		\ar[ul]
  	&& y_{-3}x_{-2}
  		 	\ar[ul]
  		 	\ar[ur, "\scU^{n+1}"]
  	&& x_{-2} y_{-1}
  		\ar[ul, "\scV^{n-1}"]
  		\ar[ur, "\scU^{n+1}"]
  	&& 
  y_{-1}	x_{0}
  		\ar[ul, "\scV^n"]
  		\ar[ur, "\scU^n"]
  \\\cdots
  y_{-1}y_{1}
  && 
  y_{1}y_{1}
  && 
  y_{1}y_{3}
  && 
  y_{3}y_{3}
  && 
  \cdots
  && 
  y_{2n-3}y_{2n-1}
  && 
  y_{2n-1}y_{2n-1}
  &&\,\\
  & 
  x_{0} y_{1}
  	\ar[ul, "\scV^n"]
  	\ar[ur, "\scU^n"]
  && 
  y_{1}x_{2}
  	\ar[ul, "\scV^{n+1}"]
  	\ar[ur, "\scU^{n-1}"]
  &&
  x_{2}y_{3}  
  	\ar[ul, "\scV^{n+1}"]\ar[ur, "\scU^{n-1}"]
  && \cdots \ar[ul] \ar[ur]
   && y_{2n-3}x_{2n-2}
   		\ar[ul]\ar[ur, "\scU"]
    && x_{2n-2} y_{2n-1}
    	\ar[ul, "\scV^{2n-1}"] \ar[ur, "\scU"]
    	\\[-.5cm]
  &&&&y_{1}y_{-1}+y_{-1}y_1&\,\\
  &&y_{-1}x_{0}+x_{0}y_{-1} \ar[urr, "\scU^n"]&&&& y_1x_{0}+x_{0}y_1 \ar[ull, "\scV^n"]\\
  &&&& x_{0}x_{0}\ar[ull, "\scV^n"]\ar[urr, "\scU^n"] \ar[uuuullll, bend left=50,red, "U^{n-1}"]
  \end{tikzcd}
  \]
  \caption{The subcomplex $\cY_n\subset \cX_n$.  The map $\phi_{\sw}$ is reflection plus the red arrow. The staircase complex in the top two rows is split in half in the figure.} \label{fig:Yn}
  \end{figure}
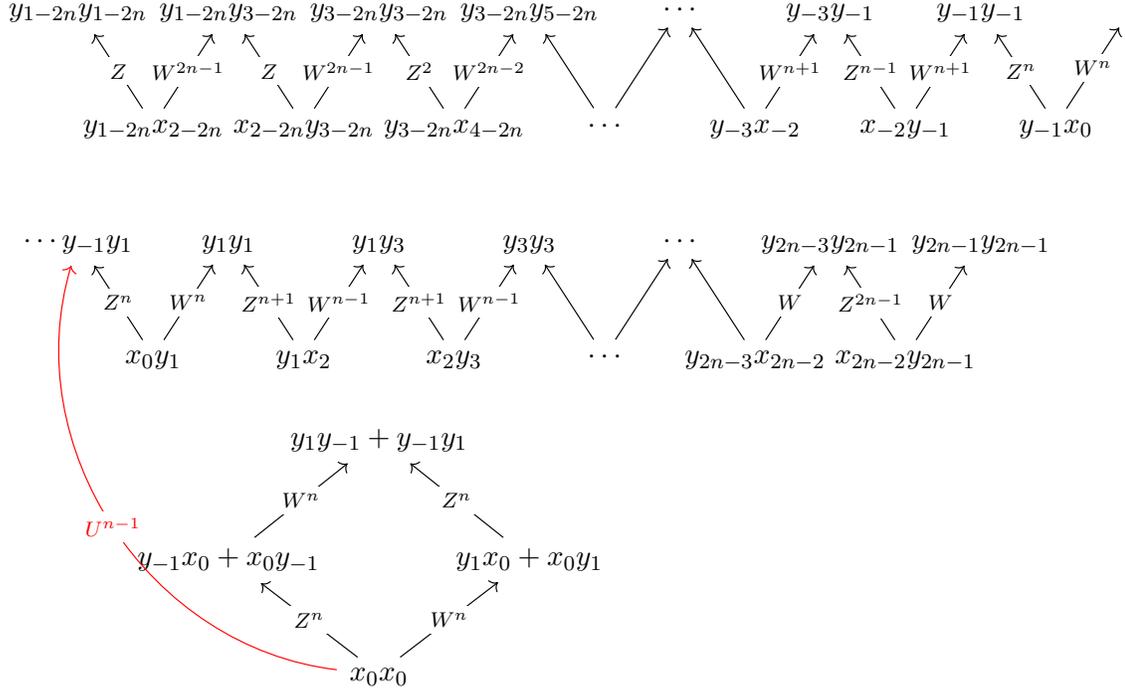

  \begin{prop}
  If $n$ is odd, then there is a local equivalence of $\phi_K$-complexes
  \[
  ( \cCFK(T_{2n,2n+1}\# T_{2n,2n+1}), \phi_{\sw})\sim_{\mathrm{loc}} \cY_n
  \]
  where $\cY_n$ is the subcomplex shown in Figure~\ref{fig:Yn} with the involution coming from restriction of $\phi_{\sw}$.
  \end{prop}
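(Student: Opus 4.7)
The plan is to establish the local equivalence by constructing explicit $\phi_K$-equivariant chain maps
\[
\iota\colon \cY_n\to \cC_n\otimes \cC_n, \qquad \pi\colon \cC_n\otimes \cC_n\to \cY_n,
\]
each inducing an isomorphism on $U^{-1}H_{*}$. The first step is to verify that $\cY_n$ is genuinely a $\phi_K$-subcomplex. Closure of its $\bF[W,Z]$-span under the tensor-product differential is a direct check from the staircase formulas for $\cC_n$. For closure under $\phi_{\sw}=(\id+\Phi\otimes \Psi)\circ \Sq\circ \Sw$, the swap $\Sq\circ \Sw$ acts on the generators $x_i\otimes x_j$ (and their analogues) by $(i,j)\mapsto (-j,-i)$, which visibly preserves the generator set of $\cY_n$ by the left/right symmetry built into Figure~\ref{fig:Yn}. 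The subtler contribution is $\Phi\otimes \Psi$, whose non-vanishing is controlled by the parities of the staircase coefficients $(c_1,\ldots,c_{4n-2})=(1,2n-1,2,\ldots,2n-1,1)$. The hypothesis that $n$ is odd enters decisively here: for the middle indices $c_{2n-1}=c_{2n}=n$ is odd, yielding $\Phi(x_0)=\scU^{n-1}y_1$ and $\Psi(x_0)=\scV^{n-1}y_{-1}$ in characteristic $2$, and hence the single extra term $U^{n-1}y_1 y_{-1}$ in $\phi_{\sw}(x_0\otimes x_0)$, which is precisely the red arrow in Figure~\ref{fig:Yn}. A finite case check driven by the same parity considerations shows that any further non-vanishing contribution of $\Phi\otimes \Psi$ on a generator of $\cY_n$ still lands in $\cY_n$.

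With Step 1 in hand, the inclusion $\iota$ is strictly $\phi_{\sw}$-equivariant, and it is a local map because the generator of $U^{-1}H_*(\cC_n\otimes \cC_n)\iso \bF[U,U^{-1}]$ admits a representative in $\cY_n$ (namely the class through which $x_0\otimes x_0$ becomes invertible after $U$-localisation, using K\"unneth and the fact that $x_0$ generates $U^{-1}H_*(\cC_n)$).

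For the reverse direction, I would construct a retraction $\pi$ by working one $\Sq\circ \Sw$-orbit of standard generators at a time: generators already lying in $\cY_n$ are fixed; a generator whose swap image lies in $\cY_n$ is sent to that image; and pairs of generators lying jointly outside $\cY_n$ are sent to a common symmetrised element of $\cY_n$. Concurrently one builds an $\bF[W,Z]$-equivariant homotopy $H$ on $\cC_n\otimes \cC_n$ so that $\iota\pi=\id+[\partial, H]$; because the staircase differential coefficients are powers of $\scU$ or $\scV$ rather than units, this cannot be executed by naive Gaussian elimination, but can be assembled orbit by orbit, using the tensor-product differential to express each deleted generator modulo $\partial H$ as a combination of generators of $\cY_n$. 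Since $\pi$ is $\Sq\circ \Sw$-equivariant by construction, verifying $\pi\circ\phi_{\sw}\simeq \phi_{\sw}|_{\cY_n}\circ \pi$ reduces to comparing the two composites with $\Phi\otimes \Psi$, and the parity analysis of Step 1 reappears to produce an explicit null-homotopy of the difference. Since $\pi$ sends $x_0\otimes x_0$ to $x_0\otimes x_0$, it is also a local map.

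The principal obstacle is Step 3: building $\pi$ while simultaneously respecting $\phi_{\sw}$-equivariance up to homotopy. The difficulty is that $\Sq\circ\Sw$ accounts for only half of $\phi_{\sw}$, so one must carefully track the Sarkar-type correction $\Phi\otimes \Psi$ through the orbit-by-orbit construction. The odd-$n$ hypothesis is precisely what makes this tractable: it both activates the $U^{n-1}$ red arrow in $\cY_n$ and ensures that the other non-trivial contributions of $\Phi\otimes \Psi$ are dictated by the parities of the $c_i$'s in a way that is compatible with the structure of $\cY_n$. For $n$ even the situation is genuinely different: $\Phi(x_0)=\Psi(x_0)=0$ in characteristic $2$, and a different local model would be required.
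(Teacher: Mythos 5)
Your overall architecture is the right one and matches the paper's: an inclusion $\iota\colon \cY_n\hookrightarrow \cC_n\otimes\cC_n$ that commutes with $\phi_{\sw}$ on the nose, and a projection back that commutes with $\phi_{\sw}$ only up to homotopy, both being local maps. Your parity computation is also correct: $\Phi(x_0)=n\,\scU^{n-1}y_1$ and $\Psi(x_0)=n\,\scV^{n-1}y_{-1}$, so the $U^{n-1}$ arrow out of $x_0x_0$ survives in characteristic $2$ exactly when $n$ is odd.

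However, Step 3 as stated cannot work. You propose to build $\pi$ together with a homotopy $H$ satisfying $\iota\pi=\id+[\partial,H]$, i.e.\ to exhibit $\cY_n$ as a deformation retract of $\cC_n\otimes\cC_n$. That is false: any $\bF[W,Z]$-complement of $\cY_n$ inside $\cCFK(T_{2n,2n+1}\#T_{2n,2n+1})$ has nontrivial homology. For instance, the symmetrized cycle $y_{1-2n}y_{3-2n}+y_{3-2n}y_{1-2n}$ lies outside $\cY_n$ and is only a boundary after multiplying by $\scU^{2n-1}$ (it is $\partial(y_{1-2n}x_{2-2n}+x_{2-2n}y_{1-2n})$ up to that factor), so it represents a nonzero $U$-torsion class not seen by $\cY_n$. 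Hence no such $H$ exists, and the orbit-by-orbit recipe ("send pairs jointly outside $\cY_n$ to a common symmetrised element") is not even obviously a chain map. What is actually available — and what the paper does — is weaker and purely additive: one writes down an explicit $\bF[W,Z]$-\emph{subcomplex} $\cG_n$ (a list of sixteen families of generators, with a rank count to confirm $\cC_n\otimes\cC_n=\cY_n\oplus\cG_n$ and a check that $\partial\cG_n\subset\cG_n$), takes $\Pi$ to be projection along $\cG_n$ (automatically a chain map, no homotopy to the identity claimed), and then verifies that $\phi_{\sw}$ preserves $\cG_n$ except on the generators of the form $x_{\pm 2m}x_{\mp 2m}+U^{\lambda}x_0x_0$, whose leakage into $\cY_n$ is a multiple of $U^{t}(y_1y_{-1}+y_{-1}y_1)$ with $t>n-1$ and is cancelled by an explicit homotopy into $y_{-1}x_0+x_0y_{-1}$. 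That combinatorial construction of the complement and the grading argument bounding $t$ are the substance of the proof, and they are absent from your proposal.
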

  \begin{proof} Our proof is organized as follows. We will first describe a decomposition of $\bF[W,Z]$-chain complexes
  \[
 \cC_n\otimes \cC_n\iso \cY_n\oplus \cG_n,
  \]
  for some free complex $\cG_n$ (described below). This splitting is not $\phi_{\sw}$-equivariant. Instead, if we let 
  \[
  I\colon \cY_n\to \cC_n\otimes \cC_n\quad \text{and} \quad \Pi\colon \cC_n\otimes \cC_n\to \cY_n
  \]
  denote the projection and inclusion maps arising from this splitting, then we will have
  \begin{equation}
  I\circ \phi_{\sw}=\phi_{\sw}\circ I\quad \text{and} \quad \Pi\circ \phi_{\sw}\simeq \phi_{\sw} \circ \Pi,
  \label{eq:splitting-local-equivalence}
  \end{equation}
  establishing the local equivalence in the statement.
  
    We begin by describing the summand $\cG_n$:
 \begin{enumerate}[label=($G$-\arabic*), ref=$G$-\arabic*]
    \item\label{G-1} If $i$ and $j$ are both odd and $i\neq \pm j$, then $y_iy_j+y_jy_i$ is a generator of $\cG_n$.
     \item\label{G-2} If $i$ is odd, $j$ is even, and $j\neq -i\pm 1$, then $y_ix_j+x_jy_i$ is a generator of $\cG_n$.
     
	\item \label{G-3} If $i>0$ is even, write $i=2n-2k$ for some $n> k\ge 1$. Then $\cG_n$ has a generator
      \[
      x_ix_i+k(2n-k)\scU^{k-1}\scV^{2n-k-1}y_{i+1}y_{i-1}.
      \]      
\item \label{G-4} If $i<0$ is even then $\cG_n$ has a generator $x_ix_i.$   

\item \label{G-5} If $i>0$ has $i=2m$ for $m$ odd, then $\cG_n$ has generators 
			\begin{enumerate}				
				\item \label{G-5a} $x_{-i}x_{i}$
				\item \label{G-5b} $x_i x_{-i}$
			\end{enumerate}

\item \label{G-6} If $i>0$ has $i=2m$ for $m$ even, then $\cG_n$ has generators
			\begin{enumerate}
			\item \label{G-6a} $x_{-i}x_{i} + U^{\lambda_{-i}}x_0x_0$
				\item \label{G-6b} $x_i x_{-i} + U^{\lambda_i} x_0x_0$
			\end{enumerate}
where $\lambda_{-i}$ and $\lambda_i$ denote the unique nonnegative integers for which the sums are homogeneously graded.
     \item\label{G-7} If $i$ and $j$ are even with $|i|< |j|$, then  
      \[
      x_ix_j+k_i(2n-k_j) \scU^{k_i-1} \scV^{2n-k_j-1} y_{i+1}y_{j-1}
      \] 
      is a generator of $\cG_n$, where $i=2n-2k_i$ and $j=2n-2k_j$. 
  \item \label{G-8} If $i$ and $j$ are even and $|i|>|j|$, then $x_ix_j$ is a generator of $\cG_n$.    
  
\item\label{G-9} If $i$ and $j$ are odd, $i<j-2$, and $i \neq -j$, then 
      \[
      y_iy_j+U^{\gamma_{i,j}}   y_{i+2}y_{j-2}
      \]
       is a generator of $\cG_n$, where $\gamma_{i,j}$ is the unique nonnegative integer so that the sum is homogeneously graded.

  \item\label{G-10} If $i>2$ is odd and $i=2m+1$ for $m$ odd, then the following are generators of $\cG_n$:
	\begin{enumerate}      
      \item
      \label{G-10a} $y_iy_{-i}+U^{\gamma_{i,-i}}  y_{i-2}y_{-i+2}$;
      \item \label{G-10b} $y_{-i}y_i + U^{\gamma_{-i,i}} y_{-i+2}y_{i-2}$.
      \end{enumerate}
  
    \item\label{G-11} If $i>2$ is odd and $i=2m+1$ for $m$ even, then the following are generators of $\cG_n$:
	\begin{enumerate}      
      \item
      \label{G-11a} $y_iy_{-i}+ U^{\gamma_{i,-i}}  y_{i-2}y_{-i+2} + U^{\delta_{i,-i}}(y_1y_{-1} + y_{-1}y_{1})$;
      \item \label{G-11b} $y_{-i}y_i + U^{\gamma_{-i,i}} y_{-i+2}y_{i-2} +  U^{\delta_{-i,i}}(y_1y_{-1} + y_{-1}y_{1})$
      where as usual the exponents are the nonnegative integers which produce a homogeneous term.
      \end{enumerate}

    \item\label{G-12} If $i$ and $j$ are even, $i<j$, and either $i\neq -j$ or $i \neq 2m$ for $m\neq 0$ even, the following are generators of $\cG_n$:
      \begin{enumerate}
      \item \label{G-12a}  $x_iy_{j+1}+\scU^{\nu_{i,j}}y_{i+1}x_j$;
      \item \label{G-12b} $y_{-j-1}x_{-i} + \scV^{\nu_{i,j}}x_{-j}y_{-i-1}$.
      \end{enumerate}
      Here, $\nu_{i,j}$ denotes the unique nonnegative integer so that the sums are homogeneously graded. 
      
	\item \label{G-13} For even $j>0$ and $j=2m$ for $m$ even, the following are generators of $\cG_n$:
		\begin{enumerate}
		 \item \label{G-13a}  $x_{-j}y_{j+1}+\scU^{\a_{-j,j}}y_{-j+1}x_j + \scU^{\epsilon_{-j,j}}\scV^{\eta_{-j,j}}(y_{-1}x_0 + x_0y_{-1})$;
      \item \label{G-13b} $y_{-j-1}x_{j} + \scV^{\a_{-j,j}}x_{-j}y_{j-1} + \scU^{\eta_{-j,j}}\scV^{\epsilon_{-j,j}}(y_1x_0 +x_0y_1)$.
		\end{enumerate}
	Here as usual the exponents are the unique nonnegative powers which make the term homogeneous.

       \item \label{G-14} For even $j>0$ and $j=2m$ for $m$ odd, the following are generators of $\cG_n$: 
  \begin{enumerate}
      \item \label{G-14a} $y_{j+1} x_{-j}+ \scV^{\a_{-j,j}} x_j y_{-j+1}$;
      \item \label{G-14b} $x_{j}y_{-j-1}+\scU^{\a_{-j,j}} y_{j-1}x_{-j}$.
      \end{enumerate}
  
    \item \label{G-15} For even $j>0$ and $j=2m$ for $m$ even, the following are generators of $\cG_n$: 
  \begin{enumerate}
      \item \label{G-15a} $y_{j+1} x_{-j}+ \scV^{\a_{-j,j}} x_j y_{-j+1}+ \scU^{\epsilon_{j,-j}}\scV^{\eta_{j,-j}}(y_{-1}x_0 + x_0y_{-1})$;
      \item \label{G-15b} $x_{j}y_{-j-1}+\scU^{\a_{-j,j}} y_{j-1}x_{-j} + \scU^{\eta_{j,-j}}\scV^{\epsilon_{j,-j}}(y_1x_0+x_0y_1)$
      \end{enumerate}
 
      \item\label{G-16} For even $j>0$, the following are generators of $\cG_n$:
      \begin{enumerate}
      \item\label{G-16a} $y_{j-1}x_{-j}+\scU^{\a_{-j,j-2}}x_{j-2}y_{-j+1}$;
      \item\label{G-16b}  $x_jy_{-j+1}+\scV^{\a_{-j,j-2}} y_{j-1}x_{-j+2}$.  
  		\end{enumerate}
  
     \end{enumerate}

\vskip 3mm

\noindent We start by showing that $\cX_n = \cY_n +\cG_n$. We will check that each generator of $\cX_n$ can be written as a sum of generators in $\cY_n$ and $\cG_n$. We first show that all elements $y_iy_j$, for $i$ and $j$ both odd, are in $\cY_n+\cG_n$. Since $y_iy_i$ is in $\cY_n$, we may assume that $i\neq j$. For the case that $i \neq -j$, by adding an element of type \eqref{G-1} we can assume that $i<j$. Then after adding some number of elements of type \eqref{G-9}, we reduce either to the case of $y_iy_i$ or $y_iy_{i+2}$, which are in $\cY_n$. Now for the case of $y_iy_{-i}$. If $i>0$ then adding elements of type \eqref{G-10a} and \eqref{G-11a} has the effect of reducing this situation to either $y_{1}y_{-1}$ or $y_{-1}y_1$; similarly if $i<0$ adding elements of type \eqref{G-10b} and \eqref{G-11b} reduces this to either $y_{1}y_{-1}$ or to $y_{-1}y_1$, both of which are in $\cY_n$. We now consider elements of the form $x_iy_j$ or $y_jx_i$. Of course if $|i-j|=1$, then either $x_iy_j$ is in $\cY_n$, or $x_iy_j$ plus an element of the form \eqref{G-2} is in $\cY_n$, and similarly for $y_jx_i$. Now, consider an arbitary $x_iy_j$. If $i<j$, by adding elements of type \eqref{G-12a} and \eqref{G-13a} we can replace such terms with sums of $x_ky_{m}$ and $y_{m}x_k$ such that $|k-m|<|i-j|$, along with possibly instances of $y_{-1}x_0+x_0y_{-1}$, which is in $\cY_n$.  If $i>j$, and $j\neq -i\pm 1$, we can use \eqref{G-2} to switch to $y_jx_i$ and then reduce with elements of type \eqref{G-12b} and \eqref{G-13b}.  If $j=-i+1$ and $i>j$, then \eqref{G-16b} applies to reduce $|i-j|$.  If $j=-i-1$ and $i>j$, \eqref{G-14b} and \eqref{G-15b} apply (note $j<-2$ by hypothesis at this point) to reduce $|i-j|$.  So inductively it suffices to check that we can similarly replace terms $y_jx_i$ with $x_ky_m,x_my_k$ with $|k-m|<|i-j|$. In the event that $j \neq -i \pm 1$ this can be done by adding an element of the form \eqref{G-2} to obtain $x_iy_j$ and repeating the argument above. In the event that $j=-i \pm 1$, if $i<j$ we must add terms of the form \eqref{G-12b} or \eqref{G-13b}, and if $i>j$ we must add terms of the form \eqref{G-14a}, \eqref{G-16a}, or \eqref{G-15a} as appropriate. Either way we reduce to the case that $|i-j|=1$, showing that all elements $x_iy_j$ or $y_jx_i$ are in the span. Finally, each $x_ix_j$ is a sum of generators of types \eqref{G-3}-\eqref{G-8} with the above terms. Ergo indeed $\cX_n = \cY_n + \cG_n$.

\vskip 3mm

\noindent Now we check that the rank of $\cX_n$ is exactly the sum of the ranks of $\cY_n$ and $\cG_n$. One checks easily that
\[ \mathrm{rank}(\cX_n) = 16n^2-8n+1 \qquad \qquad \mathrm{rank}(\cY_n) = 8n+1.\]
It suffices to check that there are exactly $16n^2-16n$ generators in the generating set for $\cG_n$ described above. Now, there are $2n^2-2n$ generators of type \eqref{G-1}, $4n^2-6n+2$ generators of type \eqref{G-2}, and $4n^2-4n$ total generators of types \eqref{G-3}-\eqref{G-8}. There are then $2n^2-2n$ generators of types \eqref{G-9}-\eqref{G-11}, $4n^2-6n+2$ generators of type \eqref{G-12} and \eqref{G-13}, and $4n-4$ generators of types \eqref{G-14}-\eqref{G-16}. This implies that this generating set must be a basis.

\vskip 3mm

\noindent We now verify that $\cG_n$ is a subcomplex:

\begin{itemize}

\item On \eqref{G-1} we see that $\partial$ vanishes. 

\item The map $\partial$ sends elements of type \eqref{G-2} to either one element of type \eqref{G-1} or a sum of two elements of type \eqref{G-1}. 

\item Elements \eqref{G-3} and \eqref{G-4} are mapped to sums of \eqref{G-2}. 

\item Elements of type \eqref{G-5a} are mapped to a sum of an element of type \eqref{G-12a} and an element of type \eqref{G-12b}.  Elements of types \eqref{G-5b} are mapped to a sum of an element of type \eqref{G-14a} and an element of type \eqref{G-14b}. 

\item Elements of type \eqref{G-6a} are mapped to a sum of an element of type \eqref{G-13a} and an element of type \eqref{G-13b}. Elements of type \eqref{G-6b} are mapped to a sum of an element of type \eqref{G-15a} and an element of type \eqref{G-15b}.

\item Elements of type \eqref{G-7}  and \eqref{G-8} for which $i<j$ are mapped to a sum of an element of type \eqref{G-12a} and \eqref{G-12b}. In the case of elements with $j<i$, elements for which $|i+j|>2$ are mapped to a sum of four elements of type \eqref{G-2}, an element of type \eqref{G-12a}, and an element of type \eqref{G-12b}. Now we consider the cases in which $|i+j|=2$, still with $j<i$. Elements for which $i+j=-2$ are sent to a sum of two elements of type \eqref{G-2}, an element of type \eqref{G-12b}, and an element of type \eqref{G-16a}. Elements for which $i+j=2$ are sent to a sum of two elements of type \eqref{G-2}, an element of type \eqref{G-12a}, and an element of type \eqref{G-16b}. 

\item The differential vanishes on elements of type \eqref{G-9}-\eqref{G-11}.  

\item Elements of either type \eqref{G-12} are mapped to elements of type \eqref{G-9} or of type \eqref{G-10}. 

\item Elements of both types \eqref{G-13} are mapped to elements of type \eqref{G-11}. 

\item Elements of both types \eqref{G-14} are mapped to elements of type \eqref{G-10}.

\item Elements of both types \eqref{G-15} are mapped to elements of type \eqref{G-11}.

\item Elements of both types \eqref{G-16} are mapped to a sum of two elements of type \eqref{G-1} and an element of type \eqref{G-9}. 

\end{itemize}

\vskip 3mm

\noindent We now verify Equation~\eqref{eq:splitting-local-equivalence}, i.e. that $\Pi$ and $I$ homotopy commute with $\phi_{\sw}$. Note that $I\circ \phi_{\sw}=\phi_{\sw}\circ I$, because $\cY_n$ is preserved by $\phi_{\sw}$. For the relation $\phi_{\sw}\circ \Pi\simeq \Pi\circ \phi_{\sw}$,  we will show that $\cG_n$ is nearly preserved by $\phi_{\sw}$, and we will show how to construct a null-homotopy of $\phi_{\sw}\circ \Pi+ \Pi\circ \phi_{\sw}$. We observe the following:
\begin{itemize}

\item Elements of type \eqref{G-1} are sent to other elements of type \eqref{G-1} under $\phi_{\sw}$. 

\item Elements of type \eqref{G-2} are sent to other elements of type \eqref{G-2}.

\item Elements of types \eqref{G-3} and \eqref{G-4} are interchanged.

\item Elements of both types \eqref{G-5} are fixed by $\phi_{\sw}$.

\item Elements of both types \eqref{G-6} are sent to a sum of themselves with some number of nonnegative $U$-powers of elements of type \eqref{G-10} and \eqref{G-11}, along with possibly a nonnegative $U$ power of $y_1y_{-1}+y_{-1}y_{1}$. (Note that $y_1y_{-1}+y_{-1}y_1$ is in $\cY_n$).

\item Elements of types \eqref{G-7} and \eqref{G-8} are interchanged.

\item Elements of type \eqref{G-9} are interchanged with other elements of type \eqref{G-9}.

\item Elements of type \eqref{G-10} and \eqref{G-11} are fixed.

\item For elements of type \eqref{G-12}-\eqref{G-16}, in each case elements of type (a) and type (b) are interchanged.

\end{itemize}

We note that the only generators of $\cG_n$ on which $\phi_{\sw}$ has a component mapping from $\cG_n$ to $\cY_n$ are those of type \eqref{G-6}. If after adding terms of type \eqref{G-10} and \eqref{G-11} to $\phi_{\sw}(x_{2m}x_{-2m})$ we obtain a summand of the form $U^t (y_{1}y_{-1}+y_{-1}y_1)$, then we claim that $t>n-1$. To see this, we observe that $U^{n-1}(y_1y_{-1}+y_{-1}y_1)$, $x_0x_0$, and $x_{2m}x_{-2m}$ all lie in Alexander grading 0, and $x_{2m}x_{-2m}$ lies in lower $\gr_{\ws}$ grading than $x_0x_0$. Since $U^{n-1}(y_1 y_{-1}+y_{-1}y_1)$ is in  $\phi_{\sw}(x_0x_0)$, we conclude that $t>n-1$, as claimed. We may therefore remove the offending term of $\phi_{\sw}\circ\Pi+\Pi\circ \phi_{\sw}$ via a homotopy $H$ which sends $H(x_{2m}x_{-2m})$ to $W^{t+1-n}Z^t (y_{-1}x_0 + x_0y_{-1})$.

It follows that $\Pi$ and $I$ are local equivalences, so $(\cCFK(T_{2n,2n+1}\# T_{2n,2n+1}), \phi_{\sw})$ is locally equivalent to $(\cY_n,\phi_{\sw})$, as claimed.
\end{proof}

We note that if we delete the box from $\cY_n$, then we obtain a staircase complex, for which we will write $\cD_n$. Note that $\cD_n$ coincides with $(\cCFK(T_{2n,4n+1}),\phi_{\std})$, where $\phi_{\std}$ is the map induced by the standard strong inversion (cf. \cite{HHSZ-Infinite}*{Proposition~3.1}).

\begin{cor}
\label{cor:almost-local-class-swap}If $n>1$ and $n$ is odd, the complex $(\cC_n\otimes \cC_n,\phi_{\sw})$ is almost locally equivalent to $(\cD_n,\phi_{\std})$.
\end{cor}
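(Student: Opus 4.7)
By the preceding proposition, $(\cCFK(T_{2n,2n+1}\# T_{2n,2n+1}),\phi_{\sw})$ is (strongly) $\phi_K$-locally equivalent to $(\cY_n,\phi_{\sw}|_{\cY_n})$, and via the K\"unneth identification this is the same as $(\cC_n\otimes\cC_n,\phi_{\sw})$. Since any strong local equivalence is in particular an almost local equivalence, by transitivity it suffices to prove that $(\cY_n,\phi_{\sw}|_{\cY_n})$ is almost locally equivalent to $(\cD_n,\phi_{\std})$.

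The key observation is that $\phi_{\sw}$ on $\cY_n$ equals reflection of Figure~\ref{fig:Yn} plus the single ``red'' arrow $x_0x_0\mapsto U^{n-1}(y_1y_{-1}+y_{-1}y_1)$. Because $n$ is odd and $n>1$, we have $n-1\ge 2$, so this correction lies in the ideal $(U)$. Thus modulo $U$ the map $\phi_{\sw}$ is pure reflection of Figure~\ref{fig:Yn}; its restriction to the staircase $\cD_n\subset\cY_n$ agrees with the standard strong inversion $\phi_{\std}$ on $\cCFK(T_{2n,4n+1})=\cD_n$.

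The strategy is then to build $\mathbb{F}[W,Z]$-equivariant, grading-preserving chain maps $\iota\colon\cD_n\to\cY_n$ (inclusion of the staircase) and $\pi\colon\cY_n\to\cD_n$, and to verify that each is a local map satisfying the almost-equivariance conditions $\iota\circ\phi_{\std}\equiv\phi_{\sw}\circ\iota\pmod{U}$ and $\pi\circ\phi_{\sw}\equiv\phi_{\std}\circ\pi\pmod{U}$. Writing $\alpha=y_{-1}x_0+x_0y_{-1}$, $\beta=y_1x_0+x_0y_1$, and $\gamma=y_1y_{-1}+y_{-1}y_1$, the ``box'' generators satisfy the Koszul-type boundary relations
\[
\partial(x_0x_0)=\scV^n\alpha+\scU^n\beta,\qquad \partial\alpha=\scU^n\gamma,\qquad \partial\beta=\scV^n\gamma.
\]
Since the ideal $(\scU^n,\scV^n)=(W^n,Z^n)$ becomes the zero ideal after inverting $U=WZ$, the box subcomplex is $U^{-1}$-acyclic; in particular $\iota$ induces an isomorphism on $U^{-1}H_*$. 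The almost-equivariance conditions then follow directly from the $U^{n-1}$-divisibility of the off-reflection component of $\phi_{\sw}$, since $n-1\ge 1$.

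The main technical obstacle is arranging $\pi$ to be a genuine chain map: naively killing the box generators fails because $\partial\alpha$ and $\partial\beta$ have nonzero image in $\cD_n$. This is resolved by a homological perturbation argument exploiting the Koszul structure of the box; equivalently, one shows that up to chain homotopy the complex $\cY_n$ splits as $\cD_n\oplus(\text{$U^{-1}$-acyclic box})$ after a small change of basis near $\gamma$, and then uses this splitting to construct $\pi$ with a correction term supported on the staircase neighborhood of $\gamma$. Once $\pi$ is produced, the remaining local-equivalence and almost-equivariance checks are automatic from the $U$-divisibility noted above.
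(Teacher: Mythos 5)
Your overall route coincides with the paper's: restrict to $\cY_n$ via the preceding proposition, note that the only non-reflection component of $\phi_{\sw}$ is the arrow $x_0x_0\mapsto U^{n-1}(y_1y_{-1}+y_{-1}y_1)$ and hence vanishes modulo $U$, and compare the staircase summand with $(\cD_n,\phi_{\std})$. The problem is that the step you single out as the ``main technical obstacle'' is not an obstacle, and your proposed resolution of it is never actually carried out, so the one place where your write-up claims real work is needed is exactly the place where no argument is given. You assert that killing the box generators fails to produce a chain map ``because $\partial\alpha$ and $\partial\beta$ have nonzero image in $\cD_n$.'' In the basis of Figure~\ref{fig:Yn} this is false: $\partial\alpha=\scU^n\gamma$ and $\partial\beta=\scV^n\gamma$ with $\gamma=y_1y_{-1}+y_{-1}y_1$, and $\gamma$ is itself one of the four box generators, not a staircase generator (the staircase generator in that bidegree is $y_{-1}y_1$). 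Hence $\cY_n$ is already a direct sum of chain complexes, the staircase $\cD_n$ and the box, with no differential between the summands; the only interaction is the single $\phi_{\sw}$-arrow weighted by $U^{n-1}$. The naive inclusion and projection are therefore chain maps on the nose, they are local because the box is the Koszul-type complex on $(\scU^n,\scV^n)$ and so is $U^{-1}$-acyclic, and they intertwine the involutions modulo $U$ since $n-1\ge 1$. That is the entire proof, and it is the paper's proof.

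The ``homological perturbation argument'' and ``small change of basis near $\gamma$'' you invoke are thus unnecessary --- or rather, the change of basis is already built into Figure~\ref{fig:Yn}, which uses $y_1y_{-1}+y_{-1}y_1$ rather than $y_1y_{-1}$ as the box generator precisely so that the splitting is a splitting of chain complexes. Had you been working with the basis $\{y_1y_{-1},\,y_{-1}y_1\}$ your worry would be legitimate, but the fix is that one-line basis change, not a perturbation argument, and in any case you must either carry the argument out or delete the claim that one is needed. Two minor further points: after inverting $U=\scU\scV$ the ideal $(\scU^n,\scV^n)$ becomes the unit ideal, not the zero ideal (the box is $U^{-1}$-acyclic because its homology $\bF[\scU,\scV]/(\scU^n,\scV^n)$ is $U$-torsion); and the preceding proposition gives a local equivalence, not a strong ($\phi_K$-equivariant homotopy) equivalence, though local equivalence is all you need.
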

\begin{proof} The $\phi_K$-component which connects the box to the staircase is weighted by $U^{n-1}$. In the almost $\phi_K$-local class, this arrow plays no role, so $\cY_n$ decomposes as a direct sum of a staircase and a box, with $\phi_{\sw}$ preserving both summands modulo $U$. The claim follows. 
\end{proof}

We now recall another symmetry on $\cC_n\otimes \cC_n\iso \cCFK(T_{2n,2n+1}\# T_{2n,2n+1})$. We recall that $T_{2n,2n+1}$ is strongly invertible. We will write $\phi_{\std}$ for the induced map on $\cCFK(T_{2n,2n+1})$. Since $\cCFK(T_{2n,2n+1})$ is a staircase complex, the map $\phi_{\std}$ is constrained to be the map $x_i\mapsto x_{-i}$, $y_i\mapsto y_{-i}$, extended skew-equivariantly. The local class of this complex is computed in \cite{DMS:equivariant}. They describe a complex $(\cE_n,\phi)$ and prove
\begin{equation}
(\cC_n,\phi_{\std})\otimes(\cC_n,\phi_{\std})\sim \cE_n \label{eq:DMS-locally-equivalence-E}
\end{equation}
(We do not need to understand the particular form of the complex $\cE_n$). 

We recall additionally the box complex $(\cB_n,\phi)$. It has five generators, given by the following diagram:
\[
v\quad \oplus \quad \begin{tikzcd}[labels=description, column sep=1cm, row sep=1cm] 
r_{-1} \ar[r, "W^n"]
	& t
\\
 r_0 
 	\ar[u, "Z^n"] \ar[r, "W^n"]
 & r_1 \ar[u, "Z^n"]
\end{tikzcd}
\]
The bigradings are determined by setting $(\gr_{\ws}, \gr_{\zs})(v)=(\gr_{\ws}, \gr_{\zs})(t)=(0,0)$. 
The $\phi_K$-action is given by the following formula
\[
\phi_K(v)=v+t,\quad \phi_K(r_0)=r_0,\quad \phi_K(r_{-1})=r_1,\quad \phi_K(r_1)=r_{-1}, \quad \phi_K(t)=t.
\]

\begin{lem}
\label{lem:almost-phi-K-class-K_n} Let $K=T_{2n,2n+1}$ with $n>1$ odd. The $\phi_K$-almost local class of
\[
(\cCFK(K\#  K\# mrK\# mK), \phi_{\std}\# \phi_{\std}\#  \phi_{\sw})
\]
coincides with the box complex $(\cB_n,\phi)$. 
In the above, $mK$ denotes the mirror of $K$, while $mrK$ denotes the mirror of $K$ with the reversed knot orientation.
\end{lem}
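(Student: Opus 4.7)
The plan is to split the complex via the connected sum formula, identify each tensor factor using the two structural results just established, and then perform a cancellation in the almost-$\phi_K$-local equivalence group.

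\emph{Step 1.} By the connected sum formula for strong inversions \cite{DMS:equivariant}*{Theorem~4.1}, I can write
\[
\bigl(\cCFK(K\# K\# mrK\# mK),\phi_{\std}\# \phi_{\std}\# \phi_{\sw}\bigr)\simeq (\cC_n,\phi_{\std})^{\otimes 2}\otimes \bigl(\cCFK(mrK\# mK),\phi_{\sw}\bigr).
\]
Since $T_{2n,2n+1}$ is reversible we have $mrK=mK$, so that $(\cCFK(mrK\# mK),\phi_{\sw})$ is, as a swap-invertible knot, the mirror of $(\cCFK(K\# rK),\phi_{\sw})=(\cC_n\otimes\cC_n,\phi_{\sw})$. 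Mirroring inverts almost-$\phi_K$-local equivalence classes, so Corollary~\ref{cor:almost-local-class-swap} (which uses the hypothesis $n>1$ odd) identifies the second tensor factor with the inverse of $\cD_n$.

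\emph{Step 2.} Equation~\eqref{eq:DMS-locally-equivalence-E} identifies the first tensor factor with the complex $\cE_n$ of \cite{DMS:equivariant}. It therefore remains to verify the algebraic identity
\[
\cE_n\otimes \cD_n^{-1}\sim \cB_n
\]
in the almost-$\phi_K$-local equivalence group, or equivalently $\cE_n\sim \cD_n\otimes \cB_n$. To obtain this, I would revisit the explicit model of $\cE_n$ from \cite{DMS:equivariant} and construct, in the same spirit as the splitting $\cX_n=\cY_n\oplus\cG_n$ used in the proof of Corollary~\ref{cor:almost-local-class-swap}, a $\phi_K$-equivariant-modulo-$U$ decomposition of $(\cC_n\otimes\cC_n,\phi_{\std}\otimes\phi_{\std})$ as the sum of a staircase $\cD_n$, a box $\cB_n$ arising from a $U^{n-1}$-arrow near $x_0\otimes x_0$, and an acyclic complement. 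Cancellation of the $\cD_n$ factor against $\cD_n^{-1}$ then yields the claim.

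The main obstacle is isolating the box summand $\cB_n$ in the $\phi_K$-decomposition of $\cE_n$. The symmetrization of $x_0\otimes x_0$ under $\phi_{\std}\otimes\phi_{\std}$ will produce a Maslov grading $-2(n-1)$ element connected to $y_{-1}\otimes y_1+y_1\otimes y_{-1}$ by the $U^{n-1}$-arrow, exactly parallel to the $U^{n-1}$-arrow appearing in Figure~\ref{fig:Yn}, and the odd parity of $n$ is used to place this arrow in the correct Alexander and Maslov grading. The remaining step of exhibiting an acyclic $\phi_K$-equivariant complement is a combinatorial exercise of the same flavor as the explicit enumeration of generators of $\cG_n$ in the proof of Corollary~\ref{cor:almost-local-class-swap}, with most of the generators pairing off under reflection $x_i\mapsto x_{-i},\ y_j\mapsto y_{-j}$.
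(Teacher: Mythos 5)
Your Steps 1 and 2 reproduce the paper's reduction exactly: the complex is identified with $(\cC_n\otimes \cC_n\otimes \cC_n^\vee\otimes \cC_n^\vee,\ \phi_{\std}\otimes\phi_{\std}\otimes\phi_{\sw}^\vee)$, Corollary~\ref{cor:almost-local-class-swap} replaces the dualized swap factor by $(\cD_n,\phi)^\vee$, and Equation~\eqref{eq:DMS-locally-equivalence-E} replaces the first factor by $\cE_n$. You have therefore correctly isolated the crux, namely the identity $\cE_n\otimes\cD_n^\vee\sim\cB_n$. The paper does not prove this identity; it cites \cite{DMS:equivariant}*{Lemma~6.13}, so up to that citation your argument and the paper's coincide.

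The gap is in your proposed proof of that remaining identity. You propose to exhibit a \emph{direct sum} decomposition of $(\cC_n\otimes\cC_n,\phi_{\std}\otimes\phi_{\std})$ into a staircase $\cD_n$, a box $\cB_n$, and an acyclic piece, and then to ``cancel the $\cD_n$ factor against $\cD_n^{-1}$.'' Cancellation against $\cD_n^{-1}=\cD_n^\vee$ takes place in the local equivalence \emph{group}, whose operation is tensor product over $\bF[W,Z]$, not direct sum; a $\phi_K$-equivariant splitting $\cE_n\simeq\cD_n\oplus\cB_n\oplus(\text{acyclic})$ would instead force the local class of $\cE_n$ to be that of whichever summand carries the $U$-tower, and in any case would not let you divide by $\cD_n$. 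What is actually needed is the tensor factorization $\cE_n\sim\cD_n\otimes\cB_n$, and in $\cD_n\otimes\cB_n$ the staircase and box are \emph{not} $\phi$-invariant direct summands: writing $\cB_n=v\oplus B$, the involution sends $d\otimes v$ to $\phi(d)\otimes v+\phi(d)\otimes t$, so the tower is coupled to the box through the involution. Your sketch neither exhibits this coupling nor carries out the claimed enumeration of an acyclic complement, and you flag the key point (``isolating the box summand'') as an unresolved obstacle. As written, the final step is therefore not established; either the tensor identity $\cE_n\sim\cD_n\otimes\cB_n$ must be proven directly (which is a nontrivial computation of the same scale as the $\cY_n\oplus\cG_n$ splitting), or one must invoke \cite{DMS:equivariant}*{Lemma~6.13} as the paper does.
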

\begin{proof}
The class is locally equivalent to
\[
(\cC_n\otimes \cC_n\otimes \cC_n^\vee \otimes \cC_n^\vee,  \phi_{\std}\otimes \phi_{\std}\otimes \phi_{\sw}^\vee).
\]
By Corollarly~\ref{cor:almost-local-class-swap} and Equation~\eqref{eq:DMS-locally-equivalence-E}, this is $\phi_K$-almost locally equivalent to
\[
(\cE_n, \phi)\otimes (\cD_n,\phi)^\vee
\]
which is locally equivalent to $(\cB_n,\phi)$ by \cite{DMS:equivariant}*{Lemma~6.13}.
\end{proof}

\subsection{Local class of surgeries}

We now compute the almost local class of $+1$-surgery on 
\[
(K_n,\phi):= (2T_{2n,2n+1}\# -2T_{2n,2n+1},\phi_{\std}\# \phi_{\std} \# \phi_{\sw}).
\]

\begin{prop}
\label{prop:almost-phi-class-surgery} The almost $\phi$-class of $(S_{+1}^3(K_n), \phi)$ is equal to a complex of the following form:
\[
\begin{tikzcd}
\xs
\ar[loop below, red, dashed, looseness=20] \ar[r, red,dashed]
  &
   \ys
   \ar[loop below, red, dashed, looseness=15] 
    &
    \zs \ar[l, "U^n"]
   \ar[loop below, red, dashed, looseness=20] 
\end{tikzcd}
\]
(The solid arrow denotes the differential, while the dashed arrows denotes $\phi$). 
\end{prop}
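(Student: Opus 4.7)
The plan is to reduce the computation to a small model in three steps: a surgery reduction, an almost-phi passage, and a direct cancellation on the box complex.

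First, I would apply Theorem~\ref{thm:local-class}(1) to the strongly invertible knot $(K_n,\phi)$, which is a connected sum of strong inversions and therefore carries a genuine strong inversion. For $n=1$ we have $d(L(1,1),[0])=d(S^3)=0$, so the theorem yields a $\phi$-local equivalence $(\CF^-(S^3_{+1}(K_n)),\phi)\sim_{\mathrm{loc}} (A_0(K_n),\phi_{K_n})$ with no grading shift. Since $\phi$-local equivalence implies almost $\phi$-local equivalence, it suffices to compute the almost $\phi$-class of $(A_0(K_n),\phi_{K_n})$.

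Second, by Lemma~\ref{lem:almost-phi_K-local-to-almost-phi-local} the almost $\phi$-class of $(A_0(K_n),\phi_{K_n})$ depends only on the almost $\phi_K$-class of $(\cCFK(K_n),\phi_{K_n})$, which by Lemma~\ref{lem:almost-phi-K-class-K_n} is the box complex $(\cB_n,\phi)$. The problem therefore reduces to identifying $A_0(\cB_n)$ with its inherited $\phi$-action, and then simplifying.

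Third, I would compute $A_0(\cB_n)$ directly. From the bigradings $(\gr_{\ws},\gr_{\zs})(v)=(\gr_{\ws},\gr_{\zs})(t)=(0,0)$, one extracts $A(v)=A(t)=A(r_0)=0$, $A(r_{-1})=-n$, $A(r_1)=n$. Therefore $A_0(\cB_n)$ has five $\bF[U]$-generators $v,\,t,\,r_0,\,Z^n r_{-1},\,W^n r_1$ with differentials $\d(r_0)=Z^n r_{-1}+W^n r_1$ and $\d(Z^n r_{-1})=\d(W^n r_1)=U^n t$. Using skew-equivariance of $\phi$ together with $\phi(r_{\pm 1})=r_{\mp 1}$, $\phi(r_0)=r_0$, $\phi(v)=v+t$, $\phi(t)=t$, one computes
\[
\phi(v)=v+t,\quad \phi(t)=t,\quad \phi(r_0)=r_0,\quad \phi(Z^n r_{-1})=W^n r_1,\quad \phi(W^n r_1)=Z^n r_{-1}.
\]
In particular, setting $a:=Z^n r_{-1}+W^n r_1$ one finds $\phi(a)=a$ and $\phi(r_0)=r_0$, so the acyclic pair $(r_0,a)$ is preserved by $\phi$ on the nose. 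Cancelling this pair leaves the three generators $\xs:=v$, $\ys:=t$, $\zs:=W^n r_1$ with $\d(\zs)=U^n\ys$, $\phi(\xs)=\xs+\ys$, and $\phi(\ys)=\ys$, $\phi(\zs)=\zs$, which is precisely the complex in the statement.

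The only mildly subtle point is the verification in the final step that the cancellation respects $\phi$; this is where almost-phi flexibility could have been needed, but in fact the $\phi$-invariance of the pair $(r_0,a)$ holds on the nose, so the reduction is strict. All other steps are straightforward invocations of the preceding theorems and lemmas.
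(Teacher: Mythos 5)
Your proposal is correct and follows the same route as the paper: reduce via Theorem~\ref{thm:local-class}(1) to $(A_0(K_n),\phi)$, pass to the box complex via Lemmas~\ref{lem:almost-phi-K-class-K_n} and~\ref{lem:almost-phi_K-local-to-almost-phi-local}, and compute $A_0(\cB_n)$ directly; the paper leaves that last computation implicit and you carry it out. Your explicit cancellation is fine (note only that $\phi(W^n r_1)=Z^n r_{-1}=a+W^n r_1$ before cancelling, and the induced map fixes $\zs$ only after projecting out the pair $(r_0,a)$).
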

\begin{proof}
By Theorem~\ref{thm:local-class}, we have that $(\CF^-(S^3_{+1}(K_n),\phi))$ is locally equivalent to $(A_0(K_n),\phi)$. This, of course, also implies that the almost-$\phi$ class of $(\CF^-(S^3_{+1}(K_n),\bar{\phi}))$ is equal to the almost $\phi$ class $(A_0(K_n),\bar{\phi})$. By Lemma~\ref{lem:almost-phi-K-class-K_n}, the $\phi_K$ class of $(K_n,\phi)$ is equal to the box complex $(\cB_n,\phi)$. By Lemma~\ref{lem:almost-phi_K-local-to-almost-phi-local}, this implies that the almost $\phi$ class of $(A_0(K_n),\bar{\phi})$ is equal to the almost $\phi$ class of $(A_0(\cB_n),\bar{\phi})$. One computes directly that $(A_0(\cB_n),\bar{\phi})$ is homotopy equivalent as an almost-$\phi$ class to the complex in the statement, completing the proof. 
\end{proof}

\subsection{Proof of Theorem~\ref{thm:Z-infinity-summand}}

Finally, we describe our proof of Theorem~\ref{thm:Z-infinity-summand}:
\begin{proof}[Proof of Theorem~\ref{thm:Z-infinity-summand}]
By Proposition~\ref{prop:almost-phi-class-surgery}, the almost $\phi$-class of $(S^3_{+1}(K_n),\phi)$ is equal to the almost $\phi$ class $C(-1,n)$. This complex has $\varphi_n=1$ and $\varphi_i=0$ if $i\neq n$. It follows that the map
\[
\bigoplus_{n\ge 1} \varphi_n\colon \ker( \cF)\to \Z^\infty
\]
is a surjection, and that the pairs $(S^3_{+1}(K_n),\phi)$ span a $\Z^\infty$-summand.
\end{proof}

\section{Background on the link surgery formula}

\label{sec:background-link-surgery}

In this section, we present background on the link surgery formula of Manolescu and Ozsv\'{a}th \cite{MOIntegerSurgery}.

\subsection{Meridional surgery diagrams}
\label{sec:meridional-diagrams}

The link surgery formula requires a collection of several Heegaard diagrams for the underlying 3-manifold $Y$. There is a substantial amount of flexibility in the choice of these diagrams. However, for our purposes it is helpful to focus on a restricted class of Heegaard diagrams, which we call \emph{meridional} Heegaard link diagrams.

\begin{define} A Heegaard link diagram $(\Sigma,\as,\bs,\ws,\zs)$ is called a \emph{meridional Heegaard link diagram} for $(Y,L)$ if for each link component $K_i\subset L$, there is a distinguished disk $D_i\subset \Sigma$ such that $D_i$ contains $w_i$ and $z_i$, and such that there is a single curve $\b_{0,i}$ which divides $D_i$ into two half disks, one containing $w_i$ and the other $z_i$. Furthermore, no other attaching curves intersect $D_i$.  See Figure~\ref{fig:naturality_28}.
\end{define}

\begin{figure}[h]
\begingroup%
  \makeatletter%
  \providecommand\color[2][]{%
    \errmessage{(Inkscape) Color is used for the text in Inkscape, but the package 'color.sty' is not loaded}%
    \renewcommand\color[2][]{}%
  }%
  \providecommand\transparent[1]{%
    \errmessage{(Inkscape) Transparency is used (non-zero) for the text in Inkscape, but the package 'transparent.sty' is not loaded}%
    \renewcommand\transparent[1]{}%
  }%
  \providecommand\rotatebox[2]{#2}%
  \newcommand*\fsize{\dimexpr\f@size pt\relax}%
  \newcommand*\lineheight[1]{\fontsize{\fsize}{#1\fsize}\selectfont}%
  \ifx\svgwidth\undefined%
    \setlength{\unitlength}{111.97942016bp}%
    \ifx\svgscale\undefined%
      \relax%
    \else%
      \setlength{\unitlength}{\unitlength * \real{\svgscale}}%
    \fi%
  \else%
    \setlength{\unitlength}{\svgwidth}%
  \fi%
  \global\let\svgwidth\undefined%
  \global\let\svgscale\undefined%
  \makeatother%
  \begin{picture}(1,0.58443796)%
    \lineheight{1}%
    \setlength\tabcolsep{0pt}%
    \put(0,0){\includegraphics[width=\unitlength,page=1]{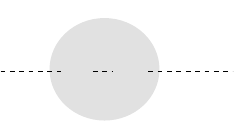}}%
    \put(0.84442977,0.31291067){\makebox(0,0)[lt]{\lineheight{1.25}\smash{\begin{tabular}[t]{l}$S_i$\end{tabular}}}}%
    \put(0.33239167,0.26964357){\makebox(0,0)[t]{\lineheight{1.25}\smash{\begin{tabular}[t]{c}$w_i$\end{tabular}}}}%
    \put(0.54663629,0.26964371){\makebox(0,0)[t]{\lineheight{1.25}\smash{\begin{tabular}[t]{c}$z_i$\end{tabular}}}}%
    \put(0,0){\includegraphics[width=\unitlength,page=2]{fig_naturality_28.pdf}}%
    \put(0.46000718,0.38759612){\color[rgb]{0,0,1}\makebox(0,0)[lt]{\lineheight{1.25}\smash{\begin{tabular}[t]{l}$\b_{0,i}$\end{tabular}}}}%
  \end{picture}%
\endgroup%

\caption{The distinguished disk $D_i$ containing the intersection of the shadow $S_i$ and the curve $\b_{0,i}$.}
\label{fig:naturality_28}
\end{figure}

On a meridional Heegaard link diagram, we can pick an embedded knot shadow $S_i$ for each link component. Since there is a canonical short arc connecting $w_i$ and $z_i$ which passes through $\b_{0,i}$, picking a knot shadow amounts to picking an arc from $w_i$ to $z_i$ which is disjoint from the beta curves. Note that such a shadow determines a Morse framing on the component $K_i$, namely the framing which is parallel to $T\Sigma$.

There is a natural interpretation of meridional Heegaard link diagrams in terms of sutured manifolds, as we now describe. See \cite{JDisks} for more background on sutured manifolds in the context of Heegaard Floer theory. Firstly, we consider the sutured manifold 
\[
(M(L),\g^\circ),
\] defined as follows. We define  $M:=Y\setminus \nu(L)$ and we define the sutures $\g$ by picking a single suture on each component of $\d M$. We pick each suture $\g$ to bound a small disk in $\d M$. We declare $R_-(M)\subset \d M$ to be the collection of small disks bounded by $\g$, and we declare $R_+(M)\subset \d M$ to be the complement of $R_-(M)$. Note that $R_+(M)$ is a disjoint union of $|L|$ punctured tori. On $\d M$ we place a meridian $\mu_i$ and a longitude $\lambda_i$ for each link component, so that there is a unique intersection of $\mu_i$ and $\lambda_i$, which occurs in the disk component of $R_-(M)$.

We now pick a sutured Heegaard diagram $(\Sigma^\circ,\as,\bs^\circ)$ for $(M(L),\g^\circ)$. The surface $\Sigma^{\circ}$ has $\ell$ boundary components, and furthermore $|\bs^\circ|=|\as|-\ell$, where $\ell=|L|$.  

To obtain a meridional Heegaard diagram for $(Y,L)$ from this description of the sutured manifold, we use the descending vector field of a Morse function to project each $\mu_i$ to a collection of $\ell$ arcs on $\Sigma^\circ$. These arcs have boundary on $\d \Sigma^\circ$, and also are pairwise disjoint and also disjoint from the $\bs^\circ$ curves. We fill each component of $\d \Sigma^\circ$ with a disk $D_i$, and in each component we add two basepoints $w_i$ and $z_i$. This gives $(\Sigma,\ws,\zs)$. We extend the projections of the $\mu_i$ over these disks to obtain the curves $\b_{0,i}$. We set $\bs_0=\bs^\circ\cup \{\b_{0,1},\dots, \b_{0,\ell}\}$.

Additionally, observe that we can similarly project each $\lambda_i$ to the Heegaard surface, as above to obtain a knot shadow $S_i$. Write $\cS=\{S_1,\dots, S_\ell\}$ for the set of shadows.

Given a meridional Heegaard diagram with shadows, we may construct a Heegaard diagram $(\Sigma,\as,\bs_{\Lambda},\ws)$ for $Y_{\Lambda}(L)$ by replacing $\b_{0,i}$ with a small isotopic translate of the knot shadow $\b_{\Lambda,i}$.

To construct the link surgery complex, we wind each curve $\b_{0,i}$ parallel to $\b_{\lambda,i}$ to form an attaching curve $\b_{1,i}$. For $\veps\in \bE_\ell$ we let $\bs_{\veps}$ denote the union 
\[
\bs_{\veps}=\bs^\circ\cup \{\b_{0,i}: \veps_i=0\} \cup \{\b_{1,i}: \veps_i=1\}
\]
with some curves translated slightly to obtain admissibility. We assume that the winding of each $\b_{1,i}$ is sufficient so that the diagram $(\Sigma,\bs_{\veps}, \bs_{\veps'}, \ps)$ is weakly admissible for each collection $\ps\subset \ws\cup \zs$ such that $\ps$ contains exactly one basepoint from each component of $L$.

\begin{define}
\label{def:meridional-surgery-diagram}
We refer to the collection $(\Sigma,\as,\bs_{\Lambda}, \{\bs_{\veps}\}_{\veps\in \bE_n}, \cS)$ described above as a \emph{meridional surgery diagram} for $(Y,L,\Lambda)$.
\end{define}
Since the main input for a meridional surgery diagram for $(Y,L,\Lambda)$ is a Heegaard diagram for $(M(L),\g^\circ)$, we obtain the following set of moves between any two meridional surgery diagrams:

\begin{prop}
\label{prop:meridional-diagram-moves}
 Any two meridional surgery diagrams for $(Y,L,\Lambda)$ may be related by a sequence of the following moves:
\begin{enumerate}
\item Handleslides and isotopies amongst the $\as$ curves, not crossing over the distinguished disks $D_i$. 
\item Handleslides of the curves $\b_{0,i}$ over the curves of $\bs^\circ$, as well as isotopies of these curves which are stationary in the distinguished disks.
\item Isotopies of the curves $\bs_{\veps}$ which do not cross over the distinguished disks $D_i$, as well as handleslides over the curves in $\bs^{\circ}$.
\item Index $(1,2)$-stabilization, to wit taking the connected sum of $\Sigma$ with a genus 1 surface, adding a new curve $\a$ to $\as$, as well as adding a curve $\b$ (or a small translate of $\b$) to $\bs^\circ$, such that furthermore $|\a\cap \b|=1$.
\item Simultaneous isotopies of the shadows $S_i$ and the curves $\b_{\Lambda,i}$, which are stationary in the distinguished disks $D_i$, such that $S_i$ and $\b_{\Lambda,i}$ remain disjoint during the isotopy. Similarly, simultaneous handleslides of $\b_{\Lambda,i}$ and $S_i$ across a curve in $\bs^\circ$. 
\item Ambient isotopy of the surface $\Sigma^\circ$ in the sutured manifold $(M(L),\g^\circ)$.
\end{enumerate}
\end{prop}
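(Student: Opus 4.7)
The plan is to reduce the statement to the well-known naturality theorems for sutured Heegaard diagrams, following Juh\'asz \cite{JDisks} and Juh\'asz--Thurston--Zemke \cite{JTNaturality}. As explained in the paragraph preceding Definition~\ref{def:meridional-surgery-diagram}, the data of a meridional surgery diagram for $(Y,L,\Lambda)$ is entirely equivalent to the following pieces: (a) a sutured Heegaard diagram $(\Sigma^\circ,\as,\bs^\circ)$ for $(M(L),\g^\circ)$, (b) for each $i$, a projection of the meridian $\mu_i\subset \d M(L)$ to $\Sigma^\circ$ via a descending gradient flow, which upon capping $\d \Sigma^\circ$ with the disks $D_i$ yields the curve $\b_{0,i}$, and (c) for each $i$, a projection of a $\Lambda_i$-framed longitude $\lambda_i$ to a knot shadow $S_i$. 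The winding curves $\b_{1,i}$ are determined up to isotopy and handleslide by these data.

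Given two meridional surgery diagrams $\scH$ and $\scH'$ for $(Y,L,\Lambda)$, I would first apply the sutured naturality theorem to the underlying sutured data $(\Sigma^\circ,\as,\bs^\circ)$ and $(\Sigma'^{\circ},\as',\bs'^{\circ})$ of the two diagrams. This reduces the problem, modulo moves of types (1)--(4) and (6), to the case that the two sutured diagrams coincide. Sutured handleslides and isotopies amongst the $\as$ and $\bs^\circ$ curves correspond to moves (1)--(3); sutured stabilizations, which attach a small handle in the interior of $M(L)$, correspond to the index $(1,2)$-stabilizations in move (4); and sutured isotopies of the Heegaard surface itself correspond to move (6). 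Throughout, none of these moves cross the distinguished disks $D_i$ since the disks lie over a neighborhood of $\d \Sigma^\circ$ while all sutured moves are performed in the interior.

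Once the sutured Heegaard diagrams agree, I would argue that the remaining data of the meridians, longitudes, and windings can all be brought into agreement using moves (2), (3), and (5). Any two projections of $\mu_i$ to $\Sigma^\circ$ via distinct compatible gradient flows differ by an ambient isotopy of $\Sigma^\circ$ rel $\d \Sigma^\circ$ through arcs disjoint from $\bs^\circ$, which after capping with $D_i$ becomes an isotopy of $\b_{0,i}$ stationary on $D_i$, together with possibly handleslides over $\bs^\circ$ curves -- this is precisely move (2). A parallel argument, applied simultaneously to $\lambda_i$ and its winding perturbation $\b_{\Lambda,i}$, gives move (5). Finally, different choices of winding of $\b_{1,i}$ along $\b_{\Lambda,i}$ are related by isotopies and handleslides, giving move (3).

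The main obstacle I anticipate is verifying that the sutured stabilization move can always be realized as the index $(1,2)$-stabilization of move (4) without interfering with the distinguished disks or the projected meridians and longitudes. Concretely, one must show that the attaching arc along which the sutured stabilization is performed can be isotoped into the interior of $\Sigma^\circ$ away from all the projected data. This should follow from a general position argument combined with the fact that the ambient isotopy move (6) allows one to first isotope $\Sigma^\circ$ within $M(L)$ so that the stabilizing handle is attached in a convenient region. A secondary technical point is ensuring that during all the isotopies of shadows in move (5), the transversality condition $S_i\cap \b_{\Lambda,i}=\emptyset$ is preserved; this can be arranged by performing the isotopy of $S_i$ first and then winding $\b_{\Lambda,i}$ tightly enough to remain a parallel translate of the shadow.
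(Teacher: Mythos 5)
The paper does not actually give a proof of this proposition---it states only that ``the proof follows from standard techniques, so we omit it.'' Your argument, reducing to the uniqueness of sutured Heegaard diagrams for $(M(L),\g^\circ)$ up to isotopies, handleslides, stabilizations and ambient isotopy, and then handling the projected meridians, longitudes and windings via the standard gradient-flow argument (two projections differ by isotopies in the complement of $\bs^\circ$ together with handleslides over $\bs^\circ$, realized as moves (2), (3) and (5)), is exactly the standard argument the authors are alluding to, and your treatment of the two technical points you single out (general position for the stabilization region, and preserving $S_i\cap\b_{\Lambda,i}=\emptyset$ by carrying $\b_{\Lambda,i}$ along as a parallel push-off of $S_i$) is sound.
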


The proof follows from standard techniques, so we omit it.

\subsection{Meridional complete systems and the link surgery complex}
\label{sec:meridional-complete-systems}

In this section, we discuss the construction of the link surgery complex. We suppose that $\cH=(\Sigma, \as, \bs_{\Lambda}, \{\bs_{\veps}\}_{\veps\in \bE_\ell}, \ws, \zs, \cS)$ is a meridional surgery diagram.  To construct the link surgery complex, we need to pick some additional data, consisting of Floer chains between the various $\bs_{\veps}$. There are several ways to organize these chains. Manolescu and Ozsv\'{a}th \cite{MOIntegerSurgery} describe these as a collection of hypercubes, indexed by oriented sublinks $\vec{M}$ of $L$, which satisfy certain compatibly conditions. For our purposes, we take the perspective from \cite{ZemExactTriangle} and arrange the necessary data as a single hypercube of Lagrangians with local systems, which simplifies the notation. 

We recall the general framework of Lagrangians with local systems. If $L$ is a Lagrangian, a \emph{local system} consists of a vector space $E$ and a representation of the fundamental groupoid $\Pi(L)$ on $E$, i.e. a functor from $\Pi(L)$ to $\End(E)$ (where the latter is viewed as a category with one object).  We write $L^E$ for a Lagrangian equipped with a local system. A morphism between Lagrangians with local systems $L_1^{E_1}$ and $L_2^{E_2}$ consists of a pair $(\xs, \phi)$ where $\xs\in L_1\cap L_2$, and $\phi\colon E_1\to E_2$ is a linear map. 

Holomorphic polygon counting maps can be defined for Lagrangians with local systems. Given Lagrangians $L_1^{E_1},\dots, L_{n+1}^{E_{n+1}}$ and inputs $\langle \xs_1,\phi_1\rangle,\dots, \langle \xs_n,\phi_n\rangle$, one defines the polygon counting maps by counting rigid holomorphic polygons with inputs $\xs_1,\dots, \xs_n$. The linear map associated to a polygon class $\psi$ in the output is the natural composition of $\rho_{n+1}\circ \phi_n\circ \rho_{n}\circ \cdots \circ \phi_1\circ \rho_1$, where $\rho_{i}$ is the monodromy along the boundary component of $\psi$ on the Lagrangian $L_i$.

In the setting of the link surgery formula, we consider local systems of $\bF[U_1,\dots, U_\ell]$-modules. We begin by defining the $\bF[U]$-modules $E_0=\bF[W,Z]$ and $E_1=\bF[U,T,T^{-1}]$. Using the knot shadows, we equip $\bs_{\veps}$ with the local system
\begin{equation}\label{eq:local-system}
E_{\veps}:=E_{\veps_1}\otimes_{\bF}\cdots \otimes_{\bF} E_{\veps_\ell}.
\end{equation}
The monodromy maps for each factor of $E_0$ are given by $Z_i^{\# \d_{\b_\veps}(\psi)\cap S_i}$.  The monodromy maps for $E_1$ are given by $T_i^{\# \d_{\b_\veps}(\psi)\cap S_i}$. Additionally a holomorphic polygon is weighted by an overall factor of $U_1^{n_{w_1}(\psi)}\dots U_\ell^{n_{w_\ell}(\psi)}$. We write $\bs_{\veps}^{E_\veps}$ when viewing $\bs_{\veps}$ as being equipped with this local system.  (\emph{A priori} the differential could involve negative powers of $Z_i$, however these are counteracted by positive powers of $U_i$, and one can see that the cumulative effect of all powers only involves monomials in $W_i$, $Z_i$ and $U_i$ with nonnegative exponent; see \cite{ZemExactTriangle}*{Appendix~A}).

We can construct a hypercube of attaching curves with local systems $\scB_{\Lambda}=(\bs_{\veps},\Theta_{\veps,\veps'})_{\veps\in \bE_\ell}$. Given $\veps<\veps'$, write $I_{\veps,\veps'}\subset \{1,\dots, \ell\}$ for the set of indices $i$ where $\veps_i< \veps_i'$.   Each of the morphisms $\Theta_{\veps,\veps'}$ decomposes as a sum of morphisms
\[
\Theta_{\veps,\veps'}=\sum_{\scO \colon I_{\veps,\veps'}\to \{\sigma,\tau\} } \Theta_{\veps,\veps'}^{\scO}.
\]
In the above, $\scO$ is a function from the set $I_{\veps,\veps'}$ to the set of symbols $\{\sigma,\tau\}$. The chains $\Theta_{\veps,\veps'}$ are constructed in \cite{ZemExactTriangle}*{Section~3.5} when $(\Sigma, \{\bs_{\veps}\}_{\veps\in \bE_\ell})$ is suitably simple, and in \cite{ZemExactTriangle}*{Proof of Theorem 9.1 (1)} for more general sets of attaching curves.

We now review aspects of the construction of $\scB_{\Lambda}$. We will refer to a set $\qs\cup \ws\cup \zs$ as a \emph{complete collection of basepoints} if $\qs$ contains exactly one basepoint from each link component. If $|\veps'-\veps|_{L^1}=1$ and $\veps<\veps'$, then we define 
\[
\Theta_{\veps,\veps'}=\Theta_{\veps,\veps'}^{\sigma}+\Theta_{\veps,\veps'}^{\tau}.
\]
The generators $\Theta_{\veps,\veps'}^{\sigma}$ are described as follows. We focus firstly on the case when $\ell=1$. In this case, the diagram $(\Sigma,\bs_0,\bs_1,\ws,\zs)$ is a stabilization of the genus 1 diagram $(\bT^2,\b_0,\b_1,w,z)$ shown in Figure~\ref{fig:naturality:42}.

\begin{figure}[h]
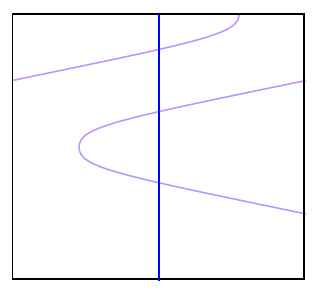
\caption{The genus 0 diagram $(\bT^2,\b_0,\b_1,w,z)$, the shadow $\cS$, and the intersection points $\theta_\sigma^+$ and $\theta_\tau^+$.}
\label{fig:naturality:42}
\end{figure}

The intersection point $\theta_\sigma^+$ is the top graded cycle with $\frs_{w}(\xs)$ torsion. The intersection point $\theta_\tau^+$ is the top graded cycle with $\frs_{z}(\xs)$ torsion. We set 
\[
\Theta^\sigma_{0,1}=\langle \theta_\sigma^+, \phi^\sigma\rangle\quad \text{and} \quad \Theta^\tau_{0,1}=\langle \theta_\tau^+ ,\phi^\tau\rangle.
\]
For more general diagrams $(\Sigma,\bs_{\veps},\bs_{\veps'},\ws,\zs)$, where $\veps'=\veps+e_i$, with $e_i$ the standard $i$-th basis vector, we consider the subcomplex 
\[
\ve{\CF}^-_{\phi^{\sigma_i}}\left(\bs_{\veps}^{E_{\veps}},\bs_{\veps'}^{E_{\veps'}}, \frs_{\sigma_i}\right)\subset\ve{\CF}^-\left(\bs_{\veps}^{E_{\veps}},\bs_{\veps'}^{E_{\veps'}}\right)
\] generated by Floer morphisms $\langle \xs, f\rangle$ where $\xs$ has $\frs_{\qs}(\xs)$ torsion for each complete collection $\qs$ such that $w_i\in \qs$ and $f$ is a multiple of
\[
\phi^{\sigma_i}=\id\otimes \cdots\otimes  \phi^\sigma\otimes \cdots \otimes \id.
\]
In the above equation, $\phi^\sigma$ is in the $i$-th tensor component. It is straightforward to see that $\frs_{\qs}(\xs)=\frs_{\qs'}(\xs)$ if $w_i\in \qs,\qs'$ and $\xs\in \bT_{\b_{\veps}}\cap \bT_{\b_{\veps'}}$. From \cite{ZemExactTriangle}*{Section~8.2}, $\ve{\CF}^-_{\phi^{\sigma_i}}\left(\bs_{\veps}^{E_{\veps}},\bs_{\veps'}^{E_{\veps'}}, \frs_{\sigma_i}\right)$ has an Alexander and Maslov grading, and we select $\Theta_{\veps,\veps'}^{\sigma_i}$ to be a cycle representing the top degree element of homology in Alexander grading 0 (cf. \cite{ZemExactTriangle}*{Lemma~8.15 and Remark~8.16}). The higher length chains of $\scB_{\Lambda}$ are constructed by an inductive filling construction. See \cite{ZemExactTriangle}*{Proof of Theorem 9.1 (1)} for more details.

 With the above set in place, the link surgery complex is merely the Floer complex
 \[
 \cC_{\Lambda}(Y,L):=\ve{\CF}^-(\as,\scB_{\Lambda}).
 \]
 
 We refer to the data of the diagram $(\Sigma, \as, \{\bs_{\veps}\}_{\veps\in \bE_\ell}, \ws, \zs, \cS)$ as well as the chains in the hypercube $\scB_{\Lambda}$ as a \emph{meridional complete system of Heegaard diagrams}.

\subsection{The isomorphism with the closed 3-manifold invariant}

Given a meridional surgery diagram $\cH=(\Sigma,\as,\bs_{\Lambda},(\bs_{\veps})_{\veps\in \bE_\ell}, \ws, \zs, \cS)$ we can write down a canonical map
\[
\Gamma\colon \ve{\CF}^-(Y_{\Lambda}(L))\to \cC_{\Lambda}(Y,L),
\]
as follows. The Heegaard diagram $(\Sigma,\as,\bs_{\Lambda},\ws)$ represents $Y_{\Lambda}(L)$. Furthermore, there is a canonical generator of $\ve{\CF}^-(\bs_{\Lambda}, \scB_{\Lambda})$ of top degree. This can be seen in several ways. Firstly, using the surgery formula, the complex $\ve{\CF}^-(\bs_{\Lambda}, \scB_{\Lambda})$ computes $\ve{\CF}^-(\#^{g-\ell} S^1\times S^2,\ws)$, which is well-known to admit a canonical generator in the torsion $\Spin^c$ structure. More generally, we can construct a morphism 
\[
\Theta_{\Gamma}\colon \bs_{\Lambda}\to \scB_{\Lambda}
\] by defining a morphism of twisted complexes as follows. We define the cycle $\bs_{\Lambda}\to \bs_{0,\dots, 0}$ to be the generator of 
\[
\begin{split}
\ve{\HF}^-(\bs_{\Lambda}, \bs_{0,\dots, 0}^{E_{0,\dots, 0}})\iso& H_*\cCFK ((S^1\times S^2)^{\# g-\ell}, \bU_n)
\\
\iso &\frac{\bF\llsquare W_1,Z_1,\dots, W_\ell,Z_\ell\rrsquare}{U_i=U_j}\otimes (\bF\oplus \bF)^{\otimes g-\ell}.
\end{split}
\]
Here $\bU_n$ denotes an $n$-component unlink. The length 2 relations of the resulting hypercube $\Cone(\bs_{\Lambda}\to \bs_{0,\dots, 0})$ hold up to homology by the model computation in \cite{ZemExactTriangle}*{Lemma~5.1}. From here, the standard filling construction for hypercubes (see \cite{MOIntegerSurgery}*{Lemma~8.6}) can be used to construct the higher length chains.

The hypercube relations for $\Cone(\bs_{\Lambda}\to \scB_{\Lambda})$ can also be verified more directly if we pick particularly simple isotopy classes of the attaching curves $\bs_{\veps}$ in $\scB_{\Lambda}$. This is the approach taken in \cite{ZemExactTriangle}; see Section~5 and Theorem~6.1 therein.

In \cite{ZemExactTriangle}*{Theorem~6.1}, we prove that $\Theta_{\Gamma}$ is an isomorphism of twisted complexes of Lagrangians. Therefore the map
\[
\Gamma:=\mu_2^{\Tw}(-,\Theta_{\Gamma})\colon \ve{\CF}^-(\as,\bs_{\Lambda})\to \cC_{\Lambda}(Y,L)
\]
is a homotopy equivalence of chain complexes.

\subsection{The link surgery formula and the elliptic involution}

In this section, we describe the effect on the link surgery formula of changing the string orientation of a link:

\begin{lem}
\label{lem:change-orientation} If $L\subset Y$ is an oriented and framed link, and $r_i L$ is obtained by reversing the orientation of the $i$-th component of $L$, then there is a canonical isomorphism
\[
\cX_{\Lambda}(Y,r_i L)^{\cK_1\otimes \cdots \otimes \cK_\ell}\iso \cX_{\Lambda}(Y,L)^{\cK_1\otimes \cdots \otimes \cK_\ell}\boxtimes {}_{\cK_i} [E]^{\cK_i}.
\]
Furthermore, this isomorphism is natural, that is, it commutes with the naturality maps for the link surgery complex. (In the above, each $\cK_i$ denotes a copy of the algebra $\cK$).
\end{lem}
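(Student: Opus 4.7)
My plan is to work directly at the level of meridional complete systems of Heegaard diagrams (Section~\ref{sec:meridional-complete-systems}). Fix a meridional complete system $\scH = (\Sigma, \as, \bs_{\Lambda}, \{\bs_{\veps}\}, \ws, \zs, \cS)$ for $(Y,L,\Lambda)$. I would construct a corresponding system $\scH'$ for $(Y, r_i L, \Lambda)$ by keeping the Heegaard surface and \emph{all} attaching curves identical, while swapping the roles of $w_i$ and $z_i$ among the basepoints and reversing the orientation of the knot shadow $S_i$. The first check is that $\scH'$ is indeed a meridional complete system for $(Y, r_i L, \Lambda)$: the Morse framing of $K_i$ is insensitive to string orientation, and reversing the orientation of $S_i$ exactly encodes the string-reversal of $K_i$.

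Next I would compare the data used to build $\cX_{\Lambda}(Y, L)^{\cK_1 \otimes \cdots \otimes \cK_\ell}$ and $\cX_{\Lambda}(Y, r_i L)^{\cK_1 \otimes \cdots \otimes \cK_\ell}$ factor by factor. For the local systems $E_{\veps}$ of \eqref{eq:local-system}: when $\veps_i = 0$, swapping $w_i$ and $z_i$ identifies the $\bF[W_i, Z_i]$ factor with itself under $W_i \leftrightarrow Z_i$, which is precisely $E$ on $\ve{I}_0 \cdot \cK_i \cdot \ve{I}_0$. When $\veps_i = 1$, reversing $S_i$ negates every intersection number $\# \partial_{\bs_{\veps}}(\psi) \cap S_i$, so the monodromy weights $T_i^{\# \cdot \cap S_i}$ become $T_i^{-\# \cdot \cap S_i}$, i.e.\ $T_i \leftrightarrow T_i^{-1}$, matching $E$ on $\ve{I}_1 \cdot \cK_i \cdot \ve{I}_1$. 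The $U_i$-weights coming from $n_{w_i}(\psi)$ become $n_{z_i}(\psi)$, but since $\psi$ has torsion $\Spin^c$ summand at each holomorphic polygon contributing to the relevant chains, $n_{w_i}(\psi)$ and $n_{z_i}(\psi)$ differ by the Alexander weight — and this is absorbed exactly into the $T_i^{\pm 1}$ shift above. Thus the local system behaves as prescribed by $E$ on the diagonal algebra elements.

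I would then analyze the hypercube chains $\Theta_{\veps,\veps'}$. Swapping $w_i$ and $z_i$ exchanges the $\Spin^c$ structures $\frs_{w_i}$ and $\frs_{z_i}$, hence exchanges the subcomplexes $\ve{\CF}^-_{\phi^{\sigma_i}}$ and $\ve{\CF}^-_{\phi^{\tau_i}}$ of Section~\ref{sec:meridional-complete-systems}. Under this exchange the top-graded cycles $\Theta^{\sigma_i}$ and $\Theta^{\tau_i}$ (in Alexander grading $0$) are interchanged, matching $E(\sigma) = \tau$ and $E(\tau) = \sigma$. For higher-length chains in $\scB_{\Lambda}$, the filling construction produces chains that are unique up to homotopy once the length-one chains are fixed (\cite{ZemExactTriangle}*{Proof of Theorem 9.1(1)}); uniqueness lets me identify the filled hypercube for $\scH'$ with the image of the filled hypercube for $\scH$ under the level-wise symmetry. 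Assembling these pieces produces a canonical bijection between the generators of the two type-$D$ modules and identifies the structure maps up to applying $E$ to every $\cK_i$-weight — exactly the statement that $\cX_{\Lambda}(Y, r_i L)^{\otimes \cK_j} \iso \cX_{\Lambda}(Y, L)^{\otimes \cK_j} \boxtimes {}_{\cK_i}[E]^{\cK_i}$.

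The main obstacle I anticipate is bookkeeping the exchange $n_{w_i}(\psi) \leftrightarrow n_{z_i}(\psi)$ versus the local-system shift $T_i \leftrightarrow T_i^{-1}$: I need to verify, as in \cite{ZemExactTriangle}*{Appendix~A}, that only non-negative-exponent monomials appear and that the two effects recombine cleanly into the single algebra automorphism $E$. For naturality, I would appeal to Theorem~\ref{thm:naturality-intro} (and its type-$D$ version, Theorem~\ref{thm:naturality-type-D}): since the assignment $\scH \mapsto \scH'$ is natural with respect to the moves of Proposition~\ref{prop:meridional-diagram-moves} (each move on $\scH$ transports to the analogous move on $\scH'$), the constructed identification commutes up to homotopy with the transition maps $\Psi_{\scH \to \scH''}$, giving naturality of the isomorphism as claimed.
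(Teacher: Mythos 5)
Your proposal is correct and takes essentially the same approach as the paper's (much terser) proof: the paper simply observes that a Heegaard link diagram for $(Y,r_iL)$ is obtained by relabeling $w_i\leftrightarrow z_i$, which swaps the $\theta^\sigma$ and $\theta^\tau$ generators and yields the canonical isomorphism, with naturality following because the identification is merely a relabeling. Your write-up fills in the bookkeeping (the effect on local systems, shadow orientations, and higher-length chains) that the paper leaves implicit.
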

\begin{proof} A Heegaard link diagram $(\Sigma,\as,\bs,\ws,\zs)$ for $(Y,r_i L)$ is obtained from a Heegaard link diagram for $(Y,L)$ by switching the labeling of the $w_i$ and $z_i$ basepoints on $\Sigma$. We can also perform this manipulation to the link surgery complex, and the effect is to change the labeling of the $\sigma$-generators $\theta^{\sigma}$ and the $\theta^{\tau}$, generators. From this discussion, we obtain the canonical isomorphism in the statement. Commutation with the naturality maps follows because the isomorphism is merely the result of relabeling.
\end{proof}

\section{The equivariant link surgery formula}
\label{sec:equivariant-link-surgery-statement}

\subsection{Statement of the equivariant link surgery formula}

Firstly, if $(Y,L,\Lambda)$ and $(Y',L',\Lambda')$ are framed links and 
\[
\phi\colon (Y,L,\Lambda)\to (Y',L',\Lambda')
\]
is a diffeomorphism of pairs which is orientation-preserving on each link component, then naturality of the link surgery complexes will imply that there is a morphism of type-$D$ modules
\[
\cX(\phi)\colon \cX_{\Lambda}(Y,L)^{\cL_\ell}\to \cX_{\Lambda'}(Y',L')^{\cL_\ell},
\]
well-defined up to chain homotopy.

We now specialize to the case where $\phi$ is a symmetry of $(Y,L,\Lambda)$, though we now allow the case where $\phi$ might reverse the orientations of some link components, and also may permute the components. Firstly, let
\[
\varrho \colon \{1,\dots, \ell\}\to \{1,\dots, \ell\}
\]
denote the permutation of the components of $L$ induced by the $\phi$.

Next, let $J\subset L$ denote the images of the components of $L$ where $\phi$ is orientation reversing. 
%

 We define a  bimodule
\[
{}_{\cL_\ell} [\bI_{\varrho^{-1}}]^{\cL_\ell}
\]
to be the bimodule associated to the algebra morphism from $\cL_\ell$ to $\cL_\ell$ which permutes the factors of $\cL_\ell$ according to the permutation $\varrho^{-1}$. That is, if $a_i$ is an algebra element in the $i$-th tensor factor of $\cL_{\ell}$, we set
\[
\delta_2^1(a_i,1)=1\otimes a_{\varrho^{-1}(i)}.
\]

Note that $\phi$ induces a diffeomorphism of strongly framed links from $(Y,L)$ to $(Y,r_J L)$. Consequently, there is an induced diffeomorphism map $\cX(\phi)$ on type-$D$ modules. Since $\phi$ permutes the components of $L$, this takes the form of a map
\[
\cX(\phi)\colon \cX_{\Lambda}(Y,L)^{\cL_\ell}\to \cX_{\Lambda}(Y,r_J L)^{\cL_\ell}\boxtimes {}_{\cL_\ell}[\bI_{\varrho^{-1}}]^{\cL_\ell}.
\]

Using Lemma~\ref{lem:change-orientation}, we may repackage this as a type-$D$ morphism
\[
\cX(\phi)\colon \cX_{\Lambda}(Y,L)^{\cL_\ell}\to \cX_{\Lambda}(Y,L)^{\cL_\ell}\boxtimes {}_{\cL_\ell}[\scE_\phi]^{\cL_\ell}
\]
where $\scE_\phi$ is the following bimodule. Firstly, define ${}_{\cL_\ell} [E_J]^{\cL_\ell}$ to be the external tensor product of the elliptic bimodule ${}_{\cK_i} [E]^{\cK_i}$, ranging over $i\in J$, and the identity bimodule ${}_{\cK_i}[\bI]^{\cK_i}$ for $i\not \in J$. Then we set
\[
{}_{\cL_\ell}[\scE_\phi]^{\cL_\ell}:={}_{\cL_\ell} [E_J]^{\cL_\ell}\boxtimes {}_{\cL_\ell}[\bI_{\varrho^{-1}}]^{\cL_\ell}.
\]

\subsection{Statement of the equivariant surgery formula}

We now state our equivariant surgery formula. We suppose that $\phi$ is a symmetry of the framed link $(Y,L,\Lambda)$. As before, we let $J\subset L$ denote the images under $\phi$ of the components of $L$ on which $\phi$ is orientation reversing. In the previous section, we described a map
\[
\cX(\phi)\colon \cX_{\Lambda}(Y,L)^{\cL_\ell}\to \cX_{\Lambda}(Y,L)^{\cL_\ell}\boxtimes {}_{\cL_\ell} [\scE_\phi]^{\cL_\ell}. 
\]

We will write ${}_{\cL_\ell} \cD^\ell$ for the external tensor product of $\ell$ copies of the solid torus module ${}_{\cK} \cD$ defined in Section~\ref{sec:background-surgery-algebra}.

We observe firstly that there is a natural isomorphism of type-$A$ modules
\[
\varrho\colon {}_{\cL_\ell} [\bI_{\varrho^{-1}}]^{\cL_\ell}\boxtimes {}_{\cL_\ell} \cD^{\ell}\to {}_{\cL_\ell} \cD^{\ell},
\]
which  permutes the factors of $\cD^{\ell}$ according to $\varrho$. The map $\varrho$ has only $\varrho_1$ non-trivial, and has $\varrho_j=0$ for $j>1$.  For example
\[
\varrho_1(W_i)=W_{\varrho(i)}
\]
and similarly for other elements of $\bI_{\varrho^{-1}}\boxtimes \cD^\ell$.

Additionally, there is a natural map
\[
\varpi_{J}\colon {}_{\cL_n} [E_J]^{\cL_\ell}\boxtimes {}_{\cL_\ell}\cD^{n}\to {}_{\cL_\ell}\cD^{\ell},
\]
which is the external tensor product of $|J|$ copies of the morphism $\varpi$ from Section~\ref{sec:elliptic-involution}. 

We write
\[
\Omega_\phi\colon {}_{\cL_\ell} [\scE_\phi]^{\cL_{\ell}}\boxtimes {}_{\cL_\ell} \cD^\ell\to {}_{\cL_\ell} \cD^\ell
\]
for the composition of $\varrho$ and $\varpi_J$.

The equivariant surgery formula is the following:

\begin{thm} Let $(L,\Lambda)$ be a Morse framed link in a 3-manifold $Y$, and let $\phi\colon(Y,L,\Lambda)\to (Y,L,\Lambda)$ be a diffeomorphism of strongly framed links. Let $\Phi\colon Y_{\Lambda}(L)\to Y_{\Lambda}(L)$ denote the diffeomorphism induced by $\phi$. Then there is a homotopy equivalence
\[
\Gamma\colon \ve{\CF}^-(Y_{\Lambda}(L))\to \cC_{\Lambda}(Y,L)
\]
which intertwines $\ve{\CF}^-(\Phi)$ with the map $\cC(\phi)$, defined as the composition of the following two maps:
\[
\begin{tikzcd}[column sep=.7cm]
\cX_{\Lambda}(Y,L)^{\cL_\ell}\boxtimes {}_{\cL_\ell} \cD^{\ell} \ar[r, "\cX(\phi)\boxtimes \bI"] & \cX_{\Lambda}(Y,L)^{\cL_\ell}\boxtimes {}_{\cL_\ell} [\scE_\phi]^{\cL_n}\boxtimes {}_{\cL_\ell} \cD^{\ell}\ar[r, "\bI\boxtimes \Omega_\phi"]&\cX_{\Lambda}(Y,L)^{\cL_\ell}\boxtimes {}_{\cL_\ell} \cD^{\ell}.
\end{tikzcd}
\]
\end{thm}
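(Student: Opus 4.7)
The plan is to reduce the theorem to the naturality theorem \ref{thm:naturality-intro} (and its type-$D$ refinement, Theorem~\ref{thm:naturality-type-D}) applied to a carefully chosen meridional complete system. First, I would select a meridional surgery diagram $\scH$ for $(Y,L,\Lambda)$ whose underlying sutured Heegaard diagram for $(M(L),\g^\circ)$ is adapted to $\phi$: using Proposition~\ref{prop:meridional-diagram-moves}, I would arrange via an ambient isotopy in the sutured manifold that the image $\phi(\scH)$ differs from $\scH$ only through a permutation of the distinguished disks $D_i$ induced by $\varrho$, an elliptic involution (swap of $w_i$ and $z_i$) inside each $D_i$ whose component lies in $J$, and a composition of the allowed combinatorial moves on the $\as$, $\bs^\circ$, and $\bs_{\Lambda}$ curves. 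The strong framing hypothesis is exactly what makes this possible, since it pins down the parametrization of the neighborhoods $D_i\times S^1$ up to the elliptic involution.

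Once such a diagram is fixed, $\phi$ induces a combinatorial identification $\cX_\Lambda(\scH)^{\cL_\ell}\to \cX_\Lambda(\phi(\scH))^{\cL_\ell}$, and composing with the naturality isomorphism $\Psi_{\phi(\scH)\to\scH}$ from Theorem~\ref{thm:naturality-intro} yields a type-$D$ morphism into $\cX_{\Lambda}(Y,r_J L)^{\cL_\ell}\boxtimes {}_{\cL_\ell}[\bI_{\varrho^{-1}}]^{\cL_\ell}$. Applying Lemma~\ref{lem:change-orientation} to repackage the string reversals on components of $J$ converts the target into $\cX_{\Lambda}(Y,L)^{\cL_\ell}\boxtimes {}_{\cL_\ell}[\scE_\phi]^{\cL_\ell}$, giving the map $\cX(\phi)$. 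On the closed-manifold side, $\Phi$ acts on the Heegaard diagram $(\Sigma,\as,\bs_{\Lambda},\ws)$ through the same data, and the standard naturality of Heegaard Floer homology provides a chain representative of $\ve{\CF}^-(\Phi)$.

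The intertwining is then implemented by the morphism $\Theta_{\Gamma}\colon \bs_{\Lambda}\to \scB_{\Lambda}$ used to build $\Gamma$. Since $\Theta_\Gamma$ is characterized up to chain homotopy by containing the top-degree cycle for the torsion $\Spin^c$ structure (together with a filling construction dictated by uniqueness of higher compositions in the relevant hypercube), it must be $\phi$-equivariant up to chain homotopy. Box-tensoring the identity $\cX(\phi)\boxtimes \bI$ with ${}_{\cL_\ell}\cD^\ell$ produces exactly the composition $(\cX(\phi)\boxtimes \bI)\circ (\bI\boxtimes \Omega_\phi)$ because, at the level of the type-$A$ module ${}_{\cL_\ell}\cD^\ell$, $\varpi_J$ implements the $W_i\leftrightarrow Z_i$ and $T_i\mapsto T_i^{-1}$ swaps forced by reversing string orientations on components of $J$, while $\varrho$ permutes the $U_i$ and $T_i$ variables in accordance with the component permutation. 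Commutation of $\ve{\CF}^-(\Phi)$ with $\cC(\phi)$ through $\Gamma$ then follows formally from $A_\infty$-functoriality of $\mu_2^{\Tw}(-,\Theta_\Gamma)$.

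The main obstacle is a careful chain-level bookkeeping of basepoints. The surgery complex $\cC_\Lambda(Y,L)$ naturally sees a multipointed version of $\ve{\CF}^-(Y_\Lambda(L))$, with one $U_i$ variable per link component, whereas the ordinary invariant has a single $U$ variable; under $\phi$ these variables get permuted and (via basepoint-pushing) twisted. This is exactly the subtlety flagged in the remark after Theorem~\ref{thm:equivariant-surgery-links}, and addressing it requires the basepoint-swapping machinery of Section~\ref{sec:pointed-diffeos} together with the hypercube technology of Sections~\ref{sec:homology-action-homotopies} and \ref{sec:hypercubes-of-surgery-complexes} to show that the several incarnations of $\phi$'s action on pointed diagrams assemble into a single homotopy-commutative diagram whose chain-level output is $(\cX(\phi)\boxtimes \bI)\circ (\bI\boxtimes \Omega_\phi)$. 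On homology, as observed in the introductory remark, the distinction between the $U_i$'s disappears, so the chain-level statement descends to the cleaner statement of Theorem~\ref{thm:equivariant-surgery-links}.
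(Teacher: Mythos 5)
Your proposal follows essentially the same route as the paper's proof: push the diagram forward under $\phi$ to obtain the tautological square for $\cX^{\can}(\phi)$, repackage the orientation reversals and the component permutation via Lemma~\ref{lem:change-orientation} and the bimodules $[\bI_{\varrho^{-1}}]$ and $[E_J]$, and then return to the original meridional system by stacking the homotopy-commutative squares supplied by the naturality maps for the moves of Proposition~\ref{prop:meridional-diagram-moves}, with $\Gamma=\mu_2^{\Tw}(-,\Theta_\Gamma)$ intertwining each stage. The only quibbles are cosmetic: the preliminary step of adapting $\scH$ to $\phi$ is unnecessary (the paper simply works with $\phi\cH$ and invokes the full naturality theorem), and in one place you wrote the composition in the order $(\cX(\phi)\boxtimes\bI)\circ(\bI\boxtimes\Omega_\phi)$ rather than $(\bI\boxtimes\Omega_\phi)\circ(\cX(\phi)\boxtimes\bI)$.
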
 

\subsection{Alexander gradings and cube points}

We now describe some of the algebraic structure of the map $\cC(\phi)$, focusing on its behavior with regards to the cube points, and with regards to the Alexander grading. Firstly, the map $\cC(\phi)$ splits as a sum of maps
\[
\cC(\phi)=\sum_{\vec{M}\subset L} F^{\vec{M}}.
\]
In the above, the sum  is over all oriented sublinks of $\vec{M}\subset L$ (with orientations potentially differing from $L$). The type-$D$ morphism $\cX(\phi)$ similarly splits as a sum
\[
\cX(\phi)=\sum_{\vec{M} \subset L} f^{\vec{M}}.
\]

If $\vec{M}\subset L$, write $a_{\vec{M}}$ for the product of $\sigma_i$ for $i$ such that $+K_i\subset \vec{M}$ and $\tau_i$ for $i$ such that $-K_i\subset \vec{M}$.  The map $f^{\vec{M}}$ has only algebra outputs which are weighted by elements of the form products of $b\cdot a_{\vec{M}}\cdot \theta$ where $b$ is an algebra element  which does not shift idempotent, and $\theta$ denotes the generator of the module $[\scE_\phi]$.  Here, we are thinking of the tensor factor of $\theta$ in the same way as in Remark~\ref{rem:theta-algebra-element}. 


We write $\varrho$ for the permutation on the set of link components which is induced by $\phi$. We can view $\varrho$ as also acting on the points of the cube $\bE_\ell$ via the formula
\[
\varrho(\veps_1,\dots, \veps_\ell)=(\veps_{\varrho^{-1}(1)},\dots, \veps_{\varrho^{-1}(\ell)}).
\]

If $\veps\in \bE_\ell$, we will write $L_{\veps}\subset L$ for the sublink consisting of components $K_i$ which have $\veps_i=0$. If $M\subset L$, write $\delta_M\in \{0,1\}^\veps$ for the indicator function on the components of $M$.

The map $F^{\vec{M}}$ sends $\cC_{\veps}\subset \cC_{\Lambda}(L)$ to $\cC_{\varrho(\veps)+\delta_{M}}$, where $\delta_{M}\in \{0,1\}^\ell$ denotes the indicator function on the components of $M$ (i.e. $\delta_M(i)=1$ if and only if $K_i\subset M$).

We now consider the Alexander grading of the map $F^{\vec{M}}$ and $f^{\vec{M}}$. We first recall the Alexander gradings on the link surgery hypercube. On cube point $\veps$, the link surgery complex $\cC_{(0,\dots, 0)}\subset \cC_{\Lambda}(L)$ is a completion of the link Floer complex $\cCFL(L)$. This has a canonical absolute $\bH(L)$-valued Alexander grading. At a cube point $\veps\in \bE_\ell$, the complex $\cC_{\veps}$ can be identified with $\cCFL(L_\veps)\otimes \bigotimes_{ \{i|\veps_i=1\}} \bF[T_i^{\pm 1}]$. Here, $\cCFL(L_{\veps})$ denotes a link Floer complex with $|L|-|L_\veps|$ extra free basepoints (which each contribute a $U_i$ variable). This complex admits a relative $\bH(L_\veps)\times \Z^{|L|-|L_\veps|}$ valued Alexander grading. We can normalize this to be an absolute $\bH(L)$-valued Alexander grading by requiring all of the surgery maps $\Phi^{+K_i}$ to preserve the Alexander grading. We call this the \emph{$\sigma$-normalized} Alexander grading (cf. \cite{ZemExactTriangle}*{Section~7.2}). With this convention, the structure map $\Phi^{\vec{M}}$ sends Alexander grading $\ve{s}$ to Alexander grading $\ve{s}+\Lambda_{\vec{M}}$, where $\Lambda_{\vec{M}}$ consists of 
\[
\Lambda_{\vec{M}}=\sum_{\{i| -K_i\subset \vec{M} \}} \Lambda_i,
\]
where 
\[
\Lambda_{i}=(\lk(K_1,K_i),\dots, \lambda_i, \dots, \lk(K_\ell,K_i))\in \Z^\ell.
\]
Note that if we view $H_1(S^3\setminus L)$ as $\Z^\ell$, then $\Lambda_i$ is equal to the class of the longitude of the component $K_i$.

We will also write $\phi(L\setminus L_{\veps})$ for the image of the components of $L\setminus L_{\veps}$ under $\phi$, equipped with the push-forward orientation from $L$.


Additionally, if $J\subset L$, we write $R_J\colon \bH(L)\to \bH(L)$ for the map which sends $\ve{s}=(s_1,\dots, s_\ell)$ to $\ve{s}'=(s_1',\dots, s_\ell')$ where $s_i'=s_i$ if $K_i\not \subset J$ and $s_i'=-s_i$ if $K_i\subset J$.

\begin{lem} Let $(L,\Lambda)$ be an integrally framed link in $S^3$, and let $\phi$ be a diffeomorphism of strongly framed links. Let $J\subset L$ denote the sublink consisting of the image under $\phi$ of the components on which $\phi$ is orientation reversing. Write $P$ for permutation of the components of $L$, induced by $\phi$. Suppose that $\veps,\veps'\in \bE_\ell$,  $M\subset L$ and that $\veps'=\varrho(\veps)+\delta_M$. The map $F^{\vec{M}}_{\veps,\veps'}$ sends the subspace of $\cC_\veps(L)$ in Alexander grading $\ve{s}$ to the subspace of  $\cC_{\veps'}(L)$ which lies in Alexander grading 
\[
(R_J\circ \varrho)(\ve{s})+\Lambda_{\phi(L\setminus L_\veps)}+\Lambda_{\vec{M}}.
\]
\end{lem}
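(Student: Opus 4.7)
My plan is to decompose the map $F^{\vec{M}}_{\veps,\veps'}$ as the composition $(\bI \boxtimes \Omega_\phi) \circ (f^{\vec{M}} \boxtimes \bI)$ and to track the Alexander grading shift through each ingredient. Three independent contributions must be identified, which I expect to line up exactly with the three summands in the claimed formula: the $R_J \circ \varrho$ piece coming from the underlying diffeomorphism on link Floer homology; the $\Lambda_{\vec{M}}$ piece coming from the $\tau$-weighted algebra outputs in $a_{\vec{M}}$; and the $\Lambda_{\phi(L \setminus L_{\veps})}$ piece arising from the interaction of the cube point $\veps$ with the elliptic-swap bimodule $[E_J]$.

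The first step is to recall that at the level of the ordinary link Floer complex (i.e.\ in the all-zero idempotent of $\cL_\ell$), the geometric action of $\phi$ on $\cCFL(L)$ shifts the $\bH(L)$-valued Alexander grading by $R_J \circ \varrho$. This is a standard fact: permuting components via $\varrho$ reindexes the Alexander grading by $\varrho$, while reversing the orientation of a component negates the corresponding coordinate, contributing $R_J$ at the positions in $J$. Second, I would pass to the $\sigma$-normalized Alexander grading on the link surgery complex. In this normalization, arrows weighted by $\sigma_i$ preserve the $\bH(L)$-grading while arrows weighted by $\tau_i$ shift it by $\Lambda_i$. Since $a_{\vec{M}}$ contains $\sigma_i$ for $+K_i \subset \vec{M}$ and $\tau_i$ for $-K_i \subset \vec{M}$, the total contribution from the $a_{\vec{M}}$ factor is exactly $\Lambda_{\vec{M}} = \sum_{-K_i \subset \vec{M}} \Lambda_i$, giving step~(ii).

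Third, I would analyze the piece coming from the interaction of $\Omega_\phi$ with the cube point. At the input cube point $\veps$, the components of $L \setminus L_\veps$ live in idempotent~$1$; under $\Omega_\phi$, these components are permuted by $\varrho$, and on positions indexed by $J$ the bimodule $[E_J]$ inverts $T_j \mapsto T_j^{-1}$. In the $\sigma$-normalized grading, this inversion is not grading-preserving: at an idempotent-$1$ factor corresponding to a component $K_j \in \phi(L \setminus L_\veps)$ with $j \in J$, the orientation reversal produces a correction of $\Lambda_j$, while if $j \notin J$ no correction appears. Summing over $j$ with $-K_j \subset \phi(L \setminus L_\veps)$ yields precisely $\Lambda_{\phi(L \setminus L_\veps)}$.

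The main obstacle will be step (iii): carefully accounting for the normalization differences between the $\sigma$-normalized grading at the input and output cube points, and verifying that the only net contribution is at the positions $j \in J \cap \phi(L \setminus L_\veps)$. To make this rigorous I would compute $\Omega_\phi$ explicitly on tensors of the form $\theta \otimes b \otimes e$ where $e \in \cD^\ell$ sits in the mixed idempotent consistent with $\varrho(\veps)$, using the explicit formulas $\varrho_1(W_i) = W_{\varrho(i)}$, $\varpi(T^n) = T^{-n}$, and the structure maps of $[\bI_{\varrho^{-1}}]$ and $[E_J]$ reviewed in Section~\ref{sec:elliptic-involution}. Since each of the three contributions is independent (they act at different positions or on different types of outputs), they combine additively to give the formula $(R_J \circ \varrho)(\ve{s}) + \Lambda_{\phi(L \setminus L_\veps)} + \Lambda_{\vec{M}}$.
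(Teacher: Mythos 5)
Your proposal identifies the same three grading contributions as the paper and handles two of them identically: the $R_J\circ\varrho$ shift comes from the action of $\phi$ on the absolute Alexander grading of $\cCFL(L)$ at the all-zero cube point, and the $\Lambda_{\vec{M}}$ shift comes from the fact that, by definition of the $\sigma$-normalization, the structure maps $\Phi^{\vec{M}}$ shift grading by $\Lambda_{\vec{M}}$. Where you diverge is the term $\Lambda_{\phi(L\setminus L_\veps)}$. You propose to extract it by computing $\Omega_\phi$ explicitly on idempotent-$1$ tensor factors and arguing that the $T_j\mapsto T_j^{-1}$ inversion from $[E_J]$ produces a correction of $\Lambda_j$ at each $j\in J$ with $K_j\subset\phi(L\setminus L_\veps)$ — and you correctly flag this as the hard step. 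The paper avoids that computation entirely: it considers only $\vec{M}=\emptyset$ (having already peeled off $\Lambda_{\vec{M}}$) and transports the $\veps=\vec{0}$ case to general $\veps$ via the commutative square whose vertical arrows are $\Phi^{L\setminus L_\veps}\colon\cC_{\vec{0}}\to\cC_\veps$ and $\Phi^{\phi(L\setminus L_\veps)}\colon\cC_{\vec{0}}\to\cC_{\varrho(\veps)}$. Since $L\setminus L_\veps$ carries the orientation of $L$, the left arrow preserves grading, while the right arrow shifts by $\Lambda_{\phi(L\setminus L_\veps)}$ (the pushforward orientation is negative exactly on the components in $J$), and the top arrow shifts by $R_J\circ\varrho$; the bottom arrow's shift is then forced. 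Your route is workable and would make the bimodule mechanism explicit, but it requires carefully reconciling the $\sigma$-normalizations at input and output cube points — precisely the bookkeeping the paper's diagram sidesteps, because the normalization is \emph{defined} by the grading behavior of the maps $\Phi^{\vec{N}}$. If you pursue your version, the cleanest way to close step (iii) is to observe that reversing the orientation of $K_j$ interchanges $\sigma_j$ and $\tau_j$, so the normalization condition ``$\sigma_j$-arrows preserve grading'' changes the idempotent-$1$ grading convention by exactly $\Lambda_j$; at that point you have essentially rederived the paper's commutative square.
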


\begin{proof} By construction, the map $F^{\vec{M}}_{\veps,\veps'}$ will have the same shift in Alexander grading as $\Phi^{\vec{M}}_{\varrho(\veps),\veps'}\circ F^{\emptyset}_{\veps,\varrho(\veps)}$. By definition, $\Phi^{\vec{M}}_{\varrho(\veps),\veps'}$ shifts the $\sigma$-normalized Alexander grading by $\Lambda_{\vec{M}}$. Therefore it suffices to show the claim when $\vec{M}=\emptyset$. We begin by verifying the claim when $\veps=(0,\dots, 0)$. In this case, the claim is just that Alexander grading $\ve{s}$ is mapped to Alexander grading $(R_J\circ \varrho)(\ve{s})$. This follows from the definition of the absolute Alexander grading on link Floer homology; see, for example, \cite{ZemAbsoluteGradings}*{Proposition~8.3} for a very similar argument. For general $\veps$, we can use the following commutative diagram of gradings to compute the grading of $F^{\emptyset}_{\veps,\varrho(\veps)}$:
\[
\begin{tikzcd}[column sep=2cm]
\cC_{\vec{0}}(\ve{s}) \ar[r, "F^\emptyset_{\vec{0},\vec{0}}"] \ar[d, "\Phi^{L\setminus L_{\veps}}"]
 & \cC_{\vec{0}}((R_J \circ \varrho)(\ve{s})) \ar[d, "\Phi^{\phi(L\setminus L_{\veps})}"]
 \\
\cC_{\veps}(\ve{s}) \ar[r,dashed, "F^\emptyset_{\veps,\varrho(\veps)}"] &\cC_{\varrho(\veps)}\left((R_J\circ \varrho)(\ve{s})+\Lambda_{\phi(L\setminus L_{\veps})}\right).
\end{tikzcd}
\]
This completes the proof.
\end{proof}

%

\section{Diffeomorphisms of multi-pointed 3-manifolds}
\label{sec:pointed-diffeos}

In this section, we discuss diffeomorphisms of 3-manifolds with multiple basepoints.  Ozsv\'{a}th and Szab\'{o} proved that Heegaard Floer homology is invariant under adding basepoints to a 3-manifold \cite{OSLinks}*{Proposition~6.5}. In this section, we prove that on the level of homology, the diffeomorphism maps are invariant under adding basepoints.

\begin{thm}\label{thm:multi-pointed-diffeos} Suppose that $Y$ is an integer homology 3-sphere and $\phi\colon Y\to Y$ is an orientation preserving diffeomorphism. Let $\phi_0$ and $\phi_1$ be diffeomorphisms which are isotopic to $\phi$ as unpointed diffeomorphisms, such that $\phi_0$ and $\phi_1$ preserve non-empty collections of basepoints $\ws_0,\ws_1\subset Y$ setwise. Then there is an isomorphism
\[
\HF^-(Y,\ws_0)\iso \HF^-(Y,\ws_1)
\] 
which intertwines the maps $\HF(\phi_0)$ and $\HF(\phi_1)$. 
\end{thm}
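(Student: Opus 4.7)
The plan is to reduce to a common basepoint set via free stabilization, and then to show that the residual freedom acts trivially on homology using the homology-sphere hypothesis.

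I would first choose a finite set $\ws\subset Y$ containing $\ws_0\cup \ws_1$ together with a diffeomorphism $\phi_\ws\colon Y\to Y$ which is isotopic to $\phi$ as an unpointed diffeomorphism and preserves $\ws$ setwise; such a $\phi_\ws$ always exists, e.g. by perturbing $\phi$ slightly near $\ws$. The free stabilization maps of \cite{OSLinks}*{Proposition~6.5}, interpreted via the graph TQFT of \cite{ZemGraphTQFT}, provide natural grading-shifted isomorphisms
\[
\Sigma_i\colon \HF^-(Y,\ws_i)\otimes V^{\otimes|\ws\setminus\ws_i|}\xrightarrow{\ \cong\ }\HF^-(Y,\ws)\qquad (i=0,1),
\]
where $V$ is the two-dimensional $\bF$-module associated to a free basepoint. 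The intertwining isomorphism of the theorem will be $\Sigma_1^{-1}\circ \Sigma_0$, after a canonical identification of the $V$-factors obtained from a fixed system of arcs joining the added basepoints to a reference basepoint. By functoriality of free stabilization under diffeomorphisms, this reduces the theorem to the statement
\begin{equation}\label{eq:reduce-multipointed}
\HF^-(\psi)=\id\text{ on }\HF^-(Y,\ws)
\end{equation}
whenever $\psi\colon Y\to Y$ preserves $\ws$ setwise and is isotopic to $\id_Y$ as an unpointed diffeomorphism.

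To prove \eqref{eq:reduce-multipointed}, I would decompose such a $\psi$, up to pointed isotopy, into two types of elementary building blocks: a product of point-push diffeomorphisms along loops $\gamma_i\subset Y$ based at the $w_i\in\ws$ (tracked by the unpointed isotopy from $\psi$ to $\id_Y$), together with a permutation $\rho$ of $\ws$ induced by that isotopy. In the graph-TQFT formalism, point-pushing along $\gamma_i$ acts on $\HF^-(Y,\ws)$ via the $H_1(Y;\bF)$-action associated with $[\gamma_i]$; this action vanishes since $H_1(Y;\bF)=0$. The permutation $\rho$ acts by permuting tensor factors under the natural decomposition $\HF^-(Y,\ws)\cong \HF^-(Y)\otimes V^{\otimes|\ws|-1}$, and I would absorb it into the arc system used to identify the free-stabilization factors in $\Sigma_0$ and $\Sigma_1$; after aligning these choices compatibly with $\rho$, the permutation contributes trivially to the comparison isomorphism.

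The main obstacle will be the bookkeeping of the $V$-factor identifications in the presence of $\rho$: the vanishing of point-pushings is essentially automatic once one has the graph-TQFT description of diffeomorphism maps, but the stabilization factors in $\Sigma_0$ and $\Sigma_1$ must be matched up in a way that is simultaneously compatible with $\phi_\ws$ and with the pointed isotopy class of the comparison. An alternative approach that sidesteps some of this bookkeeping is to realize $\phi_0$, $\phi_1$, and a chosen unpointed isotopy between them as a single graph cobordism in $Y\times I$ with vertex set $\ws_0$ on one end and $\ws_1$ on the other, and then invoke naturality of graph cobordism maps on $\HF^-$ together with the vanishing of the $H_1(Y;\bF)$-action.
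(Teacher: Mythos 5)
Your overall architecture (stabilize to a common basepoint set, then show that a diffeomorphism preserving $\ws$ setwise and unpointedly isotopic to the identity acts trivially) matches the paper's, and your treatment of the point-push-along-a-loop contributions is fine: in the graph TQFT these are of the form $\id+\Phi_w A_\gamma$, and $A_\gamma\simeq 0$ since $H_1(Y;\Z)=0$. The gap is in how you dispose of the permutation $\rho$. First, $\rho$ need not only permute the auxiliary stabilization basepoints: it can permute basepoints already lying in $\ws_0$ (e.g.\ when $\ws_0=\ws_1=\ws$ and $\phi_0,\phi_1$ differ by a nontrivial permutation of $\ws$), and such a permutation cannot be absorbed into the arc system identifying the $V$-factors of $\Sigma_0$ and $\Sigma_1$, since those factors do not involve the $\ws_0$ basepoints at all. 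Second, and more fundamentally, "the permutation acts by permuting tensor factors" is not a proof: over $\bF[U_1,\dots,U_n]$ with distinct variables, the swap diffeomorphism along an arc $\lambda$ joining $w_1$ to $w_2$ genuinely permutes the module structure, and its chain-level formula is not the identity. The paper computes it (via the graph TQFT) to be
\[
(\Sw_{\lambda})_*\simeq \bI^{\sigma,\sigma''}+\bigl(\bI^{\sigma',\sigma''}\circ \Omega_{w_1}^{\sigma,\sigma'}+\Omega_{w_2}^{\sigma',\sigma''}\circ \bI^{\sigma,\sigma'}\bigr)\circ A_{\lambda}^{\sigma},
\]
and then must construct an explicit null-homotopy of the correction term using a divided-difference operator $j$ with $(U_1+U_2)\circ j=\scT\circ\bI^{\sigma,\sigma''}+\id$. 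This is the mathematical core of the theorem (the paper's Theorem~\ref{thm:swap-diffeo-on-homology} and Proposition~\ref{prop:compute-Sw-map}); it does not follow from $H_1(Y;\bF)=0$, from naturality of free stabilization, or from any identification of tensor factors, and your proposal as written supplies no argument for it.

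Your alternative suggestion (realize everything as a single graph cobordism in $Y\times I$ and invoke naturality plus vanishing of the $H_1$-action) has the same defect: the graph cobordism encoding a transposition of two basepoints is a trivalent graph whose induced map is exactly the swap map above, so you would still need the computation showing that this map is homotopic to the identity after relabeling the variables. If you add a proof of that statement --- either by the paper's divided-difference homotopy or by some other means --- the rest of your outline goes through.
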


Theorem~\ref{thm:multi-pointed-diffeos} does not give us full access to the diffeomorphism map $\CF(\phi)$ on $\CF^-(Y)$. Nonetheless we remark in Section~\ref{sec:correction-terms} that the induced map $\HF(\phi)$ on $\HF^-(Y)$ and $\HF^-(-Y)$ determines the correction terms $\underline{d}^\phi(Y)$ and $\overline{d}{}^\phi(Y)$. 

\subsection{Stablization}

We recall from \cite{OSLinks}*{Section~4} that Heegaard Floer homology $\HF^-(Y)$ can naturally be defined for multi-pointed 3-manifolds. Indeed, if $\ws$ is a nonempty collection of basepoints, there is a chain complex $\CF^-(Y,\ws)$ which is free over the ring $\bF[U_1,\dots, U_n]$, where $|\ws|=n$. Ozsv\'{a}th and Szab\'{o} prove that the homology $\HF^-(Y,\ws)$ is invariant of the choice $\ws$ \cite{OSLinks}*{Proposition~6.5}. They further show that if $w_{n+1}\not \in \ws$, then
\begin{equation}
\CF^-(Y,\ws\cup \{w_{n+1}\})\simeq \Cone\left(\begin{tikzcd}[column sep=2cm] \CF^-(Y,\ws)[U_{n+1}] \ar[r, "U_n+U_{n+1}"] & \CF^-(Y,\ws)[U_{n+1}] \end{tikzcd}\right),
\label{eq:decompose-mapping-cone}
\end{equation}
which implies in particular that the homology of $\CF^-(Y,\ws)$ coincides with the homology of $\CF^-(Y,\ws\cup \{w_0\})$. Furthermore, all of the $U_i$ have the same action on homology.

It is helpful to formalize the above by observing that there is a natural chain map from $\CF^-(Y,\ws)$ to $\CF^-(Y,\ws\cup \{w_{n+1}\})$ which sends an element of $\CF^-(Y,\ws)$ into the right copy of $\CF^-(Y,\ws)$ in the mapping cone description of Equation~\eqref{eq:decompose-mapping-cone}. This map is easily seen to be the same as the \emph{free-stabilization} map
\[
S_{w_{n+1}}^+\colon \CF^-(Y,\ws)\to \CF^-(Y,\ws\cup \{w_{n+1}\})
\]
described in \cite{ZemGraphTQFT}*{Section~6}.
In particular, we see that $S_{w_{n+1}}^+$ is an isomorphism on homology.

\begin{rem}
In \cite{ZemGraphTQFT}, there is also a free destabilization map $S_{w_{n+1}}^-$, defined in the opposite direction. This map is zero on homology for the complexes above. (It becomes non-zero if we set some of the $U_i$ equal before taking homology).
\end{rem}

\begin{lem}\label{lem:stabilization-diffeo} If $\phi$ is an orientation preserving diffeomorphism of $Y$ which fixes a collection of basepoints $\ws\subset Y$ setwise. If $w_{n+1}\in Y\setminus \ws$ then
\[
\HF(\phi)\circ S_{w_{n+1}}^+\simeq S_{\phi(w_{n+1})}^+\circ \HF(\phi).
\]
\end{lem}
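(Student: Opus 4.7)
The plan is to work with an explicit chain-level model for the free-stabilization map and to choose a smooth representative of $\phi$ that is adapted to this model. Specifically, pick a weakly admissible pointed Heegaard diagram $\cH=(\Sigma,\as,\bs,\ws)$ for $(Y,\ws)$. The free-stabilization map $S^+_{w_{n+1}}$ at the chain level is computed by a small genus-one stabilization of $\cH$ localized near $w_{n+1}$: take the connected sum of $\Sigma$ with a torus $T^2_{w_{n+1}}$ carrying a pair of curves $\a_0,\b_0$ meeting in a single transverse intersection point $\theta^+$, with $w_{n+1}$ placed in the appropriate region. One then has the chain-level identity $S^+_{w_{n+1}}(\xs)=\xs\otimes \theta^+$, and the analogous formula computes $S^+_{\phi(w_{n+1})}$ using a torus stabilization at $\phi(w_{n+1})$.

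Next, choose a smooth diffeomorphism $\phi\colon Y\to Y$ (representing the given mapping class of the pair $(Y,\ws)$) that carries a small stabilization disk around $w_{n+1}$ diffeomorphically onto a small stabilization disk around $\phi(w_{n+1})$. Pushing forward then gives
\[
\phi\left(\cH \#_{w_{n+1}} T^2_{w_{n+1}}\right) \;=\; \phi(\cH)\#_{\phi(w_{n+1})}T^2_{\phi(w_{n+1})},
\]
and the diffeomorphism chain map induced by $\phi$ sends $\xs\otimes\theta^+$ to $\phi(\xs)\otimes\theta^+$, because $\phi$ identifies the two stabilization tori and the unique intersection point on each side is the top generator. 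On the unstabilized complex, the same $\phi$ induces the chain map $\xs\mapsto \phi(\xs)$. This provides the on-the-nose chain equality
\[
\CF(\phi)\circ S^+_{w_{n+1}} \;=\; S^+_{\phi(w_{n+1})}\circ \CF(\phi)
\]
at the level of the pushed-forward diagrams.

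Finally, invoke the Juh\'asz--Thurston--Zemke naturality for multi-pointed Heegaard Floer homology to transport the maps on $\phi(\cH)$ and its stabilization back to the original diagrams $\cH$ and $\cH\#_{w_{n+1}}T^2_{w_{n+1}}$. Naturality of the stabilization construction (which is local near the added basepoint and hence commutes with the moves used to identify $\phi(\cH)$ with $\cH$) yields that the corresponding square commutes up to chain homotopy and hence on the nose on homology. Descending to $\HF^-$ gives the desired identity.

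The main obstacle is the bookkeeping of basepoints: the diffeomorphism $\CF(\phi)$ is a priori only well-defined up to chain homotopy, and the two sides of the desired equation live on complexes with different distinguished basepoints ($w_{n+1}$ vs.\ $\phi(w_{n+1})$). The key is that the chain-level model for $S^+$ is supported in an arbitrarily small neighborhood of the new basepoint, so one can arrange $\phi$ to interchange these neighborhoods exactly, reducing the claim to the trivial statement that the diffeomorphism map on a torus stabilization carries $\theta^+$ to $\theta^+$. The remaining work is to check that the naturality isomorphisms used to identify $\CF^-$ of the various diagrams respect the chain-level free-stabilization, which follows from the fact that the stabilization torus is disjoint from the region where the diagram moves relating $\phi(\cH)$ and $\cH$ take place.
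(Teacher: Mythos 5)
Your argument is correct and is essentially the paper's proof: the paper disposes of this lemma in one line by citing the naturality of the free-stabilization maps established in \cite{ZemGraphTQFT}*{Section~6}, and your proposal is precisely an unpacking of that naturality statement (local chain-level model $\xs\mapsto\xs\otimes\theta^+$ near the new basepoint, a representative of $\phi$ carrying the stabilization region at $w_{n+1}$ to the one at $\phi(w_{n+1})$, and commutation of $S^+$ with the transition maps for Heegaard moves supported away from the stabilization torus).
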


The above follows from the naturality of the maps $S_{w_{n+1}}^+$, proven in \cite{ZemGraphTQFT}*{Section~6}.

\subsection{Correction terms}
\label{sec:correction-terms}

In this section, we prove the following result, which is a straightforward adaptation of the ideas of \cite{HMInvolutive}*{Section 4} and \cite[Lemma 2.12]{HMZConnectedSum}. Given a diffeomorphism $\phi \colon Y \rightarrow Y$, let $-\phi \colon -Y \rightarrow -Y$ be the same map on the three-manifold with its orientation reversed.

\begin{prop} The map $\HF(\phi)\colon \HF^-(Y)\to \HF^-(Y)$ determines $\underline{d}{}^{\phi}(Y)$ and the map $\HF(-\phi) \colon \HF^-(-Y) \to \HF^-(-Y)$ determines the correction term $\overline{d}{}^\phi(Y)$.
\end{prop}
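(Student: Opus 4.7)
My plan is to reduce the two statements to a single observation about $\underline d$, using the duality $\du(C,\phi)=-\dl(-(C,\phi))$ recalled in Section~\ref{sec:local-algebra}.

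\medskip

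\noindent\textbf{Step 1 (the lower correction term is determined by $\HF(\phi)$).} I would first argue directly from the definition of $\dl(C,\phi)$. The data appearing there are: a homogeneous cycle $a\in C$; the condition that $[U^n a]\neq 0$ for all $n>0$; and the existence of $b\in C$ with $\partial b=(\id+\phi)a$. Each of these passes unambiguously to $H_*(C)$: the cycle $a$ specifies a class $[a]$ of the same grading, the condition $[U^n a]\neq 0$ is the statement $U^n[a]\neq 0$ in the $\mathbb F[U]$-module $H_*(C)$, and the existence of the bounding chain $b$ is equivalent to $(\id_*+\phi_*)[a]=0$. Consequently
\[
\dl(C,\phi)=\max\{\gr(\alpha):\alpha\in H_*(C),\ U^n\alpha\neq 0\ \forall n>0,\ (\id_*+\phi_*)\alpha=0\},
\]
which depends only on $H_*(C)$ as a graded $\mathbb F[U]$-module together with the endomorphism $\phi_*$. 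Specializing to $C=\CF^-(Y)$ with $\phi=\CF(\phi)$, this exhibits $\dl^{\phi}(Y)$ as a function of the pair $(\HF^-(Y),\HF(\phi))$ only.

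\medskip

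\noindent\textbf{Step 2 (reduce $\du^\phi(Y)$ to $\dl$ on $-Y$).} For the upper correction term, I would use duality rather than try to interpret the triple $(x,y,z)$ in the definition of $\du$ directly on homology. Section~\ref{sec:local-algebra} records that $\dl((C,\phi))=-\du(-(C,\phi))$, where $-(C,\phi)$ denotes the $\mathbb F[U]$-dual of the generalized $\phi$-complex. In Heegaard Floer theory, the dual of $(\CF^-(Y),\CF(\phi))$ is canonically identified (up to the grading conventions involved in the definition of $\du$) with $(\CF^-(-Y),\CF(-\phi))$, since reversing the orientation of a Heegaard diagram dualizes the chain complex and the induced diffeomorphism map. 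Applying the duality formula gives
\[
\du^{\phi}(Y)=\du(\CF^-(Y),\CF(\phi))=-\dl(-(\CF^-(Y),\CF(\phi)))=-\dl(\CF^-(-Y),\CF(-\phi))=-\dl^{-\phi}(-Y).
\]
Combining this with Step~1 applied to $(-Y,-\phi)$ shows that $\overline d{}^{\phi}(Y)$ is determined by the induced action of $\HF(-\phi)$ on $\HF^-(-Y)$.

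\medskip

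\noindent\textbf{Main obstacle.} The conceptually straightforward part is Step~1, which is essentially unwinding a definition. The subtle point, and the one I would be most careful with, is Step~2: verifying that the identification of $(\CF^-(-Y),\CF(-\phi))$ with $-(\CF^-(Y),\CF(\phi))$ is compatible with the grading and absolute $\mathbb Q$-grading conventions used in defining $\dl$ and $\du$, so that the sign in $\du=-\dl(-(\cdot))$ comes out correctly. Once these conventions are aligned (as in \cite{HMInvolutive}*{Section~4} and \cite{HMZConnectedSum}*{Lemma~2.12}), the argument is immediate.
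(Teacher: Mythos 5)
Your proposal is correct and follows essentially the same route as the paper: reduce $\du^{\phi}(Y)$ to $\dl^{-\phi}(-Y)$ via the duality $\du(C,\phi)=-\dl(-(C,\phi))$ together with the identification of $-(\CF^-(Y),\CF(\phi))$ with $(\CF^-(-Y),\CF(-\phi))$, and then observe that every condition in the definition of $\dl$ descends to homology. The only difference is cosmetic — the paper asserts $\overline{d}{}^{\phi}(Y)=-\underline{d}{}^{-\phi}(-Y)$ in one line where you spell out the two dualities it combines.
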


\begin{proof} Since $\overline{d}{}^{\phi}(Y) = -\underline{d}{}^{-\phi}(-Y)$, it suffices to show that the map $\HF(\phi)$ determines $\underline{d}{}^{\phi}(Y)$. Recall that in terms of the chain complex $\CF^-(Y)$ and the map $\CF(\phi)$, the lower correction term is the maximum grading of a cycle $v$ in $\CF^-(Y)$ with the property that $U^n[v]\neq 0$ for any $n \geq 0$, and there is a $w$ in $\CF^-(Y)$ such that $\partial w = (1+\CF(\phi))v$; this is the analog of \cite[Lemma 2.12(a)]{HMZConnectedSum}. This is equivalent to maximum grading of a homology class $[v]$ in $\HF^{-}(Y)$ such that $U^n[v]\neq 0$ for any $n \geq 0$ and $(1+\HF(\phi))[v]=0$, and in particular determined by $\HF^-(Y)$ and $\HF(\phi)$.
\end{proof}

\subsection{Basepoint swapping maps}

In this section, we prove a technical result about basepoint moving maps in Heegaard Floer theory. We will later use the results of this section to show that if $\phi$ is an automorphism of $(Y,L)$ which fixes a framing $\Lambda$ on $L$, then the induced map $\cC(\phi)$ on $\cC_{\Lambda}(L)$ computes the map for a diffeomorphism of $Y_{\Lambda}(L)$ which is isotopic to the diffeomorphism induced by $\phi$ on the singly pointed version of Heegaard Floer homology $\HF^-(Y_{\Lambda}(L))$.

In particular, we show the following:

\begin{thm}
\label{thm:swap-diffeo-on-homology}
Let $(Y,\ws)$ be a multi-pointed 3-manifold with $|\ws|>1$, and suppose that $\lambda$ is an arc connecting two distinct basepoints in $\ws$. Write $\Sw_\lambda\colon (Y,\ws)\to (Y,\ws)$ for the diffeomorphism which switches the endpoints of $\lambda$ and is supported in a neighborhood of $\lambda$. Then $(\Sw_\lambda)_*$ induces the identity map on $\HF^-(Y,\ws)$, where $\HF^-(Y,\ws)$ denotes the version of Floer homology with $|\ws|$ distinct $U_i$ variables.
\end{thm}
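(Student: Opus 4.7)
The plan is to reduce the claim to a single-basepoint computation in which $\Sw_\lambda$ is manifestly isotopic to the identity, by introducing an auxiliary fixed basepoint and iteratively comparing the multi-pointed diffeomorphism map with the singly pointed one via Lemma~\ref{lem:stabilization-diffeo}. First, choose an auxiliary basepoint $w_0\in Y$ lying in the complement of $\ws$ and of the support of $\Sw_\lambda$; this is possible because that support is a neighborhood of $\lambda$. Setting $\ws^+:=\ws\cup\{w_0\}$, the free stabilization map $S^+_{w_0}\colon\HF^-(Y,\ws)\to\HF^-(Y,\ws^+)$ is an isomorphism on homology, and by Lemma~\ref{lem:stabilization-diffeo} (applied with $\phi=\Sw_\lambda$, which fixes $w_0$) it commutes with $\HF(\Sw_\lambda)$ up to homotopy. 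It therefore suffices to show that $\HF(\Sw_\lambda)$ acts as the identity on $\HF^-(Y,\ws^+)$.

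Write $\ws=\{w_1,\dots,w_n\}$ with $w_1,w_2$ the endpoints of $\lambda$ and $w_3,\dots,w_n$ lying outside the support of $\Sw_\lambda$ (which we may arrange), and form the iterated free stabilization
\[
T\;:=\;S^+_{w_1}\circ S^+_{w_2}\circ\cdots\circ S^+_{w_n}\colon\HF^-(Y,\{w_0\})\longrightarrow\HF^-(Y,\ws^+),
\]
which is an isomorphism on homology. On the source $\HF^-(Y,\{w_0\})$ the map $\HF(\Sw_\lambda)$ is the identity: since $\Sw_\lambda$ is supported in a $3$-ball disjoint from $w_0$, and any diffeomorphism of a $3$-ball rel boundary is isotopic to the identity, $\Sw_\lambda$ is isotopic to the identity rel $w_0$, and hence induces the identity on $\HF^-(Y,\{w_0\})$ by naturality of Heegaard Floer homology. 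Iterating Lemma~\ref{lem:stabilization-diffeo} (extended in the obvious way to diffeomorphisms whose source basepoint set is permuted rather than pointwise fixed) then yields, on homology,
\[
\HF(\Sw_\lambda)\circ T\;=\;S^+_{\Sw_\lambda(w_1)}\circ\cdots\circ S^+_{\Sw_\lambda(w_n)}\;=\;S^+_{w_2}\circ S^+_{w_1}\circ S^+_{w_3}\circ\cdots\circ S^+_{w_n},
\]
using $\HF(\Sw_\lambda)=\Id$ on $\HF^-(Y,\{w_0\})$ together with $\Sw_\lambda(w_1)=w_2$, $\Sw_\lambda(w_2)=w_1$, and $\Sw_\lambda(w_i)=w_i$ for $i>2$. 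Since free stabilization maps at distinct basepoints commute on homology, the right-hand side equals $T$, and so $\HF(\Sw_\lambda)\circ T=T$. As $T$ is an isomorphism on homology, this gives $\HF(\Sw_\lambda)=\Id$ on $\HF^-(Y,\ws^+)$, completing the reduction.

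The main obstacle is the mild extension of Lemma~\ref{lem:stabilization-diffeo} required to handle diffeomorphisms that permute rather than pointwise fix the existing basepoint set; the proof adapts verbatim, since the naturality maps can be defined for any pointed diffeomorphism and the chain-level model of $S^+_w$ is local at the added point. The other technical input --- that free stabilizations at distinct basepoints commute on homology --- is a standard locality property following from the local chain-level description of $S^+_w$ within the graph TQFT framework of \cite{ZemGraphTQFT}.
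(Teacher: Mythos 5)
Your argument is correct, but it is genuinely different from the proof in the paper. The paper proceeds by brute force at the chain level: it factors $(\Sw_\lambda)_*$ as $(S_{w'}^- A_{\lambda_2} S_{w_2}^+)(S_{w_2}^- A_{\lambda} S_{w_1}^+)(S_{w_1}^- A_{\lambda_1} S_{w'}^+)$ using \cite{ZemGraphTQFT}*{Theorem~14.11}, develops a sequence of new commutation relations between the relative homology actions $A_\lambda$, the free (de)stabilizations, and the coloring-change maps $\Omega_w^{\sigma,\sigma'}$ (Lemmas~\ref{lem:Omega-chain-maps}--\ref{lem:[Omega,A]-disjoint}), distills these into a closed chain-level formula for $(\Sw_\lambda)_*$ (Proposition~\ref{prop:compute-Sw-map}), and finally writes down an explicit null-homotopy of $\id+\scT\circ(\Sw_\lambda)_*$. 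Your route replaces all of this with a soft functoriality argument: anchor at an auxiliary basepoint $w_0$ away from the support of $\Sw_\lambda$, observe that $\Sw_\lambda$ is isotopic to the identity rel $w_0$ (it is the time-one flow of a rotation supported in a ball, so no appeal to Cerf is even needed), and conjugate through the iterated free stabilization $T$, using that $T$ is surjective on homology. What your approach buys is brevity and the avoidance of the $\Omega$-map calculus entirely; what it loses is the chain-level formula of Proposition~\ref{prop:compute-Sw-map} and the intermediate relations, which are of some independent interest (though in this paper they are used only to prove this theorem, so your argument would suffice for all downstream applications, in particular Theorem~\ref{thm:multi-pointed-diffeos}).

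Two inputs you rely on deserve to be stated precisely, though both are available. First, the commutativity $S^+_{w}S^+_{w'}\simeq S^+_{w'}S^+_{w}$ for $w\neq w'$ holds up to chain homotopy by the locality of the free stabilization construction (\cite{ZemGraphTQFT}, Section~6). Second, your parenthetical about extending Lemma~\ref{lem:stabilization-diffeo} slightly mischaracterizes what is needed: at the intermediate stages of your iteration the diffeomorphism does not merely permute the basepoint set, it fails to preserve it (e.g.\ it carries $\{w_0,w_2,w_3,\dots\}$ to the different set $\{w_0,w_1,w_3,\dots\}$), so the maps $\HF(\Sw_\lambda)$ appearing there are diffeomorphism maps between Floer homologies with different basepoint sets. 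This is still fine --- the relation $\HF(\phi)\circ S^+_w\simeq S^+_{\phi(w)}\circ \HF(\phi)$ is tautological for the pushforward maps associated to arbitrary pointed diffeomorphisms of pairs and follows from the naturality of $S^+_w$ exactly as in Lemma~\ref{lem:stabilization-diffeo} --- but you should say so explicitly rather than describe the extension as being about permutations.
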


To prove the above, we compute the diffeomorphism map $(\Sw_{\lambda})_*$ on $\CF^-(Y,\ws)$ using the graph TQFT from \cite{ZemGraphTQFT}. The map will typically be non-trivial on the chain level, but we will give an argument to show that it is the identity on homology.

 In \cite{ZemGraphTQFT}*{Proposition~14.24}, the last author proves that as an endomorphism of
\[
\frac{\CF^-(Y,\ws)}{U_1=\cdots=U_n},
\]
we have
\begin{equation}
(\Sw_{\lambda})_*\simeq \id+(\Phi_{w_1}+\Phi_{w_2}) \circ A_{\lambda}.\label{eq:basepoint-swapping-colors}
\end{equation}
Here, $A_{\lambda}$ denotes the relative homology action, and $\Phi_{w_i}$ are the basepoint actions. The definitions of these maps are recalled below. In our present setting, the above formula is not useful since we are not setting all of the $U_i$ to be equal. Instead, we will prove an analog of Equation \eqref{eq:basepoint-swapping-colors} which holds when all of the basepoints have distinct variables; our new formula appears in Proposition~\ref{prop:compute-Sw-map} below.

 We recall the definitions of the maps appearing in Equation~\eqref{eq:basepoint-swapping-colors}, which we will also require for its generalization. The map $A_{\lambda}$ is defined by representing $\lambda$ as an immersed arc on the Heegaard diagram, with $\d \lambda=\{w_1,w_2\}$. One then sets
\[
A_{\lambda}(\xs):=\sum_{\ys\in \bT_{\a}\cap \bT_{\b}} \sum_{\substack{\phi\in \pi_2(\xs,\ys)\\ \mu(\phi)=1}} \#(\d_{\a}(\phi)\cap \lambda) \# (\cM(\phi)/\R) U_1^{n_{w_1}(\phi)}\cdots U_n^{n_{w_n}(\phi)} \ys.
\] 
The map $\Phi_{w_i}$ is defined via the formula
\[
\Phi_{w_i}(\xs):=U_i^{-1}\sum_{\ys\in \bT_{\a}\cap \bT_{\b}} \sum_{\substack{\phi\in \pi_2(\xs,\ys)\\ \mu(\phi)=1}} n_{w_i}(\phi) \# (\cM(\phi)/\R) U_1^{n_{w_1}(\phi)}\cdots U_n^{n_{w_n}(\phi)} \ys.
\]
 We recall from \cite{ZemGraphTQFT}*{Lemmas~5.1,~5.5} that if $\d \lambda=\{w_1,w_2\}$, then
\begin{equation}
\begin{split} \d(A_\lambda)&=U_{1}+U_{2}\\
A_{\lambda}^2&\simeq U_1\simeq U_2.
\end{split}
\label{eq:basic-prop-A_lambda}
\end{equation}
Another relation that is helpful is the following:
\[
[A_{\lambda},A_{\lambda'}]\simeq\sum_{w_i\in \d \lambda\cap \d \lambda'} U_{i},
\]
for which we refer the reader to \cite{ZemGraphTQFT}*{Lemma~5.4}.

For our proof of Theorem~\ref{thm:swap-diffeo-on-homology}, we need to consider somewhat more complicated ways of identifying variables. We recall that if $R$ is a commutative ring, an $R$-\emph{coloring} (or just \emph{coloring} if $R$ is implicit) of $\ws$ is a map $\sigma\colon \ws\to R$. In this case, there is a version of the Floer complex $\CF^-(Y,\ws^\sigma)$ which is defined as a free $R$-module over $\bT_{\a}\cap \bT_{\b}$. The differential counts pseudoholomorphic disks of index 1 weighted by $\sigma(w_1)^{n_{w_1}(\phi)}\cdots \sigma(w_n)^{n_{w_n}(\phi)}$, where $\ws=\{w_1,\dots, w_n\}$.

We now describe a variation on the map $\Phi_{w_i}$. Suppose $\sigma,\sigma'\colon \ws\to R$ are colorings which differ only at a single basepoint $w_n\in \ws$. We place no restrictions on $\sigma(w_n)$ and $\sigma'(w_n)$. Write $X_1,\dots, X_{n-1}$ for $\sigma(w_1),\dots, \sigma(w_{n-1})$ respectively, and write $X_n$ and $Y_n$ for $\sigma(w_n)$ and $\sigma(w_n')$, respectively. In this situation, there is a map
\[
\Omega_{w_n}^{\sigma,\sigma'}\colon \CF^-(Y,\ws^\sigma)\to \CF^-(Y,\ws^{\sigma'})
\] 
defined by the equation
\[
\begin{split}
&\Omega_{w_n}^{\sigma,\sigma'}(\xs)\\
=&\sum_{\ys\in \bT_{\a}\cap \bT_{\b}} \sum_{\substack{\phi \in \pi_{2}(\xs,\ys)\\ \mu(\phi)=1}} \# (\cM(\phi)/\R) X_1^{n_{w_1}(\phi)}\cdots X_{n-1}^{n_{w_{n-1}}(\phi)} \frac{X_{n}^{n_{w_n}(\phi)}+Y_{n}{}^{n_{w_n}(\phi)}}{X_n+Y_{n}},
\end{split}
\]
extended $R$-equivariantly. This is the analog of a map appearing in \cite{MOIntegerSurgery}*{Section~13}.

\begin{lem}
\label{lem:Omega-chain-maps}
If $\sigma$ and $\sigma'$ are colorings which differ only at $w_n$, then the map $\Omega_{w_n}^{\sigma,\sigma'}$ is a chain map.
\end{lem}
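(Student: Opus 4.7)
The plan is to adapt the standard broken-flowline argument that proves $\d_\sigma^2 = 0$, with the only new input being a simple algebraic identity in characteristic $2$. The proof should be a direct calculation of $\d_{\sigma'} \circ \Omega_{w_n}^{\sigma,\sigma'} + \Omega_{w_n}^{\sigma,\sigma'} \circ \d_{\sigma}$ applied to a generator $\xs$, expressed as a sum over Maslov index $2$ classes $\phi \in \pi_2(\xs,\zs)$ together with their decompositions $\phi = \phi_1 * \phi_2$ into broken flowlines, plus the usual boundary degeneration contributions.

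First I would compute the coefficient of $\zs$ in the expression $\d_{\sigma'} \circ \Omega_{w_n}^{\sigma,\sigma'} + \Omega_{w_n}^{\sigma,\sigma'} \circ \d_{\sigma}$ coming from pairs $(\phi_1,\phi_2)$ with $\phi_1 \in \pi_2(\xs,\ys)$ and $\phi_2 \in \pi_2(\ys,\zs)$ of Maslov index $1$. Writing $a = n_{w_n}(\phi_1)$ and $b = n_{w_n}(\phi_2)$, the $X_n,Y_n$-dependent part of this coefficient is the bracket
\[
X_n^{a} \cdot \frac{X_n^{b} + Y_n^{b}}{X_n + Y_n} + \frac{X_n^{a} + Y_n^{a}}{X_n + Y_n} \cdot Y_n^{b}.
\]
The key algebraic identity is that in characteristic $2$,
\[
X^{a} \cdot \frac{X^{b} + Y^{b}}{X + Y} + \frac{X^{a} + Y^{a}}{X + Y} \cdot Y^{b} = \frac{X^{a+b} + Y^{a+b}}{X+Y},
\]
which follows by clearing the common denominator and cancelling the cross-term $X^a Y^b$ that appears twice. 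This identity shows that the total coefficient of $\zs$ organizes into a single factor depending only on $\phi=\phi_1*\phi_2$:
\[
\prod_{i<n} X_i^{n_{w_i}(\phi)} \cdot \frac{X_n^{n_{w_n}(\phi)} + Y_n^{n_{w_n}(\phi)}}{X_n + Y_n} \cdot \sum_{\phi = \phi_1 * \phi_2} \#(\cM(\phi_1)/\R)\#(\cM(\phi_2)/\R).
\]

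Next I would apply the standard compactness argument: for each Maslov index $2$ class $\phi$, the sum counting broken ends of $\overline{\cM(\phi)/\R}$ is balanced (modulo $2$) against the boundary degeneration ends, and the total count of all ends of the compactified moduli space vanishes. Because the new weight $\frac{X_n^{k} + Y_n^{k}}{X_n + Y_n}$ is a function only of the total multiplicity $k = n_{w_n}(\phi)$, the exact same cancellation that gives $\d_\sigma^2 = 0$ and $\d_{\sigma'}^2 = 0$ produces $\d_{\sigma'}\Omega + \Omega \d_{\sigma} = 0$. The boundary degeneration contributions do not cause trouble because they too factor through a fixed class $\phi$ and are weighted by the same expression.

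The main obstacle I expect is bookkeeping of the boundary degenerations (as opposed to the broken-flowline ends), in particular verifying that the $\alpha$- and $\beta$-boundary degeneration contributions to $\d_{\sigma'}\Omega + \Omega\d_\sigma$ really do cancel after reweighting by the new denominator $(X_n+Y_n)^{-1}$. This is handled by reducing to the setting considered by Manolescu and Ozsv\'{a}th in \cite{MOIntegerSurgery}*{Section~13}, where an essentially identical map was introduced for the purpose of changing coloring on a single basepoint in the link surgery complex, and where the chain map property is verified by precisely this mechanism. Once that reference is invoked, the proof is complete.
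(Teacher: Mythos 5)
Your proposal is correct and follows essentially the same route as the paper: the same identity $X^a\tfrac{X^b+Y^b}{X+Y}+\tfrac{X^a+Y^a}{X+Y}Y^b=\tfrac{X^{a+b}+Y^{a+b}}{X+Y}$ plays the role of a generalized Leibniz rule, and the chain map property follows from counting ends of index-$2$ moduli spaces with the new weights. The only difference is that where you defer the boundary degenerations to \cite{MOIntegerSurgery}, the paper disposes of them directly: for each $w_i$ there is exactly one index-$2$ alpha and one index-$2$ beta degeneration class, each with a single representative mod $2$ by \cite{OSLinks}*{Theorem~5.5} and with the same algebraic weight ($X_i$ for $i\neq n$, and $1$ for $w_n$ after reweighting), so they cancel in pairs.
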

\begin{proof}
The key algebraic relation is that if $a,b\ge 0$, then
\begin{equation}
\frac{X_n^{a+b}+Y_n^{a+b}}{X_n+Y_n}=X_n^a \frac{X_n^b+Y_n^b}{X_n+Y_n}+ \frac{X_n^a+Y_n^a}{X_n+Y_n} Y_n^b,
\label{eq:generalized-Leibniz-rule}
\end{equation}
which is straightforward to verify. To verify that $\Omega_{w_n}^{\sigma,\sigma'}$ is a chain map,
 we count the ends of moduli spaces of index 2 classes of flowlines weighted by the algebra elements
\[
X_1^{n_{w_1}(\phi)}\cdots X_{n-1}^{n_{w_{n-1}}(\phi)} \frac{X_{n}^{n_{w_n}(\phi)}+Y_{n}{}^{n_{w_n}(\phi)}}{X_n+Y_{n}}.
\]
Using Equation~\eqref{eq:generalized-Leibniz-rule}, we see that an index 2 class breaking into two index 1 classes corresponds to
\[
\d \circ \Omega_{w_n}^{\sigma,\sigma'}+\Omega_{w_n}^{\sigma,\sigma'}\circ \d.
\]
There are additionally boundary degenerations in the ends. For each $w_i$, there is a unique class of index 2 alpha degenerations covering $w_i$, and similarly a unique class of index 2 beta degenerations covering $w_i$. Modulo 2, each of these has a unique representative by \cite{OSLinks}*{Theorem~5.5}. Furthermore, the algebraic weight of each of these classes is $U_i$ if $i\neq n$, and $1$ if $i=n$. Therefore the total count of such boundary degenerations cancels, so we are left with $[\d, \Omega_{w_n}^{\sigma,\sigma'}]=0$. 
\end{proof}

In \cite{ZemGraphTQFT}*{Section~6}, there are \emph{free-stabilization and destabilization}  maps
\[
\begin{split}
S_{w}^+&\colon \CF^-(Y,\ws^\sigma)\to \CF^-(Y,(\ws\cup \{w\})^{\sigma'}) \quad \text{and} \\
S_w^- &\colon \CF^-(Y,(\ws\cup \{w\})^{\sigma'})\to \CF^-(Y,\ws^\sigma). 
\end{split}
\]
These are chain maps for any choices of colorings $\sigma$ and $\sigma'$ which coincide on $\ws$. 

The same argument as in \cite{ZemGraphTQFT}*{Lemma~14.14} shows that if $|\ws|>1$, then
\begin{equation}
\Omega_{w_n}^{\sigma,\sigma'}\simeq S^+_{w_n}\circ S_{w_n}^-. \label{eq:Omega=SwSw}
\end{equation}

Another relation which is quite useful is the ``trivial strand relation'' \cite{ZemGraphTQFT}*{Lemma~7.10}: if $\ws$ is a collection of basepoints, $w\not\in \ws$ and $\lambda$ is a path from $w$ to a basepoint $w'\in \ws$, then
\begin{equation}
S^-_{w} A_{\lambda} S_{w}^+\simeq \id,
\label{eq:trivial-strand-relation}
\end{equation}
as an endomorphism of $\CF^-(Y,\ws^{\sigma})$. Here, we color $w$ and $w'$ with the same color, so that $A_{\lambda}$ is a chain map. 

Additionally, suppose that $\sigma_0$ and $\sigma_1$ are colorings of $\ws$ which differ only at $w\in \ws$. Suppose that $w'\not \in \ws$, and that $\sigma_0'$ and $\sigma_1'$ are extensions of $\sigma_0,\sigma_1$ to $\ws \cup \{w'\}$. Then we have the relation
\begin{equation}
S_{w'}^{+}\circ \Omega_{w}^{\sigma_0,\sigma_1}\simeq \Omega_{w}^{\sigma_0',\sigma_1'}\circ S_{w'}^+
\label{eq:[Sw-Omega]}
\end{equation}
by the same argument as \cite{ZemGraphTQFT}*{Lemma~14.17}. A similar relation holds with $S_{w'}^-$ replacing $S_{w'}^+$.

Next, we consider arbitrary colorings $\sigma$ and $\sigma'$. Note that we can consider the ``identity'' map
\[
\bI^{\sigma,\sigma'}\colon \CF^-(Y,\ws^\sigma)\to \CF^-(Y,\ws^{\sigma'}),
\]
defined by 
\[
\bI(a\cdot \xs)=a\cdot \xs
\]
for any $a\in R$ and $\xs\in \bT_{\a}\cap \bT_{\b}$. Note that $\bI^{\sigma,\sigma'}$ is not a chain map. Instead, we have the following:

\begin{lem}\label{lem:del-identity} Suppose that $\sigma$ and $\sigma'$ are arbitrary colorings on $\ws$. Write $X_1,\dots, X_n$ for the colors of $\sigma$ and $Y_1,\dots, Y_n$ for the colors of $\sigma'$. Write $\sigma_i$ for the coloring which uses colors $Y_1,\dots, Y_i,X_{i+1},\dots, X_n$. Then
\[
\d(\bI^{\sigma,\sigma'})=\sum_{i=1}^n (X_i+Y_i)\cdot \bI^{\sigma_i,\sigma_n} \circ \Omega_{w_i}^{\sigma_{i-1},\sigma_i}\circ \bI^{\sigma_0,\sigma_{i-1}}.
\]
\end{lem}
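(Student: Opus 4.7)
The plan is to compute $[\d, \bI^{\sigma,\sigma'}] = \d_{\sigma'} \circ \bI^{\sigma,\sigma'} + \bI^{\sigma,\sigma'} \circ \d_\sigma$ directly from the definitions, and recognize the result as the stated telescoping sum.

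First, I would unpack both sides on a basis element $\xs \in \bT_\a \cap \bT_\b$. Since $\bI^{\sigma,\sigma'}$ acts as the identity on generators, both $\d_{\sigma'} \circ \bI^{\sigma,\sigma'}(\xs)$ and $\bI^{\sigma,\sigma'} \circ \d_\sigma(\xs)$ are sums over the same index-1 classes $\phi \in \pi_2(\xs,\ys)$, weighted by $\prod_j Y_j^{n_{w_j}(\phi)}$ and $\prod_j X_j^{n_{w_j}(\phi)}$ respectively. Working in characteristic 2, their sum is
\[
[\d, \bI^{\sigma,\sigma'}](\xs) = \sum_{\ys}\sum_{\substack{\phi\in\pi_2(\xs,\ys) \\ \mu(\phi)=1}} \#(\cM(\phi)/\R)\left(\prod_j X_j^{n_{w_j}(\phi)} + \prod_j Y_j^{n_{w_j}(\phi)}\right)\ys.
\]

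Next, I would apply the standard telescoping identity: for any nonnegative integers $n_1,\dots,n_n$,
\[
\prod_{j=1}^n X_j^{n_j} + \prod_{j=1}^n Y_j^{n_j} = \sum_{i=1}^n \Bigl(\prod_{j<i} Y_j^{n_j}\Bigr)\bigl(X_i^{n_i}+Y_i^{n_i}\bigr)\Bigl(\prod_{j>i} X_j^{n_j}\Bigr).
\]
Factoring $X_i^{n_i}+Y_i^{n_i} = (X_i+Y_i)\cdot \frac{X_i^{n_i}+Y_i^{n_i}}{X_i+Y_i}$ in each summand, the $i$-th term of the telescope becomes
\[
(X_i+Y_i)\cdot\Bigl(\prod_{j<i} Y_j^{n_j}\Bigr)\cdot \frac{X_i^{n_i}+Y_i^{n_i}}{X_i+Y_i}\cdot\Bigl(\prod_{j>i} X_j^{n_j}\Bigr).
\]

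Finally, I would identify this expression with the chain-level composition in the statement. On a generator $\xs$, the composition $\bI^{\sigma_i,\sigma_n}\circ \Omega_{w_i}^{\sigma_{i-1},\sigma_i}\circ \bI^{\sigma_0,\sigma_{i-1}}$ counts precisely the same holomorphic disks $\phi$, with weight equal to the coloring that uses $Y_j$ for $j<i$ (from $\sigma_{i-1}$), the factor $\frac{X_i^{n_{w_i}(\phi)}+Y_i^{n_{w_i}(\phi)}}{X_i+Y_i}$ at $w_i$ from the definition of $\Omega_{w_i}^{\sigma_{i-1},\sigma_i}$, and $X_j$ for $j>i$ (from $\sigma_{i-1}$, unchanged by $\Omega$); the outer $\bI^{\sigma_i,\sigma_n}$ is the identity on basis elements. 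Summing over $i$ and inserting the prefactor $(X_i+Y_i)$ reproduces the telescoped sum above, which is exactly $[\d,\bI^{\sigma,\sigma'}](\xs)$.

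The only step requiring any care is the last identification: one must verify that the coloring used in the weight of $\Omega_{w_i}^{\sigma_{i-1},\sigma_i}$ on the right (coming from $\sigma_{i-1}$, with $w_i$ replaced by the difference-quotient factor) matches the $i$-th term of the telescope. This is a direct check against the definition of $\Omega$ given just before the lemma, and is the only nontrivial bookkeeping involved.
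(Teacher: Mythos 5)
Your proof is correct and is essentially the paper's argument unpacked: the paper writes $\bI^{\sigma,\sigma'}=\bI^{\sigma_{n-1},\sigma_n}\circ\cdots\circ\bI^{\sigma_0,\sigma_1}$, computes $\d(\bI^{\sigma_{i-1},\sigma_i})=(X_i+Y_i)\Omega_{w_i}^{\sigma_{i-1},\sigma_i}$ for a single-basepoint change, and applies the Leibniz rule, and your telescoping polynomial identity is exactly that Leibniz expansion carried out at the level of the weights of individual disk classes. Both routes are fine; no gap.
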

\begin{proof} If $\sigma$ and $\sigma'$ differ at a single basepoint $w_i$, then we easily compute that
\[
\d(\bI^{\sigma,\sigma'})=(X_i+Y_i)\Omega_{w_i}^{\sigma,\sigma'}.
\]
For the general case, we write
\[
\bI^{\sigma,\sigma'}=\bI^{\sigma_{n-1},\sigma_n}\circ \cdots \circ \bI^{\sigma_0,\sigma_1}
\]
and apply the Leibniz rule for the morphism differential. This completes the proof.
\end{proof}

\begin{rem}
\label{rem:sum-is-chain-map} The following variation on Lemma~\ref{lem:del-identity} is also helpful. Suppose that $\ws=\ws_0\sqcup \ws_1$, and that $\sigma,\sigma'$ are two colorings on $\ws$. Suppose further that $\sigma|_{\ws_0}=\sigma'|_{\ws_0}$, and that $\sigma|_{\ws_1}\equiv X$ and $\sigma'|_{\ws_1}\equiv Y$, for some $X,Y\in R$. Write $\ws_1=\{w_1,\dots, w_n\}$. Then one can argue similarly to Lemma~\ref{lem:del-identity} that the map
\[
\sum_{i=1}^n  \bI^{\sigma_i, \sigma_n}\circ \Omega_{w_i}^{\sigma_{i-1}, \sigma_i} \circ \bI^{\sigma_0, \sigma_{i-1}}
\]
is a chain map. Here, $\sigma_i$ is the coloring which assigns $w_{i+1},\dots, w_n$ the color $X$, and $w_1,\dots, w_i$ the color $Y$.
\end{rem}

We recall from \cite{ZemGraphTQFT}*{Lemma~14.16} that if $\lambda$ is an arc from $w_i$ to $w_j$ (for $w_i\neq w_j$) and we set $U_i=U_j$, then
\[
A_\lambda\circ \Phi_{w_i}+\Phi_{w_i} \circ A_{\lambda}+\bI\simeq 0.
\]
When we do not give $w_i$ and $w_j$ the same color, we instead have the following:

\begin{lem}\label{lem:[A,Omega]} Suppose that $\d \lambda=\{w_i,w_n\}$, with $w_i \neq w_n$, and $\sigma$ and $\sigma'$ are colorings which differ only on $w_n$. Then
\[
A_{\lambda}^{\sigma'}\circ \Omega_{w_n}^{\sigma,\sigma'}+\Omega_{w_n}^{\sigma,\sigma'} \circ A_{\lambda}^{\sigma}+\bI^{\sigma,\sigma'}\simeq 0. 
\]
\end{lem}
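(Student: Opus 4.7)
The plan is to adapt the proof of \cite{ZemGraphTQFT}*{Lemma~14.16} (the commutator identity $[A_\lambda,\Phi_{w_i}]+\bI\simeq 0$) to the divided-difference setting. Write $X_1,\dots,X_n$ for the colors of $\sigma$ and $X_1,\dots,X_{n-1},Y_n$ for the colors of $\sigma'$. I would define a chain homotopy
\[
H\colon \CF^-(Y,\ws^\sigma)\to \CF^-(Y,\ws^{\sigma'})
\]
by counting Maslov index $2$ disks weighted by both the $\lambda$-intersection and the $\Omega$-style divided difference at $w_n$:
\[
H(\xs)=\sum_{\ys}\sum_{\substack{\phi\in \pi_2(\xs,\ys)\\ \mu(\phi)=2}}\#\bigl(\cM(\phi)/\R\bigr)\cdot \#(\d_{\a}\phi\cap \lambda)\cdot\prod_{k\neq n}X_k^{n_{w_k}(\phi)}\cdot \frac{X_n^{n_{w_n}(\phi)}+Y_n^{n_{w_n}(\phi)}}{X_n+Y_n}\cdot \ys.
\]
Then I would compute $\d H+H\d$ via the Gromov compactification of $\cM(\phi)/\R$ for $\phi$ of index $2$. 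The ends are of three types: broken flowlines $\phi=\phi_2*\phi_1$ into pairs of index-$1$ disks, $\a$-boundary degenerations, and $\b$-boundary degenerations.

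For the broken-flowline ends, the additivity $\#(\d_{\a}\phi\cap\lambda)=\#(\d_{\a}\phi_1\cap\lambda)+\#(\d_{\a}\phi_2\cap\lambda)$ combined with the generalized Leibniz identity \eqref{eq:generalized-Leibniz-rule} expands the contribution into four terms. Two of these terms reassemble as $\d H+H\d$ (via the differentials $\d^\sigma$ and $\d^{\sigma'}$). The remaining two terms — specifically $\#(\d_{\a}\phi_1\cap\lambda)\cdot X_n^a\cdot(X_n^b+Y_n^b)/(X_n+Y_n)$ and $\#(\d_{\a}\phi_2\cap\lambda)\cdot(X_n^a+Y_n^a)/(X_n+Y_n)\cdot Y_n^b$, with $a=n_{w_n}(\phi_1)$, $b=n_{w_n}(\phi_2)$ — are precisely the weights defining $\Omega_{w_n}^{\sigma,\sigma'}\circ A_{\lambda}^{\sigma}$ and $A_{\lambda}^{\sigma'}\circ\Omega_{w_n}^{\sigma,\sigma'}$ respectively.

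For the boundary-degeneration ends, I would invoke \cite{OSLinks}*{Theorem~5.5}: modulo $2$, at each basepoint $w_k$ there is a unique $\a$-boundary degeneration class and a unique $\b$-boundary degeneration class of index $2$, each covering $w_k$ with multiplicity one. The $\b$-degenerations contribute nothing because $\d_{\a}\psi=0$, and by choosing $\lambda$ transverse to the $\a$-curves and disjoint from small neighborhoods of $w_k$ for $k\neq i,n$, the $\a$-degenerations at those $w_k$ also vanish since $\#(\d_{\a}\psi\cap\lambda)=0$. The $\a$-degeneration at $w_i$ has $n_{w_n}(\psi)=0$, and in characteristic $2$ its divided-difference weight $(X_n^0+Y_n^0)/(X_n+Y_n)=0$. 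The only surviving contribution is the $\a$-degeneration at $w_n$: there $n_{w_n}(\psi)=1$ yields divided-difference weight $(X_n+Y_n)/(X_n+Y_n)=1$, and $\lambda$ meets $\d_{\a}\psi$ exactly once (as $\lambda$ terminates at $w_n$), so this end contributes precisely $\bI^{\sigma,\sigma'}(\xs)$. Summing everything gives $\d H+H\d = A_{\lambda}^{\sigma'}\circ\Omega_{w_n}^{\sigma,\sigma'}+\Omega_{w_n}^{\sigma,\sigma'}\circ A_{\lambda}^{\sigma}+\bI^{\sigma,\sigma'}$.

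The main obstacle is the book-keeping for the boundary-degeneration contributions — verifying the parity count $\#(\d_{\a}\psi\cap\lambda)=1$ for the $\a$-degeneration at $w_n$, and ensuring that the divided-difference factor kills every other $\a$-degeneration. An alternative route, which might streamline the argument, is to use the identification $\Omega_{w_n}^{\sigma,\sigma'}\simeq S_{w_n}^+\circ S_{w_n}^-$ from \eqref{eq:Omega=SwSw} together with the trivial strand relation \eqref{eq:trivial-strand-relation} and the standard commutation between $A_\lambda$ and the free (de)stabilization maps, reducing the claim to manipulations inside the graph TQFT of \cite{ZemGraphTQFT}.
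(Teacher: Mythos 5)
Your proof is correct and is essentially the paper's argument: the same homotopy counting index-$2$ disks weighted by $\#(\d_{\a}(\phi)\cap\lambda)$ times the divided difference at $w_n$, with the broken-flowline ends producing the commutator via Equation~\eqref{eq:generalized-Leibniz-rule} and the sole surviving $\a$-boundary degeneration (the one covering $w_n$) producing $\bI^{\sigma,\sigma'}$. The only cosmetic point is that $\lambda$ meets $\d_{\a}\psi$ an \emph{odd} number of times rather than literally once, which is all that is needed mod $2$.
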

\begin{proof}
The homotopy is constructed by counting moduli spaces of index 2 disks weighted by
\[
\#(\d_{\a}(\phi)\cap \lambda)X_1^{n_{w_1}(\phi)}\cdots X_{n-1}^{n_{w_n}(\phi)}\frac{X_n^{n_{w_n}(\phi)}+ Y_n^{n_{w_n}(\phi)}}{X_n+Y_n}
\]
We count ends of moduli spaces as in Lemma~\ref{lem:Omega-chain-maps}. The main difference occurs when counting boundary degenerations. From the above weights, all beta boundary degenerations are weighted by 0, and furthermore exactly one alpha boundary degeneration counts non-trivially (the one covering $w_n$). This boundary degeneration is weighted by 1, which contributes the term $\bI^{\sigma,\sigma'}$ to the relation.
\end{proof}

We now prove the following:

\begin{lem}\label{lem:complicated[A,Omega]}
 Let $\ws=\{w_1,w_2,w'\}$, and let $\sigma_0$, $\sigma_1$ and $\sigma_2$ be the colorings:
\begin{enumerate}
\item $\sigma_0(w_1)=X$, $\sigma_0(w')=X$, $\sigma_0(w_2)=Y$,
\item $\sigma_1(w_1)=Y$, $\sigma_1(w')=X$, $\sigma_1(w_2)=Y$,
\item $\sigma_2(w_1)=Y$, $\sigma_2(w')=X$, $\sigma_2(w_2)=X$.
\end{enumerate}
Suppose that $\lambda_2$ is a path from $w'$ to $w_2$.  Then
\begin{equation}
A_{\lambda_2}^{\sigma_2}(\Omega_{w_2}^{\sigma_1,\sigma_2}\bI^{\sigma_0,\sigma_1}+ \bI^{\sigma_1,\sigma_2}\Omega_{w_1}^{\sigma_0,\sigma_1})\simeq (\Omega_{w_2}^{\sigma_1,\sigma_2}\bI^{\sigma_0,\sigma_1}+ \bI^{\sigma_1,\sigma_2}\Omega_{w_1}^{\sigma_0,\sigma_1})A_{\lambda_2}^{\sigma_0}+\bI^{\sigma_0,\sigma_2}.
\label{eq:relation-A-lambda-Omega-Omega}
\end{equation}
(Note also that both sides are chain maps by Lemmas~\ref{lem:del-identity}, Remark~\ref{rem:sum-is-chain-map} and the Leibniz rule).
\end{lem}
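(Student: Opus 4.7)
The plan is to prove the relation by constructing an explicit chain homotopy via counting pseudoholomorphic disks, paralleling the proofs of Lemmas~\ref{lem:Omega-chain-maps} and~\ref{lem:[A,Omega]}.

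First I would record that the map $F := \Omega_{w_2}^{\sigma_1,\sigma_2}\bI^{\sigma_0,\sigma_1}+ \bI^{\sigma_1,\sigma_2}\Omega_{w_1}^{\sigma_0,\sigma_1}$ is already known to be a chain map (by Remark~\ref{rem:sum-is-chain-map}), while $A_{\lambda_2}^{\sigma_2}$ is a chain map since $\sigma_2(w_2)=\sigma_2(w')=X$, and $A_{\lambda_2}^{\sigma_0}$ satisfies $[\d,A_{\lambda_2}^{\sigma_0}]=X+Y$ by Equation~\eqref{eq:basic-prop-A_lambda}. A formal Leibniz computation then shows that $[\d,\, A_{\lambda_2}^{\sigma_2}F+FA_{\lambda_2}^{\sigma_0}+\bI^{\sigma_0,\sigma_2}]$ vanishes, so we seek to realize this as a boundary.

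The null-homotopy $H$ will be built by summing over index-$2$ holomorphic classes $\phi$ with the algebraic weight
\[
\#(\d_\a(\phi)\cap \lambda_2)\cdot X^{n_{w'}(\phi)}\cdot \frac{X^{n_{w_1}(\phi)}+Y^{n_{w_1}(\phi)}}{X+Y}\cdot \frac{X^{n_{w_2}(\phi)}+Y^{n_{w_2}(\phi)}}{X+Y}.
\]
Counting the ends of these moduli spaces gives three types of contributions. The broken flowlines (two index-$1$ disks) yield $\d H+H\d$ together with cross terms which, after applying the identity~\eqref{eq:generalized-Leibniz-rule} separately at $w_1$ and $w_2$, combine to give exactly $A_{\lambda_2}^{\sigma_2}F+FA_{\lambda_2}^{\sigma_0}$ (both arrangements of $\Omega$ first vs.\ $A_{\lambda_2}$ first appear). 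The beta boundary degenerations contribute $0$ because any such degeneration forces $n_{w_i}(\phi)\ge 1$ for some $i$ but the weight $\frac{X^k+Y^k}{X+Y}$ is multiplied by the algebraic output $U_i$ from the degeneration, which vanishes after combining with the opposite beta boundary degeneration as in Lemma~\ref{lem:Omega-chain-maps}. The interesting ends are the alpha boundary degenerations: among these, only the unique class covering $w_2$ survives (the one covering $w_1$ and $w'$ come with weights which cancel in pairs by admissibility of the colorings), and it is weighted by $\#(\text{class intersects }\lambda_2) = 1$ since $w_2$ is an endpoint of $\lambda_2$. This single contribution is exactly $\bI^{\sigma_0,\sigma_2}$.

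The main obstacle will be the bookkeeping of the alpha boundary degenerations: tracking which basepoints contribute nontrivially and verifying that after the combinatorial weights of~\eqref{eq:generalized-Leibniz-rule} are distributed, the remaining $\bI$-contribution comes solely from the degeneration at $w_2$. A secondary bookkeeping point is verifying that when $\phi$ has positive multiplicity at both $w_1$ and $w_2$ simultaneously, the weight factors correctly into the two pieces of $F$; this uses~\eqref{eq:generalized-Leibniz-rule} twice in succession. Once these checks are made, $[\d,H]$ equals $A_{\lambda_2}^{\sigma_2}F+FA_{\lambda_2}^{\sigma_0}+\bI^{\sigma_0,\sigma_2}$, which is the desired relation.
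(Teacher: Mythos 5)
Your overall strategy is the paper's: build the null-homotopy by counting index-$2$ classes weighted by an intersection number with $\lambda_2$ times a rational-function coloring weight, read off the relation from the ends of the moduli spaces, and identify $\bI^{\sigma_0,\sigma_2}$ with the unique contributing alpha boundary degeneration (the one at $w_2$). However, the specific weight you propose is wrong, and this is the heart of the proof. You weight a class $\phi$ by
\[
\#(\d_{\a}(\phi)\cap \lambda_2)\, X^{n_{w'}(\phi)}\,\tfrac{X^{n_{w_1}(\phi)}+Y^{n_{w_1}(\phi)}}{X+Y}\cdot \tfrac{X^{n_{w_2}(\phi)}+Y^{n_{w_2}(\phi)}}{X+Y},
\]
i.e.\ a product of two difference quotients. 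That is the natural homotopy for the \emph{composite} $\Omega_{w_2}^{\sigma_1,\sigma_2}\Omega_{w_1}^{\sigma_0,\sigma_1}$, not for the map $F=\Omega_{w_2}^{\sigma_1,\sigma_2}\bI^{\sigma_0,\sigma_1}+ \bI^{\sigma_1,\sigma_2}\Omega_{w_1}^{\sigma_0,\sigma_1}$ appearing in the statement. Concretely: applying \eqref{eq:generalized-Leibniz-rule} at $w_1$ and at $w_2$ to a broken class $\phi'*\phi$ produces cross terms in which one level is weighted by $X^{n_{w_1}}X^{n_{w_2}}$ (the coloring assigning $X$ to both $w_1$ and $w_2$, which is none of $\sigma_0,\sigma_1,\sigma_2$) and terms in which one level carries \emph{two} difference quotients; neither of these is a differential of one of the three complexes in play nor a summand of $F$ or $A_{\lambda_2}$, so the ends do not assemble into $\d H+H\d+A_{\lambda_2}^{\sigma_2}F+FA_{\lambda_2}^{\sigma_0}$. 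The weights also genuinely differ from the correct ones: for $n_{w_1}(\phi)=1$, $n_{w_2}(\phi)=n_{w'}(\phi)=0$ your polynomial factor is $0$ while the correct one is $1$.

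The correct choice is to weight $\phi$ by $\#(\d_{\a}(\phi)\cap\lambda_2)$ times the weight defining $F$ itself, namely
\[
X^{n_{w'}(\phi)}\Bigl(Y^{n_{w_1}(\phi)}\tfrac{X^{n_{w_2}(\phi)}+Y^{n_{w_2}(\phi)}}{X+Y}+\tfrac{X^{n_{w_1}(\phi)}+Y^{n_{w_1}(\phi)}}{X+Y}Y^{n_{w_2}(\phi)}\Bigr)=X^{n_{w'}(\phi)}\tfrac{X^{n_{w_1}(\phi)}Y^{n_{w_2}(\phi)}+X^{n_{w_2}(\phi)}Y^{n_{w_1}(\phi)}}{X+Y},
\]
in exact parallel with Lemma~\ref{lem:[A,Omega]}; with this numerator, a breaking distributes so that one level acquires the full $\sigma_0$- or $\sigma_2$-weight and the other acquires the $F$-weight. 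Two of your subsidiary justifications are also off: the beta degenerations vanish because $\d_{\a}$ of a beta degeneration is empty, so the factor $\#(\d_{\a}(\phi)\cap\lambda_2)$ is zero (not because of a cancellation of $U_i$'s between opposite degenerations), and the alpha degenerations at $w_1$ and $w'$ vanish individually (the first because neither endpoint of $\lambda_2$ lies in the region containing $w_1$, the second because its polynomial weight contains $\tfrac{1+1}{X+Y}=0$), not "in pairs by admissibility of the colorings."
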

\begin{proof}The homotopy counts holomorphic disks $\phi$ which are weighted by the quantity
\begin{equation}
\#(\d_{\a}(\phi)\cap \lambda_2) X^{n_{w'}(\phi)} \left(Y^{n_{w_1}(\phi)} \frac{X^{n_{w_2}(\phi)}+Y{}^{n_{w_2}(\phi)}}{X+Y} +\frac{X^{n_{w_1}(\phi)}+Y^{n_{w_1}(\phi)}}{X+Y}Y^{n_{w_2}(\phi)} \right).
\label{eq:homotopy-A-Omega-Omega}
\end{equation}
We now show that this gives a homotopy as stated; we observe firstly that it is equal to
\[
\#(\d_{\a}(\phi)\cap \lambda_2)X^{n_{w'}(\phi)}\frac{X^{n_{w_1}(\phi)}Y^{n_{w_2}(\phi)}+X^{n_{w_2}(\phi)} Y^{n_{w_1}(\phi)}}{X+Y}.
\]

We consider a class of index 2 disks $\psi$ which decomposes as a composition $\phi'*\phi$ of two index 1 disks. In this case, we compute that
\begin{equation}
\begin{split}
&\frac{X^{n_{w_1}(\phi'+\phi)}Y^{n_{w_2}(\phi'+\phi)}+X^{n_{w_2}(\phi'+\phi)}Y^{n_{w_1}(\phi'+\phi)}}{X+Y}\\
=&X^{n_{w_2}(\phi')}Y^{n_{w_1}(\phi')}Y^{n_{w_1}(\phi)} \frac{X^{n_{w_2}(\phi)}+Y^{n_{w_2}(\phi)}}{X+Y}
\\
&+X^{n_{w_2}(\phi')}Y^{n_{w_1}(\phi')} \frac{X^{n_{w_1}(\phi)}+Y^{n_{w_1}(\phi)}}{X+Y}Y^{n_{w_2}(\phi)}\\
&+Y^{n_{w_2}(\phi')} \frac{X^{n_{w_1}(\phi')}+Y^{n_{w_1}(\phi')}}{X+Y}X^{n_{w_1}(\phi)} Y^{n_{w_2}(\phi)}
\\
&+Y^{n_{w_1}(\phi')} \frac{X^{n_{w_2}(\phi')}+Y^{n_{w_2}(\phi')}}{X+Y} X^{n_{w_1}(\phi)}Y^{n_{w_2}(\phi)}.
\end{split}
\label{eq:sum-weights-A-Omega-rel}
\end{equation}
The relation above is useful in proving that the map $\Omega_{w_2}^{\sigma_1,\sigma_2}\bI^{\sigma_0,\sigma_1}+ \bI^{\sigma_1,\sigma_2}\Omega_{w_1}^{\sigma_0,\sigma_1}$ is a chain map. To get the chain homotopy described in Equation~\eqref{eq:relation-A-lambda-Omega-Omega}, we multiply Equation~\eqref{eq:sum-weights-A-Omega-rel} by 
\[
\# \d_{\a}(\phi'+\phi)\cap \lambda_2 = \# \d_{\a}(\phi')\cap \lambda_2+\# \d_{\a}(\phi)\cap \lambda_2
\]
 and collect terms. This gives all of the terms in Equation~\eqref{eq:relation-A-lambda-Omega-Omega} (as well as a homotopy) except for $\bI^{\sigma_0,\sigma_2}$. The term $\bI^{\sigma_0,\sigma_2}$ is contributed by boundary degenerations: For each $\xs\in \bT_{\a}\cap \bT_{\b}$ and each basepoint $w$ there are two classes of index 2 boundary degenerations which cover $w$, one alpha degeneration and one beta degeneration. The count of representatives of each class is 1 modulo 2 by \cite{OSLinks}*{Theorem~5.5}. Only the alpha degeneration covering $w_2$ has non-zero  weight in Equation~\eqref{eq:homotopy-A-Omega-Omega}, and the weight of this degeneration is 1. Therefore Equation~\eqref{eq:relation-A-lambda-Omega-Omega} follows.
\end{proof}

A very similar, but slightly simpler, argument shows the following:

\begin{lem}
\label{lem:[Omega,A]-disjoint}
 Suppose that $\sigma$ and $\sigma'$ are colorings of $\ws$ which differ only on a basepoint $w\in \ws$, and that $\lambda$ is an arc which connects two basepoints $w_1,w_2\in \ws$, and $w\neq w_1,w_2$. Then
\[
\Omega_{w}^{\sigma,\sigma'}\circ A_{\lambda}^{\sigma}\simeq A_{\lambda}^{\sigma'}\circ \Omega_{w}^{\sigma,\sigma'}. 
\] 
\end{lem}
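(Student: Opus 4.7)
The plan is to adapt the argument of Lemma~\ref{lem:complicated[A,Omega]}, exploiting the simplification that the basepoint $w$ at which $\sigma$ and $\sigma'$ differ is disjoint from the endpoints of $\lambda$. Write $X = \sigma(w)$ and $Y = \sigma'(w)$, and $X_i = \sigma(w_i) = \sigma'(w_i)$ for $w_i \neq w$. I would define a chain homotopy $H \colon \CF^-(Y,\ws^{\sigma}) \to \CF^-(Y,\ws^{\sigma'})$ by counting index-$2$ holomorphic disks $\phi$ weighted by
\[
\#(\d_{\a}(\phi)\cap\lambda)\cdot \#(\cM(\phi)/\R)\cdot \prod_{w_i\neq w} X_i^{n_{w_i}(\phi)} \cdot \frac{X^{n_w(\phi)}+Y^{n_w(\phi)}}{X+Y},
\]
extended $R$-equivariantly. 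This is the natural combination of the weights appearing in the definitions of $A_\lambda$ and $\Omega_w^{\sigma,\sigma'}$.

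I would then compute $\d H + H\d$ by examining the ends of the moduli spaces of index-$2$ flowlines weighted as above. The codimension-one strip breakings decompose a class as $\phi = \phi' * \phi''$ into two index-$1$ classes. Using the additivity $\#(\d_\a(\phi'*\phi'')\cap\lambda) = \#(\d_\a(\phi')\cap\lambda) + \#(\d_\a(\phi'')\cap\lambda)$, together with the identity
\[
\frac{X^{a+b}+Y^{a+b}}{X+Y} = X^a\frac{X^b+Y^b}{X+Y} + \frac{X^a+Y^a}{X+Y}Y^b
\]
from the proof of Lemma~\ref{lem:Omega-chain-maps}, these ends assemble to give precisely $A_{\lambda}^{\sigma'}\circ \Omega_{w}^{\sigma,\sigma'} + \Omega_{w}^{\sigma,\sigma'}\circ A_{\lambda}^{\sigma}$.

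The main task, and the only genuinely new part relative to Lemma~\ref{lem:complicated[A,Omega]}, is to check that the boundary degeneration contributions all vanish, so that no extra $\bI^{\sigma,\sigma'}$ term appears. By \cite{OSLinks}*{Theorem~5.5}, for each basepoint there is a unique representative modulo $2$ of each of the two classes (alpha and beta) of index-$2$ boundary degenerations covering it. Beta boundary degenerations have empty alpha boundary, so $\#(\d_\a(\phi)\cap\lambda) = 0$ and they contribute zero. An alpha boundary degeneration covering a basepoint $w_i \neq w$ (including the endpoints $w_1, w_2$ of $\lambda$) has $n_w(\phi) = 0$, so its coefficient contains the factor $\frac{X^0+Y^0}{X+Y} = 0$ in characteristic $2$, and again the contribution vanishes. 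The remaining case is the unique alpha boundary degeneration covering $w$ itself, whose coefficient in the $(X,Y)$-variables is $\frac{X+Y}{X+Y} = 1$; but here the hypothesis $w \neq w_1, w_2$ is exactly what is needed, since we may arrange $\lambda$ so that $\#(\d_\a(\text{degen})\cap\lambda) = 0$. Collecting everything gives $\d H + H\d = A_{\lambda}^{\sigma'}\circ \Omega_{w}^{\sigma,\sigma'} + \Omega_{w}^{\sigma,\sigma'}\circ A_{\lambda}^{\sigma}$, proving the lemma.
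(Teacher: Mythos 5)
Your proof follows the same route as the paper's, which defines the homotopy by the product weight and then says only that "the argument proceeds as in Lemma~\ref{lem:[A,Omega]}"; your spelling out of the boundary-degeneration analysis, and your identification of the alpha degeneration covering $w$ as the one case where the hypothesis $w\neq w_1,w_2$ is needed, is exactly the right content. Two corrections. First, a small slip: the homotopy $H$ counts \emph{index-one} disks with the product weight (for an index-two class the reduced moduli space $\cM(\phi)/\R$ is one-dimensional, so its count is not defined); the index-two moduli spaces enter only when you count their ends to verify $\d H+H\d = A_{\lambda}^{\sigma'}\circ\Omega_{w}^{\sigma,\sigma'}+\Omega_{w}^{\sigma,\sigma'}\circ A_{\lambda}^{\sigma}$.

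Second, and more substantively, the vanishing of the alpha degeneration covering $w$ is not something you get to "arrange" by choosing $\lambda$. The domain of that degeneration is the component $A_w$ of $\Sigma\setminus\as$ containing $w$, and its $\d_{\a}$ is the full boundary of $A_w$; in general every arc from $w_1$ to $w_2$ is forced to pass through $A_w$ (for instance when $A_{w_1}$ and $A_{w_2}$ are adjacent only to $A_w$), so no representative of $\lambda$ is disjoint from $\d_{\a}$ of the degeneration. What is true, and is the mechanism used implicitly in Lemmas~\ref{lem:[A,Omega]} and \ref{lem:complicated[A,Omega]}, is that $\#(\d_{\a}(\phi)\cap\lambda)$ for this degeneration equals, modulo $2$, the number of endpoints of $\lambda$ lying in $A_w$; since $w_1$ and $w_2$ lie in components of $\Sigma\setminus\as$ other than $A_w$, this count is even for \emph{every} representative of $\lambda$. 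With that fix all boundary-degeneration contributions vanish and your argument is complete.
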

\begin{proof} The chain homotopy counts index 1 disks weighted by 
\[
(\# \d_{\a}(\phi) \cap \lambda)\bigg(\prod_{\substack{w_i\in \ws\\ w_i\neq w}} X_{i}^{n_{w_i}(\phi)}\bigg) \frac{X^{n_{w}(\phi)} +Y^{n_{w}(\phi)}}{X+Y},
\]
where $X$ and $Y$ are the variables for $w$ in $\sigma$ and $\sigma'$, respectively. From here, the argument proceeds as in Lemma~\ref{lem:[A,Omega]}. 
\end{proof}

We are now set up to state and prove our computation of the map $(\Sw_\lambda)_*$. To simplify the notation, we focus on the case that $\ws=\{w_1,w_2\}$, though our computation of $\Sw_\lambda$ extends to the case when there are extra basepoints, not moved by the swapping diffeomorphism, with only minor change. Note that there are two equivalent ways to view $\Sw_{\lambda}$. The first is a map of $\bF[U_1,U_2]$-modules which interchanges the actions of $U_1$ and $U_2$. The second way is to replace $U_1$ and $U_2$ by two variables $X$ and $Y$, and consider $\Sw_{\lambda}$ as an $\bF[X,Y]$-equivariant map from $\CF^-(Y,\ws^\sigma)$ to $\CF^-(Y,\ws^{\sigma'})$ for two different colorings, $\sigma$ and $\sigma'$, of $\ws$. Although the former approach is more natural aesthetically, the latter is more convenient for the proof, so we adopt this approach. Henceforth, we work with colorings mapping to the ring $R=\bF[X,Y]$.  We now prove an algebraic formula for the basepoint swapping map:

\begin{prop}\label{prop:compute-Sw-map} Let $\ws=\{w_1,w_2\}$. Let $\sigma$, $\sigma'$ and $\sigma''$ be the colorings:
\begin{enumerate}
\item $\sigma(w_1)=X$, $\sigma(w_2)=Y$.
\item $\sigma'(w_1)=Y$, $\sigma'(w_2)=Y$.
\item $\sigma''(w_1)=Y$, $\sigma''(w_2)=X$.
\end{enumerate}
The map
\[
(\Sw_{\lambda})_*\colon \CF^-(Y,\ws^\sigma)\to \CF^-(Y,\ws^{\sigma''})
\] 
satisfies
\[
(\Sw_{\lambda})_*\simeq \bI^{\sigma,\sigma''}+(\bI^{\sigma',\sigma''}\circ \Omega_{w_1}^{\sigma,\sigma'}+\Omega_{w_2}^{\sigma',\sigma''}\circ \bI^{\sigma,\sigma'})\circ A_{\lambda}^\sigma.
\]
\end{prop}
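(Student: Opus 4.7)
The plan is to generalize the argument of \cite{ZemGraphTQFT}*{Proposition~14.24}, which establishes the analogous identity $(\Sw_\lambda)_* \simeq \id + (\Phi_{w_1} + \Phi_{w_2}) \circ A_\lambda$ in the single-color setting where $\Omega_{w_i}$ collapses to $\Phi_{w_i}$ and $\bI^{\sigma,\sigma''}$ collapses to $\id$. First I would verify that the claimed right-hand side is a well-defined chain map from $\CF^-(Y,\ws^\sigma)$ to $\CF^-(Y,\ws^{\sigma''})$. The naive map $\bI^{\sigma,\sigma''}$ fails to be a chain map, with failure computed by Lemma~\ref{lem:del-identity}:
\[
\partial \bI^{\sigma,\sigma''} = (X+Y)\bigl(\bI^{\sigma',\sigma''}\circ \Omega_{w_1}^{\sigma,\sigma'} + \Omega_{w_2}^{\sigma',\sigma''}\circ \bI^{\sigma,\sigma'}\bigr).
\]
This defect is exactly cancelled by the second summand in the proposed formula, since $\partial A_\lambda^\sigma = X+Y$ by Equation~\eqref{eq:basic-prop-A_lambda} and each $\Omega_{w_i}$ is a chain map by Lemma~\ref{lem:Omega-chain-maps}.

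Next I would realize $(\Sw_\lambda)_*$ explicitly via the graph cobordism TQFT. The mapping cylinder of $\Sw_\lambda$ is a graph cobordism on $Y\times I$ with 1-skeleton consisting of two arcs projecting onto $\lambda$: one from $(w_1,0)$ to $(w_2,1)$ and another from $(w_2,0)$ to $(w_1,1)$. This cobordism can be decomposed into elementary pieces whose action is computed on the chain level by counting pseudo-holomorphic disks weighted by appropriate rational expressions in $X$ and $Y$ together with intersection multiplicities against a projection of $\lambda$ to the Heegaard surface. The chain-level homotopy will count index $1$ disks $\phi$ with weight
\[
(\# \d_\a(\phi)\cap \lambda) \cdot \frac{X^{n_{w_1}(\phi)}Y^{n_{w_2}(\phi)} + X^{n_{w_2}(\phi)}Y^{n_{w_1}(\phi)}}{X+Y},
\]
in the spirit of the identity used in Lemma~\ref{lem:complicated[A,Omega]}.

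The main obstacle will be the careful bookkeeping of boundary degenerations in the multi-color setting. The contribution $\bI^{\sigma,\sigma''}$ must arise from alpha boundary degenerations whose algebraic weights evaluate to $1$ (as in Lemmas~\ref{lem:[A,Omega]} and~\ref{lem:complicated[A,Omega]}), while the broken-flowline contributions must assemble, via the divided difference identity
\[
\frac{X^{a+b}+Y^{a+b}}{X+Y} = X^{a}\frac{X^{b}+Y^{b}}{X+Y} + \frac{X^{a}+Y^{a}}{X+Y}Y^{b}
\]
from Equation~\eqref{eq:generalized-Leibniz-rule}, into the composition $(\bI^{\sigma',\sigma''}\Omega_{w_1}^{\sigma,\sigma'} + \Omega_{w_2}^{\sigma',\sigma''}\bI^{\sigma,\sigma'})\circ A_\lambda^\sigma$. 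The passage from $\Phi_{w_i}$ in the single-color case to $\Omega_{w_i}$ here is precisely the passage from a derivative $\partial_U$ to a discrete divided difference, which is why the same moduli arguments of \cite{ZemGraphTQFT} go through with the new weights, completing the proof.
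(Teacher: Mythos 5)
Your overall strategy is the right one, and your first step (checking via Lemma~\ref{lem:del-identity} and $\d(A_\lambda^\sigma)=X+Y$ that the right-hand side is a chain map) is a sound sanity check. But there is a genuine gap in the middle of the argument: you never actually connect the diffeomorphism map $(\Sw_\lambda)_*$ to any holomorphic-curve count. Saying that the mapping cylinder "can be decomposed into elementary pieces" is precisely the step that needs to be carried out, and it is not a single disk count. The paper's proof introduces an auxiliary basepoint $w'$ with $\lambda=\lambda_2*\lambda_1$, invokes the factorization $(\Sw_\lambda)_*\simeq (S_{w'}^- A_{\lambda_2} S_{w_2}^+)(S_{w_2}^- A_{\lambda} S_{w_1}^+)(S_{w_1}^- A_{\lambda_1} S_{w'}^+)$ from \cite{ZemGraphTQFT}*{Theorem~14.11}, rewrites the inner compositions $S^+_{w}S^-_{w}$ as $\Omega_w$ via Equation~\eqref{eq:Omega=SwSw}, and then performs a multi-step algebraic reduction using Lemmas~\ref{lem:[A,Omega]}, \ref{lem:[Omega,A]-disjoint} and \ref{lem:complicated[A,Omega]}, the relation $A_\lambda^2\simeq U_1\simeq U_2$, and the trivial strand relation~\eqref{eq:trivial-strand-relation}. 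None of this reduction appears in your proposal.

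Relatedly, the specific homotopy you propose — counting disks weighted by $(\#\d_\a(\phi)\cap\lambda)\cdot\frac{X^{n_{w_1}}Y^{n_{w_2}}+X^{n_{w_2}}Y^{n_{w_1}}}{X+Y}$ — is not a homotopy from $(\Sw_\lambda)_*$ to the claimed formula. It is exactly the homotopy realizing the commutator identity of Lemma~\ref{lem:complicated[A,Omega]}, i.e.\ it relates $A_{\lambda}\circ(\Omega_{w_2}\bI+\bI\,\Omega_{w_1})$ to $(\Omega_{w_2}\bI+\bI\,\Omega_{w_1})\circ A_{\lambda}+\bI$. That identity is one ingredient in the reduction, not the whole proof. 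You have also somewhat misplaced the difficulty: the boundary-degeneration bookkeeping is already packaged inside the commutation lemmas; the real work in the proposition is the careful composition of non-chain maps (the $\bI^{\sigma,\sigma'}$'s) within the three-fold factorization, where one must apply the Leibniz rule term by term because chain homotopies cannot be substituted freely into compositions of non-chain maps.
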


\begin{rem} Before embarking on the proof, we point out some basic algebraic subtleties. Care must be taken when working with compositions of maps which are not all chain maps. For example if $f\simeq f'$ and $g\simeq g'$, but $f$ and $g$ are not chain maps, it might not be the case that $f\circ g\simeq f'\circ g'$. Instead, we must apply the Leibniz rule for compositions. One special case that is important for us is the following. Suppose that $f_n,\dots, f_{i+1},f_{i-1},\dots, f_1$ are chain maps, and that $f_i$ is a map (possibly not a chain map). Suppose that $f_i\simeq f_i'$ for some map $f_i'$. Then there is a chain homotopy
\[
f_n\circ \cdots \circ f_i\circ \cdots\circ f_1\simeq f_n\circ \cdots \circ f_i'\circ \cdots\circ f_1.
\]
\end{rem}

\begin{proof}[Proof of Proposition~\ref{prop:compute-Sw-map}] Following \cite{ZemGraphTQFT}*{Proposition~14.24}, we add an extra basepoint $w'$. We also pick arcs $\lambda_1$ from $w_1$ to $w'$ and $\lambda_2$ from $w'$ to $w_2$ so that
\[
\lambda=\lambda_2* \lambda_1.
\]
See Figure~\ref{fig:naturality-32}.
\begin{figure}[h]
\begingroup%
  \makeatletter%
  \providecommand\color[2][]{%
    \errmessage{(Inkscape) Color is used for the text in Inkscape, but the package 'color.sty' is not loaded}%
    \renewcommand\color[2][]{}%
  }%
  \providecommand\transparent[1]{%
    \errmessage{(Inkscape) Transparency is used (non-zero) for the text in Inkscape, but the package 'transparent.sty' is not loaded}%
    \renewcommand\transparent[1]{}%
  }%
  \providecommand\rotatebox[2]{#2}%
  \newcommand*\fsize{\dimexpr\f@size pt\relax}%
  \newcommand*\lineheight[1]{\fontsize{\fsize}{#1\fsize}\selectfont}%
  \ifx\svgwidth\undefined%
    \setlength{\unitlength}{194.70133676bp}%
    \ifx\svgscale\undefined%
      \relax%
    \else%
      \setlength{\unitlength}{\unitlength * \real{\svgscale}}%
    \fi%
  \else%
    \setlength{\unitlength}{\svgwidth}%
  \fi%
  \global\let\svgwidth\undefined%
  \global\let\svgscale\undefined%
  \makeatother%
  \begin{picture}(1,0.40038525)%
    \lineheight{1}%
    \setlength\tabcolsep{0pt}%
    \put(0,0){\includegraphics[width=\unitlength,page=1]{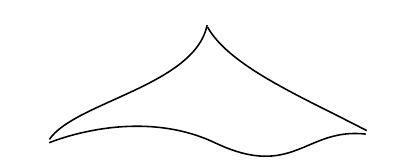}}%
    \put(0.4473107,0.00507722){\makebox(0,0)[lt]{\lineheight{1.25}\smash{\begin{tabular}[t]{l}$\lambda$\end{tabular}}}}%
    \put(0.06895707,0.02536257){\makebox(0,0)[rt]{\lineheight{1.25}\smash{\begin{tabular}[t]{r}$w_1$\end{tabular}}}}%
    \put(0.96544055,0.01558012){\makebox(0,0)[t]{\lineheight{1.25}\smash{\begin{tabular}[t]{c}$w_2$\end{tabular}}}}%
    \put(0.50877322,0.3831199){\makebox(0,0)[t]{\lineheight{1.25}\smash{\begin{tabular}[t]{c}$w'$\end{tabular}}}}%
    \put(0.69257744,0.21161757){\makebox(0,0)[lt]{\lineheight{1.25}\smash{\begin{tabular}[t]{l}$\lambda_2$\end{tabular}}}}%
    \put(0.3236261,0.19133221){\makebox(0,0)[rt]{\lineheight{1.25}\smash{\begin{tabular}[t]{r}$\lambda_1$\end{tabular}}}}%
    \put(0,0){\includegraphics[width=\unitlength,page=2]{fig_naturality_32.pdf}}%
  \end{picture}%
\endgroup%

\caption{The arcs $\lambda_1$ and $\lambda_2$. }
\label{fig:naturality-32}
\end{figure}

 Using \cite{ZemGraphTQFT}*{Theorem~14.11} we can factor $(\Sw_{\lambda})_*$ as
\[
(S_{w'}^- A_{\lambda_2} S_{w_2}^+) (S_{w_2}^- A_{\lambda} S_{w_1}^+) (S_{w_1}^- A_{\lambda_1} S_{w'}^+).
\]
To make some manipulations easier to follow, we record the colorings in the superscripts. We write $\sigma_0$, $\sigma_1$ and $\sigma_2$ for the three colorings of $\{w_1,w_2,w'\}$ below:
\begin{enumerate}
\item $\sigma_0(w)=X$, $\sigma_0(w')=X$, $\sigma_0(w_2)=Y$,
\item $\sigma_1(w)=Y$, $\sigma_1(w')=X$, $\sigma_1(w_2)=Y$,
\item $\sigma_2(w)=Y$, $\sigma_2(w')=X$, $\sigma_2(w_2)=X$.
\end{enumerate}
Using the relation in Equation~\eqref{eq:Omega=SwSw}, we rewrite the above expression for $(\Sw_{\lambda})_{*}$ as
\[
S_{w'}^- A_{\lambda_2}^{\sigma_2} \Omega_{w_2}^{\sigma_1,\sigma_2} A_{\lambda}^{\sigma_1} \Omega_{w_1}^{\sigma_0,\sigma_1} A_{\lambda_1}^{\sigma_0} S_{w'}^+
\]

We write $A_{\lambda}^{\sigma_1}=A_{\lambda_1}^{\sigma_1}+A_{\lambda_2}^{\sigma_1}$ and the above becomes
\[
S_{w'}^- A_{\lambda_2}^{\sigma_2} \Omega_{w_2}^{\sigma_1,\sigma_2} (A_{\lambda_1}^{\sigma_1}+A_{\lambda_2}^{\sigma_1}) \Omega_{w_1}^{\sigma_0,\sigma_1} A_{\lambda_1}^{\sigma_0} S_{w'}^+.
\]
We write this as
\[
S_{w'}^- A_{\lambda_2}^{\sigma_2} \Omega_{w_2}^{\sigma_1,\sigma_2} A_{\lambda_1}^{\sigma_1} \Omega_{w_1}^{\sigma_0,\sigma_1} A_{\lambda_1}^{\sigma_0} S_{w'}^+
+S_{w'}^- A_{\lambda_2}^{\sigma_2} \Omega_{w_2}^{\sigma_1,\sigma_2} A_{\lambda_2}^{\sigma_1} \Omega_{w_1}^{\sigma_0,\sigma_1} A_{\lambda_1}^{\sigma_0} S_{w'}^+
\]
Using Lemmas~\ref{lem:[A,Omega]} and~\ref{lem:[Omega,A]-disjoint}, we can move the $A_{\lambda_1}^{\sigma_1}$ in the left summand rightward within its term, and we can move the $A_{\lambda_2}^{\sigma_2}$ in the right summand leftward within its term. We obtain that the above map is chain homotopic to
\[
\begin{split}
&S_{w'}^- (A_{\lambda_2}^{\sigma_2})^2 \Omega_{w_2}^{\sigma_1,\sigma_2} \Omega_{w_1}^{\sigma_0,\sigma_1} A_{\lambda_1}^{\sigma_0} S_{w'}^+\\
+&S_{w'}^- A_{\lambda_2}^{\sigma_2} \Omega_{w_2}^{\sigma_1,\sigma_2} \Omega_{w_1}^{\sigma_0,\sigma_1} (A_{\lambda_1}^{\sigma_0})^2 S_{w'}^+\\
+&S_{w'}^- A_{\lambda_2}^{\sigma_2} (\Omega_{w_2}^{\sigma_1,\sigma_2}\bI^{\sigma_0,\sigma_1}+ \bI^{\sigma_1,\sigma_2}\Omega_{w_1}^{\sigma_0,\sigma_1}) A_{\lambda_1}^{\sigma_0} S_{w'}^+
\end{split}
\]
By Equation~\eqref{eq:basic-prop-A_lambda}, the above is homotopic to
\[
\begin{split}
&X S_{w'}^- \Omega_{w_2}^{\sigma_1,\sigma_2} \Omega_{w_1}^{\sigma_0,\sigma_1} A_{\lambda_1}^{\sigma_0} S_{w'}^+\\
+&X S_{w'}^- A_{\lambda_2}^{\sigma_2} \Omega_{w_2}^{\sigma_1,\sigma_2} \Omega_{w_1}^{\sigma_0,\sigma_1} S_{w'}^+\\
+&S_{w'}^- A_{\lambda_2}^{\sigma_2} (\Omega_{w_2}^{\sigma_1,\sigma_2}\bI^{\sigma_0,\sigma_1}+ \bI^{\sigma_1,\sigma_2}\Omega_{w_1}^{\sigma_0,\sigma_1}) A_{\lambda_1}^{\sigma_0} S_{w'}^+
\end{split}
\]
We observe that
\begin{equation}
XS_{w'}^- \Omega_{w_2}^{\sigma_1,\sigma_2} \Omega_{w_1}^{\sigma_0,\sigma_1} A_{\lambda_1}^{\sigma_0} S_{w'}^+ +X S_{w'}^- A_{\lambda_2}^{\sigma_2} \Omega_{w_2}^{\sigma_1,\sigma_2} \Omega_{w_1}^{\sigma_0,\sigma_1} S_{w'}^+\simeq 0 \label{eq:cancelling-terms}
\end{equation}
because 
\[
S_{w'}^- \Omega_{w_2}^{\sigma_1,\sigma_2} \Omega_{w_1}^{\sigma_0,\sigma_1} A_{\lambda_1}^{\sigma_0} S_{w'}^+\simeq \Omega_{w_2}^{\sigma',\sigma''} \Omega_{w_1}^{\sigma,\sigma'} S_{w'}^- A_{\lambda_1}^{\sigma_0}S_{w'}^+\simeq  \Omega_{w_2}^{\sigma',\sigma''} \Omega_{w_1}^{\sigma,\sigma'}
\]
by Equations~\eqref{eq:trivial-strand-relation} and \eqref{eq:[Sw-Omega]}, and similarly
\[
 S_{w'}^- A_{\lambda_2}^{\sigma_2} \Omega_{w_2}^{\sigma_1,\sigma_2} \Omega_{w_1}^{\sigma_0,\sigma_1} S_{w'}^+\simeq  \Omega_{w_2}^{\sigma',\sigma''} \Omega_{w_1}^{\sigma,\sigma'}.
\]
These two terms cancel, giving Equation~\eqref{eq:cancelling-terms}.

We will perform the following manipulation: 
\begin{equation}
\begin{split}
&S_{w'}^- A_{\lambda_2}^{\sigma_2} (\Omega_{w_2}^{\sigma_1,\sigma_2}\bI^{\sigma_0,\sigma_1}+ \bI^{\sigma_1,\sigma_2}\Omega_{w_1}^{\sigma_0,\sigma_1}) A_{\lambda_1}^{\sigma_0} S_{w'}^+
\\
\simeq &S_{w'}^-  \left((\Omega_{w_2}^{\sigma_1,\sigma_2}\bI^{\sigma_0,\sigma_1}+ \bI^{\sigma_1,\sigma_2}\Omega_{w_1}^{\sigma_0,\sigma_1}) A_{\lambda_2}^{\sigma_0}A_{\lambda_1}^{\sigma_0}+ \bI^{\sigma_0,\sigma_2}A_{\lambda_1}^{\sigma_0}\right) S_{w'}^+
\\
\simeq & S_{w'}^-  \left((\Omega_{w_2}^{\sigma_1,\sigma_2}\bI^{\sigma_0,\sigma_1}+ \bI^{\sigma_1,\sigma_2}\Omega_{w_1}^{\sigma_0,\sigma_1}) A_{\lambda}^{\sigma_0}A_{\lambda_1}^{\sigma_0}+ \bI^{\sigma_0,\sigma_2}A_{\lambda_1}^{\sigma_0}\right) S_{w'}^+\\
= & S_{w'}^-  \left((\Omega_{w_2}^{\sigma_1,\sigma_2}\bI^{\sigma_0,\sigma_1}+ \bI^{\sigma_1,\sigma_2}\Omega_{w_1}^{\sigma_0,\sigma_1}) A_{\lambda}^{\sigma_0}+ \bI^{\sigma_0,\sigma_2}\right)A_{\lambda_1}^{\sigma_0} S_{w'}^+\\
\simeq &((\Omega_{w_2}^{\sigma',\sigma''} \bI^{\sigma,\sigma'}+\bI^{\sigma', \sigma''} \Omega_{w_1}^{\sigma,\sigma'})A_\lambda^{\sigma}+\bI^{\sigma,\sigma''}) S_{w'}^-A_{\lambda_1}^{\sigma_0} S_{w'}^+\\
\simeq  &(\Omega_{w_2}^{\sigma',\sigma''} \bI^{\sigma,\sigma'}+\bI^{\sigma', \sigma''} \Omega_{w_1}^{\sigma,\sigma'})A_\lambda^{\sigma}+\bI^{\sigma,\sigma''}.
\end{split}
\label{eq:final-basepoint-swapping-equation}
\end{equation}

We now explain the above manipulation. Going from the first line to the second is Lemma~\ref{lem:complicated[A,Omega]}. Going from the second to the third is accomplished by adding
\[
\begin{split}
&S_{w'}^- (\Omega_{w_2}^{\sigma_1,\sigma_2}\bI^{\sigma_0,\sigma_1}+ \bI^{\sigma_1,\sigma_2}\Omega_{w_1}^{\sigma_0,\sigma_1}) A_{\lambda_1}^{\sigma_0}A_{\lambda_1}^{\sigma_0} S_{w'}^+\\
\simeq &(\Omega_{w_2}^{\sigma',\sigma''}\bI^{\sigma,\sigma'}+ \bI^{\sigma',\sigma''}\Omega_{w_1}^{\sigma,\sigma'}) S_{w'}^-  (A_{\lambda_1}^{\sigma_0})^2 S_{w'}^+\\
\simeq& X (\Omega_{w_2}^{\sigma',\sigma''}\bI^{\sigma,\sigma'}+ \bI^{\sigma',\sigma''}\Omega_{w_1}^{\sigma,\sigma'}) S_{w'}^-  S_{w'}^+\\
\simeq& 0.
\end{split}
\]
Returning to Equation~\eqref{eq:final-basepoint-swapping-equation}, we observe line four is obtained by factoring line 3. Line 5 is obtained by commuting $S_{w'}^-$ past the other terms. The final line follows from Equation~\eqref{eq:trivial-strand-relation}.  Having established Equation~\eqref{eq:final-basepoint-swapping-equation}, the proof is complete.
\end{proof}

Finally, we complete the proof of Theorem~\ref{thm:swap-diffeo-on-homology}.

\begin{proof}[Proof of Theorem~\ref{thm:swap-diffeo-on-homology}]
The statement of Theorem~\ref{thm:swap-diffeo-on-homology} concerns a map with no colorings, whereas most of the above results consider basepoints with colorings. To translate between the statements, we will now let $X=U_1$ and $Y=U_2$. We note that the map in the statement is the composition
\[
\begin{tikzcd}
\CF^-(\cH,\ws^{\sigma})\ar[r, "(\Sw_\lambda)_*"] &\CF^-(\cH,\ws^{\sigma'}) \ar[r, "\scT"]& \CF^-(\cH,\ws^{\sigma})
\end{tikzcd}
\]
where $\scT$ is the map which sends $\xs U_1^i U_2^j$ to $\xs U_1^j U_2^i$. Note that $\CF^-(\cH,\ws^\sigma)$ is the same as the complex for which we would normally write $\CF^-(\cH,\ws^\sigma)$. 

  By Proposition~\ref{prop:compute-Sw-map}, the map labeled $(\Sw_{\lambda})_*$ in the above composition is homotopic to
\[
(\Sw_\lambda)_*\simeq  \bI^{\sigma,\sigma''}+(\bI^{\sigma',\sigma''}\circ \Omega_{w_1}^{\sigma,\sigma'}+\Omega_{w_2}^{\sigma',\sigma''}\circ \bI^{\sigma,\sigma'})\circ A_{\lambda}^\sigma.
\]
We consider the map. Our goal is to show that $\scT\circ (\Sw_{\lambda})_*$ is chain homotopic to the identity map. To this end, note that $\scT\circ \bI^{\sigma,\sigma''}$ is just the map which switches $U_1$ and $U_2$, but is the identity on intersection points. Therefore we compute that
\[
(\scT\circ \bI^{\sigma,\sigma''}+\id)(U_1^i U_2^j \xs)=(U_1^i U_2^j+U_1^jU_2^i )\xs=\left(U_1^i \frac{U_1^j+U_2^j}{U_1+U_2}+ U_1^j\frac{U_1^i+U_2^i}{U_1+U_2} \right)(U_1+U_2)\xs.
\]
We define a (non-$\bF[U_1,U_2]$-equivariant) map
\[
j\colon \CF^-(\cH,\ws^\sigma)\to \CF^-(\cH,\ws^\sigma)
\]
via the formula
\[
j(U_1^iU_2^j\xs)=\left(U_1^i \frac{U_1^j+U_2^j}{U_1+U_2}+ U_1^j\frac{U_1^i+U_2^i}{U_1+U_2} \right)\xs.
\]
Note that $j$ satisfies
\begin{equation}
(U_1+U_2)\circ j=j\circ (U_1+U_2)=(\scT\circ \bI^{\sigma,\sigma''}+\id).
\end{equation}
In the above equation, we are writing $(U_1+U_2)$ for the endomorphism $\xs\mapsto (U_1+U_2)\cdot \xs$.

We define a chain homotopy $h\colon \CF^-(\cH,\ws)\to \CF^-(\cH,\ws)$ via the formula
\[
h=j\circ A_{\lambda}^{\sigma}.
\]
We see by direct computation that
\[
\d(j)=\scT\circ(\bI^{\sigma',\sigma''}\circ \Omega_{w_1}^{\sigma,\sigma'}+\Omega_{w_2}^{\sigma',\sigma''}\circ \bI^{\sigma,\sigma'}).
\]
Therefore, applying the Leibniz rule and using the above relations, we get
\[
\begin{split} \d(h)&=j\circ \d(A_\lambda^\sigma)+\d(j)\circ A_\lambda^\sigma\\
&= j\circ (U_1+U_2)+\scT\circ(\bI^{\sigma',\sigma''}\circ \Omega_{w_1}^{\sigma,\sigma'}+\Omega_{w_2}^{\sigma',\sigma''}\circ \bI^{\sigma,\sigma'})\circ A_{\lambda}^\sigma\\
&=(\scT\circ \bI^{\sigma,\sigma''}+\id)+\scT\circ(\bI^{\sigma',\sigma''}\circ \Omega_{w_1}^{\sigma,\sigma'}+\Omega_{w_2}^{\sigma',\sigma''}\circ \bI^{\sigma,\sigma'})\circ A_{\lambda}^\sigma\\
&\simeq \id+\scT\circ (\Sw_\lambda)_*,
\end{split}
\]
completing the proof.
\end{proof}

\subsection{Proof of Theorem~\ref{thm:multi-pointed-diffeos}}

We now complete the proof of Theorem~\ref{thm:multi-pointed-diffeos}:

\begin{proof}[Proof of Theorem~\ref{thm:multi-pointed-diffeos}] By assumption, $\phi_0$ and $\phi_1$ are isotopic as unpointed diffeomorphisms. After an isotopy, we may assume that $\phi_0$ and $\phi_1$ both preserve $\ws_0\cup \ws_1$ setwise.  Note that it suffices to prove the case when $\ws_0\subset \ws_1$, since we can apply the result in this case repeatedly to prove the general case. Therefore, we assume that $\ws_1=\ws_0\sqcup \ws'$ for some collection $\ws'$ of basepoints.

Since a pair of diffeomorphisms which are isotopic relative to $\ws_0$ induce the same map on $\HF^-(Y,\ws_0)$, we may assume that $\phi_0$ preserves $\ws'$ pointwise. We first define an isomorphism
\[
S\colon \HF^-(Y,\ws_0)\to \HF^-(Y,\ws_0\cup \ws')
\]
as the composition of the free-stabilization maps $S_{w_i}^+$ ranging over $w_i\in \ws'$. Using \cite{OSLinks}*{Proposition~6.5}, one can easily show that $S$ is an isomorphism. Indeed, they show that $(Y,\ws_0\cup \{w_i\})$ admits a Heegaard diagram where
\[
\CF^-(Y,\ws_0\cup \{w_i\})\iso\Cone(\CF^-(Y,\ws_0)[U_i]\times \theta^-\xrightarrow{U_1+U_i} \CF^-(Y,\ws_0)\times \theta^+),
\]
where $U_i$ is the variable for $w_i$, $U_1$ is the variable for some basepoint in $\ws_0$. Here, $\theta^+$ and $\theta^-$ are just notation. The Heegaard diagram that Ozsv\'{a}th and Szab\'{o} consider can also be used to compute the free-stabilization map $S_{w_i}^+$, which takes the form $S_{w_i}^+(\xs)=\xs\times \theta^+$ (extended equivariantly). This map is obviously a quasi-isomorphism.

Since $\phi_0$ preserves $\ws'$ pointwise, we may apply Lemma~\ref{lem:stabilization-diffeo} to see that
\[
S\circ \HF(\phi_0)=\HF(\phi_0)\circ S.
\]

Given the above discussion, we may now view $\phi_0$ and $\phi_1$ as both acting on the pointed manifold $(Y,\ws_0\cup \ws')$. Since $\phi_0$ and $\phi_1$ are isotopic as unpointed diffeomorphisms, it follows that $\phi_1$ is isotopic to $R\circ \phi_0$, where $R$ is a composition of basepoint swapping maps. Theorem~\ref{thm:swap-diffeo-on-homology} implies that $\HF(R\circ \phi_0)=\HF(\phi_0)$ as endormophisms of $\HF^-(Y,\ws_0\cup \ws')$. This concludes the proof, since we have shown that
\[
S\circ \HF(\phi_0)=\HF(\phi_0)\circ S=\HF(\phi_1)\circ S.
\]
\end{proof}

\begin{cor} Suppose that $\phi\colon (Y,L,\Lambda)\to (Y,L,\Lambda)$ is a diffeomorphism of strongly framed links. Write $\Phi\colon Y_{\Lambda}(L)\to Y_{\Lambda}(L)$ for the corresponding diffeomorphism.  Write $\cC(\phi)\colon \cC_{\Lambda}(L)\to \cC_{\Lambda}(L)$ for the associated chain endomorphism of the link surgery complex, and write $\ve{\HF}(\Phi)\colon \ve{\HF}^-(Y_{\Lambda}(L))\to \ve{\HF}^-(Y_{\Lambda}(L))$ for the map induced by the diffeomorphism $\Phi$ on the singly pointed version of Heegaard Floer homology. Then there is an isomorphism of pairs
 \[
 (H_* \cC_{\Lambda}(L), \cC(\phi))\iso (\ve{\HF}^-(Y_{\Lambda}(L)), \ve{\HF}(\Phi)).
 \]
\end{cor}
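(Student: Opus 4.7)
The strategy is to chain together the chain-level refinement of the equivariant link surgery formula (which lives naturally in the multi-pointed setting) with the basepoint invariance results of Section~\ref{sec:pointed-diffeos}, losing the chain-level information in the process but retaining everything needed on homology. First, I would invoke the chain-level refinement of the equivariant surgery formula for links mentioned in the remark following Theorem~\ref{thm:equivariant-surgery-links}. This produces a homotopy equivalence
\[
\Gamma \colon \cC_{\Lambda}(Y,L) \xrightarrow{\simeq} \ve{\CF}^-(Y_{\Lambda}(L),\ws)
\]
where $\ws = \{w_1,\ldots,w_\ell\}$ is a multi-pointed collection with one basepoint on the core of each surgery solid torus, and this equivalence intertwines $\cC(\phi)$ with $\ve{\CF}(\Phi_{\ws})$, the induced diffeomorphism map on the multi-pointed complex. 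Here $\Phi_{\ws}$ permutes the $U_i$-variables according to the permutation $\varrho$ of link components induced by $\phi$.

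Next, I would pass to homology and split by $\Spin^c$ structure on $Y_{\Lambda}(L)$. Both sides decompose as direct sums over $\Spin^c$ structures, and the naturality maps preserve this decomposition. On each summand, the task becomes identifying the pair $(\ve{\HF}^-(Y_{\Lambda}(L),\ws,\frs), \ve{\HF}(\Phi_{\ws}))$ with the pair $(\ve{\HF}^-(Y_{\Lambda}(L),w_0,\frs), \ve{\HF}(\Phi))$ for a single chosen basepoint $w_0$, where $\Phi$ is any pointed representative of the underlying unpointed diffeomorphism. This is precisely the setting of Theorem~\ref{thm:multi-pointed-diffeos}: although that theorem is stated for integer homology $3$-spheres, both its key ingredients --- the free-stabilization map $S_w^+$ being a quasi-isomorphism on the appropriate completed complexes, and the computation of the basepoint-swapping map on homology (Theorem~\ref{thm:swap-diffeo-on-homology}) --- go through unchanged for arbitrary closed oriented $3$-manifolds after restricting to a fixed $\Spin^c$ structure. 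I would verify this extension by noting that Proposition~\ref{prop:compute-Sw-map} and the algebraic manipulations that follow make no use of the homology-sphere hypothesis: only the basepoint-coloring formalism and the existence of the graph TQFT operations $A_\lambda$, $\Phi_{w}$, $S_w^\pm$ are used.

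Combining these ingredients yields the result. In more detail: the map $\Phi_{\ws}$ and any singly-pointed representative of $\Phi$ are isotopic as unpointed diffeomorphisms of $Y_\Lambda(L)$ (the passage just involves deleting or adding basepoints on solid-torus cores and point-pushing), so the generalized Theorem~\ref{thm:multi-pointed-diffeos} provides a $\Spin^c$-graded isomorphism
\[
\Psi \colon \ve{\HF}^-(Y_{\Lambda}(L),\ws) \xrightarrow{\iso} \ve{\HF}^-(Y_{\Lambda}(L))
\]
intertwining $\ve{\HF}(\Phi_{\ws})$ with $\ve{\HF}(\Phi)$. Composing with the homology of $\Gamma$ gives the desired isomorphism of pairs. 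The main obstacle is the verification that Theorem~\ref{thm:multi-pointed-diffeos} really does generalize beyond integer homology spheres with the $\Spin^c$-split formulation; once that is granted, the rest is bookkeeping. A secondary subtlety is ensuring that when $\phi$ reverses some link orientations, the resulting permutation of $U_i$-variables corresponds on the multi-pointed Floer side to an honest diffeomorphism-induced permutation of basepoints, which it does since reversing a component orientation swaps the roles of $w_i$ and $z_i$ in the Heegaard diagram but does not change the distinguished basepoint on the core of the surgery torus.
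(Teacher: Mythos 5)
Your proposal follows exactly the route the paper intends: the corollary is stated without separate proof and is meant to be the combination of the chain-level equivariant surgery formula (Theorem~\ref{thm:equivariant-surgery-theorem-text-body}), which identifies $\cC(\phi)$ with a multi-pointed diffeomorphism map, with the basepoint-invariance results of Section~\ref{sec:pointed-diffeos} (free stabilization plus Theorem~\ref{thm:swap-diffeo-on-homology}) to descend to the singly pointed theory on homology. You also correctly flag the one real issue, namely that Theorem~\ref{thm:multi-pointed-diffeos} is stated only for integer homology spheres while $Y_{\Lambda}(L)$ is general.

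One caution on your claimed resolution of that issue: the swapping and stabilization ingredients indeed make no use of the homology-sphere hypothesis, but the proof of Theorem~\ref{thm:multi-pointed-diffeos} also quietly uses that two pointed diffeomorphisms which are isotopic as \emph{unpointed} diffeomorphisms induce the same map, which for a general $3$-manifold fails: point-pushing along a non-nullhomotopic loop can act nontrivially on $\ve{\HF}^-$ (cf.\ the paper's own remark that changing the strong framing composes $\Phi$ with point-pushing along the core of the surgery torus). So ``isotopic as unpointed diffeomorphisms, the passage just involves \ldots point-pushing'' is precisely where the argument needs care: you must either fix the pointed representative of $\Phi$ determined by the strong framing (a basepoint on the core of a surgery solid torus, matched on both sides), or check that the isotopies you use to compare $\Phi_{\ws}$ with the singly pointed $\Phi$ involve only basepoint swaps along the chosen arcs and free (de)stabilizations, never a closed non-nullhomotopic push. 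With that representative pinned down the argument closes up as you describe.
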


\section{Homology actions, twisted coefficients and $\Spin$-bordism }
\label{sec:homology-action-homotopies}

If $\g\subset \Sigma$ is a closed curve, there is a map
\[
\frA_{\g}\colon \CF^-(\cA,\cB)\to \CF^-(\cA,\cB),
\]
which we will informally refer to as the \emph{hypercube homology action}. This map was first defined in \cite{HHSZNaturality}*{Section~7.2}. 

In this section, we study the naturality of the hypercube homology action. Specifically, we study its behavior under changing $\g$ on $\Sigma$.

Before we begin, we note that constructing the endomorphism $\frA_{\g}$ is related to constructions of twisted coefficients on the hypercube chain complex $\CF^-(\cA,\cB)$. The work of this section will establish several naturality results for certain twisted Floer homologies.

Our main use for these results is to define canonical transition maps on the link surgery formula for changing the shadows on a Heegaard surface.

\subsection{The hypercube homology action}
\label{sec:closed-curves-action}

Let $\cA$ and $\cB$ two hypercubes of attaching curves on $\Sigma$. We begin by defining the map 
\[
\frA_{\g}\colon \CF^-(\cA,\cB)\to \CF^-(\cA,\cB)
\]
 when $\g$ is a smooth 1-cycle on $\Sigma$. The construction is described in \cite{HHSZNaturality}*{Section~7.2}. Let 
 \[
 \phi\in \pi_2(\theta_{\a_{\veps_n}, \a_{\veps_{n-1}}},\dots, \xs,\dots, \theta_{\b_{\nu_{k-1}}, \b_{\nu_{k}}}, \ys)
 \]
be a class of polygons. The domain of $D(\phi)$ is an integral 2-chain on $\Sigma$. The boundary of $D(\phi)$ may naturally be written as the union of two 1-chains, $\d_{\a}(\phi)$ and $\d_{\b}(\phi)$. We orient these using the convention that
\[
\d D(\phi)=\d_{\b}(\phi)+\d_{\a}(\phi)
\]
and 
\[
\d \d_{\b}(\phi)=-\d \d_{\a}(\phi)=\xs-\ys.
\] 
We set  $\#\d_{\a}(\phi)\cap \g$ to be the  intersection number of $\g$ with the components of $\d D(\phi)$ which lie in a curve of $\cA$. We define $\#\d_{\b}(\phi)\cap \g$ similarly.
 The hypercube morphism $\frA_{\g}$ is gotten by summing over holomorphic polygons which are weighted by $\# \d_{\a}(\phi) \cap \g$. Similarly the morphism $\frB_{\g}$ is obtained by summing over holomorphic polygons which are weighted by $\# \d_{\b}(\phi)\cap \g$. 
 
It is helpful to interpret the above slightly differently. We can think of $\g$ as determining a formal endomorphism of each $\as_{\veps}$. We will write $F_{\g}$ for this formal endomorphism. Summing over all $\veps$, we also obtain a formal endomorphism $F_{\g}\colon \cL_{\a}\to \cL_{\a}$. Then the morphism $\frA_{\g}$ can be described formally as
\[
\frA_{\g}=\mu_2^{\Tw}(F_{\g},-).
\] 
To make sense of the above equation, we need to have an interpretation of 
\[
\mu_{n+1}(\xs_1,\dots, \xs_i,\g, \xs_{i+1},\dots, \xs_n)
\]
 when $\xs_i$ are ordinary Floer chains. If $(\Sigma,\ds_0,\dots, \ds_n)$ is a Heegaard multi-diagram, then we define $\mu_{n+1}(\xs_1,\dots, \xs_{i}, \g, \xs_{i+1}, \dots, \xs_n)$ as counting holomorphic polygons with inputs $(\xs_1,\dots, \xs_n)$ of index $3-n$ which are weighted by $\#(\d_{\dt_i}(\phi)\cap \g)$.

\begin{rem} It is clear from the definition that $\frA_\g=\frB_\g$ since $\#\d_{\a}(\phi)\cap \g= \# \d_{\b}(\phi) \cap \g$ (with mod 2 coefficients). 
\end{rem}

\begin{prop}\label{prop:homology-actions-chain-maps}
 If $\cA$ and $\cB$ are hypercube of Lagrangians, then 
\[
\frA_\g \colon \CF^-(\cA,\cB)\to \CF^-(\cA,\cB)
\]
is a chain map. 
\end{prop}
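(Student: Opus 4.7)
The plan is to verify $[\d,\frA_\g]=0$ by analyzing the ends of moduli spaces of index $2$ holomorphic polygons weighted by the intersection number $\#\d_{\a}(\phi)\cap \g$, then use additivity of this intersection number under splicing together with the Mauer--Cartan relations for the hypercubes $\cA$ and $\cB$.

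First I would establish the key additivity property: if a class of polygons $\phi$ degenerates as a concatenation $\phi_1*\phi_2$, then
\[
\d_{\a}(\phi) \;=\; \d_{\a}(\phi_1)+\d_{\a}(\phi_2),
\]
as $1$-chains on $\Sigma$, so the weights satisfy $\#\d_{\a}(\phi)\cap \g = \#\d_{\a}(\phi_1)\cap \g + \#\d_{\a}(\phi_2)\cap \g$. This is immediate from the domain interpretation of holomorphic polygons, and is the algebraic fact that makes $\g$ behave like a derivation on the polygon count.

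Second, I would reinterpret $\frA_\g$ in the twisted-complex formalism of Section~\ref{sec:hypercubes}. Namely, the formal endomorphism $F_\g\colon \cL_\a\to \cL_\a$ sits inside an enlarged $A_\infty$-category where the higher compositions $\mu_{n+1}(\xs_1,\dots,\xs_i,\g,\xs_{i+1},\dots,\xs_n)$ are defined by counting polygons with the $\d_\dt\cap \g$ weighting. With this convention, $\frA_\g=\mu_2^{\Tw}(F_\g,-)$, and the chain map property $[\d,\frA_\g]=0$ is equivalent to the statement that $F_\g$ is a $\mu_1^{\Tw}$-cycle in the morphism complex of the enlarged twisted category.

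Third, I would verify the $\mu_1^{\Tw}$-cycle property by examining the ends of the one-dimensional moduli space of index-$2$ polygons with one input insertion of $\g$. The ends are of four types: (i)~genuine Gromov breakings into two index-$1$ polygons, which by additivity yield precisely $\d\circ \frA_\g+\frA_\g\circ\d$; (ii)~degenerations where the $\g$-insertion collides with an alpha-input $\theta_{\a_{\veps},\a_{\veps'}}$ or beta-input $\theta_{\b_{\nu},\b_{\nu'}}$, which combine into the hypercube relations for $\cA$ (respectively $\cB$) weighted by $F_\g$ and therefore vanish by the Mauer--Cartan equation for $\cL_\a$ (respectively $\cL_\b$); and (iii)~alpha- and beta-boundary degenerations, which either cancel in pairs or are weighted by $0$ because the relevant boundary component is null-homologous in $\Sigma\setminus \text{(basepoints)}$ so has zero intersection with $\g$.

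The main obstacle is step (ii): one must check that degenerations involving the higher polygon maps of the hypercubes $\cA$ and $\cB$ recombine correctly. Here the additivity of $\#\d_\a(\phi)\cap \g$ under splicing is essential, because it allows the $\g$-weighted $A_\infty$-relation to factor through the ordinary Mauer--Cartan relations for $\cA$ and $\cB$, which hold by hypothesis. Once this is in place, summing all contributions yields $[\d,\frA_\g]=0$ as required.
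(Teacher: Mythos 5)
Your argument is correct and is essentially the paper's own proof, which is deferred to \cite{HHSZNaturality}*{Section~7.2} and consists of exactly this count of the ends of index-$2$ moduli spaces, using additivity of $\#\d_{\a}(\cdot)\cap \g$ under splicing. The one point worth making explicit in your step (ii) is that when a story of the degeneration lies entirely among alpha (or beta) curves, its full boundary is a null-homologous $1$-cycle on $\Sigma$ and hence meets $\g$ trivially, so the $\g$-weight is carried entirely by the other story and these ends reduce to the \emph{unweighted} Mauer--Cartan relations for $\cA$ and $\cB$.
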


The proof, which is laid out in \cite{HHSZNaturality}*{Section~7.2}, follows from a straightforward count of the ends of moduli spaces.

\subsection{Hypercubes with twisted coefficients}
\label{sec:twisted-coefficients-def}
As is standard, the construction of the homology action $\frA_\g$ from the previous section adapts to give a version of Floer homology with twisted coefficients. If we are given a class $\g\in H_1(Y)$, we may view $\bF[T,T^{-1}]$ as an $\bF[H^1(Y)]$-module, by defining 
\[
e^{\eta}\cdot T^\a=T^{\a+\eta(\g)}. 
\]

Suppose $\cA,\cB$ are hypercubes of attaching curves on $\Sigma$ and $\g\subset \Sigma$ is a closed 1-chain, transverse to the curves of $\cA$ and $\cB$, and $\g$ is disjoint from the intersection points of $\cA$ and $\cB$. We may form a twisted Floer complex $\underline{\CF}^-_\g(\cA,\cB)$ as follows. The underlying vector space is the same as $\CF^-(\cA,\cB)\otimes_{\bF} \bF[T,T^{-1}]$. The differential counts the same curves as $\CF^-(\cA,\cB)$, except with an extra power of
\[
T^{-\# \d_{\a}(\psi)\cap \g}=T^{\# \d_{\b}(\psi)\cap \g}.
\]

The discussion extends in a straightforward manner to the case that we have a collection of 1-cycles $\ve{\g}=(\g_1,\dots, \g_n)$ on $\Sigma$. In this case, the underlying vector space $\underline{\CF}^-_{\ve{\g}}(\cA,\cB)$ is 
\[
\CF^-(\cA,\cB)\otimes_{\bF} \bF[T_1,T_1^{-1},\dots T_n,T_n^{-1}].
\]
The differential is again similar to that of $\CF^-(\cA,\cB)$, except with an extra weight of
\[
T_1^{\# \d_{\b}\cap \g_1}\cdots T_n^{\# \d_{\b}\cap \g_n}.
\]

\subsection{A relative Chern class formula}\label{subsec:relative-chern}

We now describe a helpful Chern class formula, which will be needed for proving certain naturality results for Floer complexes with twisted coefficients.

Let $(\Sigma,\as,\bs,\ws)$ be a Heegaard diagram for $Y$. We suppose that $\g\subset \Sigma$ is an immersed, oriented 1-manifold. We view $\g$ as a map from a disjoint union of circles to $\Sigma$. The bundle $\g^*TY$ can be canonically trivialized, by declaring, for example, that $t=(v_1,v_2,v_3)$ is a trivialization where $v_1$ is an oriented tangent for $\g$, $v_2\in T\Sigma$ is the oriented normal to $v_1$, and $v_3$ is normal to $\Sigma$, compatible with the orientation of $Y$.

We now suppose that
\[
i\colon \scC\to Y
\]
is a smooth map and $\g=i|_{\d \scC}$, where we still assume that the image of $\g$ lies in $\Sigma$. As above, the bundle $i^* TY|_{\d \scC}$ admits a canonical trivialization, for which we write $t$.

We assume that $\g$ is disjoint from the intersection points of $\as$ and $\bs$, and also disjoint from the basepoints $\ws$. Given $\xs\in \bT_{\a}\cap \bT_{\b}$, the construction from \cite{OSDisks}*{Section~2.6}, constructs a non-vanishing vector field $v_{\ws}(\xs)$ on $Y$. By construction $v_{\ws}(\xs)$ is normal to $\Sigma$ along $\g$. Taking the orthogonal complement of $v_{\ws}(\xs)$ gives an oriented 2-plane field $\xi_{\ws}(\xs)$ on $Y$, and the trivialization $t$ of $TY|_{\g}$ naturally restricts to a trivialization of $\xi_{\ws}(\xs)$ along $\g$, for which we also write $t$.

Note that an almost complex structure on $\xi_{\ws}(\xs)$ induces a relative $\Spin^c$ structure on the bundle $\xi_{\ws}(\xs)$ via the map $U(1)\to \Spin^c(2)$, and consequently a relative $\Spin^c$ structure $\frs_{\ws}(\xs,t)$ on $i^* TY\iso i^* (\xi_{\ws}(\xs)\oplus \underline{\R})$, where $\underline{\R}$ is the trivial bundle. The relative Chern classes are identified, so
\[
c_1(\frs_{\ws}(\xs,t))=c_1(\xi_{\ws}(\xs),t).
\]

We are now interested in the case that $\scC$ is induced by a relative 2-chain $C$ on  $(\Sigma,\as,\bs,\ws)$. In this case, we assume that $C$ is an integral domain on $\Sigma$ which has boundary 
\[
\d C=\g+A+B,
\]
where $A$ is an integral combination of curves in $\as$, and $B$ is an integral combination of curves in $\bs$. In this case, $C$ can be canonically capped off with disks along the curves of $A$ and $B$ to get a surface $\hat{C}$. Smoothing corners gives a smooth map
\[
i\colon \hat{C}\to Y.
\]

\begin{lem}
\label{lem:relative-Chern-class}
If $\g$, $C$ and $\hat{C}$ are as above, then
\[
\langle c_1(\frs_{\ws}(\xs,t)), [\hat{C}, \d \hat{C}]\rangle =e(C)+2\bar{n}_{\xs}(C)-2n_{\ws}(C). 
\]
Here $e(C)$ denotes the Euler measure of $C$, and $\bar{n}_{\xs}(C)=\sum_{x_i\in \xs} \bar{n}_{x_i}(C)$ where $\bar{n}_{x_i}(C)$ is the average multiplicity in the four regions adjacent to $x_i$. 
\end{lem}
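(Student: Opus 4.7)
My plan is to adapt the proof of the classical first Chern class formula of Ozsv\'ath--Szab\'o \cite{OSDisks}*{Proposition~7.5} to the relative setting. The strategy rests on three observations: (i) the trivialization $t$ is canonically determined by $\Sigma$ and the orientation of $Y$, so both sides of the desired identity depend only on the relative 2-chain $C$; (ii) both sides are $\Z$-linear in $C$, since $e(C)$, $\bar n_{\xs}(C)$, and $n_{\ws}(C)$ are manifestly additive, and the relative Chern class pairing is additive in relative homology classes once the boundary trivialization is fixed; and (iii) if $C$ and $C'$ have the same boundary $\g + A + B$, then $C - C'$ is a (closed) periodic domain for which the formula reduces to the classical one. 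Together these reduce the problem to verifying the formula on a single, conveniently chosen representative of each relative homology class that arises.

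For this representative I would take a thin collar of $\g$ inside $\Sigma$ attached to small disk cappings along the components of $A \subset \as$ and $B \subset \bs$. On such a model, $e(C)$, $\bar n_{\xs}(C)$, and $n_{\ws}(C)$ are readily computed from the combinatorics of $(\Sigma, \as, \bs, \ws)$, and the capped surface $\hat C$ decomposes into pieces on which the vector field $v_{\ws}(\xs)$ (and hence the 2-plane field $\xi_{\ws}(\xs) = v_{\ws}(\xs)^{\perp}$) admits the explicit description from \cite{OSDisks}*{Section~7}: namely, $v_{\ws}(\xs)$ is normal to $\Sigma$ away from small balls around $\xs$ and $\ws$, inside of which it has been modified by a standard handle-cancellation.

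I would compute the left-hand side by extending the trivialization $t|_\g$ to a section $s$ of $\xi_{\ws}(\xs)$ over $\hat C$ with finitely many zeros and counting their signs, which is precisely $\langle c_1(\frs_{\ws}(\xs,t)), [\hat C, \d\hat C]\rangle$. Since $\xi_{\ws}(\xs)|_\g = T\Sigma|_\g$ away from $\xs \cup \ws$, the restriction $t|_\g$ is identified with (oriented tangent to $\g$, inward normal to $\g$ in $\Sigma$). Extending the tangent-to-$\g$ component of $t$ across $\hat C$, the zero count splits into three pieces matching the three terms on the right-hand side: the bulk contribution gives $e(C)$ by a relative Poincar\'e--Hopf computation on $\hat C$ with boundary section equal to the oriented tangent of $\g$; each $x_i \in \xs$ contributes $2\bar n_{x_i}(C)$ from the local model of the handle-cancellation at $x_i$ exactly as in \cite{OSDisks}*{Section~7}; and each $w \in \ws$ contributes $-2 n_w(C)$ from the analogous local model at $w$.

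The main technical obstacle is showing that the tangent-to-$\g$ boundary section produces precisely $e(C)$ as the bulk zero count, with no additional correction localized along $\g$. This is a relative Poincar\'e--Hopf bookkeeping on $\hat C$ with corners at the intersection points $\g \cap (\as\cup \bs)$: one must balance the Euler measure of $C$, the corner angle defects introduced by capping along $A$ and $B$, and the indices of the local model sections near $\xs$ and $\ws$, and verify that these assemble into exactly the three terms of the right-hand side. Once this matching is checked, the formula follows by comparison with the corresponding calculation in \cite{OSDisks}*{Section~7}.
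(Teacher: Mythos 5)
Your core computation is correct and amounts to the obstruction-theoretic (Poincar\'e--Hopf) dual of what the paper does: the paper computes $\langle c_1(\frs_{\ws}(\xs,t)),[\hat{C},\d\hat{C}]\rangle$ by choosing a metric-compatible connection on $\xi_{\ws}(\xs)$ and integrating its curvature as in Gauss--Bonnet, with the identification $\xi_{\ws}(\xs)=T\Sigma$ away from $\xs\cup\ws$ producing the bulk term $e(C)$ and excised local models at $\xs$ and $\ws$ producing $2\bar{n}_{\xs}(C)$ and $-2n_{\ws}(C)$; you instead extend the boundary trivialization $t|_{\g}$ to a section of $\xi_{\ws}(\xs)$ over $\hat{C}$ and count signed zeros, with the same three-way split of contributions. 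The two arguments carry identical content and are left at a comparable level of detail (both defer the bulk bookkeeping to the Gauss--Bonnet/Poincar\'e--Hopf mechanism and the local contributions to the model computation in Ozsv\'ath--Szab\'o's Proposition~7.5), so your route is a legitimate alternative rather than a gap.

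One step you should drop, however, is the reduction to a ``thin collar of $\g$ attached to disk cappings along $A$ and $B$.'' A collar of $\g$ has boundary $\g$ minus a parallel pushoff of $\g$, and gluing on compressing-disk cappings along $A$ and $B$ does not in general produce a $2$-chain with boundary $\g+A+B$; for a curve $\g$ that is nonseparating in $\Sigma$ but nullhomologous in $Y$, no such model exists, so this ``convenient representative'' is not available. Fortunately it is also unnecessary: your linearity/periodic-domain observation (which is exactly how the paper handles the dependence on $C$ in Lemma~\ref{lem:ambiguity-homotopy} and Lemma~\ref{lem:spin-even}) only shows independence of the choice of $C$ with fixed $\g$, and your section-extension argument in the following paragraphs applies directly to an arbitrary $C$ with $\d C=\g+A+B$. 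So the proof should be run on the given $C$ itself, with the collar model excised entirely.
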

\begin{proof} The proof is essentially the same as Ozsv\'{a}th and Szab\'{o}'s proof in the case that $\g$ is empty \cite{OSProperties}*{Proposition~7.5}. Away from small neighborhoods of $\xs$ and $\ws$, the 2-plane field $\xi_{\ws}(\xs)$ coincides with $T\Sigma$. We excise small disks from $C$ near these points. As in the proof of Gauss-Bonnet formula, we can compute the Chern class by picking a real valued connection on $\xi_{\ws}(\xs)$ which is compatible with some Riemannian metric, and then integrating the curvature 2-form over the surface. This induces a complex valued connection on $\xi_{\ws}(\xs)$. Away from $\xs$ and $\ws$, the curvature of $\xi_{\ws}(\xs)$ coincides up to an overall constant with the Gaussian curvature of $\Sigma$ with the chosen metric. Integrating as in the proof of the Gauss-Bonnet theorem (cf. \cite{MilnorStasheff}*{Appendix~C}), and applying a simple model computation in a neighborhood of $\xs$ and $\ws$, as in the proof of \cite{OSProperties}*{Proposition~7.5}, yields the stated formula.
\end{proof}

We now consider immersed 1-chains $\g\subset Y$ equipped with trivializations $t$ of $TY|_{\g}$. Recall that a $\Spin$ structure on a real vector bundle $\xi\to X$ is equivalent to a homotopy class of trivialization on the 1-skeleton which extends to the 2-skeleton, as long as the fiber dimension is 3 or greater \cite{milnor-spin}. Therefore, we note that a trivialization on $TY|_\g$ is equivalent to a $\Spin$ structure on $TY|_{\g}$, since $\g$ is has no cells of dimension 2 or greater. 

Suppose that $\g$ is an immersed 1-cycle in $Y$, and $t$ is a trivialization of $\g^* TY$. We say that $(\g,t)$ is $\Spin$ \emph{nullhomologous} if there is a smooth map $i\colon \scC^2\to Y$ from a surface with boundary to $Y$ such that $i|_{\d \scC}=\g$ and such that the bundle $(i^* TY, t)$ on $\scC$ is relatively $\Spin$. Note that the set of $\Spin$ 1-chains modulo $\Spin$ cobordism is a group, which we denote by $H_1^{\Spin}(Y)$. The addition operation is disjoint union.   Since $TY$ is trivializable, a framing $s$ of $Y$ induces an isomorphism 
\[
I_s\colon H_1(Y)\oplus \Z/2\to H_1^{\Spin}(Y).
\]
The first coordinate of $I_s$ enhances a homology class to a framed homology class by using the restriction of $s$. The image of $\Z/2$ comes from a null-bordant $1$-cycle with the framing opposite the framing of $Y$.  We note that $H_1^{\Spin}(Y)$ can also be described in terms of the normal bundle to $\gamma$: let $L_1(Y)$ denote the collection of immersed $1$-cycles $\gamma$ with spin structure on the normal bundle $N\gamma$, up to bordisms along which the spin structure on the normal bundle extends.  The group $L_1(Y)$ is isomorphic to $\Omega_1^{\Spin}(Y)$, by an isomorphism depending on a spin structure on $Y$; see \cite[Corollary 3.4]{Atiyah-bordism}. More precisely, $L_1(Y)$ is the \emph{twisted} bordism group $\Omega_1^{\Spin}(Y,TY)$; since $TY$ is parallelizable, this latter group is noncanonically isomorphic to the untwisted bordism group.  An element $(\gamma,t)$ of $L_1(Y)$ determines a spin structure on $T\gamma\oplus N\gamma\cong \gamma^*TY$, using the trivial spin structure on $T\gamma$; this induces a morphism $L_1(Y)\to H_1^{\Spin}(Y)$, which turns out to be an isomorphism.

As noted at the start of this section, any immersed $1$-manifold $\gamma$ in a Heegaard surface $\Sigma\subset Y$ can be canonically equipped with a trivialization on $TY|_\g$. On the other hand, given a pair $(\g,s)$ where $\g$ is an immersion of $S^1$ in $Y$, and $s$ is a trivialization of $\g^* TY$, we can always modify $\g$ by a regular homotopy so that the image of $\g$ lies in $\Sigma$ and the trivialization induced by the immersion in $\Sigma$ coincides with $s$ up to homotopy. To see this, observe that we can always immerse $\g$ in $\Sigma$ by a simple topological argument. The surface-induced trivialization can then be modified by adding kinks to $\g$, as in Figure~\ref{fig:40}. 

\begin{figure}[h]
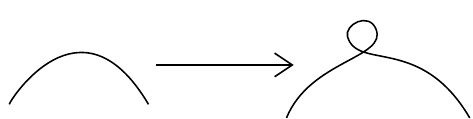
\caption{Adding a kink to $\g$ on $\Sigma$ to change the induced trivialization of $\g^* TY$.}
\label{fig:40}
\end{figure}

  We now show that $\Spin$ bordism can be detected easily using Heegaard diagrams:

\begin{lem}
\label{lem:spin-even} Let $(\Sigma,\as,\bs,\ws)$ be a Heegaard diagram, and let $\g$ be an immersed 1-cycle on $\Sigma$ which represents the trivial homology class. The following are equivalent:
\begin{enumerate}
\item $\g$ is $\Spin$ nullhomologous in $Y$.
\item\label{itm:spin-even-2} There is an integral 2-chain $C\subset \Sigma$ such that $\d C=\g+A+B$, where $A$ (resp. $B$) is an integral combination of curves in $\as$ (resp. $\bs$), such that
\[
e(C)+2\bar{n}_{\xs}(C)-2n_{\ws}(C)\equiv 0 \pmod 2.
\]
\item $\g$ is null-homologous in $Y$ and for all integral 2-chains $C\subset \Sigma$ with $\d C=\g+A+B$, as above, we have
\[
e(C)+2\bar{n}_{\xs}(C)-2n_{\ws}(C)\equiv 0 \pmod 2.
\]
\end{enumerate}
\end{lem}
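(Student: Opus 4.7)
The plan is to use Lemma~\ref{lem:relative-Chern-class} together with the observation that, on a parallelizable 3-manifold, the first Chern class of any $\Spin^c$ structure reduces to zero mod $2$. Throughout, I work with the canonical trivialization $t$ of $TY|_\g$ induced by the Heegaard surface, which (per the discussion before the lemma) represents any prescribed homotopy class of trivialization after adjusting $\g$ by surface kinks. Given any 2-chain $C\subset\Sigma$ with $\d C=\g+A+B$, cap off $A$ and $B$ by the compressing disks in the two handlebodies to produce a smooth map $\hat C\to Y$ with $\d\hat C=\g$. Since a trivialization of a real rank-3 bundle over a 2-complex lifts to a $\Spin$ structure iff $w_2$ vanishes, and since $TY=\xi_{\ws}(\xs)\oplus\underline{\R}$ (so $w_2(TY|_{\hat C},t)=w_2(\xi_\ws(\xs)|_{\hat C},t)$), the relative $\Spin$-extension obstruction of $(TY,t)$ over $\hat C$ is the mod-$2$ reduction of the relative Chern class $\langle c_1(\xi_\ws(\xs),t),[\hat C,\d\hat C]\rangle$. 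By Lemma~\ref{lem:relative-Chern-class} this is exactly the parity of $e(C)+2\bar n_{\xs}(C)-2n_{\ws}(C)$.

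For $(1)\Rightarrow(2)$, start from a $\Spin$ null-bordism $i\colon \scC\to Y$ of $(\g,t)$. By transversality and the usual Heegaard-diagram move of pushing 2-chains onto $\Sigma$ plus compressing disks in the handlebodies, one replaces $i$ by a homologous filling consisting of a 2-chain $C$ on $\Sigma$ capped by disks for some $A,B$; the resulting $\hat C$ inherits the $\Spin$ extension (since the spin structure on each handlebody is unique), so the relative Chern class vanishes mod $2$ and (2) follows. For $(3)\Rightarrow(1)$, pick any such $C$ and form $\hat C$; by hypothesis the relative Chern class is even, so $w_2(TY|_{\hat C},t)=0$ and a relative $\Spin$ structure exists, establishing $\Spin$ null-homology.

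The main remaining point is $(2)\Rightarrow(3)$, whose non-trivial content is that the parity of $e(C)+2\bar n_{\xs}(C)-2n_{\ws}(C)$ is independent of the choice of $C$. Given two such chains $C,C'$, the difference $C-C'$ has boundary in $\as\cup\bs$ alone and caps off to a closed surface $F\subset Y$. The same Euler-measure calculation that proves Lemma~\ref{lem:relative-Chern-class} (applied with $\g=\emptyset$, i.e.\ the standard Ozsv\'ath--Szab\'o formula \cite{OSProperties}*{Proposition~7.5}) identifies the difference of parities with $\langle c_1(\frs_\ws(\xs)),[F]\rangle \pmod 2$. Because $Y$ is orientable, hence parallelizable, one has $w_2(TY)=0$, so for every $\Spin^c$ structure $\frs$ on $Y$ the reduction $c_1(\frs)\bmod 2=w_2(TY)=0$ in $H^2(Y;\Z/2)$; thus $\langle c_1(\frs_\ws(\xs)),[F]\rangle$ is even. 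Well-definedness follows, and also the first half of (3) (that $\g$ is null-homologous in $Y$) is automatic from the existence of $C$ since $\g$ bounds $\hat C$ in $Y$.

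The hard part is the homological replacement in $(1)\Rightarrow(2)$, namely verifying that an arbitrary $\Spin$ null-bordism in $Y$ can be arranged to sit on $\Sigma$ together with handlebody compressing disks without destroying the $\Spin$ extension of $(TY,t)$; this amounts to a transversality argument combined with the uniqueness of $\Spin$ structures on each handlebody (which is simply connected up to the 2-skeleton given by the $\as$ or $\bs$ disks). Everything else is a mechanical application of Lemma~\ref{lem:relative-Chern-class} and the vanishing of $w_2(TY)$.
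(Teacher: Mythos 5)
Your handling of $(2)\Leftrightarrow(3)$ and $(3)\Rightarrow(1)$ matches the paper's proof: both rest on Lemma~\ref{lem:relative-Chern-class}, the identification of the relative $\Spin$-extension obstruction with the mod~$2$ reduction of the relative Chern class, and the fact that $c_1(\frs_{\ws}(\xs))$ reduces to $w_2(TY)=0$, so its pairing with any closed class is even.

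The problem is your $(1)\Rightarrow(2)$. You propose to geometrically replace an arbitrary $\Spin$ null-bordism $\scC$ by a surface of the form $\hat{C}$ (a $2$-chain on $\Sigma$ capped by compressing disks) in a way that ``inherits the $\Spin$ extension,'' and you justify the inheritance by ``the uniqueness of $\Spin$ structures on each handlebody.'' That justification is false: a genus-$g$ handlebody is homotopy equivalent to a wedge of $g$ circles, so it carries $2^g$ distinct $\Spin$ structures, and it is certainly not ``simply connected up to the $2$-skeleton.'' More importantly, the entire geometric replacement — which you flag as ``the hard part'' — is unnecessary. Since $\g$ is in particular null-homologous, a $2$-chain $C$ on $\Sigma$ with $\d C=\g+A+B$ exists by elementary Heegaard-diagram homology; you do not need to derive it from $\scC$. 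The capped surface $\hat{C}$ and the given $\Spin$ bounding surface $\scC$ then differ by a closed class in $H_2(Y)$, so their relative $w_2$-obstructions differ by $\langle w_2(TY),[\hat{C}-\scC]\rangle=0$ — exactly the same mechanism you already invoke for $(2)\Rightarrow(3)$. Hence $\hat{C}$ is also $\Spin$-bounding, and Lemma~\ref{lem:relative-Chern-class} gives the parity statement. This is the route the paper takes; with that substitution your argument closes up, but as written the one step you identify as essential is both wrongly justified and superfluous.
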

\begin{proof} We show $(2) \iff (3)$. We observe that if $C$ and $C'$ are two integral 2-chains as in the statement, then $C-C'$ determines a homology class $h$ in $H_2(Y)$. By Ozsv\'{a}th and Szab\'{o}'s Chern class formula \cite{OSProperties}*{Proposition~7.5}, we observe that
\[
\left(e(C)+2\bar{n}_{\xs}(C)-2n_{\ws}(C)\right)-\left(e(C')+2\bar{n}_{\xs}(C')-2n_{\ws}(C')\right)=\langle c_1(\frs_{\ws}(\xs)),h\rangle
\]
which is always even.

We now show $(1)\then (2).$ Suppose $\g$ is $\Spin$ nullhomologous. Since $\g$ is null-homologous,  there is a relative 2-chain $C$ on $\Sigma$ with $\d C=\g+A+B$. By Lemma~\ref{lem:relative-Chern-class} we have that
\[
\langle c_1(\frs_{\ws}(\xs,t)), [\hat C, \d \hat{C}]\rangle =e(C)+2\bar{n}_{\xs}(C)-2n_{\ws}(C).
\]
We recall the general fact that a real bundle $\xi\to X$ is $\Spin$ if and only if $w_2(\xi)=0$. Furthermore, we recall that if $\xi$ is equipped with a $\Spin^c$ structure $\frs$, when $c_1(\frs)$ reduces to $w_2(\xi)$ modulo 2.  Moreover, although there are potentially many $\Spin^c$ structures on $i^*TY$ (rel boundary), it suffices to check the modulo $2$ reduction for any particular $\Spin^c$ structure $\frs$. Since $i^* TY$ is equipped with a $\Spin^c$ structure, we therefore conclude that $i^* TY$ is $\Spin$ if and only if the quantity $e(C)+2\bar{n}_{\xs}(C)-2n_{\ws}(C)$ is even.   We note, in particular, that this quantity does not depend on $\xs$ modulo $2$, as can also be seen directly.  

The above argument also clearly implies that $(3)\then  (1)$, since if $C$ bounds $\g+A+B$ and $e(C)+2\bar{n}_{\xs}(C)-2n_{\ws}(C)$ is even, then there is a surface, homologous to $\hat{C}$ rel boundary, with the property that $i^* TY$ is $\Spin$.
\end{proof}

\begin{cor} Suppose $\g$ and $\g'$ are oriented, immersed curves on a Heegaard diagram $(\Sigma,\as,\bs,\ws)$ and let $t$ and $t'$ be the framings induced by $T\Sigma$. If $(\g,t)$ and $(\g',t')$ are homotopic in $Y$ as framed curves and $C$ is a 2-chain on $\Sigma$ with $\d C=\g-\g'+A+B$, where $A$ and $B$ are integral combinations of the curves in $\as$ and $\bs$, respectively, then
\[
e(C)+2\bar{n}_{\xs}(C)-2n_{\ws}(C)\equiv 0\pmod 2. 
\]
\end{cor}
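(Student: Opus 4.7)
\medskip

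\textbf{Proof proposal.} The plan is to reduce the statement to Lemma~\ref{lem:spin-even} applied to the 1-cycle $\gamma - \gamma'$ in $Y$, equipped with its $T\Sigma$-induced framing. The key observation is that a framed homotopy between $(\gamma,t)$ and $(\gamma',t')$ provides a $\Spin$ null-homology for $\gamma-\gamma'$.

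First I would produce the null-bordism. Since $(\gamma,t)$ and $(\gamma',t')$ are framed-homotopic in $Y$, there is a smooth homotopy $H\colon S^1 \sqcup \cdots \sqcup S^1 \times I \to Y$ with $H(\cdot,0)=\gamma$ and $H(\cdot,1)=\gamma'$, together with an extension of the trivializations $t,t'$ of $TY$ pulled back along $H$ over the whole cylinder. After a small perturbation we can arrange that $H$ is an immersion. Viewing the image as a singular 2-chain $\scC$ in $Y$, we have $\partial \scC = \gamma - \gamma'$ and the restriction of $H^* TY$ to $\scC$ is trivializable relative to the prescribed boundary trivialization $t \cup t'$ on $\gamma \sqcup (-\gamma')$. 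Hence $\gamma-\gamma'$ with its $T\Sigma$-induced framing is $\Spin$ nullhomologous in $Y$ in the sense of Section~\ref{subsec:relative-chern}.

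Next I would invoke Lemma~\ref{lem:spin-even}. The 1-cycle $\gamma-\gamma'$ is nullhomologous in $Y$ (it bounds $\scC$), and by the previous paragraph it is $\Spin$ nullhomologous. Hence condition (1) of the lemma holds for $\gamma-\gamma'$. By the implication $(1)\Rightarrow(3)$, \emph{every} integral 2-chain $D$ on $\Sigma$ with $\partial D = (\gamma-\gamma') + A_D + B_D$ (where $A_D$, $B_D$ are integral combinations of $\as$- and $\bs$-curves) satisfies
\[
e(D)+2\bar n_{\xs}(D)-2n_{\ws}(D)\equiv 0\pmod 2.
\]
Applying this to $D=C$ (whose boundary has exactly the required form by hypothesis) yields the conclusion.

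The only subtlety, which I expect to be the main small obstacle, is matching framing conventions under orientation reversal: the $T\Sigma$-induced trivialization on $\gamma'$ and the trivialization on $-\gamma'$ coming from reversing the oriented tangent differ by an even element, so they define the same $\Spin$ structure on the normal bundle, and hence the framed boundary of the immersed cylinder $\scC$ really does recover the $T\Sigma$-induced framing on $\gamma-\gamma'$ as required to apply Lemma~\ref{lem:spin-even}. Once this is checked, the argument above goes through.
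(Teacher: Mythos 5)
Your proposal is correct and is exactly the argument the paper intends: the corollary is stated without proof as an immediate consequence of Lemma~\ref{lem:spin-even}, with the trace of the framed homotopy serving as the $\Spin$ bounding surface for $\g-\g'$ and the implication $(1)\Rightarrow(3)$ giving the parity statement for every such 2-chain $C$. Your check that reversing the orientation of $\g'$ changes the $T\Sigma$-induced frame only by a constant rotation, hence preserves the $\Spin$ structure, is the right way to handle the one genuine subtlety.
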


The main point of interest for us will be the case that $\g$ and $\g'$ are embedded in $\Sigma$, and are isotopic as framed knots in $Y$.

\begin{example} We consider two basic examples. One is a flat unknot $\g_0$ embedded on $\Sigma$, and another is a figure eight $\g_0'$, which is immersed on $\Sigma$.  See Figure~\ref{fig:naturality_33}. We assume both curves are away from the attaching curves and basepoints on the Heegaard diagram. The curve $\g_0$ is not $\Spin$-nullhomologous since it bounds a disk on $\Sigma$ with Euler measure $\pm 1$. The curve $\g_0'$ is $\Spin$ null-homologous since it bounds a domain which has Euler measure 0. 
\end{example}

\begin{figure}
\begingroup%
  \makeatletter%
  \providecommand\color[2][]{%
    \errmessage{(Inkscape) Color is used for the text in Inkscape, but the package 'color.sty' is not loaded}%
    \renewcommand\color[2][]{}%
  }%
  \providecommand\transparent[1]{%
    \errmessage{(Inkscape) Transparency is used (non-zero) for the text in Inkscape, but the package 'transparent.sty' is not loaded}%
    \renewcommand\transparent[1]{}%
  }%
  \providecommand\rotatebox[2]{#2}%
  \newcommand*\fsize{\dimexpr\f@size pt\relax}%
  \newcommand*\lineheight[1]{\fontsize{\fsize}{#1\fsize}\selectfont}%
  \ifx\svgwidth\undefined%
    \setlength{\unitlength}{164.12870776bp}%
    \ifx\svgscale\undefined%
      \relax%
    \else%
      \setlength{\unitlength}{\unitlength * \real{\svgscale}}%
    \fi%
  \else%
    \setlength{\unitlength}{\svgwidth}%
  \fi%
  \global\let\svgwidth\undefined%
  \global\let\svgscale\undefined%
  \makeatother%
  \begin{picture}(1,0.41670575)%
    \lineheight{1}%
    \setlength\tabcolsep{0pt}%
    \put(0,0){\includegraphics[width=\unitlength,page=1]{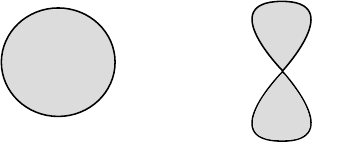}}%
    \put(0.15072673,0.21490765){\makebox(0,0)[lt]{\lineheight{1.25}\smash{\begin{tabular}[t]{l}$1$\end{tabular}}}}%
    \put(0.82331272,0.31941623){\makebox(0,0)[t]{\lineheight{1.25}\smash{\begin{tabular}[t]{c}$1$\end{tabular}}}}%
    \put(0.82331272,0.07184082){\makebox(0,0)[t]{\lineheight{1.25}\smash{\begin{tabular}[t]{c}$-1$\end{tabular}}}}%
    \put(0.31022258,0.37062562){\makebox(0,0)[lt]{\lineheight{1.25}\smash{\begin{tabular}[t]{l}$\g_0$\end{tabular}}}}%
    \put(0.9051915,0.18292007){\makebox(0,0)[lt]{\lineheight{1.25}\smash{\begin{tabular}[t]{l}$\g_0'$\end{tabular}}}}%
  \end{picture}%
\endgroup%

\caption{The curves $\g_0$ and $\g_0'$. The curve $\g_0$ is not $\Spin$-nullhomologous, while $\g_0'$ is. The shaded regions denote bounding 2-chains.}
\label{fig:naturality_33}
\end{figure}

In the following, for any $2$-chain $C\subset \Sigma$ satisfying Condition \ref{itm:spin-even-2} of Lemma \ref{lem:spin-even}, we say $C$ is a Spin-bounding $2$-chain (of $\g$).  More generally, for $\g\subset Y$ with $\Spin$ structure, and an immersed surface $\scC\subset Y$ with $\partial \scC=\g$ along which the $\Spin$ structure extends, we call $\scC$ a Spin bounding surface.

\subsection{On the invariance of the hypercube homology action}
\label{sec:invariance-homology-action}

In this section, we investigate the relation between hypercube homology action and $\Spin$-homology.

We suppose that $(\Sigma,\ws)$ is a pointed Heegaard surface in a 3-manifold $Y$, splitting $Y$ into the union of two handlebodies $U_\a$ and $U_{\b}$. We assue that $\cA$ and $\cB$ are both hypercubes of handleslide equivalent attaching curves on $Y$. We assume that each collection $\as$ in $\cA$ bounds compressing disks in $U_{\a}$ and each $\bs$ in $\cB$ bounds compressing disks for $U_{\b}$. We assume that $\g$ is an immersed and oriented 1-manifold on $\Sigma$ and we write $t$ for the induced trivialization of $\g^* TY$.

If $(\g,t)$ is $\Spin$ null-homologous in $Y$, we will describe a null-homotopy of $\frA_{\g}$. The null-homotopy will depend on a choice of relative 2-chain $\scC$ bounding $\g$. We assume that $\scC$ is an oriented surface with boundary and 
\[
 i\colon \scC\to Y
\] is a smooth map such that $i|_{\d \scC}=\g$. 
 With this data chosen, we will construct a map 
\[
H_{\scC}^{\ws}\colon \CF^-(\cA,\cB)\to \CF^-(\cA,\cB)
\]
satisfying
\[
\d(H_{\scC}^{\ws})=\frA_{\g}.
\]
The dependence on $\scC$ is explored in Lemma~\ref{lem:ambiguity-homotopy}.

We now construct the map $H_{\scC}^{\ws}$. For each $(\nu,\veps)$, we pick a relative 2-chain $C_{\nu,\veps}$ on $\Sigma$ such that
\[
\d C_{\nu,\veps}=\g+A_{\nu}+B_{\veps},
\]
where $A_{\nu}$ is an integer combination of curves from $\as_{\nu}$, and $B_{\veps}$ is an integer combination of curves from $\bs_{\nu}$. Recall that we write $\hat{C}_{\nu,\veps}$ for the 2-chain in $Y$ with boundary $\g$ obtained by capping $C_{\nu,\veps}$ along compressing disks for $\as_{\nu}$ and $\bs_{\nu}$. We assume that the capped off surface $\hat{C}_{\nu,\veps}$ is homologous to $\scC$.

We define the morphism
\[
H^{\ws}_{\scC}\colon \CF^-(\cA,\cB)\to \CF^-(\cA, \cB)
\]
by setting $(H^{\ws}_{\scC})_{(\veps,\nu),(\veps,\nu)}(\xs)$ to be 
\[
(H^{\ws}_{\scC})_{(\veps,\nu),(\veps,\nu)}(\xs):=\frac{e(C_{\veps,\nu})+2\bar{n}_{\xs}(C_{\nu,\veps})-2n_{\ws}(C_{\nu,\veps})}{2}\cdot \xs
\]
 for all $\xs\in \CF^-(\as_\nu, \bs_{\veps})$. The map $H^{\ws}_{\scC}$ has no higher length terms. Note that since $\g$ is $\Spin$-nullhomologous, the fraction in the above equation is an integer by Lemma~\ref{lem:spin-even}.

\begin{lem}
\label{lem:ambiguity-homotopy} Suppose that $\g$ is $\Spin$ nullhomologous and $\scC$ and $\scC'$ are two integral 2-chains in $Y$ with $\d \scC=\d \scC'=\g$. Then
\[
(H^{\ws}_{\scC'}-H^{\ws}_{\scC})(\xs)= \frac{\langle c_1(\frs_{\ws}(\xs)), [\scC'-\scC]\rangle}{2}\cdot \xs
\]
for an intersection point $\xs$. 
\end{lem}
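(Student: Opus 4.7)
The plan is to reduce to the standard Ozsv\'{a}th--Szab\'{o} Chern class formula (\cite{OSProperties}*{Proposition~7.5}) by taking the difference of the two bounding 2-chains on $\Sigma$. Fix $\xs\in \CF^-(\as_\nu,\bs_\veps)$ and let $C_{\nu,\veps}$ and $C'_{\nu,\veps}$ be the relative 2-chains on $\Sigma$ used in the construction of $H^{\ws}_{\scC}(\xs)$ and $H^{\ws}_{\scC'}(\xs)$ respectively. By construction $\hat{C}_{\nu,\veps}$ is homologous to $\scC$ and $\hat{C}'_{\nu,\veps}$ is homologous to $\scC'$ in $Y$.

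Set $D:=C'_{\nu,\veps}-C_{\nu,\veps}$, an integral 2-chain on $\Sigma$. Since the boundaries of $C_{\nu,\veps}$ and $C'_{\nu,\veps}$ each decompose as $\g$ plus an integral combination of curves in $\as_\nu$ and $\bs_\veps$, the copies of $\g$ cancel and $\partial D$ is an integral combination of $\as_\nu$ and $\bs_\veps$ curves. Capping $D$ off with compressing disks along these curves produces a closed surface $\hat{D}\subset Y$, and by construction
\[
[\hat{D}] = [\hat{C}'_{\nu,\veps}] - [\hat{C}_{\nu,\veps}] = [\scC'] - [\scC]\in H_2(Y).
\]

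Since the Euler measure $e$, the average multiplicity $\bar{n}_\xs$, and the basepoint multiplicity $n_\ws$ are all additive in the 2-chain, the definition of $H^\ws_\scC$ yields
\[
(H^\ws_{\scC'}-H^\ws_\scC)(\xs) = \frac{e(D)+2\bar{n}_\xs(D)-2n_\ws(D)}{2}\cdot \xs.
\]
Applying Ozsv\'{a}th--Szab\'{o}'s Chern class formula \cite{OSProperties}*{Proposition~7.5} to the closed periodic domain $\hat{D}$ gives
\[
\langle c_1(\frs_\ws(\xs)),[\hat{D}]\rangle = e(D)+2\bar{n}_\xs(D)-2n_\ws(D),
\]
and combining these yields the claimed formula. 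The only mild wrinkle is checking that the two ingredients of the difference can be compared with the same intersection point $\xs$ (which is automatic, since $H^\ws_\scC$ and $H^\ws_{\scC'}$ act diagonally on intersection points with no higher-length terms), and that $\hat{D}$ really represents $[\scC']-[\scC]$; both are immediate from the construction, so there is no serious obstacle.
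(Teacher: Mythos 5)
Your proof is correct and follows exactly the route the paper takes: the paper's own proof is a one-line appeal to the Ozsv\'{a}th--Szab\'{o} Chern class formula \cite{OSProperties}*{Proposition~7.5} together with the definition of $H^{\ws}_{\scC}$, which is precisely what you have spelled out. The additivity of $e$, $\bar{n}_{\xs}$, and $n_{\ws}$, and the observation that the difference of the two relative $2$-chains caps off to a closed surface representing $[\scC']-[\scC]$, are the right (and only) ingredients.
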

\begin{proof} This follows immediately from the Chern class formula from  \cite{OSProperties}*{Proposition~7.5} and the definition of $H_{\scC}^{\ws}$. 
\end{proof}

In particular, if $(C_{\nu,\veps})_{(\nu,\veps)\in \bE_m\times \bE_n}$ and $(C_{\nu,\veps}')_{(\nu,\veps)\in \bE_m\times \bE_n}$ are two collections of relative 2-chains representing the class $\scC$, then the map $H_{\scC}^{\ws}$ defined with $C_{\nu,\veps}$ is identical to the map defined with $C_{\nu,\veps}'$.

\begin{prop}
\label{prop:nullhomotopy-homology}
Suppose that $\cA$ and $\cB$ are hypercubes of strongly equivalent attaching curves on $(\Sigma,\ws)$, determining a 3-manifold $Y$. Let $\g\subset \Sigma$ be an immersed curve which is $\Spin$-nullbordant in $Y$. Let $\scC$ a Spin bounding 2-chain in $Y$ which bounds $\g$. Then
\[
\d(H^{\ws}_{\scC})=\frA_{\g},
\]
as morphisms of hypercubes. 
\end{prop}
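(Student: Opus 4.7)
The plan is to verify $\d(H^{\ws}_{\scC}) = \frA_\g$ by direct comparison of coefficients componentwise in the bicube-indexed morphism complex, polygon by polygon. Since $H^{\ws}_{\scC}$ is supported on the diagonal and acts on each generator $\xs \in \bT_{\a_\nu}\cap \bT_{\b_\veps}$ by multiplication by the scalar $c_{\nu,\veps}(\xs) := \tfrac{1}{2}(e(C_{\nu,\veps}) + 2\bar{n}_\xs(C_{\nu,\veps}) - 2n_\ws(C_{\nu,\veps}))$, the $(\nu_0,\veps_0)\to (\nu',\veps')$ component of $\d(H^{\ws}_{\scC})$ is obtained by inserting $H^{\ws}_{\scC}$ at any corner of a polygon $\mu^{\Tw}$-product. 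For a polygon class $\psi$ with corner generators $\xs_0 = \xs, \xs_1, \ldots, \xs_n = \ys$ at cube points $(\nu_i, \veps_i)$, the insertion of $H^{\ws}_{\scC}$ at position $i$ contributes a scalar factor $c_{\nu_i,\veps_i}(\xs_i)$. Matching with the weight $\#\d_{\a}(\psi)\cap \g$ contributed by $\frA_\g$, the required equality reduces to the polygon identity
\[
\sum_{i=0}^{n} c_{\nu_i, \veps_i}(\xs_i) \equiv \#\d_{\a}(\psi) \cap \g \pmod 2
\]
for each polygon class $\psi$ of the appropriate Maslov index.

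First, I would verify this identity in the diagonal case $(\nu_0,\veps_0) = (\nu',\veps')$, where it reduces to $c_{\nu,\veps}(\xs) + c_{\nu,\veps}(\ys) \equiv \#\d_\a(\phi)\cap \g \pmod 2$ for each index-$1$ Whitney disk $\phi \in \pi_2(\xs,\ys)$. Applying Lemma~\ref{lem:relative-Chern-class} once with respect to $\xs$ and once with respect to $\ys$ and subtracting expresses the left-hand side as the Chern class difference $\langle c_1(\frs_\ws(\xs,t)) - c_1(\frs_\ws(\ys,t)), [\hat{C}_{\nu,\veps}]\rangle$. By the relative version of the standard Ozsv\'{a}th--Szab\'{o} identity $\frs_\ws(\xs,t) - \frs_\ws(\ys,t) = \PD[\g_\phi]$ in $H^2(Y,\g)$, this equals $2(\g_\phi \cdot \hat{C}_{\nu,\veps})$; pushing the connecting $1$-cycle $\g_\phi$ onto $\Sigma$ by gradient flow identifies this intersection with $\#\d_\a(\phi)\cap \g$.

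For general $(n{+}1)$-gon classes, the strategy is to assemble the $2$-chains $C_{\nu_i,\veps_i}$ together with the polygon domain $D(\psi)$ into a single relative $2$-chain $\hat{\scD}$ in $Y$ by gluing along the $\a$- and $\b$-boundary arcs of $\psi$. Since each $\hat{C}_{\nu_i,\veps_i}$ represents $[\scC]$ and $\scC$ is $\Spin$-bounding for $\g$, the assembled $\hat{\scD}$ carries a canonical $\Spin$ structure compatible with $t$. Applying the polygon generalization of the Chern class formula (the polygon version of \cite{OSProperties}*{Proposition~7.5}, as used in \cite{MOIntegerSurgery}*{Section~8}) to $\hat{\scD}$, the Euler measure and $\bar{n}_{\xs_i}$ contributions at each corner telescope along shared boundary edges between the $C_{\nu_i,\veps_i}$ and $D(\psi)$, and the residual term along $\g$ evaluates to $\#\d_\a(\psi)\cap \g$ modulo $2$, giving the polygon identity. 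The freedom in choosing each representative $C_{\nu_i,\veps_i}$ does not affect the final morphism by Lemma~\ref{lem:ambiguity-homotopy}, keeping the argument internally consistent.

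The main obstacle will be making the polygon gluing and the attendant Chern-class bookkeeping rigorous, particularly handling intersection-number and Maslov-index conventions consistently so that the residual $\g$-term comes out to exactly $\#\d_\a(\psi)\cap \g$ rather than a shifted variant, and verifying that the sum $\sum_i c_{\nu_i,\veps_i}(\xs_i)$ does not depend on the ordering chosen for the insertion positions of $H^{\ws}_{\scC}$ in $\mu^{\Tw}_{n+1}$ --- this should follow from the symmetry of the bar differential in the twisted complex. Once these pieces are assembled, the component-wise polygon identity yields the claimed equality of hypercube morphisms.
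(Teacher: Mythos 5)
There is a genuine gap, and it starts with the reduction. The map $H^{\ws}_{\scC}$ is a diagonal endomorphism of the paired complex $\CF^-(\cA,\cB)$, so its morphism differential is $\d(H^{\ws}_{\scC})=D\circ H^{\ws}_{\scC}+H^{\ws}_{\scC}\circ D$, where $D$ is the hypercube differential. For a polygon class $\psi\in\pi_2(\theta^{\a}_{m,m-1},\dots,\theta^{\a}_{1,0},\xs,\theta^{\b}_{0,1},\dots,\theta^{\b}_{n-1,n},\ys)$, the map $H^{\ws}_{\scC}$ is therefore inserted only at the input $\xs$ and the output $\ys$; it is never inserted at the intermediate corners, which are the length-one chains of $\cA$ and $\cB$ living in $\CF^-(\as_{\nu_{i+1}},\as_{\nu_i})$ or $\CF^-(\bs_{\veps_j},\bs_{\veps_{j+1}})$ — for these there is no $2$-chain $C_{\nu,\veps}$ and your $c_{\nu_i,\veps_i}(\xs_i)$ is not defined. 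The identity you need is the two-corner one, $N^{\ws}_{C_{\nu_m,\veps_n}}(\ys)-N^{\ws}_{C_{\nu_0,\veps_0}}(\xs)=-\#\d_{\b}(\psi)\cap\g$, not a sum over all $n+1$ corners. Your bigon case is fine (and the $\Spin^c$-difference argument there is a reasonable alternative to the Leibniz-rule computation), but the "sum over insertion positions" framing signals that the structure of the morphism differential has been misread.

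The second, more substantive omission is in the general-polygon step. The two relevant $2$-chains $C_{\nu_0,\veps_0}$ and $C_{\nu_m,\veps_n}$ live on different pairs of attaching curves, and their difference is a sum of periodic domains: after correcting by a doubly periodic domain (using Lemma~\ref{lem:ambiguity-homotopy} and that both capped chains represent $\scC$, with $H_2$ of the handlebodies vanishing), one writes $C_{\nu_m,\veps_n}-C_{\nu_0,\veps_0}$ as a sum of $\a$- and $\b$-periodic domains $P^{\a}_{i+1,i}$, $P^{\b}_{j,j+1}$ attached at the theta corners. Your proposed "telescoping" of Euler measures and point measures only closes up because the quantities $N^{\ws}_{P^{\a}_{i+1,i}}(\theta^{\a}_{i+1,i})=\tfrac{1}{2}\langle c_1(\frs_{\ws}(\theta^{\a}_{i+1,i})),[P^{\a}_{i+1,i}]\rangle$ vanish, which uses that the theta chains represent the torsion $\Spin^c$ structure on $\#^g(S^1\times S^2)$. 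Without invoking this, the residual term is not $\#\d_{\a}(\psi)\cap\g$ but differs by Chern class evaluations on periodic domains, and the proof does not go through. This is the actual content of the higher-polygon case and is absent from your sketch.
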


Before we give the proof, we describe a straightforward homological calculation: 
\begin{lem}
\label{lem:periodic-domain-tuples} Let $\cH=(\Sigma,\as_{m},\dots, \as_1, \bs_1,\dots, \bs_n,\ws)$ be a Heegaard tuple where each $\as_i$ and $\as_j$ are handleslide equivalent, and each $\bs_i$ and $\bs_j$ are handleslide equivalent. Let $D(\cH)$ denote the group of periodic domains, i.e. integral 2-chains $P$ on $\Sigma$ with boundary equal to a linear combination of  $\as_i$ and $\bs_i$ curves such that furthermore $n_{w_i}(P)=0$ for all $w_i\in \ws$. If $\gs,\ds\in \{\as_m,\dots, \as_0,\bs_0,\dots, \bs_n\}$, let $D(\cH_{\g,\dt})$ denote the space of periodic domains with boundary on $\gs$ and $\ds$. Then there is an isomorphism
\[
D(\cH)\iso D(\cH_{\a_m,\a_{m-1}})\oplus \cdots \oplus D(\cH_{\a_0,\b_0})\oplus D(\cH_{\b_0,\b_1})\oplus \cdots \oplus D(\cH_{\b_{n-1},\b_n}). 
\]
\end{lem}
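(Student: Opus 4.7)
The plan is to proceed by induction on $m+n$, using the following fundamental observation: if $\gs$ is any single handlebody-bounding cut system of attaching curves on $(\Sigma,\ws)$, then any 2-chain $P$ on $\Sigma$ with $\partial P$ a $\Z$-linear combination of curves of $\gs$ and $n_{w_i}(P)=0$ for all $w_i \in \ws$ must vanish. Indeed, the curves of $\gs$ are linearly independent in $H_1(\Sigma)$ (they span a Lagrangian subspace), so the relation $[\partial P]=0$ in $H_1(\Sigma)$ forces $\partial P=0$; hence $P=c[\Sigma]$ for some $c\in\Z$, and the basepoint condition forces $c=0$. In particular $D(\cH_{\g,\g})=0$ for any single collection $\g$.

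First I would define the summation map
\[
\Psi \colon D(\cH_{\a_m,\a_{m-1}})\oplus \cdots \oplus D(\cH_{\b_{n-1},\b_n}) \longrightarrow D(\cH)
\]
sending a tuple $(P_{m,m-1},\dots,P_{\b_{n-1},\b_n})$ to its sum as a 2-chain on $\Sigma$. Each summand avoids $\ws$ and has boundary on the combined attaching curves, so the image lies in $D(\cH)$. For injectivity, suppose $\sum P_{\cdot,\cdot}=0$ as 2-chains. Projecting $\partial$ onto the $\as_m$-summand of the boundary, only $P_{m,m-1}$ contributes, forcing $\partial_{\a_m}P_{m,m-1}=0$. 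Then $P_{m,m-1}$ has boundary only on $\as_{m-1}$, so by the fundamental observation $P_{m,m-1}=0$. Stripping collections off one at a time shows all summands vanish.

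For surjectivity I would use handleslide equivalence to peel off boundaries inductively. Given $P \in D(\cH)$, write $\partial_{\a_m}P=\sum c_i \a_i^{(m)}$. Because $\as_m$ and $\as_{m-1}$ are handleslide equivalent in the complement of $\ws$, each $\a_i^{(m)}$ can be written as a $\Z$-linear combination of curves in $\as_{m-1}$ modulo the boundary of an integral 2-chain on $\Sigma\setminus\ws$. Taking the corresponding linear combination of these 2-chains produces an element $P_{m,m-1}\in D(\cH_{\a_m,\a_{m-1}})$ with $\partial_{\a_m}P_{m,m-1} = \partial_{\a_m}P$. Then $P-P_{m,m-1}$ has trivial $\as_m$-boundary and is thus a periodic domain for the smaller tuple $\cH' = (\Sigma,\as_{m-1},\dots,\as_1,\bs_1,\dots,\bs_n,\ws)$. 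By the inductive hypothesis, $P-P_{m,m-1}$ decomposes as a sum of pieces in the remaining $D(\cH_{\g,\dt})$, which combined with $P_{m,m-1}$ gives a preimage of $P$. The base cases $m+n=2$ (a single pair of collections) are tautological.

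The main obstacle is the surjectivity step, specifically the claim that each $\a \in \as_m$ can be realized, modulo a 2-chain on $\Sigma\setminus\ws$, as a $\Z$-linear combination of curves in $\as_{m-1}$. This requires unpacking the definition of handleslide equivalence: a sequence of handleslides relates $\as_m$ to $\as_{m-1}$ through curve systems that never cross $\ws$, and each elementary handleslide contributes a pair-of-pants region on $\Sigma\setminus\ws$ whose boundary exhibits one new curve as a sum of an old curve and a parallel copy. Composing these regions realizes the desired $\as_{m-1}$-expression. Once this is in place the construction of $P_{m,m-1}$ is straightforward, and everything else in the argument is bookkeeping in the induction.
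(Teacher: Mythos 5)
Your proof follows essentially the same route as the paper's: decompose a periodic domain by stripping off one attaching set at a time, with injectivity coming from the vanishing of periodic domains supported on a single handleslide-equivalence class, and surjectivity from realizing any prescribed boundary coefficients on $\as_m$ by an element of $D(\cH_{\a_m,\a_{m-1}})$. The paper packages the latter as the statement that the coefficient maps $L_{\g,\g'},R_{\g,\g'}$ are isomorphisms and leaves the verification to the reader; your handleslide/pair-of-pants argument is exactly the content of that verification, so you are if anything supplying a detail the paper omits.

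One correction to the justification of your ``fundamental observation.'' In the multi-pointed setting in which this lemma is applied, an attaching set $\gs$ consists of $g+|\ws|-1$ curves, and these are \emph{not} linearly independent in $H_1(\Sigma)$: they span only a rank-$g$ (Lagrangian) subgroup, with one relation for each of the $|\ws|-1$ extra components of $\Sigma\setminus\gs$. So from $[\partial P]=0$ you cannot conclude $\partial P=0$, and the step ``$P=c[\Sigma]$'' fails. The conclusion $D(\cH_{\g,\g})=0$ is nevertheless true for a slightly different reason: any 2-chain $P$ with $\partial P$ supported on $\gs$ is an integral combination of the closures of the components of $\Sigma\setminus\gs$, and each such component contains a basepoint of $\ws$, so the condition $n_{w_i}(P)=0$ for all $w_i\in\ws$ forces every coefficient, hence $P$, to vanish. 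With that one-line repair the rest of your argument goes through.
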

\begin{proof} 
We first observe that if $\gs$ and $\gs'$ are handleslide equivalent, then $D(\cH_{\g,\g'})\iso \Z^{g+|\ws|-1}$. Furthermore, there is a concrete isomorphism. Given an element $P\in D(\cH_{\gs,\gs'})$, $\d P$ consists of an integer linear combination of $\gs$ and $\gs'$ curves. We define a map 
\[
L_{\g,\g'}\colon D(\cH_{\g,\g'})\to \Z^{g+|\ws|-1}
\]
to map a $P\in D(\cH_{\g,\g'})$ to the coefficients of the $\gs$ curves in $\d P$. Naturally, there is another map
\[
R_{\g,\g'}\colon D(\cH_{\g,\g'})\to \Z^{g+|\ws|-1},
\]
 obtained by mapping to the coefficients of the $\gs'$ appearing in $\d P$. We leave it to the reader to verify that both $L_{\g,\g'}$ and $R_{\g,\g'}$ are isomorphisms.

There is a canonical map
\begin{equation}
 D(\cH_{\a_m,\a_{m-1}})\oplus \cdots \oplus D(\cH_{\a_0,\b_0})\oplus D(\cH_{\b_0,\b_1})\oplus \cdots \oplus D(\cH_{\b_{n-1},\b_n})\to D(\cH)
\label{eq:map=sum}
\end{equation}
obtained by summing. To see that this is injective, we observe that if
\[
P_{\a_m,\a_{m-1}}+\cdots +P_{\b_{n-1},\b_n}=0,
\]
then $P_{\a_m,\a_{m-1}}$ must be zero since no other summands have boundary on $\as_m$, and the map $L_{\a_m,\a_{m-1}}$ is an isomorphism. Similarly $P_{\b_{n-1},\b_n}=0$. Repeating this procedure, we are left with $P_{\a_0,\b_0}=0$, showing that the aforementioned map was injective. Similarly, to see that the map in Equation~\eqref{eq:map=sum} is surjective, we take an arbitrary $P\in D(\cH)$. Since $R$ and $L$ are isomorphisms, we may add an element in $D(\cH_{\a_m,\a_{m-1}})$ so that $\d P$ has zero coefficients on $\as_m$. Repeating this procedure, we reduce to the case that $P$ has boundary only on $\as_0$ and $\bs_0$, which implies that $P\in D(\cH_{\a_0,\b_0})$, so the aforementioned map is surjective and the proof is complete.
\end{proof}

\begin{proof}[Proof of Proposition~\ref{prop:nullhomotopy-homology}]
For convenience, write $\as_i$ for $\as_{\nu_i}$ and $\bs_i$ for $\bs_{\veps_i}$. Similarly write $\theta_{i+1,i}^{\a}$ for $\theta_{\nu_{i+1},\nu_i}^\a$. We will write $C_{i,j}$ for $C_{\nu_i,\veps_j}$ as well.

Suppose
\[
\psi
\in\pi_2(\theta^{\a}_{m,m-1},\dots, \theta^{\a}_{1,0}, \xs, \theta_{0,1}^{\b},\dots, \theta_{n-1,n}^{\b}, \ys)
\]
is a class of polygons. Write 
\[
N_C^{\ws}(\xs)=\frac{e(C)+2\bar n_{\xs}(C)-2n_{\ws}(C)}{2}
\]
The main claim follows from the equation
\begin{equation}
N^{\ws}_{C_{\nu_m,\veps_n}}(\ys)-N^{\ws}_{C_{\nu_0,\veps_0}}(\xs)=-\#\d_{\b}(\psi)\cap \g,
\label{eq:main-claim-homology-action-null-homotopy}
\end{equation}
which we now prove.

 On the Heegaard multi-diagram $(\Sigma,\as_{m},\as_{\nu_0},\bs_{0}, \bs_{n})$, Lemma~\ref{lem:periodic-domain-tuples} implies that we may write 
\[
C_{\nu_m,\veps_n}-C_{\nu_0,\veps_0}=P_{\nu_0,\veps_0}+P_{\nu_m,\nu_0}^\a+P_{\veps_0,\veps_n}^{\b}
\]
where $P_{\nu_0,\veps_0}$ is a doubly periodic domain on $(\Sigma,\as_{0}, \bs_{0})$, and similarly for the other terms. Noting that $H_2(U_{\a})=H_2(U_{\b})=0$, where $U_{\a}$ and $U_{\b}$ are the alpha and beta handlebodies of $Y$, we conclude that
\[
[P_{\nu_0,\veps_0}]=0,
\]
because both $C_{\nu_m,\veps_n}$ and $C_{\nu_0,\veps_0}$ represent $\scC$. Note that as in the proof of Lemma~\ref{lem:ambiguity-homotopy} we have that
\[
N_{C_{\nu_0,\veps_0}+P_{\nu_0,\veps_0}}^{\ws}=N_{C_{\nu_0,\veps_0}}^{\ws}.
\]
By replacing  $C_{\nu_0,\veps_0}$ with $C_{\nu_0,\veps_0}+P_{\nu_0,\veps_0}$, we may assume that
\begin{equation}
C_{\nu_m,\veps_n}-C_{\nu_0,\veps_0}=P_{\nu_n,\nu_0}^{\a}+P_{\veps_0,\veps_m}^{\b}.
\label{eq:Cmn-C00}
\end{equation}

Lemma~\ref{lem:periodic-domain-tuples} also implies that there are periodic domains $P_{\nu_{i+1},\nu_i}^{\a}$ and $P_{\veps_i,\veps_{i+1}}^{\b}$ so that
\[
P_{\nu_n,\nu_0}^{\a}=P_{\nu_n,\nu_{n-1}}^{\a}+\cdots+P_{\nu_{1},\nu_0}^{\a}\quad \text{and} \quad P_{\veps_0,\veps_m}^\b=P_{\veps_0,\veps_1}^{\b}+\cdots+P_{\veps_{m-1}, \veps_m}^{\b}. 
\]

 If $\eta$ and $\tau$ are two 1-chains on $\Sigma$, we define
\[
\#\eta\barcap\tau \in \frac{1}{4} \Z
\]
as follows. We consider the four translations of $\eta$ in the $\mathrm{NE}$, $\mathrm{NW}$, $\mathrm{SW}$ and $\mathrm{SE}$ directions (where $\mathrm{N}$ means the direction of an oriented tangent to $\eta$). We take the ordinary intersection number of each of these with $\tau$, and then define $\# \eta \barcap \tau$ to be the average.

The Leibniz rule for intersections reads
\[
0=\#\d(\d_{\b_{i}}(\psi) \barcap C_{0,0})=\# (\d \d_{\b_i}(\psi)) \barcap C_{0,0}-\# \d_{\b_i}(\psi) \barcap \d C_{0,0}.
\]

We define closed 1-chains $A_s$ in $\as_s$ and $B_t$ in $\bs_t$ as
\[
\d P_{\nu_i,\nu_{i-1}}^{\a}=A_i-A_{i-1}\quad \text{and} \quad \d P_{\veps_{t-1}, \veps_t}^\b=B_t-B_{t-1}.
\]
Note that Equation~\eqref{eq:Cmn-C00} implies that
\[
\d C_{0,0}= \g+A_0+B_0\quad \text{and} \quad \d C_{m,n}=\g+A_m+B_n.
\]

Using the Leibniz rule for intersections applied to $\d_{\b}(\psi)\cap C_{0,0}$, we observe that
\begin{equation}
N_{C_{0,0}}^{\ws}(\ys)=N_{C_{0,0}}^{\ws}(\xs)- \# \d_{\b}(\psi)\cap \g-\# \d_{\b}(\psi)\barcap (A_0+B_0).
\label{eq:Relation-Ny-Nx}
\end{equation}
Next, since $\d D(\psi)= \d_{\b}(\psi)+\d_{\a}(\psi)$, and $A_0$ is a closed 1-cycle, we observe that
\begin{equation}
\d_{\b}(\psi)\barcap A_0=-\d_{\a}(\psi) \barcap A_0.
\label{eq:db=da}
\end{equation}
Combining Equations~\eqref{eq:Cmn-C00}, ~\eqref{eq:Relation-Ny-Nx} and ~\eqref{eq:db=da} we obtain that
\[
\begin{split}
N_{C_{n,m}}^{\ws}(\ys)=&N_{C_{0,0}}^{\ws}(\xs)- \# \d_{\b}(\psi)\cap \g\\
&+ \#\d_{\a}(\psi)\barcap A_0-\sum_{i=0}^{m-1} N_{P_{i+1,i}^\a}^{\ws}(\ys)\\
&-\# \d_\b(\psi) \barcap B_0-\sum_{i=0}^{n-1} N_{P_{i,i+1}^\b}^{\ws}(\ys).
\end{split}
\]
We claim that
\begin{equation}
\#\d_{\a}(\psi)\barcap A_0-\sum_{i=0}^{m-1} N_{P_{i+1,i}^\a}^{\ws}(\ys)=-\# \d_\b(\psi) \barcap B_0-\sum_{i=0}^{m-1} N_{P_{i+1,i}^\b}^{\ws}(\ys)=0, \label{eq:da(psi)capA_0}
\end{equation}
which will establish Equation~\eqref{eq:main-claim-homology-action-null-homotopy} and prove the main claim. We consider the left side of the above equation first. We will write $\d_{\a_m+\cdots +\a_i}(\psi)$ for the portion of $\d D(\psi)$ which lies on $\as_m,\dots, \as_i$. Note that $ \d_{\a_n+\dots+\a_i}(\psi)$ is a 1-chain with boundary $\ys-\theta_{i,i-1}^{\a}$. Applying the Leibniz rule for intersections to $\d_{\a_n+\dots+ \a_i}(\psi)\cap P_{i+1,i}^\a$ and summing, we obtain
\[
\begin{split}
&\# \d_{\a}(\psi)\barcap A_0-\sum_{i=0}^{m-1} N_{P_{i+1,i}^\a}^{\ws}(\ys)
\\
=&\# \d_{\a}(\psi)\barcap A_0 -\sum_{i=0}^{n-1} N^{\ws}_{P_{i+1},i}(\theta_{i+1,i}^{\a})-\sum_{i=0}^{n-1} \#\d_{\a_{n}+\cdots+\a_i}\barcap (A_{i+1}-A_i)
\end{split}
\]
We observe that the above equation vanishes because 
\[
\# \d_{\a}(\psi) \barcap A_0-\sum_{i=0}^{n-1} \# \d_{\a_n+\dots +\a_i} \barcap (A_{i+1}-A_i)
\]
is a telescoping sum and
\[
N_{P_{i+1,i}^{\a}}^{\ws}(\theta_{i+1,i}^{\a})=\frac{\langle c_1(\frs_{\ws}(\theta_{i+1,i}^{\a})), [P_{i+1,i}^{\a}]\rangle}{2}=0
\]
by the Chern class formula of Ozsv\'{a}th and Szab\'{o} \cite{OSProperties}*{Proposition~7.5} and the fact that the class $\theta_{i+1,i}^{\a}$ represent the torsion $\Spin^c$ structure on $(S^1\times S^2)^{\# g}$. The vanishing of the middle portion of Equation~\eqref{eq:da(psi)capA_0} follows from the same logic, completing the proof.
\end{proof}

\subsection{Change of 1-chain maps and twisted coefficients}
\label{sec:twisted-coefficients-Phi-w-C}

The construction in Section~\ref{sec:invariance-homology-action} of a null-homotopy of the homology action also extends to give homotopy equivalences between Floer complexes with twisted coefficients.

 We suppose that $\g\subset \Sigma$ is an immersed 1-chain. As discussed in Section~\ref{sec:twisted-coefficients-def}, there is a twisted Floer complex
\[
\underline{\CF}^-_{\g}(\cA,\cB,\ws),
\]
which is a free module over $\bF[T,T^{-1},U_1,\dots, U_n]$. We now suppose that $\g'$ is another immersed 1-chain which is $\Spin$ homologous to $\g$ in $Y$.

We assume that $\scC$ is a $\Spin$ bounding 2-chain with boundary $\g'-\g$.    We now define a map
\[
\Phi_{\scC}^{\ws}\colon \underline{\CF}^-_{\g}(\cA,\cB)\to \underline{\CF}^-_{\g'}(\cA,\cB)
\]
as follows. Let $\ve{d}$ (resp. $\ve{d}'$) denote the size of $\cA$ (resp. $\cB$). For each pair $(\nu,\veps)\in \bE(\ve{d})\times \bE(\ve{d'})$, we pick a relative 2-chain $C_{\nu,\veps}$ on  $\Sigma$ with boundary $\g'-\g+A_{\nu}+B_{\veps}$, where $A_{\nu}$ (resp. $B_{\veps}$) is an integral combination of curves from $\as_{\nu}$ (resp. $\bs_{\veps}$). All components of
$\Phi_{\scC}^{\ws}$ preserve the cube point $(\nu,\veps)$. For $\xs\in \bT_{\a_{\nu}}\cap \bT_{\b_{\veps}}$, we set
\[
\Phi_{\scC}^{\ws}(\xs)=T^{-N_{C_{\nu,\veps}}^{\ws}(\xs)} \cdot \xs,
\]
extended equivariantly over the $U_i$ and $T$ actions.
Here
\begin{equation}\label{eq:n-def}
N_{C}^{\ws}(\xs)=\frac{e(C)+2\bar{n}_{\xs}(C)-2n_{\ws}(C)}{2}.
\end{equation}

\begin{lem} 
The map $\Phi_{\scC}^{\ws}$ is a chain isomorphism.
\end{lem}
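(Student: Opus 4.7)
The plan is to verify two separate properties: $\Phi_{\scC}^{\ws}$ is a chain map, and $\Phi_{\scC}^{\ws}$ is invertible. The second is much easier, so I would dispatch it quickly: the map preserves each cube point $(\nu,\veps)$ and acts diagonally on intersection points by multiplication by $T^{-N_{C_{\nu,\veps}}^{\ws}(\xs)}$, a unit in $\bF[T,T^{-1},U_1,\dots,U_n]$. Choosing for each $(\nu,\veps)$ the $2$-chain $-C_{\nu,\veps}$ (which bounds $\g-\g'+(-A_\nu)+(-B_\veps)$) as the data for a map $\Phi_{-\scC}^{\ws}\colon \underline{\CF}^-_{\g'}(\cA,\cB)\to \underline{\CF}^-_{\g}(\cA,\cB)$ gives a two-sided inverse on the nose, since $N_{-C}^{\ws}(\xs)=-N_{C}^{\ws}(\xs)$.

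For the chain map property, I would reduce directly to the computation carried out in Proposition~\ref{prop:nullhomotopy-homology}. Concretely, for a polygon class
\[
\psi\in\pi_2(\theta^{\a}_{m,m-1},\dots,\theta^{\a}_{1,0},\xs,\theta^{\b}_{0,1},\dots,\theta^{\b}_{n-1,n},\ys),
\]
the required compatibility between the $\g$-twisted differential on the source and the $\g'$-twisted differential on the target reduces to the identity
\[
N_{C_{\nu_m,\veps_n}}^{\ws}(\ys)-N_{C_{\nu_0,\veps_0}}^{\ws}(\xs)=\#\d_{\b}(\psi)\cap(\g'-\g).
\]
This is precisely Equation~\eqref{eq:main-claim-homology-action-null-homotopy} with $\g$ replaced by $\g'-\g$ and the bounding surfaces $C_{\nu,\veps}$ now taken to bound $\g'-\g+A_{\nu}+B_{\veps}$. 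The proof goes through verbatim: use Lemma~\ref{lem:periodic-domain-tuples} to write $C_{\nu_m,\veps_n}-C_{\nu_0,\veps_0}$ as a sum of $\a$-periodic and $\b$-periodic domains associated to consecutive cube edges (the relative homology class of the bounding surface in $Y$ is fixed to $[\scC]$ throughout, so no interior $(\as_0,\bs_0)$-periodic domain can appear), then apply the Leibniz rule for intersection numbers and Ozsv\'ath--Szab\'o's Chern class formula (\cite{OSProperties}*{Proposition~7.5}) to verify that the contributions from each periodic domain telescope against the relevant boundary intersection counts, exactly as in \eqref{eq:da(psi)capA_0}. The $\Spin$ hypothesis on $\scC$ enters precisely as in Lemma~\ref{lem:spin-even} to guarantee that $N^{\ws}_{C_{\nu,\veps}}(\xs)\in \Z$, so that the monomial $T^{-N^{\ws}_{C_{\nu,\veps}}(\xs)}$ is well-defined in the coefficient ring.

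The main obstacle, to the extent there is one, is setting up the bookkeeping so that Proposition~\ref{prop:nullhomotopy-homology} can be applied verbatim with $\g$ replaced by the null-homologous cycle $\g'-\g$; in particular one must confirm that the $\Spin$ bounding hypothesis on $\scC$ for $\g'-\g$ delivers the same integrality and Chern-class vanishing used there, and that the independence of $\Phi_{\scC}^{\ws}$ from the auxiliary $2$-chain representatives $C_{\nu,\veps}$ (granted they all represent $\scC$) follows from the same Chern class computation as in Lemma~\ref{lem:ambiguity-homotopy}, now producing an overall factor of $T^k$ for some integer $k$, so that different choices yield chain-isomorphic, in fact equal, maps on the nose.
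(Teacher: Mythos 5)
Your proof is correct and follows essentially the same route as the paper's: the chain-map property is exactly Equation~\eqref{eq:main-claim-homology-action-null-homotopy} applied with $\g$ replaced by $\g'-\g$, rearranged, and the $\Spin$ hypothesis enters only to make $N^{\ws}_{C_{\nu,\veps}}$ integer-valued; your extra remarks on invertibility and on independence of the choice of $C_{\nu,\veps}$ are fine (the paper leaves the former implicit, since the map is diagonal with unit entries). One small bookkeeping point: with the paper's conventions ($\Phi^{\ws}_{\scC}(\xs)=T^{-N^{\ws}_{C_{\nu,\veps}}(\xs)}\xs$ and differential weighted by $T^{\#\d_{\b}(\psi)\cap\g}$), the required identity is $N^{\ws}_{C_{\nu_m,\veps_n}}(\ys)-N^{\ws}_{C_{\nu_0,\veps_0}}(\xs)=-\#\d_{\b}(\psi)\cap(\g'-\g)$, which is what the substitution into \eqref{eq:main-claim-homology-action-null-homotopy} actually yields; your displayed version has the opposite sign, though the derivation you invoke produces the correct one.
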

\begin{proof} The proof follows by modifying the proof of Proposition~\ref{prop:nullhomotopy-homology}, as follows. We suppose that $\psi
\in\pi_2(\theta^{\a}_{m,m-1},\dots, \theta^{\a}_{1,0}, \xs, \theta_{0,1}^{\b},\dots, \theta_{n-1,n}^{\b}, \ys)$. We consider Equation~\eqref{eq:main-claim-homology-action-null-homotopy}, with $\g$ replaced therein by $\g'-\g$. This reads
\[
N_{C_{\nu_m,\veps_n}}^{\ws}(\ys)-N_{\nu_0,\veps_0}^{\ws}(\xs)=-\# \d_{\b}(\psi)\cap \g'+\# \d_{\b}(\psi)\cap \g.
\]
Rearranging, we get
\[
-N_{\nu_0,\veps_0}^{\ws}(\xs)+ \# \d_{\b}(\psi)\cap \g'=\# \d_{\b}(\psi)\cap \g-N_{\nu_m,\veps_n}^{\ws}(\ys),
\]
which implies that $\Phi_{\scC}^{\ws}$ commutes with the hypercube structure maps.
\end{proof}

The above discussion extends easily to the case where we we have a collection of immersed 1-chains $\gs=\{\g_1,\dots, \g_n\}$ on $\Sigma$ and we twist by each one, working with a twisted Floer complex which has a $T_i$ variable for each $\g_i$. If $\gs'=\{\g_1',\dots, \g_n'\}$ are another collection of immersed 1-chains and $\scC_1,\dots, \scC_n$ are $\Spin$ bounding 2-chains with $\d \scC_i=\g_i'-\g_i$, then there is a similar chain isomorphism
\[
\Phi_{\scC_1,\dots, \scC_n}^{\ws}\colon \underline{\CF}^-_{\ve{\g}}(\cA,\cB)\to \underline{\CF}^-_{\ve{\g}'}(\cA,\cB).
\]

\section{Hypercubes of link surgery complexes}
\label{sec:hypercubes-of-surgery-complexes}

In this section, we describe how to construct hypercubes of link surgery complexes when the underlying Heegaard splitting is unchanged.

\subsection{Alexander gradings and local systems}
\label{sec:Alexander-grading}

We begin by describing an Alexander grading on certain morphism spaces, which will be important in constructing hypercubes. In particular, we will describe an Alexander grading on morphism spaces of the form $\ve{\CF}^-(\bs_\veps^{E_{\veps}}, \bs_{\veps'}^{E_{\veps'}})$ when $\bs_{\veps}$ and $\bs_{\veps'}$ are Lagrangians with local systems appearing in the construction of the surgery complex.

Let $(\Sigma,\ws,\zs)$ be a meridional Heegaard splitting for a link $(Y,L)$, produced via the construction in Section~\ref{sec:meridional-diagrams}. We write $D_i$ for the distinguished disk containing $w_i$ and $z_i$.

Suppose that $\bs_{\veps}$ and $\bs_{\veps'}$ are attaching curves on $\Sigma$, associated to cube points $\veps,\veps'\in \bE_n$. That is, $\bs_{\veps}$ is constructed from the beta curves of a sutured diagram for $M^\circ(L)$ by adding special meridional curves for the components $K_i\subset L$ with $\veps_i=0$, and by adding isotopic copies of these which are disjoint from the $D_i$ when $\veps_i=1$.

\begin{define}
\label{def:well-adapted}
We say that a link shadow $\cS=\{S_1,\dots, S_\ell\}$ on $\Sigma$ for $(Y,L)$ is \emph{adapted} to $\bs_{\veps}$ if each shadow $S_i$ passes through $w_i$ and $z_i$, and $S_i\setminus \{w_i,z_i\}$ decomposes as $\lambda_\b\cup \lambda_\a$, as follows:
\begin{enumerate}
\item  $\lambda_\a$ is an arc from $w_i$ to $z_i$ which is contained in $D_i$.
\item $\lambda_\b$ becomes isotopic (rel boundary) in $U_{\b}$ to $K_i\cap U_{\b}$ once we push the interior of $\lambda_{\b}$ into $U_{\b}$. 
\item If $\veps_i=0$, then $\lambda_\b$ is disjoint from $\bs_{\veps}$. 
\end{enumerate}
\end{define}

Suppose now that $\bs_{\veps}$ and $\bs_{\veps'}$ are two collections of attaching curves for $\veps\le \veps'\in \bE_\ell$. (Here we allow the case that $\veps=\veps'$, with the understanding that $\bs_{\veps}$ and $\bs_{\veps'}$ are different attaching curves). We let $\cS_{\veps}=(S_1,\ldots, S_\ell)$ and $\cS_{\veps'}=(S_1',\ldots,S_\ell')$ be two link shadows which are adapted to $\bs_{\veps}$ and $\bs_{\veps'}$, respectively. Using these link shadows, we can construct local systems $E_\veps$ and $E_{\veps'}$ over $\bs_{\veps}$ and $\bs_{\veps'}$, as in Equation~\eqref{eq:local-system}, respectively. This yields a Floer complex
\[
\ve{\CF}^-(\bs_{\veps}^{E_{\veps}}, \bs_{\veps'}^{E_{\veps'}}). 
\]
We now claim that if $\cS_{\veps}$ and $\cS_{\veps'}$ represent the same Morse framing on $L$, then $\ve{\CF}^-(\bs_{\veps}^{E_{\veps}}, \bs_{\veps'}^{E_{\veps'}})$ has a natural $\Z^n$-valued Alexander grading, which we define as follows. 

The Floer complex $\ve{\CF}^-(\bs_{\veps}^{E_{\veps}}, \bs_{\veps'}^{E_{\veps'}})$ is spanned by elements of the form $\langle \xs, \phi\rangle$ where $\phi\colon E_{\veps}\to E_{\veps'}$ is an $\bF[U]$-linear map, and $\xs\in \bT_{\b_{\veps}}\cap \bT_{\b_{\veps'}}$. We then complete with respect to the action of $U$.  Notice that $\Hom_{\bF[U]}(E_\veps, E_{\veps'})$ has a natural $n$-component Alexander grading, corresponding to the shift induced by $\phi$. Indeed, both $E_0$ and $E_1$ admit Alexander gradings by 
\[
A(W)=-1, \quad A(Z)=A(T)=1 \quad \text{and} \quad  A(U)=0,
\]
 from which $E_\veps$ inherits a $\mathbb{Z}^n$-valued Alexander grading; in particular, we have a notion of Alexander-homogeneous morphisms $\phi\colon E_\veps\to E_{\veps'}$.  We write $A_i(\phi)$ for the $i$-th component of the Alexander grading of $\phi$.

To define the Alexander grading, we need to make some auxiliary choices. We pick a $\Spin$ 2-chain $\scC_i$ in $Y$ which bounds $S_i'-S_i$  for each $i=1,\dots, n$. We refer to a collection $\qs$ which contains exactly one basepoint from each link component as a \emph{complete set of basepoints}. We pick any complete set of basepoints $\ve{q}\subset \ws\cup \zs$ so that $\frs_{\qs}(\xs)$ is torsion.

 We define an Alexander grading $A_i(\phi)$ for homogeneous $\phi$ as follows.   For $\phi$ a morphism that is homogeneous with respect to Alexander grading on $E_\veps,E_{\veps'}$, we then define the Alexander grading of $\langle \xs,\phi\rangle$ to be
\begin{equation}
A_i(\langle \xs, \phi\rangle)=A_i(\phi)+N_{C_i}^{\qs}(\xs) \label{eq:A_i-def-morphism-hypercubes}
\end{equation}
where $C_i$ is a relative periodic domain on $\Sigma$ which induces the homology class $\scC_i$ in $Y$, and $N_{C_i}^{\qs}(\xs)$ is as in~\eqref{eq:n-def}.

\begin{rem} In the context of the surgery formula, the condition that $\frs_{\qs}(\theta)$ is torsion can be understood as follows. Suppose that $\theta\in \bT_{\b_{\veps}}\cap \bT_{\b_{\veps'}}$ for $\veps\le \veps'$. Let $I_{\veps,\veps'}=\{i: \veps_i<\veps_i'\}$. There are $2^{|I_{\veps,\veps'}|}$ $\Spin^c$ classes of intersection points which contribute to the surgery hypercube. We identify these $\Spin^c$ structures with maps from $s\colon I_{\veps,\veps'}\to \{\sigma,\tau\}$. The condition that $\frs_{\qs}(\theta)$ is torsion is equivalent to  $w_i\in \qs$ whenever $s(i)=\sigma$, and $z_i\in \qs$ whenever $s(i)=\tau$.
\end{rem}

\begin{rem} Some care is required to define the term $n_{\qs}(C_i)$ appearing in the definition of $N_{C_i}^{\qs}(\xs)$ in Equation~\eqref{eq:A_i-def-morphism-hypercubes}, since $\d C_i$ intersects the basepoints $\qs$. We may assume that all of the intersections are transverse, and if $q\in \qs$, we define $n_{q}(C_i)$ appearing therein to be the average of multiplicities in the four regions adjacent to $q$.
\end{rem}

\begin{lem}\label{lem:alex-well-def} The Alexander grading $A_i$ is independent of the choice of $\qs$ such that $\frs_{\qs}(\theta)$ is torsion. 
\end{lem}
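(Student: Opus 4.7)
The plan is to reduce to the case where $\qs$ and $\qs'$ differ in a single coordinate. Any two complete collections of basepoints are connected by a sequence of one-basepoint swaps, so I may assume $\qs = \qs_0 \cup \{w_j\}$ and $\qs' = \qs_0 \cup \{z_j\}$ for some index $j$. By the definition in Equation~\eqref{eq:A_i-def-morphism-hypercubes} together with Equation~\eqref{eq:n-def}, the dependence on $\qs$ enters only through $n_{\qs}(C_i)$, so
\[
A_i^\qs(\langle \theta, \phi\rangle) - A_i^{\qs'}(\langle \theta, \phi\rangle) = N_{C_i}^{\qs}(\theta) - N_{C_i}^{\qs'}(\theta) = n_{z_j}(C_i) - n_{w_j}(C_i),
\]
and the task is to show that the right-hand side vanishes.

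To analyze this difference I would apply Lemma~\ref{lem:relative-Chern-class}, which identifies $2 N_{C_i}^{\qs}(\theta)$ with $\langle c_1(\frs_\qs(\theta,t)), [\hat{C}_i, \d \hat{C}_i]\rangle$. Since the relative $\Spin^c$ structures $\frs_\qs(\theta,t)$ and $\frs_{\qs'}(\theta,t)$ share the same boundary trivialization $t$ along $S_i'-S_i$, their Chern class difference is pulled back from the absolute cohomology class $c_1(\frs_\qs(\theta)) - c_1(\frs_{\qs'}(\theta)) = 2(\frs_\qs(\theta) - \frs_{\qs'}(\theta)) \in H^2(Y;\Z)$. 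The standard multi-pointed analogue of the knot Floer difference formula $\frs_w(\xs) - \frs_z(\xs) = \PD[K]$ identifies the right-hand side with $\pm 2\PD[K_j]$, where $K_j \subset L$ is the $j$-th component. Thus
\[
N_{C_i}^{\qs}(\theta) - N_{C_i}^{\qs'}(\theta) = \pm \bigl(K_j \cdot \hat{C}_i\bigr).
\]

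By the hypothesis that both $\frs_\qs(\theta)$ and $\frs_{\qs'}(\theta)$ are torsion, their difference $\pm \PD[K_j]$ is torsion in $H^2(Y;\Z)$; equivalently $[K_j]$ is torsion in $H_1(Y;\Z)$. This makes the intersection number $K_j \cdot \hat{C}_i$ independent of the particular 2-chain $\hat{C}_i$ bounding $S_i'-S_i$: any two choices differ by a closed 2-cycle $Z$, and $K_j \cdot Z = \langle\PD[K_j], [Z]\rangle$ is a torsion integer, hence zero. I would then compute the intersection using a convenient representative: since $S_i$ and $S_i'$ both realize the same Morse framing $\lambda_i$ on $K_i$, they are isotopic longitudes and cobound a smooth cylinder inside a tubular neighborhood $\nu(K_i)$. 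Choosing $\nu(K_i)$ disjoint from the other components of $L$ (in particular from $K_j$), this cylinder is disjoint from $K_j$, so $K_j \cdot \hat{C}_i = 0$ and therefore $N_{C_i}^{\qs}(\theta) = N_{C_i}^{\qs'}(\theta)$, establishing the lemma.

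The main technical hurdle is justifying the two cohomological identifications on which the argument rests: identifying the $\Spin^c$-structure difference with $\pm\PD[K_j]$ in the multi-pointed meridional setup, and deducing the well-definedness (and hence vanishing) of $K_j \cdot \hat{C}_i$ from the torsion constraint. The latter requires paying careful attention to the fact that $K_j$ need not be null-homologous but only rationally null-homologous.
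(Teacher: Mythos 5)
Your overall strategy matches the paper's: reduce to a single basepoint swap on one component $K_j$, observe that the discrepancy is $n_{z_j}(C_i)-n_{w_j}(C_i)$, recognize this as the algebraic intersection number of $K_j$ with the capped-off interpolating surface $\hat{C}_i$, and kill it geometrically. Your detour through Lemma~\ref{lem:relative-Chern-class} and the $\Spin^c$ difference formula $\frs_{w}-\frs_{z}=\PD[K]$ is a legitimate (if slightly roundabout) way to make that identification; the paper just reads the intersection number off directly from the multiplicity difference. Your remark that the torsion hypothesis makes $K_j\cdot\hat{C}_i$ independent of the choice of bounding chain is a worthwhile supplement.

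However, there is one genuine gap: the diagonal case $j=i$. Your vanishing argument reads ``Choosing $\nu(K_i)$ disjoint from the other components of $L$ (in particular from $K_j$), this cylinder is disjoint from $K_j$,'' which presupposes $K_j\neq K_i$. But nothing in the setup forces this: the component whose basepoint is being swapped can perfectly well be the component $K_i$ whose Alexander grading is being computed (e.g.\ already for a knot, $\ell=1$, with $\veps_i=\veps_i'=0$). In that case the interpolating annulus lives inside $\nu(K_i)$, which contains $K_i$ itself, and the intersection number $K_i\cdot\hat{C}_i$ is precisely the difference of the framings induced by $S_i$ and $S_i'$. It vanishes only because the lemma's standing hypothesis is that $\cS_{\veps}$ and $\cS_{\veps'}$ induce the same Morse framing --- equivalently, because the two longitudes are isotopic \emph{within} $\d\nu(K_i)$, so the annulus can be taken in the boundary torus and hence disjoint from the core. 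You have all the ingredients for this (you already invoke the equality of framings to produce the cylinder), but as written the argument does not cover this case, and it is exactly the case the paper singles out for separate treatment. A minor additional point: when reducing to single swaps you should note that the intermediate collections $\qs$ still satisfy the torsion hypothesis, and that on components with $\veps_j<\veps_j'$ the basepoint is pinned down by the $\sigma/\tau$ labeling, so no swap occurs there.
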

\begin{proof} The ambiguity in $\qs$ is as follows. For each $i$ such that $\veps_i<\veps'_i$, the choice of the basepoint on $K_i$ is fixed. (If $\theta$ and $\phi$ have labeling $\sigma_i$, then $w_i\in \qs$; if they have labeling $\tau_i$, then $z_i\in \qs$). If $\veps_i=\veps_i'$, then we can have either $w_i$ or $z_i$ in $\qs$. If $\veps_i=\veps_i'=1$, then we can have either $w_i$ or $z_i$, but since they are immediately adjacent, the choice is irrelevant. Therefore it suffices to consider the case that $\veps_i=\veps_i'=0$.  We will show that if $\veps_i=\veps_i'=0$, then
\[
N_{\scC_j}^{\qs_0\cup \{w_i\}}(\theta)=N_{\scC_j}^{\qs_0\cup \{z_i\}}(\theta),
\]
for all $j$.  If $j\neq i$ then 
\[
N_{\scC_j}^{\qs_0\cup \{z_i\}}(\theta)-N_{\scC_j}^{\qs_0\cup \{w_i\}}(\theta)=-(n_{z_i}-n_{w_i})(\scC_j).
\]
If $i\neq j$, $(n_{z_i}-n_{w_i})(\scC_j)$ represents the algebraic intersection number of the knot $K_i$ with the 2-chain $\scC_j$, which vanishes because both $\cS_{\veps}$ and $\cS_{\veps'}$ represent link shadows for $L$ and we can choose $\scC_j$ to geometrically represent an isotopy between these two shadows which is disjoint from the other knot components. We now consider the case that  if $i=j$. It is not hard to see using geometric reasoning similar to the previous case that the quantity $(n_{z_i}-n_{w_i})(\scC_i)$ is equal to the difference in the Morse framings between $\cS_{\veps}$ and $\cS_{\veps'}$, and therefore vanishes vanishes because  $\cS_{\veps}$ and $\cS_{\veps'}$ induce the same Morse framings. 
\end{proof}

We now consider the Alexander grading in the context of hypercubes. 

\begin{prop}
	\label{prop:grading-preserving}
Suppose that $\bs_{0}^{E_{\veps_0}},\dots, \bs_n^{E_{\veps_n}}$ is a sequence of attaching curves with local systems on a Heegaard splitting $(\Sigma,\ws,\zs)$ for an $\ell$-component link $L\subset Y$, $\veps_0\le \cdots \le \veps_n$ is a sequence of cube points in $\bE_\ell$, and that $\theta_{i,i+1}\colon \bs_i^{E_{\veps_i}}\to \bs_i^{E_{\veps_{i+1}}}$ is a sequence of homogeneously Alexander graded chains. Then the holomorphic polygon map satisfies
\[
A_j(\mu_n(\theta_{0,1},\dots, \theta_{n-1,n}))=A_j(\theta_{0,1})+\cdots+A_j(\theta_{n-1,n}).
\]
for each $j\in \{1,\dots, \ell\}$.
\end{prop}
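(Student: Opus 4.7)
The plan is to verify the grading identity one rigid holomorphic polygon at a time; since $\mu_n$ is a sum of contributions from individual polygons, this suffices. Fix $j\in\{1,\dots,\ell\}$, and for each pair $0\le k<l\le n$ choose a relative $2$-chain $C_j^{(k,l)}$ on $(\Sigma,\bs_{\veps_k},\bs_{\veps_l})$ with $\partial C_j^{(k,l)} = S_j^{(l)}-S_j^{(k)}+B_{k,l}$ for an integral combination $B_{k,l}$ of $\bs_{\veps_k}$ and $\bs_{\veps_l}$ curves, all representing the same fixed $\Spin$-nullhomologous $2$-chain $\scC_j$ in $Y$. The beta-only analog of Lemma~\ref{lem:periodic-domain-tuples} gives a decomposition
\[
C_j^{(0,n)} = \sum_{k=0}^{n-1} C_j^{(k,k+1)} + \sum_k P_k,
\]
where each periodic correction $P_k$ has trivial homology class in $Y$ (since all $C_j^{(k,l)}$ represent the fixed class $\scC_j$), hence contributes zero to every $N$-value by Lemma~\ref{lem:ambiguity-homotopy}. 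Lemma~\ref{lem:alex-well-def} further allows us to compute $A_j$ using a single complete basepoint set $\qs$ on every morphism space, the compatibility with the various $\Spin^c$-labelings of $\theta_{k,k+1}$ and $\xs$ being automatic from the structure of the nonvanishing polygon count.

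For a rigid polygon $\psi$ contributing to $\mu_n(\theta_{0,1},\dots,\theta_{n-1,n})$ with output $\xs$, its algebraic weight factors as $\prod_i U_i^{n_{w_i}(\psi)}\cdot M(\psi)$. The $U$-factor has $A_j$-degree zero, while the monodromy factor $M(\psi)$ has $A_j$-degree
\[
A_j(M(\psi)) \;=\; \sum_{k=0}^{n} \#\partial_{\b_{\veps_k}}(\psi)\cap S_j^{(k)},
\]
because each monodromy contribution is a power of $Z_j$ (on a $\veps_k^{(j)}=0$ factor) or of $T_j$ (on a $\veps_k^{(j)}=1$ factor), and both variables have $A_j$-degree $+1$. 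The claimed identity $A_j(\mu_n(\cdots))=\sum_k A_j(\theta_{k,k+1})$ therefore reduces, for each polygon $\psi$, to
\[
N_{C_j^{(0,n)}}^{\qs}(\xs) - \sum_{k=0}^{n-1} N_{C_j^{(k,k+1)}}^{\qs}(\theta_{k,k+1}) \;=\; \sum_{k=0}^{n} \#\partial_{\b_{\veps_k}}(\psi)\cap S_j^{(k)}.
\]

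I would establish this identity by adapting the Leibniz telescoping calculation from the proof of Proposition~\ref{prop:nullhomotopy-homology}. Applying $\#\partial(\partial_{\b_{\veps_k}}(\psi))\barcap C_j^{(k,k+1)}=0$ together with Lemma~\ref{lem:relative-Chern-class} to rewrite the Euler-measure and point-multiplicity data encoded in $N_{C_j^{(k,k+1)}}^{\qs}$, and telescoping over $k$, the interior shadow pieces $S_j^{(k)}$ cancel in pairs, leaving only the contributions $S_j^{(n)}-S_j^{(0)}$, which together with the $\bs$-boundary pieces in $\partial C_j^{(k,k+1)}$ reassemble into the right-hand sum. The main obstacle is the bookkeeping in this telescoping: one must verify that every residual term involving intersections of $\partial\psi$ with a curve of $\bs_{\veps_k}$ collects into a Chern class pairing $\langle c_1(\frs_{\qs}(\theta_{k,k+1})),[P]\rangle$ for a periodic domain $P$ on $(\Sigma,\bs_{\veps_k},\bs_{\veps_{k+1}})$, and hence vanishes because $\theta_{k,k+1}$ represents a $\qs$-torsion $\Spin^c$ structure. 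This vanishing is the direct analog of the argument leading to Equation~\eqref{eq:da(psi)capA_0} in the earlier proof, which carries over essentially verbatim.
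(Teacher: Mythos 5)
Your proposal is correct and follows essentially the same route as the paper's proof: consecutive relative $2$-chains $C^{(k,k+1)}$ interpolating between adjacent shadows, normalization of their $\bs$-boundaries so that they sum to $C^{(0,n)}$ (with periodic-domain corrections killed by the torsion $\Spin^c$ condition), the computation of the output morphism's Alexander degree as $\sum_k \#\d_{\b_{\veps_k}}(\psi)\cap S_j^{(k)}$ plus the input degrees, and the telescoping Leibniz-rule identity whose residual terms vanish as Chern-class pairings $\langle c_1(\frs_{\qs}(\theta_{k,k+1})),[P]\rangle$. The only caveat is that the signs in your displayed ``reduces to'' identity are reversed relative to what the telescoping actually produces, namely $\sum_{k} N_{C^{(k,k+1)}}^{\qs}(\theta_{k,k+1}) - N_{C^{(0,n)}}^{\qs}(\ys) = \sum_{k}\#\d_{\b_{\veps_k}}(\psi)\cap S_j^{(k)}$, but the argument you outline yields the correct version.
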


\begin{proof} Let $\cS_0,\dots, \cS_n$ for the link shadows used in the constructions of the local systems $E_{\veps_i}$.  Write $S_0,\dots, S_n$ for the shadows for the link component $K_j$.  We pick, for each $i\in \{0,\dots, n-1\}$ a $\Spin$ 2-chain $\scC_{i,i+1}$ in $U_{\b_{i+1}}\cup- U_{\b_{i}}$ with boundary $S_{i+1}-S_i-B_i^{i,i+1}+B_{i+1}^{i,i+1}$.  We represent $\scC_{i,i+1}$ by a 2-chain $C_{i,i+1}$ on $\Sigma$ with boundary $S_{i+1}-S_i- B_i^{i,i+1}+B_{i+1}^{i,i+1}$ where $B_{i}^{i,i+1}$ is a linear combination of the curves in $\bs_i$ and $B_{i+1}^{i,i+1}$ is a linear combination of the curves in $\bs_{i+1}$. Since $\bs_0,\bs_{1},\dots, \bs_n$ are all isotopic (ignoring the basepoints), it follows that by adding domains with boundary equal to linear combinations of the curves $\bs_{0},\dots, \bs_n$, we can ensure that $B_i^{i-1,i}=B_{i}^{i,i+1}$ for all $i$. Therefore we write $B_i$ for $B_i^{i,i+1}=B_i^{i+1,i}$. This may change the 2-chains $C_{i,i+1}$ by adding periodic domains. However this does not change the gradings $A_{j}(\theta_{i,i+1})$ by Lemma~\ref{lem:alex-well-def} because $\frs_{\qs}(\theta_{i,i+1})$ is assumed to be torsion. Similarly, we may assume that the 2-chain used to compute the grading of an output of the polygon counting map is
	\[
	C_{0,n}:=C_{0,1}+\cdots+C_{n-1,n}.
	\]

We also write $\theta_{i,i+1}=\langle \xs_{i,i+1}, f_{i,i+1}\rangle$. We consider a polygon class $\psi\in \pi_2(\theta_{0,1},\dots, \theta_{n-1,n},\ys)$. Let $f\colon E_{\veps_0}\to E_{\veps_n}$ be the linear morphism which is output by the homology class of $n$-gons $\psi$.

We apply the Leibniz rule to $\d_{\b_0+\dots +\b_i}(\psi)\cap C_{i,i+1}$. This yields
\[
0 =\bar{n}_{C_{i,i+1}}(\theta_{i,i+1})-\bar{n}_{C_{i,i+1}}(\ys)+\d_{\b_0+\dots+\b_i}(\psi)\cap (S_{i+1}-S_i+B_{i+1}-B_i).
\]
We now sum the above expression over $i=0,\dots, n-1$ and then regroup terms to get
\[
0=\sum_{i=0}^{n-1} \bar{n}_{C_{i,i+1}}(\theta_{i,i+1})-\bar{n}_{C_{0,n}}(\ys)+\sum_{i=0}^{n-1} \#\d_{\b_i}(\psi)\cap (S_n-S_i+B_n-B_i)=0.
\]
We note that $\#\d_{\b_i}(\psi)\cap B_i=0$. Therefore we can rearrange the above equation to obtain
\begin{equation}
\begin{split}
\bar{n}_{C_{0,n}}(\ys)=&\sum_{i=0}^{n-1}\bar{n}_{C_{i,i+1}}(\theta_{i,i+1})-\sum_{i=0}^{n-1} \# \d_{\b_i}(\psi)\cap S_i\\
&+\sum_{i=0}^{n-1} \# \d_{\b_i}(\psi)\cap (S_n+B_n). 
\end{split}
\label{eq:regroup-Alexander-gradings}
\end{equation}
Note that $S_n+B_n$ is closed (as a 1-chain on $\Sigma$) and $\sum_{i=0}^{n-1}\# \d_{\b_i}(\psi)$ is homologous to $-\d_{\b_n}(\psi)$, so
\[
\sum_{i=0}^{n-1} \# \d_{\b_i}(\psi)\cap (S_n+B_n)=-\# \d_{\b_n}(\psi)\cap (S_n+B_n)=-\# \d_{\b_n}(\psi)\cap S_n.
\]
Combining this with Equation~\eqref{eq:regroup-Alexander-gradings}, we obtain
\begin{equation}
\sum_{i=0}^{n-1} \bar{n}_{C_{i,i+1}}(\theta_{i,i+1})=\bar{n}_{C_{0,n}}(\ys)+\sum_{i=0}^{n} \# \d_{\b_i}(\psi)\cap S_i. \label{eq:relation-sum-nC_itheta_i}
\end{equation}

 We easily compute that
\begin{equation}
A_j(f)=\# \d_{\b_0}(\psi)\cap S_0+A_j(f_{0,1})+ \cdots +A_j(f_{n-1,n})+\# \d_{\b_n}(\psi)\cap S_n.
\label{eq:A_j-output-formula}
\end{equation}

We compute therefore that if $\langle \ys, f\rangle$ is a summand of the polygon counting map, then
\[
\begin{split}
&\sum_{i=0}^{n-1} A_j(\langle \theta_{i,i+1},f_{i,i+1}\rangle)
\\
=&\sum_{i=0}^{n-1}\left(\frac{e(C_{i,i+1})+2\bar{n}_{\theta_{i,i+1}}(C_{i,i+1})-2n_{\qs}(C_{i,i+1})}{2}+A_j(f_{i,i+1})\right)\\
=&\frac{e(C_{0,n})+2\sum_{i=0}^{n-1} \bar{n}_{\xs_{i,i+1}}(C_{i,i+1})-2n_{\qs}(C_{0,n})}{2}+\sum_{i=0}^{n-1}A_j(f_{i,i+1}) \\
=&\frac{e(C_{0,n})+2 \bar{n}_{\ys}(C_{0,n})-2n_{\qs}(C_{0,n})}{2}+\sum_{i=0}^n \# \d_{\b_i}(\psi)\cap S_i+\sum_{i=0}^{n-1}A_j(f_{i,i+1})\\
=& A_j(\langle \ys, f\rangle ).
\end{split}
\]
To go from the first line to the second, we use the definition of $A_j$. To go from the second line to the third, we use the relation $C_{0,n}=C_{0,1}+\cdots+C_{n-1,n}$. To go from the third line to the fourth, we use Equation~\eqref{eq:relation-sum-nC_itheta_i}. To go from the fourth line to the final line, we use the definition of $A_j$ as well as Equation~\eqref{eq:A_j-output-formula}. This completes the proof.
\end{proof}

\subsection{Homology groups}
\label{sec:homology-groups}
We now consider the homology groups of the morphism spaces $\ve{\CF}^-(\bs_{\veps}^{E_{\veps}}, \bs_{\veps'}^{E_{\veps'}})$ when $\veps\le \veps'$. For our purposes, it is more helpful to compute the homology of a subcomplex, as we now describe. We let $M$ denote the set of components $K_i$ such that $\veps_i'>\veps_i$. Let $\fro(M)$ denote the set of orientations on $M$. If $\scO\in \fro(M)$, we let
\[
\phi^{\scO}_i\colon E_{\veps_i}\to E_{\veps'_i}
\]
denote the identity if $\veps_i=\veps_i'$, $\phi^\sigma$ if $\veps_i<\veps_i'$ and $K_i$ is oriented positively in $\scO$, and $\phi^\tau$ if $\veps_i<\veps_i'$ and $K_i$ is oriented negatively in $\scO$. We define
\[
\phi^{\scO}:=\phi_1^{\scO}\otimes \cdots \otimes \phi_n^{\scO}\colon E_{\veps}\to E_{\veps'}.
\]

Given $\scO$, write $\qs\subset \ws\cup \zs$ for a complete set of basepoints which contains $w_i$ if $\veps_i<\veps_i'$ and $K_i$ is oriented positively in $\scO$, and which contains $z_i$ if $\veps_i<\veps_i'$ and $K_i$ is oriented negatively in $\scO$. We will write $\ve{\CF}^-_{\phi^{\scO}}(\bs_{\veps}^{E_\veps}, \bs_{\veps'}^{E_{\veps'}}, \frs_{\scO})$ for the complex spanned by morphisms $\langle \xs, \phi\rangle$, where $\frs_{\qs}(\xs)$ is torsion, and $\phi$ is a multiple of $\phi^{\scO}$. 

\begin{prop}
	\label{prop:top-degree-cycle} Suppose that $\bs_{\veps}$ and $\bs_{\veps'}$ are two collections of meridional beta curves on a link Heegaard splitting $(\Sigma,\ws,\zs)$ for $(Y,L)$, and $\cS$ and $\cS'$ are two collections of adapted shadows. Then the Alexander grading $(0,\dots, 0)$ subspace of
\[
\ve{\HF}^-_{\phi^\scO}(\bs^{E_{\veps}}_{\veps}, \bs^{E_{\veps'}}_{\veps'}, \frs_\scO)
\]
is isomorphic to $\Lambda_g\otimes \bF\llsquare U\rrsquare$, where $\Lambda_g$ denotes the exterior algebra on $g$ generators.
\end{prop}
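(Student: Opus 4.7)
The plan is to reduce the computation to a standard Heegaard Floer calculation for $\#^g(S^1\times S^2)$ and then to track the effect of the local systems and the Alexander grading.

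First, I would observe that both $\bs_{\veps}$ and $\bs_{\veps'}$ are attaching curves for the same handlebody $U_{\b}$: the curves of $\bs^\circ$ are (up to isotopy within $U_{\b}$) common to both collections, and each meridional or winding curve $\b_{0,i}$ or $\b_{1,i}$ bounds a compressing disk in $U_{\b}$, since winding $\b_{0,i}$ parallel to the framed longitude $\b_{\lambda,i}$ does not alter the isotopy class of a compressing disk. Hence $(\Sigma,\bs_{\veps},\bs_{\veps'})$ presents $\#^g(S^1\times S^2)$ as a (multi-pointed) Heegaard diagram, where $g$ is the genus of $\Sigma$. With the basepoint set $\qs$ consistent with $\scO$ (plus any remaining free basepoints in $(\ws\cup\zs)\setminus\qs$), the standard computation gives that in the torsion $\Spin^c$ structure,
\[
\ve{\HF}^-(\#^g(S^1\times S^2),\qs,\frs_0)\iso \Lambda_g\otimes\bF\llsquare U\rrsquare,
\]
with $\bF\llsquare U\rrsquare$ acting via $U=U_i$ for all $w_i\in\qs$.

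Next, I would analyze the effect of the local systems. The subcomplex $\ve{\CF}^-_{\phi^{\scO}}(\bs_{\veps}^{E_{\veps}},\bs_{\veps'}^{E_{\veps'}},\frs_{\scO})$ consists of morphisms $\langle \xs,c\cdot\phi^{\scO}\rangle$ with $\xs$ in the torsion $\Spin^c$ structure selected by $\scO$, and whose algebra component is a multiple of $\phi^{\scO}$. The local system contributes weights of the form $Z_i^{\#\d_\b(\phi)\cap S_i}$ or $T_i^{\#\d_\b(\phi)\cap S_i}$, encoding the monodromy of polygons along the shadows; the restriction to morphisms of the form $c\cdot\phi^{\scO}$ selects those polygons whose cumulative monodromy is compatible with $\phi^{\scO}$. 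Since the Alexander grading is preserved by the polygon counting maps (Proposition~\ref{prop:grading-preserving}), and since $\phi^{\scO}$ itself has vanishing Alexander shift once the shadows $\cS$ and $\cS'$ are normalized to represent the same Morse framing (so that the differences $S_i'-S_i$ are $\Spin$-nullbordant in $Y$ by Lemma~\ref{lem:spin-even}), the Alexander-grading-zero subspace of the $\phi^{\scO}$-subcomplex is identified with the torsion $\Spin^c$ summand from the previous step, yielding $\Lambda_g\otimes \bF\llsquare U\rrsquare$.

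The principal technical obstacle is verifying this identification precisely: that restricting simultaneously to the $\phi^{\scO}$-morphism component and to Alexander grading zero recovers \emph{exactly} the torsion $\Spin^c$ summand without spurious grading shifts or extra factors. This amounts to showing that a top-degree cycle representing a generator of $\Lambda^g\otimes 1\subset \Lambda_g\otimes\bF\llsquare U\rrsquare$ lies in Alexander grading zero. I would verify this by choosing the 2-chains $C_{\veps,\veps'}$ used in Equation~\eqref{eq:A_i-def-morphism-hypercubes} to agree near the shadows with the $\Spin$-bounding surfaces $\scC_i$, and by direct inspection in the local genus-1 model of Figure~\ref{fig:naturality:42}, where the distinguished intersection points $\theta^{+}_{\sigma}$ and $\theta^{+}_{\tau}$ are top-graded in the respective torsion $\Spin^c$ structures and have vanishing $N_{C_i}^{\qs}$-contribution for the canonical choice of $C_i$ supported in a neighborhood of $D_i$.
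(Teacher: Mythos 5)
Your argument is essentially correct, but it takes a different (and more self-contained) route than the paper. The paper's proof of Proposition~\ref{prop:top-degree-cycle} is a two-line citation: the case where $\cS=\cS'$ is quoted from \cite{ZemExactTriangle}*{Remark~8.16}, and the case of different shadows is reduced to it by associativity of the triangle maps (inserting an intermediate copy of the attaching curves and transferring the computation through a triangle count). You instead unpack the identical-shadows case directly — reducing to the standard torsion $\Spin^c$ computation for a multi-pointed diagram of a connected sum of $S^1\times S^2$'s and then accounting for the local systems — and you handle the general-shadow case by working directly with the Alexander grading of Equation~\eqref{eq:A_i-def-morphism-hypercubes} and the $\Spin$-bounding $2$-chains $\scC_i$, rather than by associativity. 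Both routes are viable: the paper's outsources the local-system bookkeeping to the cited reference, while yours makes it explicit but leaves two points at the level of a sketch. First, your description of $\ve{\CF}^-_{\phi^{\scO}}$ as "selecting polygons whose monodromy is compatible with $\phi^{\scO}$" is backwards: the subcomplex is defined by restricting the morphism component of $\langle\xs,f\rangle$ to multiples of $\phi^{\scO}$, and one must check it is closed under the differential (this is part of what \cite{ZemExactTriangle}*{Section~8.2} provides). Second, since multiplication by $W_i$ and $Z_i$ on the morphism component is not invertible for components with $\veps_i=\veps_i'=0$, identifying the Alexander grading $(0,\dots,0)$ piece with the full rank-$2^g$ torsion summand (rather than a proper $U$-translate of it) genuinely requires the verification you flag in your last paragraph — that a full set of $\bF\llsquare U\rrsquare$-generators of the homology sits in grading zero, which your local model computation near the disks $D_i$ together with Lemma~\ref{lem:alex-well-def} (well-definedness of $A_i$, using that $\cS$ and $\cS'$ induce the same Morse framing) does supply. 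So the proposal is sound, but be aware the paper's actual argument is shorter and structured around the triangle-map associativity rather than the explicit grading formula.
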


This follows from \cite{ZemExactTriangle}*{Remark~8.16}, which addresses the case where the shadows are identical. The general case, where the choice of shadows may be different, can be derived by using associativity of the triangle maps to reduce to the aforementioned case.

\subsection{Building surgery hypercube hypercubes}
\label{sec:hypercube-hypercubes}

The Alexander grading from Section~\ref{sec:Alexander-grading} and the computations in Section~\ref{sec:homology-groups} allow us to efficiently build link surgery complexes for complicated meridional systems of Heegaard diagrams, and also to build hypercubes of such complexes.

We first consider the construction of the link surgery complex for more general systems of Heegaard diagrams than those considered in Section~\ref{sec:meridional-complete-systems}.

\begin{define} 
	\item
	\begin{enumerate}
		\item We say that a Heegaard diagram $(\Sigma,\as, (\bs_{\veps})_{\veps\in \bE_\ell}, \ws, \zs)$ is a \emph{generalized meridional system of Heegaard diagrams} if there is a meridional system of Heegaard diagrams $(\Sigma,\as,(\bs_{\veps}')_{\veps\in \bE_\ell}, \ws, \zs)$, in the sense of Definition~\ref{def:meridional-surgery-diagram}, such that $\bs_{\veps}$ and $\bs_{\veps}'$ are related by a sequence of handleslides and isotopies in the complement of $\ws\cup \zs$.
		\item We say a collection of Heegaard diagrams $(\Sigma, (\as_{\mu})_{\mu\in \bE_m}, (\bs_{(\nu, \veps)})_{(\nu,\veps)\in \bE_n\times \bE_\ell},\ws,\zs)$ is a \emph{cube of generalized meridional systems} if there is a genuine meridional system $(\Sigma, \as, (\bs'_{\veps})_{\veps\in \bE_\ell}, \ws, \zs)$ so that each $\as_\nu$ is handleslide equivalent to $\as$ in the complement of $\ws\cup \zs$, and each $\bs_{(\nu,\veps)}$ is handleslide equivalent to $\bs'_{\veps}$ in the complement of $\ws\cup \zs$. We also assume that each $\as_\nu$ is disjoint from the meridional disk regions $D_1,\dots, D_\ell$.
	\end{enumerate} 
	\end{define}

Suppose $\Lambda$ is a Morse framing on a link $L\subset Y$, and  $(\Sigma,\as, (\bs_{\veps})_{\veps\in \bE_\ell}, \ws, \zs)$ is a generalized meridional system of Heegaard diagrams for $(Y,L)$. For each $\veps\in \bE_\ell$, we pick an adapted set of link shadows $\cS_\veps$ for $(\Sigma,\as,\bs_{\veps}, \ws, \zs)$. We assume these link shadows intersect each of the canonical special meridional disk regions $D_1,\dots, D_{\ell}$ in a single arc. These shadows give local systems on each $\bs_\veps$, as well as Alexander multi-gradings on each $\ve{\CF}^-_{\phi^{\scO}}(\bs_{\veps}^{E_{\veps}}, \bs_{\veps'}^{E_{\veps'}}, \frs_{\scO})$, where $\scO$ is an orientation on $L$. 

For each orientation $\scO$ on $L$, we build a hypercube $\scB_{\Lambda}^{\scO}$. If $\veps<\veps'$ and $|\veps'-\veps|_{L^1}=1$, then we pick $\theta^{\scO}_{\veps,\veps'}$ in this hypercube to be a cycle representing the top $\gr_{\qs}$-graded element in $\ve{\HF}^-_{\phi^{\scO}}(\bs_{\veps}^{E_{\veps}}, \bs_{\veps'}^{E_{\veps'}},\frs_{\scO})$, where $\qs$ is a compatible complete collection of basepoints in $\ws\cup \zs$. This homology class is described in Proposition~\ref{prop:top-degree-cycle}: If $\scO$ is positive on the component of $L$ which is incremented from $\veps$ to $\veps'$, then $\theta^{\scO}_{\veps,\veps'}$ is the cycle $\theta_\sigma^+$, and if $\scO$ is negative on this component, then $\theta^{\scO}_{\veps,\veps'}$ is the cycle $\theta_\tau^+$. Next, we fill the higher length chains in the hypercube $\scB_{\Lambda}^{\scO}$ by using the same strategy as \cite{MOIntegerSurgery}*{Lemma~8.6}. This procedure works by Propositions~\ref{prop:grading-preserving} and~\ref{prop:top-degree-cycle}. We then define the full hypercube $\scB_{\Lambda}$ by summing the chains of each $\scB^{\scO}$ for all $\scO\in \fro(L)$. Assuming the diagram $(\Sigma,\as,\scB_{\Lambda},\ws,\zs)$ is weakly admissible at each complete collection of basepoints $\qs\subset \ws\cup \zs$, we get a model for the link surgery complex $\cC_{\Lambda}(L):= \ve{\CF}^-(\as, \scB_{\Lambda})$.

Essentially the same construction works for a cube of generalized meridional systems. In this case, we obtain two hypercubes $\cA$ (of dimension $m$) and $\scB_{\Lambda}$ (of dimension $\ell+n$). Pairing these gives a hypercube of dimension $n+m+\ell$, which we can view as a hypercube of link surgery complexes of dimension $n+m$.

Note that if $\cA=\begin{tikzcd} \as_0\ar[r, "\theta"] & \as_1 \end{tikzcd}$, then $\ve{\CF}^-(\cA, \scB_{\Lambda})$ is a 1-dimensional hypercube, i.e. a mapping cone of a map $\Psi_{\a_0\to \a_1}^{\scB_{\Lambda}}$. We define this map to be the transition map for changing the alpha curves, assuming the diagram $(\Sigma,\as_1,\as_0,\scB_{\Lambda},\ws,\zs)$ is weakly admissible at each complete collection $\qs\cup \ws\cup \zs$. If the above diagram is not weakly admissible, we pick third collection $\as'$ so that $(\Sigma,\as',\as_i,\scB_{\Lambda}, \ws, \zs)$ is admissible for $i=0,1$, and we set
\[
\Psi_{\a_0\to \a_1}^{\scB_{\Lambda}}:=\Psi_{\a'\to \a_1}^{\scB_{\Lambda}}\circ \Psi_{\a_0\to \a'}^{\scB_{\Lambda}}
\]

 Similarly, if $\scB_{\Lambda}$ has dimension $\ell+1$, then we can view $\scB_{\Lambda}$ as a mapping cone $\scB_{\Lambda}=\begin{tikzcd} \scB_{\Lambda,0}\ar[r, "\theta"] & \scB_{\Lambda,1} \end{tikzcd}$. We then view $\ve{\CF}^-(\as, \scB_{\Lambda})$ as the mapping cone of a chain map $\Psi_{\a}^{\scB_{\Lambda,0}\to \scB_{\Lambda,1}}$, and we define this map to be the transition map for changing the beta attaching curves and the shadows. 
 
 By considering hypercubes of dimension $\ell+2$ (for example $\dim(\cA)=2$ and $\dim(\scB_{\Lambda})=\ell$; or $\dim(\cA)=1$ and $\dim(\scB_{\Lambda})=\ell+1$; or $\dim(\cA)=0$ and $\dim(\scB_{\Lambda})=\ell+2$) we see that these transition maps are functorial up to chain homotopy. In summary, we have the following:
 
 \begin{prop}
 \label{prop:distinguished-rectangles-alpha-beta}
 	\item
 	  \begin{enumerate}
 	  	\item If $(\Sigma,\as,\scB_{\Lambda},\ws,\zs)$ and $(\Sigma,\as',\scB_{\Lambda}',\ws,\zs)$ are generalized meridional systems of Heegaard diagrams with the same Heegaard surface, then there is a homotopy equivalence
 	\[
 	\Psi_{(\a, \scB_{\Lambda})\to (\a', \scB_{\Lambda}')}\colon \ve{\CF}^-(\as,\scB_{\Lambda})\to \ve{\CF}^-(\as', \scB_{\Lambda}').
 	\]
 	which is well-defined up to chain homotopy.
 	\item These maps satisfy
 	 	\[
 	\Psi_{(\a, \scB_{\Lambda})\to (\a'', \scB_{\Lambda}'')}\simeq  \Psi_{(\a', \scB_{\Lambda}')\to (\a'', \scB_{\Lambda}'')}\circ \Psi_{(\a, \scB_{\Lambda})\to (\a', \scB_{\Lambda}')}.
 	\]
 	\end{enumerate}
 	\end{prop}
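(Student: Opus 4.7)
The plan is to establish both claims by constructing hypercubes of dimension $\ell+1$ to define the transition maps, and hypercubes of dimension $\ell+2$ to verify their functorial properties, exactly as outlined in the discussion preceding the proposition. The two key technical inputs are the Alexander grading preservation of Proposition~\ref{prop:grading-preserving} and the existence of top-graded cycles from Proposition~\ref{prop:top-degree-cycle}, both of which apply to arbitrary cubes of generalized meridional systems, so that all the hypercubes required below can in fact be built.

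First I would handle the two elementary cases where either the alpha curves or the beta/shadow data change, but not both. In the alpha-only case, given handleslide-equivalent $\as$ and $\as'$, I build a one-dimensional hypercube of attaching curves $\cA$ with vertices $\as,\as'$ joined by a top-graded cycle, pair with $\scB_\Lambda$ to obtain an $(\ell+1)$-dimensional hypercube, and read off $\Psi_{\a\to\a'}^{\scB_\Lambda}$ as the edge map in the new cube direction. The beta-only case is symmetric, using an $(\ell+1)$-dimensional beta hypercube $\scB_\Lambda\to \scB_\Lambda'$ filled inductively by the procedure of Section~\ref{sec:hypercube-hypercubes}. The general map $\Psi_{(\a,\scB_\Lambda)\to(\a',\scB_\Lambda')}$ is then defined as the composition of a beta-change followed by an alpha-change; the two possible orders of composition are shown to be homotopic as part of~(2).

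To establish well-definedness and the composition law, I invoke $(\ell+2)$-dimensional hypercubes of three flavors: $\dim\cA=2$ with $\dim\scB_\Lambda=\ell$; $\dim\cA=1$ with $\dim\scB_\Lambda=\ell+1$; and $\dim\cA=0$ with $\dim\scB_\Lambda=\ell+2$. Independence of the alpha-change map from choices (of handleslide sequence and of filling chains) comes from the first flavor, whose length-two diagonal chain supplies the required null-homotopy of the difference of two candidate maps; the third flavor plays the analogous role for the beta-change map. The second flavor simultaneously handles the compatibility of the two orders of composition of beta-change and alpha-change. The composition law in part~(2), applied to three generalized meridional systems, then follows from the diagonal edge of an $(\ell+2)$-dimensional cube built from either a 2-dimensional alpha hypercube or a 2-dimensional beta hypercube with three vertices joined in a chain and by their composition.

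The main obstacle will be weak admissibility, because combining several alpha systems or several beta systems together with basepoints $\ws\cup\zs$ typically fails to be admissible at every complete collection $\qs$. The standard remedy, already invoked to define the transition maps themselves, is to insert a sufficiently wound auxiliary alpha or beta system and factor through it; verifying that the resulting composition is independent of this auxiliary data requires yet one more layer of hypercubes. Once Propositions~\ref{prop:grading-preserving} and~\ref{prop:top-degree-cycle} are in hand, however, all the hypercube fillings needed above exist and are determined up to chain homotopy in the relevant Alexander-graded subcomplexes, which supplies precisely the null-homotopies encoding naturality.
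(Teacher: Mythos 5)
Your proposal matches the paper's argument: the transition maps are defined by pairing one-dimensional alpha or beta hypercubes (filled via Propositions~\ref{prop:grading-preserving} and~\ref{prop:top-degree-cycle}) with the existing data to form $(\ell+1)$-dimensional hypercubes, the general map is the composite of a beta-change followed by an alpha-change, and well-definedness plus functoriality come from the three flavors of $(\ell+2)$-dimensional hypercubes you list, with admissibility handled by factoring through a sufficiently wound auxiliary system. This is essentially the same approach as the paper, so no further comparison is needed.
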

 
\section{Naturality maps for meridional systems}
\label{sec:naturality}

  In this section, we define the transition maps and prove naturality for meridional basic systems. After defining the maps $\Psi_{\scH\to \scH'}$, we prove the following:

\begin{thm}
\label{thm:naturality-type-D} If $\scH$ and $\scH'$ are two meridional complete systems of surgery diagrams for $(Y,L,\Lambda)$, then there is a transition map
\[
\Psi_{\scH\to \scH'}\colon \cC_{\Lambda}(\scH)\to \cC_{\Lambda}(\scH'),
\]
well-defined up to chain homotopy, such that the following hold:
\begin{enumerate}
\item $\Psi_{\scH\to \scH}\simeq \id$.
\item $\Psi_{\scH'\to \scH''}\circ \Psi_{\scH\to \scH'}\simeq \Psi_{\scH\to \scH''}.$
\end{enumerate}
The same statements hold at the level of type-$D$ modules and morphisms over the algebra $\cL_\ell$ where $\ell=|L|$. 
\end{thm}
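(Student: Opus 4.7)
The plan is to define $\Psi_{\scH\to \scH'}$ by decomposing the change from $\scH$ to $\scH'$ into a sequence of elementary moves from Proposition~\ref{prop:meridional-diagram-moves}, associating a transition map to each elementary move, and then verifying that any two decompositions produce chain homotopic compositions. This follows the general strategy of Juh\'asz--Thurston--Zemke naturality \cite{JTNaturality}. The elementary moves split into two types: those that preserve the Heegaard surface $\Sigma$ (items (1)--(5) of Proposition~\ref{prop:meridional-diagram-moves}), and ambient isotopy of $\Sigma^\circ$ in $(M(L),\g^\circ)$ together with $(1,2)$-stabilization (item (4) and (6)).

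For the surface-preserving moves, the associated transition maps are already provided by Proposition~\ref{prop:distinguished-rectangles-alpha-beta}: one simply builds a $1$-dimensional cube of generalized meridional systems interpolating between the old and new data, and the map is read off as the mapping cone connecting map. This handles handleslides and isotopies of $\as$, of $\bs^\circ$, and of the $\bs_\veps$, as well as simultaneous handleslides and isotopies of the shadows $\cS$ and the curves $\b_{\Lambda,i}$ (here using the Alexander grading invariance from Section~\ref{sec:Alexander-grading} and Lemma~\ref{lem:alex-well-def}, which ensures the canonical top-graded cycles are unambiguously defined). For $(1,2)$-stabilization, I would define the transition map by taking the external product with the canonical top-graded generator of the genus-one factor of the stabilization, as in \cite{JTNaturality}. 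For ambient isotopy of $\Sigma^\circ$ in $(M(L),\g^\circ)$ rel boundary, the induced diffeomorphism of the Heegaard data gives a tautological isomorphism of link surgery complexes, and we take this as the transition map.

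The hard part, as always with naturality, is to verify that these elementary transition maps satisfy pentagon-like compatibility relations corresponding to a generating set of distinguished rectangles among the moves, and to argue that such a generating set suffices. Concretely, one must check that transition maps for moves of different types commute up to homotopy when their supports are disjoint, and that stabilization commutes up to homotopy with each of the other moves. For each such distinguished rectangle, I would construct a $2$-dimensional hypercube of generalized meridional systems as in Section~\ref{sec:hypercube-hypercubes}, whose diagonal length-$2$ map serves as the required homotopy. The existence of the necessary hypercube fillings is guaranteed by Propositions~\ref{prop:grading-preserving} and~\ref{prop:top-degree-cycle}, together with the fact that there are no nontrivial positively graded endomorphisms of the top-graded cycle modulo completion, so obstructions to filling vanish.

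The main obstacle is assembling a complete list of such distinguished rectangles and checking that these generate all loops in the space of meridional complete systems. The additional subtlety beyond classical Heegaard Floer naturality is that we must track the shadows $\cS$ and the distinguished disks $D_i$ through each move, since they appear in the local systems $E_\veps$. The Alexander grading developed in Section~\ref{sec:Alexander-grading}, together with the $\Spin$-bordism machinery of Section~\ref{subsec:relative-chern} which controls the dependence of $N_C^{\qs}(\xs)$ on the choice of bounding $2$-chain, ensures that this tracking does not introduce new ambiguities. Finally, the type-$D$ refinement is obtained by observing that each transition map constructed above is block-diagonal with respect to the idempotent decomposition and intertwines the algebra action by construction, so the hypercube filling procedure lifts verbatim to the level of type-$D$ morphisms and homotopies over $\cL_\ell$.
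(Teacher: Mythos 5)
Your overall strategy matches the paper's: both invoke the Juh\'asz--Thurston--Zemke framework, define transition maps for elementary moves via the hypercube machinery of Section~\ref{sec:hypercube-hypercubes}, and reduce well-definedness to commutation of distinguished rectangles. However, there are two genuine gaps.

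First, you dispose of isotopies of the Heegaard surface by declaring the transition map to be ``the tautological isomorphism,'' but this sidesteps the \emph{continuity} axiom, which is the real content of that step. The issue is that a diffeomorphism of $\Sigma$ isotopic to the identity can also be realized as a sequence of attaching-curve isotopies, so there are two competing definitions of the transition map --- the pushforward $\cX^{\can}(\phi)$ followed by polygon-counting maps back to the original curves, versus the identity --- and one must prove these agree up to homotopy. In the link surgery setting this is delicate: the paper's Section~\ref{sec:continuity} builds continuation maps with dynamic Lagrangian boundary conditions and $H$-skewed polygons, establishes $\Gamma\simeq\Psi_{\a\to\a'}^{\scB_\Lambda'}\circ\Psi_{\a}^{\scB_\Lambda\to\scB_\Lambda'}$ and $\Gamma\simeq\cX^{\can}(\phi)$ separately, and then uses Thurston's simplicity of $\Diff_0(\Sigma,\ps)$ to reduce an arbitrary isotopy to Hamiltonian flows. (A small-translate argument is explicitly noted to be problematic here because of admissibility for the local systems.) None of this is tautological, and your proposal gives no mechanism for it.

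Second, you identify ``checking that the distinguished rectangles generate all loops'' as the main remaining obstacle but leave it unresolved. In the JTNaturality framework the answer is known: besides distinguished rectangles and continuity, the one additional family of loops is the \emph{simple handleswaps}, whose monodromy must be shown to be trivial. Your proposal never mentions handleswap invariance. The paper handles it in Section~\ref{sec:handleswap} by a neck-stretching argument that identifies the stabilized polygon maps in the handleswap loop with small-isotopy maps on the destabilized diagram, and then appeals back to continuity. Without both of these verifications the transition maps are not well defined, so the proof as proposed is incomplete.
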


Many of the details in our proof follow from relatively standard arguments. The main new technical contribution in our present work concerns the interaction between the twisted coefficients, knot shadows, and our naturality maps.

\subsection{Overview of the naturality maps}

We now sketch the construction of the naturality maps. If $\scH=(\Sigma,\as,\scB_{\Lambda},\ws,\zs)$ is a meridional link surgery diagram for $(Y,L,\ws,\zs)$, then we write $\cC_{\Lambda}(\scH)$ for the link surgery hypercube 
\[
\cC_{\Lambda}(\scH):=\ve{\CF}^-(\as,\scB_{\Lambda}).
\]

We think of $\scH$ as being constructed from a Heegaard diagram for the sutured manifold $(M(L),\g^\circ)$ considered in Section~\ref{sec:meridional-diagrams}. We will apply the naturality theorem from \cite{JTNaturality} to this sutured manifold.

Suppose $\scH=(\Sigma,\as,\scB_{\Lambda},\ws,\zs)$ and $\scH'=(\Sigma,\as',\scB_{\Lambda}',\ws,\zs)$ are two meridional link surgery diagrams which share the same Heegaard surface. Suppose first that $(\Sigma,\as',\as,\scB_{\Lambda},\scB_{\Lambda}',\qs)$ is weakly admissible for all complete collections $\qs\subset \ws\cup \zs$.  Then we define
\[
\Psi_{\scH\to \scH'}:=\Psi_{\a \to \a'}^{\scB'_{\Lambda}}\circ \Psi_{\a}^{\scB_{\Lambda}\to \scB_{\Lambda}'}.
\]
Here, $\Psi_{\a}^{\scB_{\Lambda}\to \scB'_{\Lambda}}$ denotes the holomorphic polygon map $\mu_2^{\Tw}(-, \theta)$, where $\theta\colon \scB_{\Lambda}\to \scB_{\Lambda}'$ is the canonical hypercube morphism constructed in Section~\ref{sec:hypercube-hypercubes}. If $(\Sigma,\as',\as,\scB_{\Lambda}, \scB_{\Lambda}',\qs)$ is not admissible, we then we define
\[
\Psi_{\scH\to \scH'}:=\Psi_{\scH''\to \scH'}\circ \Psi_{\scH\to \scH''}
\]
for any diagram $\scH''$ sharing the same Heegaard surface, for which the pairs $(\scH,\scH'')$ and $(\scH'',\scH')$ satisfy the above admissibility hypotheses. Proposition~\ref{prop:distinguished-rectangles-alpha-beta} above verifies commutation under the distinguished rectangles  of type (1), which is to say commutation of maps for alpha equivalences and beta equivalences, from \cite{JTNaturality}*{Definition~2.29}. 

In Section~\ref{sec:stabilizations}, we define maps for stabilizations of the underlying Heegaard surface, and we show that these maps commute up to chain homotopy with the above maps for attaching curve equivalences. This will verify commutation under distinguished rectangles of type-(2) from \cite{JTNaturality}*{Definition~2.29}, namely commutation of alpha/beta equivalences and stabilizations. 
Commutation under the distinguished rectangles of type-(3), namely commutation of alpha/beta equivalences and diffeomorphisms, follows immediately from Proposition~\ref{prop:distinguished-rectangles-alpha-beta}, above.
Commutation under distinguished rectangles of type-(4), namely commutation of maps for two disjoint stabilizations, from \cite{JTNaturality}*{Definition~2.29} follows from the same argument as in the case of ordinary naturality; for this see \cite{JTNaturality}*{Section~9.2}. Commutation of distinguished rectanges of type (5), namely commutation of stabilizations and diffeomorphisms, can be derived from Lemma~\ref{lem:stabilization}.

 We consider diffeomorphisms which are isotopic to the identity rel boundary in Section~\ref{sec:continuity}, and we show that such diffeomorphisms induce the identity morphism on the link surgery complex in Proposition~\ref{prop:continuity}. This verifies the \emph{continuity} axiom from \cite{JTNaturality}*{Definition~3.23}.
 
 Finally \emph{handleswap invariance} is verified in Section~\ref{sec:handleswap}. 

\subsection{Almost complex structures}

Given a collection of attaching curves $\gs_0\dots, \gs_n$ on $\Sigma$, the holomorphic polygon counting maps count pseudoholomorphic curves in $\Sym^{g+k}(\Sigma)$ for a family of almost complex structures, indexed by points $x\in K_{n}$ and points $z\in \bD$. If $\bI=\{0,\dots, n\}$, we will write $J^{\bI}=(J^{\bI}_{x})_{x\in K_{n}}$ for this family. 

If $\bJ\subset \bI$ is an ordered subset with $1<|\bJ|<|\bI|$, we can restrict a family $(J_x)^{\bI}_{x\in K_n}$ to obtain a family $J^{\bJ}=(J^{\bJ}_x)_{x\in K_{|\bJ|-1}}$ of almost complex structures. We refer to $J^{\bJ}$ as the \emph{restriction} of $J^{\bI}$. 

It is helpful conceptually to think of a family $J^{\bI}$ as inducing a \emph{simplex} of almost complex structures. We can think of $J^{\bI}$ as being an almost complex structure associated to the $n$-simplex. If $\bJ\subset \bI$ is a $k$-simplex with $k>0$, then there an induced restriction of $J^{\bI}$ to the simplex $\bJ$. See Figure~\ref{fig:43} for a schematic of a 3-simplex of almost complex structures, and the restrictions to the four codimension 1 faces.

\begin{figure}[h]
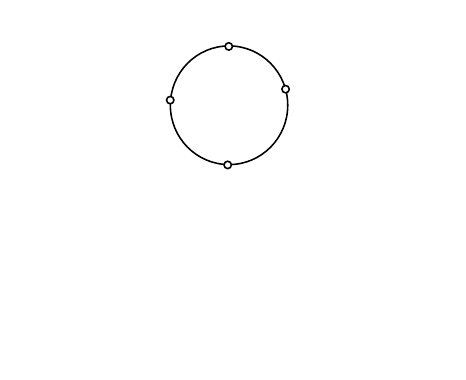
\caption{A  3-simplex of almost complex structures is a family $J_{0123}$ of almost complex structures indexed of $K_3\iso [0,1]$, which can be used to compute holomorphic polygons on a diagram $(\Sigma,\ds_0,\ds_1,\ds_2,\ds_3)$. Here we show the codimension 1 strata of $K_3$, and the four 2-simplices of almost complex structures, $J_{012}$, $J_{013}$, $J_{023}$ and $J_{123}$.}
\label{fig:43}
\end{figure}

\begin{define}
 An $n$-cube of almost complex structures on $\Sym^{g+k}(\Sigma)$ consists of the following: for each increasing sequence $\bI=\{\veps_0,\dots, \veps_j\}$ in $\bE_n$ with $j>0$, there is a choice of a domain dependent family of almost complex structures
 \[
J^{\bI}= (J^{\bI}_{z,x})_{z\in \bD ,x\in K_{j}}. 
 \]
  Furthermore, these sequences are compatible with respect to restriction, in the sense that if $\bJ\subset \bE_n$ and $\bI\subset \bE_n$ are increasing sequences of points with $\bJ\subset \bI$, then the restriction of $J^{\bI}$ to $\bJ$ is equal to $J^{\bJ}$.
\end{define}

\begin{rem}
Recall that for transversality of holomorphic curves, one picks families of almost complex structures which depend on a choice of points $z$ in the source curve and $x$ in the associahedron $K_j$. For our purposes, the source curve is a unit complex disk $\bD$. We typically suppress the dependence on $z$ in the notation.
\end{rem}

\begin{rem} We do not assign almost complex structures to $0$-simplices. The correct notion of a 0-simplex of almost complex structures would be a family of almost complex structures with respect to which boundary degenerations can be counted. 
\end{rem}

Of course, an $n$-cube of almost complex structures can be thought of as $n!$ simplices of almost complex structures which agree on their common faces. The definition naturally extends to general simplicial complexes:

\begin{define} Let $X$ be a simplicial complex. A \emph{$X$-complex of almost complex structures} consists of a collection of $k$-simplices of almost complex structures, for each $k>0$, so that the restrictions to common simplices agree.
\end{define}

Implicitly in the previous sections, we were picking $n$-cubes of almost complex structures when we considered $n$-dimensional hypercubes of attaching curves. In this section, we consider the dependence of hypercubes of attaching curves and hypercubes of chain complexes on these families of almost complex structures.

If $\cA$ is an $n$-dimensional hypercube of attaching curves, we view a choice of generic $n$-cube of almost complex structures as being part of the data of $\cA$.  We will write $(\cA,J_{\cA})$ when we wish to emphasize  the family.

In order to define morphisms, we will introduce the notion of a \emph{formal endormophism} $1\colon \as\to \as$. We interpret a holomorphic  $n$-gon which has $1$ as an input as being a holomorphic $(n-1)$-gon with an extra marked point along its boundary, corresponding to the input labeled $1$. We typically do not put any constraint on this marked point. For our purposes, the purpose of this marked point will be to give an extra parameter with respect to which the almost complex structures may vary. We will consider the case where our Heegaard diagrams $(\Sigma,\gs_1,\dots, \gs_n)$ have a repeated Lagrangian; that is, some $\gs_i$ may be equal to $\gs_j$ for $i\neq j$. However, we are only going to consider the case where these repeated Lagrangians are adjacent, and the morphisms between them are the formal endomorphisms $1$.

Given two hypercubes $(\cA,J_{\cA})$ and $(\cA',J_{\cA}')$ which have the same set of underlying attaching cubes, we can therefore define a notion of a morphism of twisted complexes
\[
\theta\colon (\cA,J_{\cA})\to (\cA',J_{\cA'})
\]
If $\dim(\cA)=n$, then $\theta$ consists of a choice of $(n+1)$-dimensional cube of almost complex structures, as well as a collection of Floer chains. We typically will assume that the components of $\theta$ which preserve the cube point are the formal endomorphisms $1$.

\begin{example} Let 
\[
\cA=\Cone(\begin{tikzcd} \as_0\ar[r, "\xs"]& \as_1\end{tikzcd}) \quad \text{and} \quad \cA'= \Cone(\begin{tikzcd} \as_0\ar[r, "\xs'"]& \as_1\end{tikzcd}).
\]
We assume that $\cA$ uses the almost complex structure $J$ and $\cA'$ uses the almost complex structure $J'$. Consider a morphism of the following form:
\[
F=\begin{tikzcd} \as_0 \ar[d, "1"]\ar[r, "\xs"] \ar[dr, "\eta"] & \as_1 \ar[d, "1"]\\ 
\as_0\ar[r, "\xs'"] & \as_1
\end{tikzcd}.
\]
Additionally, the morphism $F$ includes the data of two non-cylindrical almost complex structures $J_{1'10}$ and $J_{1'0'0}$ on $[0,1]\times \R$, as well as a third cylindrical almost complex structure $J''$ on $[0,1]\times \R$. The almost complex structure $J_{1'10}$ restricts to $J$ and $J''$, while $J_{1'0'0}$ restricts to $J'$ and $J''$. We think of $J_{1'10}$ as being a 3-simplex of almost complex structures, but where the restriction to the edge $1'1$ containing no non-trivial information. The map $\mu_2(1,\xs)$ counts $J_{1'10}$-holomorphic disks of Maslov index 0 and with input $\xs$. We can think of such disks as having a marked point along the $\as_1$ boundary, which is mapped to $\bT_{\a_1}$, but which has no other constraint. Up to conformal equivalence, we can identify the domain with $[0,1]\times \R$ so that this marked point is mapped to  $(0,0)$. In particular the marked point gives no constraint, and we can think of $J_{1'10}$  as an almost complex structure for counting holomorphic disks which is not invariant under the $\R$-action on $[0,1]\times \R$. Similarly $\mu_2(\xs',1)$ counts $J_{1'0'0}$-holomorphic disks with input $\xs'$. The hypercube condition is that
\[
\mu_2^{J_{1'10}}(1,\xs)+\mu_2^{J_{1'0'0}}(\xs',1)=\d^{J''}(\eta). 
\]
\end{example}

We now discuss almost complex structures for the complexes $\ve{\CF}^-(\cA,\cB)$, when $\cA$ and $\cB$ are hypercubes. Let $\dim(\cA)=n$ and $\dim(\cB)=m$. In this case, we need a family of almost complex structures indexed by the  \emph{abstract join}, $C(n,m)$, of an $n$-cube and an $m$-cube. 
To ease the notation, let us write $C(n)$ for the cube, viewed as a simplicial complex with simplices consisting of increasing sequences $(\veps_1,\dots, \veps_j)$ in $\bE_n$. If $X$ and $Y$ are simplicial complexes, we write $\join(X,Y)$ for their join. We recall that the simplicies of $\join(X,Y)$ consist of pairs $(s,s')$, where $s$ and $s'$ are (potentially empty) simplices of $X$ and $Y$. In our present case, $C(n,m)$ is a simplicial complex whose vertices are identified with points of  $\bE_n\sqcup \bE_m$. The $k$-simplices in $C(n,m)$ consist of sequences $(\veps_1,\dots, \veps_i, \nu_1,\dots, \nu_j)$ where $(\veps_1,\dots, \veps_i)$ is an increasing sequence in $\bE_n$ and $(\nu_1,\dots, \nu_j)$ is an increasing sequence in $\bE_m$. (The case that one of $(\veps_1,\dots, \veps_i)$ or $(\nu_1,\dots, \nu_j)$ is the empty sequence is allowed).
As before, we do not assign almost complex structures to 0-simplices. Each 1-simplex is a complex structure for counting holomorphic disks.  More generally, each $k$-simplex is an almost complex structure for counting holomorphic $(k+1)$-gons.

To construct the complex $\ve{\CF}^-(\cA,\cB)$, we need to pick a $C(n,m)$-complex of almost complex structures $J$, which have $J_{\cA}$ and $J_{\cB}$ as restrictions. We now show that up to homotopy equivalence, the choice of extension of $J_{\cA}$ and $J_{\cB}$ does not affect the complex $\ve{\CF}^-_{J}(\cA,\cB).$

\begin{lem} Fix hypercubes of attaching curves $\cA$, $\cB$ with cubes of almost complex structures $J_\cA$ and $J_{\cB}$. The complex $\ve{\CF}^-_{J}(\cA, \cB)$ is independent of the choice of extension $J$ of $J_{\cA}$ and $J_{\cB}$, up to chain homotopy equivalence.
\end{lem}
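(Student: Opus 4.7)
The plan is to adapt the standard continuation-map argument of Floer theory to the hypercube setting, using the formal endomorphism technology set up above. First I would observe that the space of $C(n,m)$-complexes of almost complex structures extending the fixed data $(J_\cA, J_\cB)$ is contractible: on each simplex of $C(n,m)$ whose vertices lie entirely in $\bE_n$ (resp.\ $\bE_m$) the family is pinned down by $J_\cA$ (resp.\ $J_\cB$), and on the remaining ``mixed'' simplices the choice is a section of an affine bundle of domain-dependent families. In particular, any two extensions $J^0$ and $J^1$ may be joined by a smooth path $\{J^t\}_{t\in[0,1]}$ that is stationary at $(J_\cA,J_\cB)$ on the fixed faces.

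Given such a path, I would construct a chain map
\[
F^{J^0\to J^1}\colon \ve{\CF}^-_{J^0}(\cA,\cB)\to \ve{\CF}^-_{J^1}(\cA,\cB)
\]
by the ``formal-$1$'' trick: on each mixed simplex $\bI$ of $C(n,m)$ insert an additional formal identity morphism $1\colon \gs\to\gs$ on one of the Lagrangians, and let the resulting extra marked point sweep out the parameter interval $[0,1]$, equipping the associated associahedron with the $1$-parameter family $J^t$. Counting rigid holomorphic polygons with respect to this enlarged family defines the components of $F^{J^0\to J^1}$. The chain-map relation is verified by examining the ends of the corresponding $1$-dimensional moduli spaces: polygon degenerations contribute $\d\circ F^{J^0\to J^1}+F^{J^0\to J^1}\circ\d$, while $t\to 0$ and $t\to 1$ degenerations reconstruct the structure maps of $\ve{\CF}^-_{J^0}(\cA,\cB)$ and $\ve{\CF}^-_{J^1}(\cA,\cB)$ (since on the faces the families agree with $J_\cA$ and $J_\cB$).

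For a homotopy inverse, the reverse path gives a map $F^{J^1\to J^0}$; to see that $F^{J^1\to J^0}\circ F^{J^0\to J^1}\simeq \id$ I would use that the concatenated loop in the space of extensions is contractible, producing a $2$-parameter family of extensions whose boundary is the concatenation on one side and the constant family $J^0$ on the other. Counting rigid curves in this $2$-parameter family yields a chain homotopy between $F^{J^1\to J^0}\circ F^{J^0\to J^1}$ and the identity, once again by analyzing ends of moduli spaces now of dimension up to $2$. Symmetrically, $F^{J^0\to J^1}\circ F^{J^1\to J^0}\simeq \id$.

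The principal obstacle will be the combinatorial bookkeeping: arranging the parameter-dependent families coherently across all simplices of $C(n,m)$ (and of its $2$-parameter thickening for the homotopy step), while ensuring that restriction to common faces produces compatible data and that the inserted formal-$1$ marked points do not interfere with existing almost complex structures on subsimplices. Once this bookkeeping is set up carefully, the analytic ingredients (transversality and the usual Gromov compactness argument) are the standard ones for continuation maps on completed Heegaard Floer complexes, and the proof goes through mutatis mutandis as in the proof that $\CF^-$ is independent of the choice of almost complex structure in the original setting of \cite{OSDisks}.
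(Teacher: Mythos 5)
Your construction of the transition map is essentially the one in the paper: both insert a formal identity morphism $1\colon \cA\to\cA$, pull back a product family over the extra cube direction, choose a $C(n+1,m)$-complex of almost complex structures restricting to $J^0$ and $J^1$ on the two faces, and define the map as $\mu_2^{\Tw}(1,-)$. Where you diverge is in proving this map is a homotopy equivalence. You propose the classical continuation argument: contract the concatenated loop of families to the constant family via a $2$-parameter deformation and read off a chain homotopy from the ends of the resulting moduli spaces. The paper instead exploits the cube filtration: it observes that each \emph{cube-point-preserving} component of $G\circ F$ is homotopic to the identity by the usual $2$-parameter argument for ordinary Floer \emph{disks}, so that after correcting by a filtration-preserving homotopy one has $G\circ F+\d(H)=\bI+L$ with $L$ strictly lowering the cube point; this is then inverted algebraically by the geometric series $\bI+L+L^2+\cdots$, making $(\bI+L+\cdots)\circ G$ a homotopy inverse of $F$. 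Your route is viable but requires two extra verifications that the paper's argument sidesteps: (i) the composition $G\circ F$ agrees with the map for the concatenated family only after a gluing argument, and in the twisted-complex setting this involves sums over intermediate cube points; and (ii) the endpoint of your homotopy is the map $\mu_2^{\Tw}(1,-)$ for the \emph{constant} extension, which still has to be identified with the identity in all lengths (the diagonal terms by translation invariance, the higher-length terms by a product-family transversality/dimension count). The filtration argument buys you the freedom to never analyze $2$-parameter families of polygon moduli spaces or the constant-family map beyond length one, at the cost of only producing a homotopy inverse rather than an explicit homotopy from $G\circ F$ to $\bI$; your approach, if the bookkeeping in (i) and (ii) is carried out, gives the more traditional and more explicit statement.
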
 
\begin{proof}
Let $J_0$ and $J_1$ be two such families of almost complex structures, which both extend $J_\cA$ and $J_{\cB}$. 

We consider the formal identity morphism $1\colon (\cA,J_{\cA})\to (\cA, J_{\cA})$. This requires a choice of an $(n+1)$-cube of almost complex structures. We choose this family to be a product, in the sense that we assume that each simplex in this family for counting almost complex structures where $1$ is an input is the pullback of a family of almost complex structures under the natural forgetful map $K_{j+1}\to K_{j}$ obtained by forgetting this marked point corresponding to $1$. Call this family $J_{\cA\times I}$.

We now pick a $C(n+1,m)$-complex of almost complex structures $\tilde{J}$ which restricts to $J_{\cA\times I}$ on $C(n+1)$, and which restricts to $J_0$ on $\join(C(n)\times \{0\},C(m))$, and restricts to $J_1$ on $\join(C(n)\times \{1\},C(m))$. We use $\tilde{J}$ to define a map $\mu^{\Tw}(1,-)$ from $\ve{\CF}^-_{J_0}(\cA,\cB)$ to $\ve{\CF}^-_{J_1}(\cA,\cB)$.  Write $F$ for this map. It is straightforward to see that $F$ is a chain map.

To see that this is a homotopy equivalence, observe that we can perform a similar construction to define a map $G$ in the opposite direction. We observe that each component of $G\circ F$ which preserves the cube points is chain homotopic to the identity, as can be seen by a simple argument involving 2-parameter families of almost complex structures for counting holomorphic disks. Let $H\colon \ve{\CF}^-_{J_0}(\cA,\cB)\to \ve{\CF}^-_{J_0}(\cA,\cB)$ be the map obtained by summing these homotopies, each of which preserves the cube point. Then $G\circ F+\d(H)$ is equal to $\bI+L$, where $L$ strictly lowers the cube point. This is a chain isomorphism since its inverse is given by $\bI+L+L^2+\cdots$. Therefore $(\bI+L+L^2+\cdots)\circ G$ is a homotopy inverse of $F$.
\end{proof}

\begin{lem} Suppose that $(\cA,J_{\cA})$ and $(\cA',J_{\cA'})$ are hypercubes of handleslide equivalent attaching curves, such that furthermore $\cA$ and $\cA'$ have the same dimension and share the same attaching curves. Then there is a formal endomorphism $\bI\colon (\cA,J_{\cA})\to (\cA',J_{\cA'})$ whose cube-coordinate preserving components consist of the formal endomorphism $1$, and such that $\bI$ is a cycle.
\end{lem}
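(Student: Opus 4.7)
The plan is to construct $\bI$ by induction on the cube distance $|\veps'-\veps|_{L^1}$, simultaneously assembling the Floer chains and extending the family of almost complex structures. Let $n = \dim \cA = \dim \cA'$. A morphism of twisted complexes $\bI \colon (\cA, J_{\cA}) \to (\cA', J_{\cA'})$ consists of the data of a $C(n+1)$-complex of almost complex structures $\tilde{J}$ which restricts to $J_{\cA}$ on one face and to $J_{\cA'}$ on the opposite face, together with Floer chains $\bI_{\veps,\veps'} \in \CF^-(\as_\veps, \as_{\veps'})$ for each $\veps \le \veps'$. For the base case $\veps = \veps'$, we declare $\bI_{\veps,\veps}$ to be the formal endomorphism $1$ and require the corresponding slice of $\tilde{J}$ to be a product family, in the sense that it is pulled back from a family of almost complex structures on a smaller simplex via the forgetful map collapsing the extra marked point corresponding to $1$. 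This is modeled exactly on the construction used in the preceding lemma of the excerpt.

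For the inductive step, fix $\veps < \veps'$ with $|\veps'-\veps|_{L^1} = k \ge 1$ and assume that $\bI_{\mu,\mu'}$ has been constructed for all pairs with $|\mu'-\mu|_{L^1} < k$, satisfying the $A_\infty$ relations, along with compatible extensions of $\tilde{J}$. Since the space of generic families of almost complex structures is contractible, we may first extend $\tilde{J}$ over the relevant higher simplex corresponding to the pair $(\veps,\veps')$. With this extension chosen, the obstruction to defining $\bI_{\veps,\veps'}$ is the chain
\[
O_{\veps,\veps'} := \sum \mu_{r}^{\Tw}\bigl(\theta'_{\nu_{s-1},\nu_s}, \dots, \theta'_{\nu_0,\nu_1},\, \bI_{\mu_j,\nu_0},\, \theta_{\mu_{j-1},\mu_j}, \dots, \theta_{\mu_0,\mu_1}\bigr)
\]
summed over all admissible decompositions $\veps = \mu_0 \le \cdots \le \mu_j \le \nu_0 \le \cdots \le \nu_s = \veps'$ with $|\nu_0 - \mu_j|_{L^1} < k$. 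A standard application of the $A_\infty$ associativity relations for $\cA$ and $\cA'$, combined with the inductive hypothesis that the lower length $\bI_{\mu,\mu'}$ satisfy the morphism relations, shows that $O_{\veps,\veps'}$ is a cycle.

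It remains to show that $O_{\veps,\veps'}$ is a boundary. Since $\as_\veps$ and $\as_{\veps'}$ are handleslide equivalent, their Floer complex computes $\ve{\HF}^-$ of the underlying handlebody 3-manifold (tensored appropriately with local systems when present), which has a unique top Maslov graded generator, and in particular vanishes above a fixed top grading $d_{\max}$. The hypercube chains $\theta_{\mu,\mu'}$ of $\cA$ and $\theta'_{\nu,\nu'}$ of $\cA'$ are top-graded cycles by construction. Tracking the Maslov grading shifts contributed by each $\theta_{\mu,\mu'}$, $\theta'_{\nu,\nu'}$, and the inductively-chosen $\bI_{\mu,\mu'}$, one verifies that $O_{\veps,\veps'}$ has Maslov grading strictly greater than $d_{\max}$. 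Hence $[O_{\veps,\veps'}] = 0$ in $\ve{\HF}^-(\as_\veps, \as_{\veps'})$ and we may take $\bI_{\veps,\veps'}$ to be any bounding cochain, completing the inductive step.

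The main obstacle is the grading bookkeeping required to confirm that $O_{\veps,\veps'}$ indeed sits in a Maslov degree above the top-graded cycles; this parallels the grading analysis used in the inductive construction of the hypercube chains $\theta_{\veps,\veps'}$ themselves (cf.\ \cite{MOIntegerSurgery}*{Lemma~8.6}). Once this is established, the filling procedure at each stage proceeds by the standard obstruction-theoretic recipe, and the construction of the almost complex structure extensions in parallel with the chains $\bI_{\veps,\veps'}$ is routine by contractibility of the relevant spaces of generic almost complex structures.
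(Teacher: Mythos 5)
Your proposal is essentially the paper's own proof: the paper disposes of this lemma with a one-line citation to the inductive filling construction of \cite{MOIntegerSurgery}*{Lemma~8.6}, and you have written that construction out in the morphism setting (base case the formal endomorphism $1$ with product families of almost complex structures, then obstruction-theoretic filling of the higher-length components).

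One correction to the filling step. Your claim that the obstruction cycle $O_{\veps,\veps'}$ always lies in Maslov grading strictly above the top grading $d_{\max}$ fails at the lowest nontrivial length. When $|\veps'-\veps|_{L^1}=1$ the obstruction is $\mu_2(1,\theta_{\veps,\veps'})+\mu_2(\theta'_{\veps,\veps'},1)$, and since $\mu_2(1,-)$ and $\mu_2(-,1)$ are degree-zero (continuation-type) maps, this cycle sits exactly in the top grading $d_{\max}$, where the homology of $\CF^-(\as_{\veps},\as_{\veps'})$ is nonzero. Its class vanishes not for grading reasons but because each of the two terms is a continuation map applied to a top-degree cycle and hence represents the top-degree generator of homology, so the two terms cancel. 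For $|\veps'-\veps|_{L^1}\ge 2$ your grading argument is the correct one (the obstruction then lies in grading $d_{\max}+k-1>d_{\max}$, above the support of homology). With that repair the induction closes as you describe.
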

\begin{proof} The proof follows from the same inductive argument which shows that for every weakly admissible cube of attaching curves $(\as_{\veps})_{\veps\in \bE_n}$, there is a collection of chains $\theta_{\veps,\veps'}$ (ranging over all $\veps<\veps'$) which make the diagram $(\as_{\veps}, \theta_{\veps,\veps'})_{\veps\in \bE_n}$ into a hypercube of attaching curves. See \cite{MOIntegerSurgery}*{Lemma~8.6}. 
\end{proof}

\subsection{Stabilizations}
\label{sec:stabilizations}

We now discuss the naturality maps for index $(1,2)$-stabilizations. If $\scH$ is a meridional basic system of Heegaard diagrams, then we can form a new meridional basic system $\scH'$ by taking the connected sum of the Heegaard surface with a 2-torus. On this torus, we place a small alpha and beta curve, $\a_0$ and $\b_0$, respectively. If $\cA$ and $\cB$ are the two hypercubes for $\scH$, then we form hypercubes $\cA'$ and $\cB'$ by adjoining to each $\as_{\veps}$ in $\cA$ a copy of a small translation of $\a_0$. Similarly to each $\bs_{\nu}$ in $\cB$, we adjoin a copy of $\b_0$. We assume that each pair of translates of $\a_0$ intersect pairwise in two points, and similarly for the translates of the $\b_0$. We define the morphism $\theta_{\veps',\veps}'\in \ve{\CF}^-(\as'_{\veps'},\as_{\veps}')$ of $\cA'$ to be
\[
\theta_{\veps',\veps}':=\theta_{\veps',\veps}\times \theta^+,
\]
where $\theta^+$ is the top degree intersection point of the two copies of $\a_0$. We define $\cB'$ similarly.

We define a hypercube morphism
\[
\varsigma\colon \ve{\CF}^-(\cA,\cB)\to \ve{\CF}^-(\cA',\cB').
\]
The map $\varsigma$ has no higher length components, and is given  by the equation
\[
\varsigma(\xs)=\xs\otimes p,
\]
where $\{p\}=\a_0\cap \b_0$, extended equivariantly over the variables. The map $\varsigma$ has an inverse given by the equation $\varsigma^{-1}(\xs\otimes p)=\xs$. 

Some care must be taken regarding almost complex structures. If for each $\as_\veps\in \cA$ and $\bs_\nu\in \cB$ the diagram $(\Sigma,\as_{\veps},\bs_\nu)$ represents a rational homology 3-sphere, then the map may be defined as above for any almost complex structure which is sufficiently stretched along the connected sum neck. This is because the argument that $\varsigma$ is a chain map involves a limiting argument which is applied to each neck individually. When $(\Sigma,\as_{\veps},\bs_{\nu})$ is a rational homology 3-sphere, only finitely many classes of polygons are relevant so we may apply the argument to all classes at once. For the more general case, we can either consider a stronger version of admissibility to ensure that only finitely many polygon classes contribute (cf. \cite{OSDisks}*{Definition~4.10}), or we can consider infinitely stretched necks, as in \cite{HHSZExact}*{Section~10}.

\begin{lem}
\label{lem:stabilization} For suitable almost complex structures, the maps $\varsigma$ and $\varsigma^{-1}$ are chain maps.
\end{lem}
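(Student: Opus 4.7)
The plan is to establish the chain map property via a standard neck-stretching argument in the style of Ozsv\'{a}th--Szab\'{o} \cite{OSDisks}. Let $T$ denote the torus summand carrying $\a_0$ and $\b_0$, and let $J(T)$ denote a family of almost complex structures on $\Sym^{g+k+1}(\Sigma')$ in which the neck $\Sigma \# T$ is stretched by length $T \to \infty$. As $T \to \infty$, any pseudoholomorphic polygon on the stabilized diagram converges to a matched pair $(\psi, \psi_T)$, where $\psi$ is a polygon on $(\Sigma, \as_{\veps_0}, \dots, \bs_{\nu_m})$ and $\psi_T$ is a polygon on $(T, \a_0^{(0)}, \dots, \a_0^{(n)}, \b_0^{(0)}, \dots, \b_0^{(m)})$, with matching condition that the corner multiplicities of $\psi$ at the connect-sum point agree with those of $\psi_T$. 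Since the morphisms $\theta'_{\veps',\veps}$ in $\cA'$ have their torus components given by the top-graded intersection point $\theta^+$ between adjacent small translates of $\a_0$ (and similarly for $\cB'$), and since $\varsigma(\xs) = \xs \otimes p$, the relevant matched pairs are those for which $\psi_T$ has inputs among $\{\theta^+, p\}$ and output $p$.

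The next step is the model computation on $T$: for any sequence of small translates of $\a_0$ and $\b_0$, the count of $\psi_T$ of Maslov index zero with inputs $\theta^+$ and $p$ and output $p$ is exactly one. The only contributing polygon is the constant-area one concentrated near $p$; higher-index polygons either exist in nontrivial moduli (no rigid count) or are forced into Maslov index $\geq 1$. Matching at the neck then forces $n_{w}(\psi) = 0$ at the stabilization basepoint (there is no basepoint in the stabilization region since $\ws, \zs$ remain on $\Sigma$), so the algebraic weight of $(\psi, \psi_T)$ equals the weight of $\psi$ alone. Consequently, for $T$ sufficiently large, the polygon-counting maps defining the differential and higher $\mu_n^{\Tw}$ operations on $\ve{\CF}^-(\cA', \cB')$ applied to $\xs \otimes p$ reduce precisely to the analogous counts on $\ve{\CF}^-(\cA, \cB)$ applied to $\xs$, each decorated with an extra $\otimes p$. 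This gives $\d \circ \varsigma = \varsigma \circ \d$, and an identical computation with inputs $\ys \otimes p$ gives that $\varsigma^{-1}$ is a chain map in the other direction. Moreover $\varsigma \circ \varsigma^{-1} = \id$ on generators of the form $\xs \otimes p$, while on generators $\xs \otimes q$ for $q \in \a_0^{(i)} \cap \b_0^{(j)}$ other than $p$, a direct Maslov index calculation of small torus polygons from $q$ to $p$ shows $\varsigma \circ \varsigma^{-1}$ is chain homotopic to $\id$.

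The main obstacle is controlling the argument uniformly in the polygon degree, because the complexes $\ve{\CF}^-(\cA, \cB)$ are assembled from polygon counts of arbitrarily large arity through the twisted complex structure on the hypercubes, and the requisite neck length $T$ a priori depends on the polygon degree. Under the strong admissibility hypothesis implicit in the rational homology sphere case, only finitely many classes contribute in each degree and the compactness argument is routine. For the general case, one needs to either upgrade to a stronger admissibility condition on the meridional diagrams, restricting attention to a tame class of domains in the sense of \cite{OSDisks}*{Definition~4.10}, or alternatively work directly with infinitely stretched necks as in \cite{HHSZExact}*{Section~10}, where the algebra of the surgery formula accommodates formal power series in the neck parameter. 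Either approach reduces the compatibility check to the model torus computation described above, completing the proof.
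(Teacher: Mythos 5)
Your argument is essentially the paper's: both prove the lemma by stretching the neck at the stabilization region, matching rigid polygons on the stabilized diagram with pairs consisting of a polygon on the original diagram and a model polygon class on the torus, and handling the finiteness/compactness issue by restricting to the rational homology sphere case (where only finitely many classes contribute) and appealing to stronger admissibility or infinitely stretched necks in general. The one imprecision is your claim that only the constant torus polygon near $p$ contributes: for classes $\psi$ with positive multiplicity at the connected-sum point the matched torus classes $\psi_0$ are non-constant, and what is actually needed is that the mod $2$ count of all such $\psi_0$ with the prescribed corner multiplicity equals one --- this is exactly the model computation the paper imports from \cite{HHSZNaturality}.
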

\begin{proof} This follows from the following well-known neck-stretching argument. We focus on the case that $(\Sigma,\as_{\veps},\bs_\nu)$ represents a rational homology 3-sphere, so that only finitely many classes contribute to the hypercube structure map. The argument using infinitely stretched necks, for general 3-manifolds, is a straightforward modification.

 We consider classes of polygons on the Heegaard diagram 
\[
\cD'=(\Sigma\# \bT^2, \as_{\veps_j}',\dots, \as_{\veps_1}',\bs_{\nu_1}',\dots, \bs_{\nu_k}',\ws)
\]
 and also on
\[
\cD=(\Sigma, \as_{\veps_j},\dots, \as_{\veps_1}, \bs_{\nu_1},\dots, \bs_{\nu_k}, \ws).
\] 
See Figure~\ref{fig:naturality_30} for the stabilization region of $\cD'$. Let 
\[
\psi\in \pi_2(\theta_{\veps_j,\veps_{j-1}},\dots, \theta_{\veps_2,\veps_1}, \xs,\theta_{\nu_1,\nu_2},\dots, \theta_{\nu_{k-1}, \nu_k})
\]
 be a class of polygons of Maslov index $j+k-3$. Then for sufficiently stretched almost complex structure, we have a mod 2 equivalence of counts (see \cite[Proposition 6.5, Example 6.7]{HHSZNaturality}):
\[
\# \cM_{J}(\psi)\equiv \sum_{\substack{\psi_0\in \pi_2(\theta^+,\dots, \theta^+, p,\theta^+,\dots, \theta^+)\\ n_p(\psi)=n_{p_0}(\psi)}} \#\cM_{J(T)}(\psi\# \psi_0) \pmod 2. 
\]
In the above, the sum is taken over classes $\psi_0$ on a diagram obtained by performing small translations to $(\bT^2,\a_0,\dots, \a_0, \b_0,\dots, \b_0)$.
\end{proof}

\begin{figure}[h]
\begingroup%
  \makeatletter%
  \providecommand\color[2][]{%
    \errmessage{(Inkscape) Color is used for the text in Inkscape, but the package 'color.sty' is not loaded}%
    \renewcommand\color[2][]{}%
  }%
  \providecommand\transparent[1]{%
    \errmessage{(Inkscape) Transparency is used (non-zero) for the text in Inkscape, but the package 'transparent.sty' is not loaded}%
    \renewcommand\transparent[1]{}%
  }%
  \providecommand\rotatebox[2]{#2}%
  \newcommand*\fsize{\dimexpr\f@size pt\relax}%
  \newcommand*\lineheight[1]{\fontsize{\fsize}{#1\fsize}\selectfont}%
  \ifx\svgwidth\undefined%
    \setlength{\unitlength}{90.7382039bp}%
    \ifx\svgscale\undefined%
      \relax%
    \else%
      \setlength{\unitlength}{\unitlength * \real{\svgscale}}%
    \fi%
  \else%
    \setlength{\unitlength}{\svgwidth}%
  \fi%
  \global\let\svgwidth\undefined%
  \global\let\svgscale\undefined%
  \makeatother%
  \begin{picture}(1,0.99917983)%
    \lineheight{1}%
    \setlength\tabcolsep{0pt}%
    \put(0,0){\includegraphics[width=\unitlength,page=1]{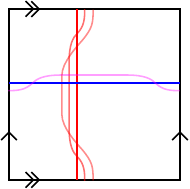}}%
    \put(0.5936227,0.45079619){\color[rgb]{0,0,1}\makebox(0,0)[lt]{\lineheight{1.25}\smash{\begin{tabular}[t]{l}$\b_0$\end{tabular}}}}%
    \put(0.4241537,0.34181526){\color[rgb]{1,0,0}\makebox(0,0)[lt]{\lineheight{1.25}\smash{\begin{tabular}[t]{l}$\a_0$\end{tabular}}}}%
    \put(0,0){\includegraphics[width=\unitlength,page=2]{fig_naturality_30.pdf}}%
  \end{picture}%
\endgroup%

\caption{Stabilizing a Heegaard multi-diagram.}
\label{fig:naturality_30}
\end{figure}

\subsection{Continuity}
\label{sec:continuity}

We now address \emph{continuity} of the naturality maps:
\begin{prop}\label{prop:continuity} If $\phi\colon (Y,L)\to (Y,L)$ is a diffeomorphism of strongly framed links which is isotopic to the identity through diffeomorphisms of strongly framed links, then $\cX(\phi)\simeq \id$.
\end{prop}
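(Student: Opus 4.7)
The plan is to combine the naturality theorem (Theorem~\ref{thm:naturality-type-D}) with the given isotopy from $\id$ to $\phi$ to decompose $\cX(\phi)$ into a telescoping product of small transition maps, each of which I will identify with the tautological pushforward by a small diffeomorphism. I will fix a meridional complete system $\scH = (\Sigma, \as, \scB_{\Lambda}, \ws, \zs, \cS)$ for $(Y, L, \Lambda)$, and unwind the definition of $\cX(\phi)$ as the composition $\Psi_{\phi(\scH)\to \scH}\circ \phi_{\#}$, where $\phi_\#\colon \cC_\Lambda(\scH)\to \cC_\Lambda(\phi(\scH))$ is the tautological identification transporting each Heegaard-theoretic generator through $\phi$. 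It will then suffice to show $\Psi_{\phi(\scH)\to \scH} \simeq \phi_{\#}^{-1}$. Picking a path $\phi_t$ ($t\in [0,1]$) of strongly framed diffeomorphisms from $\id$ to $\phi$ and setting $\scH_t := \phi_t(\scH)$, I will subdivide $[0,1]$ by $0 = t_0 < \cdots < t_k = 1$ finely enough that each incremental diffeomorphism $\psi_i := \phi_{t_{i+1}} \circ \phi_{t_i}^{-1}$ is $C^\infty$-small and that the passage from $\scH_{t_i}$ to $\scH_{t_{i+1}}$ is realized by a single elementary move from Proposition~\ref{prop:meridional-diagram-moves}. The composition property of Theorem~\ref{thm:naturality-type-D} will then reduce the claim to the per-step identity $\Psi_{\scH_{t_{i+1}}\to \scH_{t_i}} \simeq \psi_{i,\#}^{-1}$.

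Step-wise compatibility will split into cases according to the elementary move performed. For isotopies of alpha or beta attaching curves inside $\Sigma$ not crossing the distinguished meridional disks, the transition map is a holomorphic triangle map with a top-graded cycle, and for a small enough isotopy this will be chain homotopic to the nearest-point identification, matching $\psi_{i,\#}^{-1}$; this is the content of \cite{JTNaturality}, transported into the hypercube setting by Proposition~\ref{prop:distinguished-rectangles-alpha-beta}. For a simultaneous isotopy of the knot shadows $S_i$ on $\Sigma$, the transition map will be the twisted-coefficient isomorphism $\Phi^{\ws}_{\scC}$ of Section~\ref{sec:twisted-coefficients-Phi-w-C}, and I will take $\scC$ to be the strip on $\Sigma$ traced by the shadow through the small isotopy. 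Because the ambient isotopy is through strongly framed diffeomorphisms, the framing on $\partial\scC$ induced by $T\Sigma$ will agree with the one extending across $\scC$, so Lemma~\ref{lem:relative-Chern-class} together with Lemma~\ref{lem:spin-even} will force $N^{\ws}_{\scC}(\xs) = 0$ for every generator $\xs$, making $\Phi^{\ws}_{\scC}$ the tautological identification. For index $(1,2)$-stabilizations, diffeomorphisms supported away from the shadows, and any residual basepoint permutations absorbed via Theorem~\ref{thm:multi-pointed-diffeos}, compatibility will follow from Lemma~\ref{lem:stabilization} combined with the original Juh\'{a}sz-Thurston-Zemke continuity axiom applied to the underlying sutured diagram of $(M(L),\g^\circ)$. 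Telescoping the three cases will give $\Psi_{\phi(\scH)\to\scH}\circ \phi_\# \simeq \id$.

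The main obstacle will be the shadow-isotopy case: controlling the grading shift contributed by the Spin bounding 2-chain $\scC$ when a shadow is transported by a small strongly framed diffeomorphism. Here it is essential that the isotopy preserves the strong framing on each component of $L$ throughout --- without this hypothesis, the shadow could wind once around $K_i$ during the isotopy, introducing a nontrivial power of $T_i$ into $\Phi^{\ws}_{\scC}$ and breaking the identification with $\psi_{i,\#}^{-1}$. The strongly framed constraint is precisely what promotes the $H_1(Y)$-level invariance of Section~\ref{sec:invariance-homology-action} to the chain-level equality needed to close the telescoping argument.
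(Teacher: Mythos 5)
Your overall architecture (write $\cX(\phi)$ as $\Psi_{\phi(\scH)\to\scH}\circ\phi_{\#}$, subdivide the isotopy, and show each incremental transition map is the tautological identification) is reasonable, but the load-bearing step is exactly the one the paper deliberately avoids, and you give no argument for it. Your per-step claim for isotopies of attaching curves is a \emph{small translate theorem}: that the triangle map $\mu_2^{\Tw}(-,\Theta)$ with top-degree input is chain homotopic to a nearest-point map. The paper explicitly flags this route as ``challenging in our present setting due to admissibility issues'' at the start of Section~\ref{sec:continuity}. The difficulty is not the size of the isotopy: the transition map $\Psi_{\a}^{\scB_{\Lambda}\to\scB_{\Lambda}'}$ involves holomorphic polygons among \emph{all} the $\bs_{\veps}$, including the wound curves $\b_{1,i}$ carrying the local systems $E_1=\bF[U,T,T^{-1}]$, and one must arrange weak admissibility of the relevant multi-diagrams at every complete collection of basepoints while simultaneously controlling energy well enough to rule out all but the ``small'' triangle classes. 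Citing \cite{JTNaturality} does not close this, since that argument is for untwisted, single-$\bs$ diagrams; and Proposition~\ref{prop:distinguished-rectangles-alpha-beta} only gives functoriality of the transition maps, not their identification with nearest-point maps. There is also a structural worry in your telescoping step: you invoke the composition property of Theorem~\ref{thm:naturality-type-D} to reduce to single elementary moves, but Proposition~\ref{prop:continuity} is itself one of the axioms (continuity) needed to prove that theorem, so you may only use the weaker, move-by-move composition statements of Proposition~\ref{prop:distinguished-rectangles-alpha-beta}; moreover a small ambient isotopy moves $\as$, every $\bs_{\veps}$, and the shadows simultaneously, so it is not a single move of Proposition~\ref{prop:meridional-diagram-moves}.

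The paper takes a different and more robust route: by Thurston's simplicity theorem (Lemma~\ref{lem:Hamiltonian-isotopy}) it reduces to the case where $\phi$ is a time-one Hamiltonian flow on $\Sigma$, then defines a continuation map $\Gamma$ counting polygons with dynamic (moving) Lagrangian boundary conditions. Admissibility is handled by Lemma~\ref{lem:continuation-map-admissibility} (the flow fixes a point in each region, so homotopy classes are in multiplicity-preserving bijection with the static ones); $\Gamma$ is identified with the tautological pushforward $\cX^{\can}(\phi)$ via the inhomogeneous Cauchy--Riemann trick (Lemma~\ref{lem:Gamma-Psi-can}), and with the composite of polygon-counting transition maps via a rescaling degeneration producing the skewed-polygon morphism $\theta^H$ (Lemmas~\ref{lem:homotopy-Gamma-Gamma-a-b}--\ref{lem:continuation=polygons}). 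Notably, the shadow/local-system bookkeeping that you propose to handle with $\Phi^{\ws}_{\scC}$ and a Spin bounding strip is absorbed in the paper by the observation that the skewed polygons defining $\theta^H$ have boundary only on beta curves, so their net intersection with each shadow vanishes. Your $\Phi^{\ws}_{\scC}$ idea is plausible in isolation, but it does not rescue the attaching-curve case, which is where the proof actually lives.
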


To prove Proposition~\ref{prop:continuity}, we first address the claim when the diffeomorphism is the flow of a time dependent Hamiltonian vector field, for some symplectic form on $\Sigma$. Later we will generalize this to arbitrary isotopies on $\Sigma$. We must describe continuation maps in the context of the surgery hypercube. An alternate strategy would be to consider small translate theorems (cf. \cite{HHSZExact}*{Section~11});  however, these are challenging in our present setting due to admissibility issues.

We consider link surgery hypercube data $(\Sigma,\as,\scB_{\Lambda},\ws,\zs)$ and $(\Sigma,\as,\scB'_{\Lambda}, \ws, \zs)$, where $\scB'_{\Lambda}$ is obtained by applying small Hamiltonian translations to $\as$ and $\scB'_{\Lambda}$, respectively. We assume that the translations are \emph{small} in the following sense. We assume that there is a collection of points $\ve{r}$ on $\Sigma$ so that $\ve{r}$ contains exactly one point from each component of
\[
\Sigma\setminus (\as\cup \bigcup_{\bs\in \scB_{\Lambda}} \bs),
\]
and the Hamiltonian translations to do not cross over $\ve{r}$. 

We will write $H$ for the time dependent Hamiltonian.  We let $\Phi_t$  be the flow of $H$ on $\Sigma$. We assume that $\Phi_t$ is fixed for $t<-1$ and $t>1$, and write $\Phi=\Phi_t$ for $t>1$.

We now describe a version of the continuation map
\[
\Gamma\colon \ve{\CF}^-(\as, \scB_{\Lambda})\to \ve{\CF}^-(\as', \scB_{\Lambda}').
\]

The version of the continuation map $\Gamma$ that we describe will be slightly specialized to our present case. In particular, our analysis assumes that the differential on each  $\ve{\CF}^-_{\phi^{\scO}}(\bs_{\veps},\bs_{\veps'}, \frs_{\scO})$ vanishes. This can easily be achieved by assuming that $\scB_{\Lambda}$ is constructed using the Heegaard diagrams described in Section~\ref{sec:meridional-complete-systems}.  

The map $\Gamma$ counts holomorphic polygons somewhat similarly to the hypercube differential on $\ve{\CF}^-(\as, \scB_{\Lambda})$. We consider a unit complex disk $D$ with $n+2$ boundary punctures $(n\ge 0)$, two of which are distinguished. One of the distinguished punctures is called the \emph{distinguished input}. The other distinguished puncture is the \emph{output}. We may identify $D$ with $[0,1]\times \R$ (minus several boundary points) so that the distinguished input is $-\infty$ and the distinguished ouput is $+\infty$. 

We are interested in counting holomorphic polygons $u$ which have domain $[0,1]\times \R$ (minus several boundary punctures) and codomain $\Sym^{g+|\ws|-1}(\Sigma)$. Suppose that $\theta_{\veps_0,\veps_1},\dots, \theta_{\veps_{n-1}, \veps_n}$ is a sequence of composable morphisms in $\scB_{\Lambda}'$, and $\xs\in \ve{\CF}^-(\as, \scB_{\Lambda})$ and $\ys\in \ve{\CF}^-(\as', \scB_{\Lambda}')$. (Note that these spaces are canonically isomorphic as groups).  We let $t_1<\dots<t_n$ denote an increasing sequence of real numbers. Let $c_0,\dots, c_n$ denote the connected components of $\{1\}\times (\R\setminus \{t_1,\dots, t_n\})$. Writing $\ve{t}=(t_1,\dots, t_n)$, let $D_{\ve{t}}$ denote
\[
D_{\ve{t}}=[0,1]\times \R\setminus (\{1\}\times \{t_1,\dots, t_n\}).
\]
We write $K_{n+1}^{(0,n)}$ for the space of such punctured strips. Note that, of course, $K_{n+1}^{(0,n)}$ can be identified with the ordinary associahedron $K_{n+1}$.

For a fixed sequence $\ve{t}=(t_1,\dots, t_n)$,  we define the moduli space of \emph{$H$-shifted holomorphic $(n+2)$-gons} with parameter $\ve{t}$, denoted $\cM^{H}_{\ve{t}}(\phi)$ as the space of holomorphic maps
\[
u\colon D_{\ve{t}}\to \Sym^{g+|\ws|-1}(\Sigma)
\]
which have the property that 
\[
u(0,t)\in \bT_{\Phi_t(\a)} \text{ for all } t\in \R,
\]
and
\[
u(1,t)\in \bT_{\Phi_t(\b_{\veps_i})} \text{ for all }  t\in c_i.
\]
We assume that at puncture $t_i$, the holomorphic curve is asymptotic to the image of $\theta_{\veps_{i-1}, \veps_i}$ under $\Phi_{t_i}$. In the above, $\phi$ denotes a homology class of such polygons.

We define the moduli space of $H$-shifted polygons to be the parametrized moduli space
\[
\bigcup_{\substack{\ve{t}=(t_1,\dots, t_n) \\ t_1<\cdots <t_n}} \cM_{\ve{t}}^H(\phi).
\]

We define 
\[
\Gamma\colon \ve{\CF}^-(\as, \scB_{\Lambda})\to \ve{\CF}^-(\as', \scB_{\Lambda}')
\]
 to count such $H$-shifted holomorphic $(n+2)$-gons, ranging over sequences $\theta_{\veps_0,\veps_1},\dots, \theta_{\veps_{n-1}, \veps_n}$ of composable morphisms in $\scB_{\Lambda}$ and over homology classes $\phi$ which have $\mu(\phi)=-n$. The reader may compare this to the construction in \cite{SeidelFukaya}*{Section~10c}.

We first address admissibility, in a manner similar to \cite{OSDisks}*{Lemma~7.4}:

\begin{lem}
\label{lem:continuation-map-admissibility} Suppose that the diagram $(\Sigma,\as,\scB_{\Lambda},\qs)$ is weakly admissible for each complete collection $\qs\subset \ws\cup \zs$. Then for each $N$, only finitely many holomorphic curves have algebraic contribution to the map $\Gamma$ modulo the ideal $(U_1,\dots, U_\ell)^N$.
\end{lem}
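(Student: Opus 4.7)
The plan is to adapt the standard admissibility argument of \cite{OSDisks}*{Lemma~4.12} to the link surgery setting. The map $\Gamma$ decomposes as a finite sum indexed by tuples $(\theta_{\veps_0,\veps_1},\dots,\theta_{\veps_{n-1},\veps_n})$ of composable morphisms in $\scB_{\Lambda}'$; there are only finitely many such tuples because $\bE_\ell$ is finite and each edge of $\scB_{\Lambda}'$ carries only finitely many generator-level morphisms. So it suffices, for each such tuple and each pair of asymptotic generators $\xs,\ys$, to bound the number of contributing homology classes $\phi$ of $H$-shifted polygons with $\mu(\phi)=-n$ whose algebraic weight lies outside $(U_1,\dots,U_\ell)^N$. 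Since the homology class of a polygon is unaffected by the $\ve{t}$-parameter, this reduces to counting ordinary homology classes on the Heegaard multi-diagram. The weight attached to $\phi$ factors as $U_1^{n_{w_1}(\phi)}\cdots U_\ell^{n_{w_\ell}(\phi)}$ multiplied by local-system monodromy terms---non-negative monomials in $Z_i$ and $T_i$ whose exponents are shadow-intersection counts---so the $U$-adic valuation of the weight equals $\sum_i n_{w_i}(\phi)$. A contribution therefore lies outside $(U_1,\dots,U_\ell)^N$ only when $\sum_i n_{w_i}(\phi)<N$.

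Next, I would invoke the grading preservation of Proposition~\ref{prop:grading-preserving} to control the remaining data in the weight: the Alexander-grading shift induced by $\phi$ must match that of the fixed asymptotic tuple, so once $\sum_i n_{w_i}(\phi)$ is bounded, the shadow-intersection exponents $\#\partial_{\b_i}(\phi)\cap S_j$ and the multiplicities $n_{z_j}(\phi)$ are also bounded. Any two classes with the same asymptotics, the same sequence of attaching curves, and the same multiplicities at every basepoint in $\ws\cup\zs$ differ by a 2-chain $P$ on the multi-diagram with $\mu(P)=0$ and $n_{w_i}(P)=n_{z_i}(P)=0$ for all $i$. Lemma~\ref{lem:periodic-domain-tuples} decomposes the space of such $P$ as a direct sum over two-sided subdiagrams, and the weak admissibility hypothesis---applied at all complete collections $\qs\subset\ws\cup\zs$---implies by a standard convexity argument that only finitely many such $P$ have all multiplicities bounded, yielding the desired finiteness. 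The main technical obstacle I anticipate is the second step: verifying carefully that Alexander-grading preservation really does convert a bound on $\sum_i n_{w_i}(\phi)$ into a joint bound on $(n_{w_i}(\phi),n_{z_j}(\phi),\#\partial_{\b_i}(\phi)\cap S_j)$, since this is where the presence of local systems and shadows most visibly enters and is not covered by the classical admissibility arguments.
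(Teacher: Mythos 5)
Your overall strategy---pass from $H$-shifted classes to ordinary polygon classes and then invoke weak admissibility---matches the paper's, but there is a genuine error at the step where you convert the algebraic weight into a bound on basepoint multiplicities. You assert that the $U$-adic valuation of the weight of a class $\phi$ equals $\sum_i n_{w_i}(\phi)$, so that a contribution survives modulo $(U_1,\dots,U_\ell)^N$ only if $\sum_i n_{w_i}(\phi)<N$. This is false for the idempotent-$0$ factors: there the cumulative weight in the $i$-th factor is $U_i^{n_{w_i}(\phi)}Z_i^{k_i}$ with $k_i=\#\d_{\b}(\phi)\cap S_i$ possibly negative (the paper flags exactly this point in Section~\ref{sec:meridional-complete-systems}), i.e.\ it is the monomial $W_i^{n_{w_i}(\phi)}Z_i^{n_{w_i}(\phi)+k_i}$, whose $U_i=W_iZ_i$-adic valuation is $\min\bigl(n_{w_i}(\phi),\,n_{w_i}(\phi)+k_i\bigr)$ and can be far smaller than $n_{w_i}(\phi)$, indeed zero. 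Hence there may be classes with $\sum_i n_{w_i}(\phi)$ arbitrarily large that still contribute modulo $(U_1,\dots,U_\ell)^N$, and weak admissibility at the single collection $\ws$ does not make these finite in number. This is precisely why the hypothesis demands weak admissibility at \emph{every} complete collection $\qs\subset\ws\cup\zs$: the paper's proof chooses, for each $\Spin^c$ structure contributing to the surgery complex, a complete collection $\qs$ (containing $w_i$ or $z_i$ according to the $\sigma/\tau$ labels) for which $n_{\qs}(\phi)\ge N$ forces the weight into $(U_1,\dots,U_\ell)^N$, and then applies weak admissibility at that $\qs$. Your second paragraph contains the germ of a repair---for fixed asymptotics the Alexander grading pins down $n_{z_i}(\phi)-n_{w_i}(\phi)$, so a bound on the valuation would bound each $n_{w_i}(\phi)$---but as written that observation is deployed after, and in reliance on, the incorrect valuation claim rather than in place of it, and it would additionally require checking that Proposition~\ref{prop:grading-preserving} applies to $H$-shifted classes.

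Two smaller points. First, ``the homology class of a polygon is unaffected by the $\ve{t}$-parameter'' is not the right justification for reducing to ordinary classes: the issue is the dynamic boundary conditions on $\bT_{\Phi_t(\a)}$ and $\bT_{\Phi_t(\b_{\veps_i})}$, not the puncture positions. The paper constructs an explicit bijection $\phi\mapsto \Phi_t^{-1}\circ\phi$ and, crucially, observes that it preserves $n_{\qs}$ because the isotopy fixes the reference points $\ve{r}$. Second, once $n_{\qs}(\phi)<N$ is established, finiteness follows from weak admissibility at $\qs$ by the standard Ozsv\'{a}th--Szab\'{o} argument; your detour through fixing all multiplicities and Lemma~\ref{lem:periodic-domain-tuples} is unnecessary.
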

\begin{proof} There is a bijection
\[
I\colon \pi_2^H(\xs, \theta_{\veps_0,\veps_1},\dots, \theta_{\veps_{n-1},\veps_n}, \ys)\to \pi_2(\xs,\theta_{\veps_0,\veps_1},\dots, \theta_{\veps_{n-1},\veps_n},\ys)
\]
gotten by sending an $H$-skewed polygon class $\phi$ to the class $\phi'$ determined by the equation
\[
\phi'(s,t)=(\Phi_t^{-1}\circ\phi)(s,t).
\]
Note that isotopy $\Phi_t$ fixes the points $\ve{r}\subset \Sigma$ and therefore 
\[
n_{\qs}(I(\phi))=n_{\qs}(\phi)
\]
for each class of $H$-skewed polygons $\phi$. By weak admissibility, there are only finitely many classes in $\pi_2(\xs,\theta_{\veps_0,\veps_1},\dots, \theta_{\veps_{n-1},\veps_n})$ with total multiplicity $n_{\qs}(\phi)<N$, and therefore the same holds for the classes in $\pi_2^H$. For each $\Spin^c$ structure contributing to the surgery complex, there is a complete collection of basepoints $\ve{q}$ such that the module morphism associated to the homology class $\phi$ (output by the holomorphic polygon maps) lies in $(U_1,\dots, U_\ell)^N$ if $n_{\ve{q}}(\phi)\ge N$. The main claim therefore follows.
\end{proof}

The diffeomorphism $\phi\colon (Y,L)\to (Y,L)$ induces a canonical map
\[
\cX^{\can}(\phi)\colon \ve{\CF}^-_J(\as, \scB_{\Lambda})\to \ve{\CF}^-_{\phi_*(J)}(\as', \scB_{\Lambda}')
\]
for each collection of familiy of almost complex structures $J$. The map $\cX^{\can}(\phi)$ is given by $\xs\mapsto \phi(\xs)$, extended equivariantly over all of the variables. We now compare the continuation map $\Gamma$ with the map $\cX^{\can}(\phi)$:

\begin{lem}
\label{lem:Gamma-Psi-can}
If $\phi=\Phi_1$ is the time 1 flow of a 1-parameter family of Hamiltonian vector fields, then the map $\Gamma$ is chain homotopic to the canonical diffeomorphism map $\cX^{\can}(\phi)$.
\end{lem}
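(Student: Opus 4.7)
The plan is to apply the standard change-of-variables trick from Lagrangian Floer theory: pull back by $\Phi_t^{-1}$ to convert $H$-shifted polygons into ordinary polygons for a time-dependent almost complex structure, then deform away that time-dependence via a parametrized moduli argument.

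First I would introduce the change of variables. Given an $H$-shifted polygon $u\colon D_{\vec t}\to \Sym^{g+|\ws|-1}(\Sigma)$ contributing to $\Gamma$, set $\tilde u(s,t):=\Phi_t^{-1}\circ u(s,t)$. Since each $\Phi_t$ is a symplectomorphism of $\Sigma$ (hence of $\Sym^{g+|\ws|-1}(\Sigma)$), a direct chain-rule computation shows that $\tilde u$ is holomorphic with respect to the time-dependent almost complex structure $\tilde J_t:=\Phi_t^\ast J$. By construction its boundary lies on the \emph{static} tori $\bT_\a$ and $\bT_{\b_{\veps_i}}$; its asymptotic at the distinguished input remains $\xs$ (since $\Phi_t=\id$ for $t<-1$); its asymptotic at the distinguished output becomes $\phi^{-1}(\ys)$ (since $\Phi_t=\phi$ for $t>1$); and at each internal puncture $t_i$ it is asymptotic to $\theta_{\veps_{i-1},\veps_i}$ itself. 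Because $\phi$ preserves the shadows and basepoints, the local-system weights and the multiplicities $n_\qs(\tilde u)=n_\qs(u)$ are preserved, so admissibility is inherited from Lemma~\ref{lem:continuation-map-admissibility}. This bijection of moduli spaces gives
\[
\Gamma \;=\; \cX^{\can}(\phi)\circ \tilde\Gamma,
\]
where $\tilde\Gamma\colon \ve{\CF}^-_J(\as,\scB_\Lambda)\to \ve{\CF}^-_{\phi^\ast J}(\as,\scB_\Lambda)$ counts ordinary holomorphic polygons with boundary on $(\as,\scB_\Lambda)$, using the family $\tilde J_t$ which equals $J$ for $t<-1$ and $\phi^\ast J$ for $t>1$; in other words, $\tilde\Gamma$ is a continuation map for the change of almost complex structure $J\rightsquigarrow \phi^\ast J$.

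Second, I would show that $\tilde\Gamma$ is chain homotopic to the tautological identification of $\ve{\CF}^-_J(\as,\scB_\Lambda)$ with $\ve{\CF}^-_{\phi^\ast J}(\as,\scB_\Lambda)$ on generators. Introduce the rescaled Hamiltonians $H_r:=rH$ for $r\in[0,1]$, with flows $\Phi_t^{(r)}$, and consider the 1-parameter family $\tilde J^{(r)}_t:=(\Phi_t^{(r)})^\ast J$. The parametrized moduli spaces over $r\in[0,1]$ yield a chain homotopy between $\tilde\Gamma$ (at $r=1$) and the continuation map for the constant family $\tilde J^{(0)}_t\equiv J$ at $r=0$; this latter map is the identity on generators, contributed by the constant Maslov-index-zero strips. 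Composing with $\cX^{\can}(\phi)$ then gives $\Gamma\simeq \cX^{\can}(\phi)$, completing the argument.

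The main technical obstacle will be organizing the parametrized moduli spaces compatibly with the entire simplicial cube $J$ of almost complex structures used to build the hypercubes $\cA$ and $\scB_\Lambda$, and verifying that the codimension-one boundary of each parametrized moduli space consists of the expected pieces (internal polygon breakings together with the $r=0$ and $r=1$ ends). The admissibility needed across the parametrized family follows as in Lemma~\ref{lem:continuation-map-admissibility}, because the flows $\Phi_t^{(r)}$ all fix the chosen transversal points $\ve{r}\subset\Sigma$, so that the bijection sending $u\mapsto \tilde u$ continues to preserve the multiplicities $n_\qs$ uniformly in $r$.
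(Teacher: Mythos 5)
Your argument is correct and is essentially the paper's proof in mirror image: the paper pushes forward static-boundary polygons by $\Phi_t$ to identify them with solutions of an inhomogeneously perturbed Cauchy--Riemann equation with dynamic boundary, whereas you pull back the $H$-shifted polygons by $\Phi_t^{-1}$ to get ordinary polygons for the time-dependent family $\Phi_t^{\ast}J$ and then deform that family away --- the same change-of-variables trick, which the paper itself restates in your pullback form in the remark on the cylindrical reformulation. Your second step (the $r$-parametrized homotopy from $\tilde\Gamma$ to the identity) is a spelled-out version of the paper's closing assertion that changing the almost complex structure, or equivalently the perturbation term, only changes $\Gamma$ by a chain homotopy.
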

\begin{proof} For the proof it  is helpful to enlarge our notion of pseudoholomorphic curves slightly. We recall the Cauchy-Riemann equation for maps from $[0,1]\times \R$ to $\Sym^{g+|\ws|-1}(\Sigma)$:
\[
\d_t u-J \d_s u=0.
\]
It is helpful to consider a perturbation of this equation by an inhomogeneous term. If $X(t)$ is the vector field on $\Sym^{g+|\ws|-1}(\Sigma)$ is the time dependent vector field from the time dependent Hamiltonian used to construct $\Phi_1$, then we consider solutions $u'$ to the equation
\[
\d_t u'-J \d_s u'=X_t(u'(s,t)).
\]
(See \cite{SeidelFukaya}*{Section~8.f} for more on the inhomogeneous Cauchy-Riemann equation).

Note that if $u\colon D_{\ve{t}}\to \Sym^{g+|\ws|-1}(\Sigma)$ is a holomorphic polygon mapping to $\bT_{\a}$ and the $\bT_{\b_{\veps}}$, for example one counted by the differential on $\ve{\CF}^-(\as,\scB_{\Lambda})$, then the map $u'(s,t)=\Phi_t\circ u(s,t)$ satisfies the above inhomogeneous Cauchy-Riemann equation. In this way, we obtain a diffeomorphism between the $H$-shifted moduli space $\cM_{\ve{t}}^H(\psi)$ (defined using the inhomogeneous Cauchy-Riemann equations) and the corresponding ordinary moduli space $\cM(\psi)$ (defined using the homogeneous Cauchy-Riemann equation, as we use to define the differential on $\ve{\CF}^-_{J}(\as,\scB_{\Lambda})$).
 Note that this quickly implies that $\Gamma$ is the identity, if we use the inhomogeneous Cauchy-Riemann equations. The map counts index $-n$ curves. For the non-skewed moduli spaces of $(n+2)$-gons $\cM(\psi)$, ordinary transversality implies that there will be representatives only if $n=0$ and the classes $\psi$ are the constant disks. Under the identification of $\cM(\psi)$ with $\cM_{\ve{t}}^H(\psi)$, the constant disks move an intersection point $\ve{x}$ to its image under the map $\phi$.

Computing the map $\Gamma$ for a different choice of almost complex structures (or rather, counting solutions to the ordinary Cauchy-Riemann equations rather than the equations which are perturbed by an inhomogeneous term) will give a chain homotopic map, and therefore we conclude that $\Gamma\simeq \cX^{\can}(\phi)$.
\end{proof}

\begin{rem} In Lipshitz's cylindrical reformulation \cite{LipshitzCylindrical}, the above argument has the following analog. We take an ordinary family of almost complex structures $J$ on the spaces $\Sigma\times D_{\ve{t}}$ for counting holomorphic $(n+2)$-gons. We define an automorphism $\Psi$ of $\Sigma\times D_{\ve{t}}$ via the formula $(x,s,t)\mapsto (\Phi_t(x), s,t)$. Then there is an identification between $J$-holomorphic curves for counting ordinary holomorphic polygons on $(\Sigma,\as,\scB_{\Lambda})$, and counting $\Psi_*(J)$ holomorphic curves with dynamic boundary conditions.
\end{rem}

We now prove another result about the map $\Gamma$: 

\begin{prop}
\label{prop:continuity-techical}
 There is a chain homotopy $\Gamma\simeq \Psi_{\a\to \a'}^{\scB'_{\Lambda}}\circ \Psi_{\a}^{\scB_{\Lambda}\to \scB_{\Lambda}'}$, where the latter composition denotes the composition of the polygon counting maps.
\end{prop}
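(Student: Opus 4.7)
The plan is to build a one-parameter family of maps $\Gamma_R$, $R\in[0,\infty)$, each counting holomorphic polygons with dynamic Lagrangian boundary conditions, which interpolates between $\Gamma_0=\Gamma$ and a degenerate limit $\Gamma_\infty$ that manifestly decomposes as the composition $\Psi_{\a\to\a'}^{\scB_{\Lambda}'}\circ\Psi_{\a}^{\scB_{\Lambda}\to\scB_{\Lambda}'}$. The parametrized moduli space in $R$ will then provide the desired chain homotopy. First, I would split the time-dependent Hamiltonian $H$ as a sum $H=H^{\b}+H^{\a}$ where $H^{\b}$ generates the Hamiltonian isotopy carrying the attaching curves of $\scB_{\Lambda}$ to those of $\scB_{\Lambda}'$ (leaving $\as$ fixed), and $H^{\a}$ generates the isotopy carrying $\as$ to $\as'$ (leaving $\scB_{\Lambda}'$ fixed); such a splitting exists because the isotopy from $(\as,\scB_{\Lambda})$ to $(\as',\scB_{\Lambda}')$ can be factored through $(\as,\scB_{\Lambda}')$ by performing the two halves sequentially in time. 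For each $R\ge 0$, define $H_R(t)$ by supporting $H^{\b}$ on an interval $[-1,0]$ and supporting $H^{\a}$ on the interval $[R,R+1]$, so that $H_0$ is the original $H$ (up to a reparametrization of time which does not affect the chain homotopy type of $\Gamma$, by Lemma~\ref{lem:Gamma-Psi-can}), while increasing $R$ inserts a time gap during which the boundary conditions are static.

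Next, I would define $\Gamma_R$ by counting $H_R$-shifted $(n{+}2)$-gons, using the same moduli space construction as for $\Gamma$ but with $H$ replaced by $H_R$. Admissibility is handled as in Lemma~\ref{lem:continuation-map-admissibility}, using the bijection with non-shifted polygon classes via $\phi\mapsto \Phi_t^{-1}\circ \phi$, which preserves basepoint multiplicities modulo $(U_1,\dots,U_\ell)^N$ for each $N$. Counting ends of the total moduli space $\bigcup_{R\ge 0}\cM^{H_R}_{\ve{t}}(\phi)$ yields the chain homotopy relation
\[
\Gamma_0-\Gamma_\infty = \d H + H \d
\]
where $H$ is the operator counting rigid curves in the parametrized family for finite $R$, while the boundary contributions at $R=0$ and $R=\infty$ give the two maps above. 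To identify $\Gamma_\infty$ with the composition, I would apply a neck-stretching argument in the spirit of the proof of Lemma~\ref{lem:stabilization}: as $R\to\infty$, any $H_R$-shifted polygon splits along the static region $(0,R)\times[0,1]$ into two pieces. The lower piece is a polygon for the beta-isotopy (with $\as$-boundary on a fixed copy of $\as$ and $\bs$-boundary transitioning from $\Phi_t(\bs_\veps)$ for the original curves to $\Phi_t(\bs_\veps')$), while the upper piece is a polygon for the alpha-isotopy. Applying Lemma~\ref{lem:Gamma-Psi-can} (or rather, its ``partial'' analogue) to each piece separately, each continuation-style half identifies with the polygon counting map $\Psi_{\a}^{\scB_{\Lambda}\to\scB_{\Lambda}'}$ and $\Psi_{\a\to\a'}^{\scB_{\Lambda}'}$ respectively, and gluing the two along the intermediate attaching data $(\as,\scB_{\Lambda}')$ gives precisely the composition of the polygon maps.

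The hard part of this argument will be the analytical control of the $R\to\infty$ limit, especially in ensuring that transversality for the parametrized moduli spaces is compatible with the splitting and that the compactification of $\bigcup_R \cM^{H_R}_{\ve{t}}(\phi)$ has the expected boundary strata (the $R=0$ face contributing $\Gamma$, the $R=\infty$ face contributing the glued pair, and the usual polygon-breaking ends contributing $\d H+H\d$). A secondary technical point is keeping track of how the Alexander-graded, local-system-weighted hypercube morphisms $\theta_{\veps,\veps'}$ appearing in $\scB_{\Lambda}$ and $\scB_{\Lambda}'$ are transported across the homotopy; here one uses that, by Proposition~\ref{prop:top-degree-cycle}, the top-degree cycles representing the hypercube data are preserved under Hamiltonian isotopy of the underlying Lagrangians, and that the knot shadows move along with the isotopy so that the twisted-coefficient weights match under the limiting identification. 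The rest is routine once the above framework is in place.
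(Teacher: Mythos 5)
Your first step---inserting a growing time gap $R$ between the beta-deformation and the alpha-deformation and counting ends of the resulting parametrized moduli space---is exactly the paper's Lemma~\ref{lem:homotopy-Gamma-Gamma-a-b}, and it correctly reduces the problem to showing that the two ``half'' continuation maps $\Gamma_{\a}^{\scB_{\Lambda}\to\scB_{\Lambda}'}$ and $\Gamma_{\a\to\a'}^{\scB_{\Lambda}'}$ are homotopic to the corresponding polygon-counting maps. The gap is in that second step. You invoke ``Lemma~\ref{lem:Gamma-Psi-can} (or rather, its `partial' analogue),'' but that lemma proves something different: it identifies the continuation map with the \emph{canonical pushforward} $\cX^{\can}(\phi)$ via the inhomogeneous Cauchy--Riemann trick, and its output is a tautological relabeling map, not a map of the form $\mu_2^{\Tw}(-,\theta)$. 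There is no ``partial analogue'' of it that produces $\Psi_{\a}^{\scB_{\Lambda}\to\scB_{\Lambda}'}$; the identification of a continuation map with a polygon-counting map is a genuinely separate argument, and it is the main technical content of the proposition. Your claim that the $R\to\infty$ limit ``manifestly decomposes'' as the composition of the two $\Psi$'s is therefore not justified: what the limit manifestly gives is the composition of two \emph{continuation} maps.

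What is actually needed (this is the paper's Lemma~\ref{lem:continuation=polygons}) is a second degeneration in which the speed of the isotopy is rescaled, replacing $\Phi_t$ by $\Phi_{t/r}$ and letting $r\to 0$. In that limit the dynamic boundary region collapses and a morphism $\theta^H$, defined by counting $H$-skewed polygons whose boundary lies entirely on beta curves (resp.\ entirely on alpha curves), bubbles off, exhibiting the continuation map as $\mu_2^{\Tw}(-,\theta^H)$. One must then verify that $\theta^H$ is a cycle in the hypercube morphism complex and that it agrees up to homotopy with the canonical top-degree morphism used to define $\Psi_{\a}^{\scB_{\Lambda}\to\scB_{\Lambda}'}$; this requires the algebraic rigidity of $\scB_{\Lambda}$ (to control the degenerations where the skewed polygon sits on the lower level), a Maslov-grading count, and the observation that the all-beta boundary forces Alexander grading zero so that Proposition~\ref{prop:top-degree-cycle} pins down the homology class (Lemma~\ref{lem:thetaH-monogons} in the paper). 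Your remark about the shadows moving with the isotopy gestures at the right issue but does not substitute for this argument. As written, the proof is incomplete at precisely the step that carries the weight of the statement.
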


The proof of Proposition~\ref{prop:continuity-techical} requires several steps. The first step is to deform the relative heights in the polygon counts of the alpha and beta dynamic regions of the boundary. We let $\Gamma_{\a\to \a'}^{\scB_{\Lambda}'}$ denote the continuation map where only $\as$ changed under the isotopy $\Phi_t$. We let $\Gamma_{\a}^{\scB_{\Lambda}\to \scB_{\Lambda}'}$ be the continuation map where only $\scB_{\Lambda}$ is affected by the isotopy.

\begin{lem}\label{lem:homotopy-Gamma-Gamma-a-b} There is a chain homotopy
\[
\Gamma\simeq \Gamma_{\a\to \a'}^{\scB_{\Lambda}'}\circ \Gamma_{\a}^{\scB_{\Lambda}\to \scB_{\Lambda}'}.
\]
\end{lem}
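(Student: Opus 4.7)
The plan is to construct a 1-parameter family of modified continuation maps $\Gamma^R$, $R \in [0,\infty)$, interpolating between $\Gamma$ at $R = 0$ and the composition $\Gamma_{\a\to\a'}^{\scB_\Lambda'}\circ \Gamma_{\a}^{\scB_\Lambda\to\scB_\Lambda'}$ at $R = \infty$, by separating in time the alpha-isotopy and the beta-isotopy. Counting the ends of the associated parametrized moduli space will produce the desired chain homotopy.

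First I would fix a decomposition $H_t = H^\a_t + H^\b_t$ of the time-dependent Hamiltonian on $\Sigma$, where $H^\a_t$ is supported in a small neighborhood of $\as$ that is disjoint from all of the $\bs_\veps, \bs_\veps'\in \scB_\Lambda\cup \scB_\Lambda'$, and $H^\b_t$ is supported in a small neighborhood of these beta curves that is disjoint from $\as\cup \as'$. Here one uses that the isotopies from $\as$ to $\as'$ and from $\scB_\Lambda$ to $\scB_\Lambda'$ are each small in the sense of the preceding discussion, so that such a decomposition exists and produces the same time-1 flow as $\Phi_t$. Writing $\Phi^\a_t$ and $\Phi^\b_t$ for the flows of $H^\a_t$ and $H^\b_t$, I would then define, for each $R\ge 0$, a time-dependent Hamiltonian isotopy $\Phi^R_t$ whose flow coincides with $\Phi^\b_t$ on a time window $[-1,0]$ and with $\Phi^\a_{t-R}$ on $[R,R+1]$, and equals the corresponding terminal diffeomorphism outside these windows. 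For $R = 0$ this agrees (up to reparametrization of time and a standard cobordism of continuation data) with the simultaneous isotopy defining $\Gamma$.

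Next, I would define the parametrized $H^R$-shifted moduli spaces $\cM^R_{\ve t}(\phi)$ exactly as in the construction of $\Gamma$, but with boundary conditions $\bT_{\Phi^R_t(\a)}$ on the alpha side and $\bT_{\Phi^R_t(\b_{\veps_i})}$ on the beta side. Admissibility for the parametrized family follows from the same bijection with ordinary polygon classes used in Lemma~\ref{lem:continuation-map-admissibility}, since the points $\ve r\subset \Sigma$ are fixed throughout the family. Counting the index $-n-1$ elements of $\bigcup_{R \in [0,\infty)} \cM^R$ (with the usual weighting by $U_i$-multiplicities) defines a chain $h\colon \ve{\CF}^-(\as,\scB_\Lambda)\to \ve{\CF}^-(\as',\scB_\Lambda')$, raising the relative grading by one.

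The ends of the parametrized moduli space of index $-n$ curves are of three types: strip/polygon breakings in the interior at fixed $R$, which contribute $\d h + h\d$ in the standard way; the boundary at $R = 0$, which contributes $\Gamma$; and the boundary at $R = \infty$, which I claim contributes $\Gamma_{\a\to\a'}^{\scB_\Lambda'}\circ \Gamma_{\a}^{\scB_\Lambda\to\scB_\Lambda'}$. This last claim is the main content, and is a neck-stretching/gluing argument of the sort standard in Lagrangian Floer theory (e.g.\ \cite{SeidelFukaya}*{Section~10}): as $R\to \infty$, the time-translation separates the alpha-isotopy window from the beta-isotopy window, and by a Gromov-type compactness plus Floer gluing argument, limiting curves decompose as a $\Phi^\b$-shifted polygon on $(\Sigma, \as, \scB_\Lambda, \scB_\Lambda')$ asymptotic to some intermediate intersection point $\zs\in \bT_{\a}\cap \bT_{\b_{\veps}'}$, glued to a $\Phi^\a$-shifted polygon on $(\Sigma, \as, \as', \scB_\Lambda')$ with input $\zs$. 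By design, these are precisely the curves counted by $\Gamma_\a^{\scB_\Lambda\to\scB_\Lambda'}$ and $\Gamma_{\a\to\a'}^{\scB_\Lambda'}$, respectively, and the intermediate intersection points are summed over as in the definition of the composition. Putting this together yields $\d h + h\d = \Gamma + \Gamma_{\a\to\a'}^{\scB_\Lambda'}\circ \Gamma_{\a}^{\scB_\Lambda\to\scB_\Lambda'}$.

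The main obstacle will be carrying out the $R = \infty$ analysis rigorously in the presence of the hypercube boundary punctures and local systems: one must check that the weights coming from intersections with the link shadows $\cS$ match up correctly on the two sides of the glued curve, and that all polygon inputs $\theta_{\veps_{i-1},\veps_i}\in \scB_\Lambda'$ (which themselves are images under $\Phi_1$ of chains in $\scB_\Lambda$) are distributed correctly between the two factors in the limit. Both points are handled by keeping track of the domain data of the limiting curves and using that $\Phi_t$ preserves the meridional shadow data up to the small translations bounded by $\ve r$.
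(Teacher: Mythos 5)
Your proposal is essentially the paper's proof: the paper also introduces a single real parameter (the relative height $t\in(0,\infty)$ of the alpha deformation region over the beta deformation region), counts ends of the resulting $1$-dimensional parametrized moduli space, and identifies the $t\to 0$ end with $\Gamma$, the $t\to\infty$ end with the composition, and interior breakings with the chain homotopy. One step of your setup would fail as written, though it is also unnecessary: you cannot choose $H^\a_t$ supported in a neighborhood of $\as$ disjoint from all the beta curves, since its flow would then keep $\as$ inside that neighborhood, whereas $\as'$ must intersect the beta curves. No decomposition of the Hamiltonian is needed -- one simply imposes the dynamic boundary condition $\bT_{\Phi_t(\a)}$ on the alpha side and $\bT_{\Phi_{t-R}(\b_{\veps_i})}$ (say) on the beta side, i.e.\ time-shifts the two dynamic windows of the same global isotopy, which is exactly how $\Gamma_{\a\to\a'}^{\scB_{\Lambda}'}$ and $\Gamma_{\a}^{\scB_{\Lambda}\to\scB_{\Lambda}'}$ are defined; with that correction your argument goes through as in the paper.
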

\begin{proof} The chain  homotopy is obtained by counting 1-dimensional moduli spaces where the alpha deformation region has height $t\in (0,\infty)$ relative to the beta deformation region, as in Figure~\ref{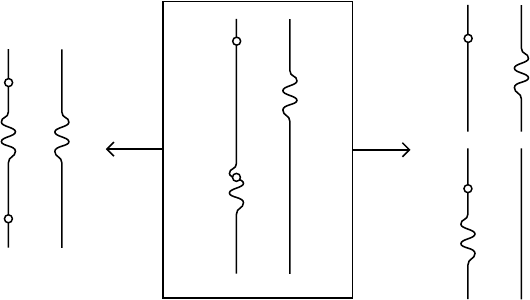_tex}. Counting the ends of moduli spaces quickly gives the chain homotopy. 
\end{proof}

\begin{figure}[h]
\begingroup%
  \makeatletter%
  \providecommand\color[2][]{%
    \errmessage{(Inkscape) Color is used for the text in Inkscape, but the package 'color.sty' is not loaded}%
    \renewcommand\color[2][]{}%
  }%
  \providecommand\transparent[1]{%
    \errmessage{(Inkscape) Transparency is used (non-zero) for the text in Inkscape, but the package 'transparent.sty' is not loaded}%
    \renewcommand\transparent[1]{}%
  }%
  \providecommand\rotatebox[2]{#2}%
  \newcommand*\fsize{\dimexpr\f@size pt\relax}%
  \newcommand*\lineheight[1]{\fontsize{\fsize}{#1\fsize}\selectfont}%
  \ifx\svgwidth\undefined%
    \setlength{\unitlength}{254.32005071bp}%
    \ifx\svgscale\undefined%
      \relax%
    \else%
      \setlength{\unitlength}{\unitlength * \real{\svgscale}}%
    \fi%
  \else%
    \setlength{\unitlength}{\svgwidth}%
  \fi%
  \global\let\svgwidth\undefined%
  \global\let\svgscale\undefined%
  \makeatother%
  \begin{picture}(1,0.56594638)%
    \lineheight{1}%
    \setlength\tabcolsep{0pt}%
    \put(0,0){\includegraphics[width=\unitlength,page=1]{fig_naturality_39.pdf}}%
    \put(0.23472279,0.29482007){\makebox(0,0)[lt]{\lineheight{1.25}\smash{\begin{tabular}[t]{l}$\d$\end{tabular}}}}%
    \put(0.69752034,0.29482007){\makebox(0,0)[lt]{\lineheight{1.25}\smash{\begin{tabular}[t]{l}$\d$\end{tabular}}}}%
  \end{picture}%
\endgroup%

\caption{The moduli spaces considered in Lemma~\ref{lem:homotopy-Gamma-Gamma-a-b}. In the center we illustrate the moduli space where the height of the dynamic regions takes values in $t\in (0,1)$. On the left and right we illustrate two important types of codimension 1 ends of this moduli space.}
\label{fig_naturality_39.pdf_tex}
\end{figure}

To prove Proposition~\ref{prop:continuity-techical}, we deform the moduli spaces used to define $\Gamma$ by introducing a parameter $r\in (0,1)$ which corresponds to a scaling factor on the speed of the deformation $\Phi_t$; that is, we replace $\Phi_t$ with $\Phi_{t/r}$ as $r\to 0$.  Before counting the ends of the moduli spaces which are parametrized by $r\in (0,1)$, we introduce a new type of curve which appears in the compactification.

\begin{define} Let $\bs_{0},\dots, \bs_n$ be a sequence of attaching curves, and let $\theta_{0,1},\dots, \theta_{n-1,n}$ be intersection points where $\theta_{i,i+1}\in \bT_{\b_i}\cap \bT_{\b_{i+1}}$. Let $\Phi_t$ be the flow of a time-dependent Hamiltonian which is locally constant in $t$ for $t\in (-\infty,-1]\cup [1,\infty)$. We write $\phi=\Phi_1$. Let $\ys\in \bT_{\b_0}\cap \bT_{\phi(\b_n)}$. An \emph{$H$-skewed holomorphic $n$-gon} consists of a $J$-holomorphic map
\[
u\colon \bH\setminus \ve{t}\to \Sym^{g+\ws-1}(\Sigma),
\]
where $\bH=(-\infty,0]\times \R$  and $\ve{t}$ is a collection of $n$ boundary punctures. We assume the following are also satisfied. If $c_0,\dots, c_n$ denote the $n+1$ components of $\d H$ (ordered in ascending $t$-values) then we assume that if the point $(0,t)\in c_i$ then
\[
u(0,t)\in \bT_{\Phi_t(\b_i)}. 
\]
We assume that if the $i$-th boundary puncture of $H$ occurs at height $t\in \R$, then $u$ is asymptotic at that puncture to $\Phi_t(\theta_{i,i+1})$.
\end{define}

\begin{figure}[h]
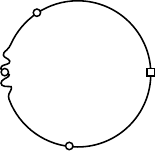
\caption{A schematic of an $H$-skewed holomorphic polygon. Inputs are denoted with circles, and the output is denoted with a square. The region of the source $u$ has dynamic boundary conditions is denoted with a squiggle. We are identifying the half plane with a unit disk so that the output puncture is identified with $-1$.}
\label{fig:37}
\end{figure}

Suppose now that $\scB_{\Lambda}$ is the hypercube built in the construction of the surgery formula, and furthermore assume that $\scB_{\Lambda}$ is algebraically rigid. We now show that by counting $H$-skewed holomorphic $n$-gons, we can construct the natural homotopy equivalence $\theta^H \colon \scB_{\Lambda}\to \scB_{\Lambda}'$, where $\scB_{\Lambda}'$ denotes the image of $\scB_{\Lambda}$ under $\Phi$. More precisely, we define $\theta^H_{\veps,\veps'}$ to be the sum over all increasing sequences $\veps=\veps_0<\cdots <\veps_n=\veps'$ of skewed holomorphic $n$-gons with inputs from $\scB_{\Lambda}$ and with Maslov index $1-n$. The case that $n=1$ is allowed.

Note that in the context of the hypercube $\scB_{\Lambda}$, we additionally need to specify the module maps between the underlying spaces of the local systems $E_{\veps}$.
These are given as the composition of the monodromy maps for each edge of the polygon, and the module-morphisms for input.  Note that since the skewed polygons used to define $\theta^H$ all have boundary only on beta curves, the total Alexander grading shift of these maps is always 0 since 
\[
\# (\d_{\b}(\psi)\cap S_i)=\#(\d(\psi)\cap S_i)=0.
\]
 In particular, the module morphism for a skewed monogon is always the identity from $E_{\veps}$ to $E_{\veps}$.

If $\scB_{\Lambda}$ is the beta cube from a cube of generalized meridional systems of Heegaard diagrams of a link $(Y,L)$, we will say that $\scB_{\Lambda}$ is \emph{algebraically rigid} if it has the property that for each complete collection $\ps\subset \ws\cup \zs$, and each pair $\bs_{\veps},\bs_{\veps'}$ with $\veps<\veps'$, there are exactly $2^{g(\Sigma)+|L|-1}$ intersection points $\xs\in \bT_{\b_{\veps}}\cap \bT_{\b_{\veps'}}$ for which $\frs_{\ps}(\xs)$ is torsion. For such diagrams, there are no intersection points with $\frs_{\ps}(\xs)$ torsion which have greater $\gr_{\ps}$-grading than the canonical generator.

\begin{lem}
\label{lem:thetaH-monogons} If $\scB_{\Lambda}$ is algebraically rigid and $H$ is a time dependent Hamiltonian so that its flow $\Phi_t$ is sufficiently small (to achieve weak admissibility as in Lemma~\ref{lem:continuation-map-admissibility}), then the morphism
\[
\theta^H\colon \scB_{\Lambda}\to \scB_{\Lambda}'
\]
is a cycle (i.e. a morphism of hypercubes of Lagrangians), and furthermore coincides with the canonical equivalence described in Section~\ref{sec:homology-groups} up to chain homotopy. 
\end{lem}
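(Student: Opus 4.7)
The argument breaks into two pieces: (i) verify that $\theta^H$ is a cycle in the morphism complex $\Hom_{\Tw}(\scB_{\Lambda},\scB'_{\Lambda})$, and (ii) identify its chain-homotopy class with the canonical equivalence. The plan for (i) is a standard Gromov compactness argument applied to 1-dimensional moduli spaces of $H$-skewed $(n{+}1)$-gons of expected dimension $1$, i.e.\ Maslov index $2-n$, ranging over all increasing sequences $\veps_0<\cdots<\veps_n$ in $\bE_\ell$. One first needs to establish transversality for generic $J$ (analogous to \cite{SeidelFukaya}*{Section~8.f}) and prove compactness modulo the ideal $(U_1,\dots,U_\ell)^N$; the latter follows from Lemma~\ref{lem:continuation-map-admissibility} once one observes that the area-shifting bijection $I$ there extends to $(n{+}1)$-gons. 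The codimension-1 strata of the compactification are: (a)~collisions of two adjacent interior punctures $t_i, t_{i+1}\to t$ in the bounded portion $[-1,1]$, which contribute $\theta^H$ pre-composed with a holomorphic $\scB_{\Lambda}$-polygon map; (b)~a puncture escaping to $t_i\to-\infty$, where, because $\Phi_t\equiv \id$ for $t<-1$, the curve splits off a cylindrical $\scB_{\Lambda}$-polygon of the non-dynamic type, giving another contribution to $\theta^H\circ\delta^{\scB_\Lambda}$; (c)~a puncture escaping to $t_i\to+\infty$, where, because $\Phi_t\equiv\phi$ for $t>1$, the curve splits off a $\Phi$-translated $\scB_{\Lambda}'$-polygon, contributing $\delta^{\scB'_\Lambda}\circ\theta^H$; and (d)~strip-breaking at the output $-\infty$, contributing $\mu_1\circ\theta^H$ on $\ve{\CF}^-(\bs_0,\Phi(\bs_n))$. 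Matching these ends mod 2 yields precisely the Maurer–Cartan relation
\[
\sum_{k,l\ge0} \mu^{\Tw}_{k+l+1}\bigl(\delta_{\scB'_\Lambda}^{\otimes k},\theta^H,\delta_{\scB_\Lambda}^{\otimes l}\bigr)=0,
\]
which is what it means for $\theta^H$ to be a morphism of hypercubes. Throughout, the appropriate monodromy (module) weights propagate correctly because at each boundary arc of a skewed polygon, the intersection numbers $\#(\d_{\b}(\psi)\cap S_i)$ are preserved under the flow $\Phi_t$ (which fixes the points $\ve{r}$ used to witness admissibility).

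For (ii), the key observation is that Proposition~\ref{prop:grading-preserving} applies verbatim to $H$-skewed polygons (replacing each $S_i$ by its flowed image is compatible because the Spin $2$-chains $\scC_i$ can be taken to be the traces of the isotopy, which have Euler measure $O(\|H\|)$ and no $\qs$-multiplicity for small $\|H\|$). Consequently each length-$n$ component of $\theta^H_{\veps_0,\veps_n}$ is homogeneous of Alexander multi-grading $\ve{0}$ and of the unique Maslov grading in which the canonical equivalence of Section~\ref{sec:homology-groups} sits. By the algebraic rigidity hypothesis, the top-graded subspace of $\ve{\HF}^-_{\phi^{\scO}}(\bs_\veps^{E_\veps},\bs_{\veps'}^{\prime\, E_{\veps'}},\frs_\scO)$ is identified with the image of the canonical top-graded generator by Proposition~\ref{prop:top-degree-cycle}, and any two cycles in that subspace differ by a boundary. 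It therefore suffices to check that the length-$1$ components $\theta^H_{\veps,\veps}$, which count skewed monogons, represent the canonical top-graded generator in homology (and are not null-homologous). This is done by taking $\|H\|$ so small that each $\bs_{\veps}$ and $\Phi_1(\bs_{\veps})$ pair of Lagrangians is Hamiltonian isotopic by a small translation with a unique nearest-neighbor intersection in the chosen top-graded idempotent; the skewed monogons are then, by a local model argument à la the proof of Lemma~\ref{lem:Gamma-Psi-can} (reducing to the inhomogeneous Cauchy–Riemann equation solved by constant sections), rigidly identified with the constant sections of $\Sigma\times D$, producing precisely one representative for each generator of $\Lambda_g\otimes\bF\llsquare U\rrsquare$.

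Once the length-$1$ components are identified with the canonical generator up to Floer boundaries, the higher-length components of $\theta^H$ and of the canonical equivalence are both fillings of the same length-$1$ data, and hence are chain homotopic as morphisms of hypercubes by the standard inductive filling construction (cf.\ \cite{MOIntegerSurgery}*{Lemma~8.6} together with the grading considerations of Section~\ref{sec:hypercube-hypercubes}). This yields the desired chain homotopy $\theta^H \simeq \theta^{\can}$.

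The main obstacle is step~(ii), specifically the verification that the length-$1$ skewed monogons represent the canonical top-graded generator rather than some $U$-power multiple of it. A clean way to bypass an explicit count is a one-parameter deformation argument: vary $H$ through small time-dependent Hamiltonians $H_s$ with $H_0\equiv 0$. The family of counts gives a chain homotopy between $\theta^{H}$ and the degenerate ``polygon'' count at $H\equiv0$, which, after a perturbation of the Lagrangians to cylindrical position, reduces to the canonical construction of Section~\ref{sec:hypercube-hypercubes}. The subtlety is that admissibility must be preserved throughout the deformation, which is exactly the reason for the smallness hypothesis on $\Phi_t$ in the statement.
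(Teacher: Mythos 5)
Your part (ii) is essentially the paper's argument: show the chains of $\theta^H$ are homogeneous of Alexander grading $0$ and of the correct Maslov grading (the paper gets the Alexander statement more directly than you do, from the observation that every boundary component of a skewed polygon lies on a beta curve, so the total intersection number of $\d(\psi)$ with each shadow vanishes), identify the length-one components with the canonical top-degree cycles (the paper cites Lipshitz's continuation-map-equals-triangle-map argument rather than your local-model count), and conclude by uniqueness of fillings. That half of your proposal is fine in outline.

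The gap is in part (i), specifically your item (a). When two adjacent punctures collide at a finite time $t\in[-1,1]$, the polygon that bubbles off has Lagrangian boundary on the \emph{flowed} curves $\Phi_t(\bs_{\veps_{i-1}}),\Phi_t(\bs_{\veps_i}),\Phi_t(\bs_{\veps_{i+1}})$ with inputs $\Phi_t(\theta_{\veps_{i-1},\veps_i})$, $\Phi_t(\theta_{\veps_i,\veps_{i+1}})$; it is a polygon count on $\Phi_t(\scB_{\Lambda})$, not on $\scB_{\Lambda}$, and it is computed with whatever almost complex structure the family assigns to that boundary stratum, so it cannot be identified with a structure map of $\scB_{\Lambda}$ (the symplectomorphism $\Phi_t$ only identifies the moduli spaces if one also pushes forward $J$). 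These strata therefore do not assemble into $\theta^H\circ\delta^{\scB_{\Lambda}}$, and the Maurer--Cartan relation does not follow from the boundary count as you have organized it. This is precisely where the algebraic rigidity hypothesis enters the paper's proof of the cycle condition --- a hypothesis your part (i) never invokes: rigidity forces the bubble at time $t$ to be a bigon or a triangle; the bigon counts vanish because each $\Phi_t(\theta_{\veps,\veps'})$ is a cycle for every $t$; and the triangle counts cancel in pairs because $\mu_2(\Phi_t(\theta^{\circ}_{\veps,\veps+e_i}),\Phi_t(\theta^{\circ'}_{\veps+e_i,\veps+e_i+e_j}))=\mu_2(\Phi_t(\theta^{\circ'}_{\veps,\veps+e_j}),\Phi_t(\theta^{\circ}_{\veps+e_j,\veps+e_i+e_j}))$ holds for every $t$. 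Only the degenerations in which the split-off ordinary polygon lies entirely outside the dynamic region contribute to the hypercube relation for $\theta^H$. Relatedly, your closing suggestion to deform through $H_s$ down to $H_0\equiv 0$ is problematic at the endpoint, where $\bs_{\veps}$ and $\Phi_1(\bs_{\veps})$ coincide and the Floer complexes are undefined without a further perturbation.
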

\begin{proof} We first show that $\theta^H$ is a hypercube morphism. This is proven by counting the ends of generic 1-parameter families of $H$-skewed $n$-gons. The compactification of such 1-parameter families consist of a pair consisting of one $H$-skewed $k$-gon with $1\le k\le n$ and one non-skewed $(n-k+2)$-gon. If a pair of such curves appears in the boundary of our 1-dimensional moduli space, generically both curves have to be part of 0-dimensional moduli spaces. That is, the skewed $k$-gon must have Maslov index $k-1$, and the non-skewed polygon should have index $1-n-k$.

Note that in such a degeneration, there is a \emph{lower level} and an \emph{upper level}. The lower level is the one which contains the final output puncture. See Figure~\ref{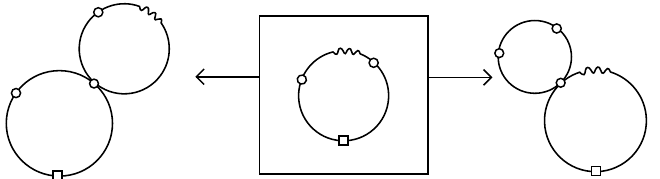_tex}. We claim that the curves where the skewed polygon are on the lower level come in canceling pairs. To see this, we observe that if the skewed polygon occurs on the lower level, then the polygon on the upper level occurs on some $\Phi_t(\scB_{\Lambda})$. This diagram of Lagrangians is a hypercube for all $t$, and is furthermore algebraically rigid for all $t$, being the diffeomorphic image of something which is algebraically rigid. In particular, the upper level must consist of either a holomorphic bigon or a holomorphic triangle. The count of disks cancels because each chain in $\Phi_t(\scB_{\Lambda})$ is a cycle for all $t$. Furthermore, the count of triangles cancel because 
\[
\mu_2(\Phi_t(\theta_{\veps,\veps+e_i}^{\circ}), \Phi_t(\theta_{\veps+e_i,\veps+e_i+e_j}^{\circ'}))=\mu_2(\Phi_t(\theta_{\veps,\veps+e_j}^{\circ'}), \Phi_t(\theta_{\veps+e_j,\veps+e_i+e_j}^{\circ})).
\]

\begin{figure}[h]
\begingroup%
  \makeatletter%
  \providecommand\color[2][]{%
    \errmessage{(Inkscape) Color is used for the text in Inkscape, but the package 'color.sty' is not loaded}%
    \renewcommand\color[2][]{}%
  }%
  \providecommand\transparent[1]{%
    \errmessage{(Inkscape) Transparency is used (non-zero) for the text in Inkscape, but the package 'transparent.sty' is not loaded}%
    \renewcommand\transparent[1]{}%
  }%
  \providecommand\rotatebox[2]{#2}%
  \newcommand*\fsize{\dimexpr\f@size pt\relax}%
  \newcommand*\lineheight[1]{\fontsize{\fsize}{#1\fsize}\selectfont}%
  \ifx\svgwidth\undefined%
    \setlength{\unitlength}{316.51650256bp}%
    \ifx\svgscale\undefined%
      \relax%
    \else%
      \setlength{\unitlength}{\unitlength * \real{\svgscale}}%
    \fi%
  \else%
    \setlength{\unitlength}{\svgwidth}%
  \fi%
  \global\let\svgwidth\undefined%
  \global\let\svgscale\undefined%
  \makeatother%
  \begin{picture}(1,0.27117892)%
    \lineheight{1}%
    \setlength\tabcolsep{0pt}%
    \put(0,0){\includegraphics[width=\unitlength,page=1]{fig_naturality_38.pdf}}%
    \put(0.32610925,0.16414823){\makebox(0,0)[lt]{\lineheight{1.25}\smash{\begin{tabular}[t]{l}$\d$\end{tabular}}}}%
    \put(0.67549886,0.16414823){\makebox(0,0)[lt]{\lineheight{1.25}\smash{\begin{tabular}[t]{l}$\d$\end{tabular}}}}%
  \end{picture}%
\endgroup%

\caption{Examples of codimension 1 degenerations (left and right) of 1-parameter family of $H$-skewed polygons (center). In the figure, the lower levels are the bottom-most levels. In the left figure, the lowest level is an ordinary holomorphic polygon. In the right figure, the lowest level has dynamic boundary conditions.}
\label{fig_naturality_38.pdf_tex}
\end{figure}

The remaining configurations, for which the upper level is a skewed polygon, are exactly the hypercube relations for $\theta^H$.

We now address the claim that $\theta^H$ is homotopic to the canonical equivalence described in Section~\ref{sec:homology-groups}. It follows from Lipshitz's work \cite{LipshitzCylindrical}*{Proof of Proposition~11.4} that the length 1 components are chain homotopic to the top degree cycles $\theta_{\b_{\veps}, \Phi(\b_{\veps})}^+$. (Lipshitz proves the claim when $\bs_{\veps}$ and $\phi(\bs_{\veps})$ are standard translations of each other. The general case follows from this by using a continuation map argument). For the rest of the claim, it suffices to show that the chains of $\theta^H$ have the expected Maslov grading and have Alexander grading 0. The claim about Maslov gradings is proven by observing that the skewed polygon maps are homogeneously graded. Furthermore, the output chains will have Alexander grading 0 since we are using a single collection of link shadows for $L$, and each boundary component of the skewed polygons are beta, so the total intersection number of the boundary of skewed $n$-gon with a link shadow will be 0. 
\end{proof}

\begin{lem}
\label{lem:continuation=polygons}
There are chain homotopies 
\[
\Gamma_{\a\to \a'}^{\scB_{\Lambda'}}\simeq \Psi_{\a\to \a'}^{\scB_{\Lambda}'}\quad \text{and} \quad \Gamma_{\a}^{\scB_{\Lambda}\to \scB_{\Lambda}'}\simeq \Psi_{\a}^{\scB_{\Lambda}\to \scB_{\Lambda}'}.
\]
\end{lem}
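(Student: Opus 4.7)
The plan is to establish each of the two homotopies by a one-parameter degeneration of moduli spaces, in the spirit of the proof of Lemma~\ref{lem:homotopy-Gamma-Gamma-a-b} but stretching the beta-dynamic (resp.\ alpha-dynamic) region rather than displacing the two dynamic regions relative to one another. I would first treat $\Gamma_{\a}^{\scB_{\Lambda}\to\scB_{\Lambda}'}$. Recall that this map counts holomorphic $(n+2)$-gons with static alpha boundary on $\as$ but dynamic beta boundary along $\Phi_t(\bs_{\veps_i})$. Introduce a parameter $T\in[0,\infty]$ recording the temporal distance between the skewed beta region and the remaining input punctures. At $T=0$ the skewed region is adjacent to the other punctures, recovering $\Gamma_{\a}^{\scB_{\Lambda}\to\scB_{\Lambda}'}$. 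As $T\to\infty$ the moduli space degenerates in the SFT sense into a two-level configuration: an upper level consisting of an $H$-skewed $k$-gon with boundary only on the $\bs_{\veps_i}$ and their flows $\Phi_t(\bs_{\veps_i})$, and a lower level consisting of an ordinary holomorphic $(n-k+2)$-gon on the Heegaard multi-diagram $(\Sigma,\as,\bs_{\veps_0},\dots,\bs_{\veps_i},\Phi(\bs_{\veps_i}),\dots,\Phi(\bs_{\veps_n}))$.

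Summing the upper-level contributions over $k$ and over increasing sequences $\veps_0<\cdots<\veps_n$ produces exactly the morphism $\theta^H\colon\scB_{\Lambda}\to\scB_{\Lambda}'$ of Lemma~\ref{lem:thetaH-monogons}, which by that lemma is chain homotopic to the canonical equivalence used to define $\Psi_{\a}^{\scB_{\Lambda}\to\scB_{\Lambda}'}$. The lower-level contributions therefore assemble to $\Psi_{\a}^{\scB_{\Lambda}\to\scB_{\Lambda}'}$ applied to $\theta^H$, and counting the ends of the 1-dimensional part of the $T$-family gives the homotopy $\Gamma_{\a}^{\scB_{\Lambda}\to\scB_{\Lambda}'}\simeq \Psi_{\a}^{\scB_{\Lambda}\to\scB_{\Lambda}'}$. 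Admissibility and finiteness of the contributing classes mod $(U_1,\dots,U_\ell)^N$ follow from the bijection argument of Lemma~\ref{lem:continuation-map-admissibility}. The statement for $\Gamma_{\a\to\a'}^{\scB_{\Lambda}'}$ is then obtained by the symmetric argument, swapping the roles of alpha and beta; since $\as,\as'$ do not form a hypercube, only $k=1$ appears in the degeneration, and the relevant skewed bigon count is identified with the canonical top-graded cycle $\theta_{\a',\a}^+\in\ve{\CF}^-(\as',\as)$ by the argument of Lipshitz \cite{LipshitzCylindrical}*{Proof of Proposition~11.4} already invoked in Lemma~\ref{lem:thetaH-monogons}.

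The main obstacle is checking that the SFT-style neck stretching is compatible with the algebraic weights carried by the polygons, namely the powers of $U_i$, the Alexander-grading shifts induced by the local systems $E_\veps$, and the monodromy contributions recorded by the shadows $\cS$. What makes this step go through is that every boundary component of the upper-level skewed polygon lies on beta curves (respectively, alpha curves), so its intersection number with each shadow $S_i$ vanishes; the upper-level morphism therefore acts as the identity on $E_\veps$, and the total weight of a broken configuration factors as a product of the weights on the two levels. This matches precisely the weights appearing in the polygon map $\Psi$, so the degeneration-and-counting argument yields a genuine chain homotopy on the completed complex.
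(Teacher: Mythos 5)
Your endgame is the right one and matches the paper's: identify the boundary of a one-parameter degeneration of the $\Gamma$ moduli space with $\mu_2^{\Tw}(-,\theta^H)$, where $\theta^H\colon \scB_{\Lambda}\to\scB_{\Lambda}'$ is the morphism built from skewed polygons with boundary only on beta curves, and then invoke Lemma~\ref{lem:thetaH-monogons} to replace $\theta^H$ by the canonical equivalence defining $\Psi_{\a}^{\scB_{\Lambda}\to\scB_{\Lambda}'}$. Your remarks about the weights are also fine: the vanishing of $\#(\d_{\b}(\psi)\cap S_i)$ for beta-only skewed polygons is exactly what makes the local-system bookkeeping work, and the alpha-curve case reducing to a single Lipshitz-type bigon count is consistent with the paper's ``same argument works.''

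The gap is in the degeneration parameter itself. The moduli space defining $\Gamma_{\a}^{\scB_{\Lambda}\to\scB_{\Lambda}'}$ already ranges over all puncture positions $t_1<\cdots<t_n$ with the dynamic region fixed at $[-1,1]$, so there is no constrained ``$T=0$'' slice in which the skewed region is ``adjacent to the other punctures'' that recovers $\Gamma$; and if you instead realize your parameter by translating the dynamic region (or the punctures) off to $\pm\infty$, the resulting breaking is a strip-splitting at the ends of $[0,1]\times\R$, whose broken-off level necessarily carries $\bT_{\a}$ boundary as well as beta boundary. Such a configuration computes a composition of the form (continuation bigon for $\bs_{\veps}\to\Phi(\bs_{\veps})$) $\circ$ (ordinary hypercube pairing), not $\mu_2^{\Tw}(-,\theta^H)$: in particular the higher-length components $\theta^H_{\veps,\veps'}$ with $\veps<\veps'$ never appear. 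To produce an upper level with boundary \emph{only} on beta curves you must make the dynamic boundary condition concentrate at an interior point of the beta edge, which is what the paper achieves by replacing $\Phi_t$ with $\Phi_{t/r}$ and letting $r\to 0$: the isotopy is compressed into $[-r,r]$, and in the limit a polygon with purely beta (and $\Phi(\bs)$) boundary bubbles off at the point $(1,0)$, carrying with it whichever inputs $\theta_{\veps_{i-1},\veps_i}$ collide with the dynamic region. If you replace your parameter $T$ with this rescaling parameter $r$ (and note, as the paper does, that breakings at interior values of the parameter either cancel in pairs or assemble into the chain homotopy), the rest of your argument goes through.
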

\begin{proof} We focus on the proof that $ \Gamma_{\a}^{\scB_{\Lambda}\to \scB_{\Lambda}'}\simeq \Psi_{\a}^{\scB_{\Lambda}\to \scB_{\Lambda}'}$. The proof is an elaboration on the Lipshitz construction to relate the continuation map on Floer complexes with a holomorphic triangle map \cite{LipshitzCylindrical}*{Proposition~11.4}. The idea is again to introduce a new parameter $r\in (0,1)$ which rescales the dynamic dynamic region of the boundary; equivalently, we replace the isotopy $\Phi_{t}$ with $\Phi_{t/r}$. We call such polygons $H^r$-skewed, and we consider the 1-dimensional moduli space of such curves, viewed as a parametrized moduli space over $r\in (0,1)$. The limiting curves appearing as $r\to 1$ correspond exactly to the map $\Gamma_{\a}^{\scB_{\Lambda}\to \scB_{\Lambda}'}$. Curve breakings which occur at $r\in (0,1)$ either cancel with other breakings (if there is one level which contains both alpha and beta curves) or contribute to the chain homotopy (if there are two levels which contain alpha and beta curves). The ends which appear as $r\to 0$ correspond exactly to the map $\mu_2^{\Tw}(-,\theta^H)$, where $\theta^H\in \ve{\CF}^-(\scB_{\Lambda},\scB_{\Lambda}')$ is the morphism considered in Lemma~\ref{lem:thetaH-monogons}, which is gotten by counting $H$-skewed holomorphic polygons with boundary components on the beta curves. By definition, the map $\mu_2^{\Tw}(-,\theta^H)$ is equal to the map   $\Psi_{\a}^{\scB_{\Lambda}\to \scB_{\Lambda}'}$. The same argument works for the map $\Psi_{\a\to \a'}^{\scB_{\Lambda}\to \scB_{\Lambda}'}$.
\end{proof}

We recall a final lemma about diffeomorphism groups of compact manifolds and the subgroup generated by Hamiltonian isotopies.. Let $\Sigma$ be a compact surface with a (potentially empty) set of basepoints $\ps\subset \Sigma$. Write $\Diff_0(\Sigma,\ps)$ denote the group of diffeomorphisms which are the identity on some neighborhood of $\ps$, and which are smoothly isotopic to the identity through such diffeomorphisms. If $H$ is a Hamiltonian function (i.e. a smooth map $H\colon \Sigma\to \R$), write $X^{H}$ for the vector field given by
\[
d H=\iota_{X^H}(\omega).
\]
Write $\Theta^{H}_t\colon \Sigma\to \Sigma$ for the time $t$ flow of the vector field $X^H$. If $H$ is zero in a neighborhood of $\ve{p}$, then $\Theta^H_t\in \Diff_0(\Sigma,\ps)$.

\begin{lem}
\label{lem:Hamiltonian-isotopy} Suppose that $(\Sigma,\ps)$ is a pointed, compact surface as above and let $\phi\in \Diff_0(\Sigma,\ps)$. Then there is a sequence of symplectic forms $\omega_1,\dots, \omega_n$ on $\Sigma$ and Hamiltonian functions $H_1,\dots, H_{n}$ which vanish on a neighborhood of $\ps$, so that
\[
\phi=\Theta_1^{H_n}\circ \cdots \circ \Theta^{H_n}_1.
\]
\end{lem}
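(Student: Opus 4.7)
The plan is to combine a fragmentation argument in the diffeomorphism group with Moser's theorem and the triviality of $H^1_c$ of a disk. First I would invoke the standard fragmentation lemma for $\Diff_0$: given an isotopy from $\id$ to $\phi$, by choosing a finite cover of $\Sigma \setminus \nu(\ps)$ by topological disks $D_1,\dots,D_k$, a partition of unity subordinate to this cover, and a sufficiently fine subdivision of the isotopy in time, one writes $\phi = \phi_N \circ \cdots \circ \phi_1$ with each $\phi_i$ compactly supported in one of the disks $D_{j(i)}$ and isotopic to the identity through diffeomorphisms compactly supported in that disk.

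Second, I would reduce each compactly supported factor $\psi$ in a disk $D \subset \Sigma\setminus\nu(\ps)$ to a product of Hamiltonian time-$1$ flows for varying symplectic forms. Fix a background form $\omega_\ast$ on $\Sigma$ supported away from $\ps$ and non-degenerate on $D$, and set $\omega' := \psi^\ast\omega_\ast$. The difference $\omega' - \omega_\ast$ is compactly supported in $D$ and integrates to zero, hence (since $H^2_c(D)=\mathbb{R}$ via integration) is exact with a compactly supported primitive. Moser's theorem in its compactly supported form then yields a diffeomorphism $\beta$ compactly supported in $D$ with $\beta^\ast\omega_\ast = \omega'$, so that $\psi\circ\beta^{-1}$ is a symplectomorphism of $(\Sigma,\omega_\ast)$ supported in $D$. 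Because $H^1_c(D)=0$, this symplectomorphism equals $\Theta_1^{H_1}$ for some (time-independent) Hamiltonian $H_1$ with respect to $\omega_\ast$, supported in $D$. Thus $\psi = \Theta_1^{H_1}\circ \beta$, reducing the problem to expressing the Moser diffeomorphism $\beta$ itself as a finite product of time-$1$ Hamiltonian flows for varying forms.

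The main technical obstacle is this last step: the Moser diffeomorphism $\beta$ is not in general symplectic for any fixed form, since its generating vector field $Y_s$ satisfies $\iota_{Y_s}\omega_s = -\eta$ with $d\eta = \omega'-\omega_\ast \neq 0$, so $Y_s$ is not even $\omega_s$-symplectic. To handle it, I would subdivide the Moser time interval $[0,1]$ into many small subintervals, writing $\beta$ as a composition of near-identity pieces $\gamma$ compactly supported in $D$; for each such $\gamma$, one constructs a $\gamma$-invariant symplectic form $\omega_\gamma = f_\gamma\omega_\ast$ by solving the transport equation $f_\gamma\circ\gamma = f_\gamma/J_\gamma$ for a positive conformal factor along $\gamma$-orbits (using that $D$ is contractible to arrange positivity and global existence), and realizes $\gamma$ as a Hamiltonian time-$1$ flow for $\omega_\gamma$ via the $H^1_c(D)=0$ argument. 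Composing these pieces with the earlier $\Theta_1^{H_i}$ recovers $\phi$ as a finite product of time-$1$ Hamiltonian flows for varying symplectic forms on $\Sigma$. The hard part will be the control of the conformal factor $f_\gamma$ near fixed points of $\gamma$; this requires choosing the subdivision fine enough and handling orbits that stay inside the support by a direct perturbative analysis, which is standard in the theory but requires care.
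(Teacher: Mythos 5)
Your overall architecture (fragmentation into disk-supported pieces, then Moser, then a cohomological argument on the disk) is a genuinely different route from the paper's, but it has two gaps that I do not think can be repaired within this framework.

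First, the step ``because $H^1_c(D)=0$, this symplectomorphism equals $\Theta_1^{H_1}$ for some time-independent Hamiltonian'' is not correct. The vanishing of $H^1_c(D)$ shows that a compactly supported \emph{symplectic isotopy} of the disk is generated by a \emph{time-dependent} compactly supported Hamiltonian; it does not show that its time-one map is the time-one flow of a single autonomous function $H\colon \Sigma\to \R$, which is what the lemma (with the paper's definition of ``Hamiltonian function'') requires. Autonomous Hamiltonian diffeomorphisms do not form a group, and the time-one map of a generic time-dependent flow is not autonomous. It is true that autonomous Hamiltonian diffeomorphisms generate $\Ham_c(D)$, but the standard proof of that fact is itself a normality-plus-simplicity argument — i.e.\ exactly the global argument you are trying to avoid — so this step cannot be waved through as a consequence of $H^1_c(D)=0$. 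Second, the treatment of the Moser diffeomorphism is fatally flawed: a diffeomorphism $\gamma$ with an interior fixed point $p$ at which the Jacobian $J_\gamma(p)\neq 1$ preserves \emph{no} smooth positive $2$-form, since the transport equation $f_\gamma(p)=f_\gamma(p)/J_\gamma(p)$ forces $f_\gamma(p)=0$. Being $C^\infty$-close to the identity does not exclude such fixed points (a small compactly supported contraction is a counterexample), so no amount of subdividing the Moser isotopy produces pieces admitting invariant symplectic forms. (A smaller point: a symplectic form on a surface is an area form, so it cannot be ``supported away from $\ps$''; one should take a global area form and only require the Hamiltonians to vanish near $\ps$.)

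For comparison, the paper's proof is a two-line soft argument: the subgroup $G\subset \Diff_0(\Sigma,\ps)$ generated by time-one flows $\Theta_1^{H}$ (over \emph{all} choices of $\omega$ and $H$ vanishing near $\ps$) is normal, because $\Phi\circ\Theta_1^H\circ\Phi^{-1}$ is the time-one flow of the Hamiltonian $H\circ\Phi^{-1}$ with respect to the symplectic form $(\Phi^{-1})^*\omega$ — here the freedom to vary the symplectic form is exactly what makes $G$ normal under arbitrary (not just symplectic) conjugation. Since $\Diff_0(\Sigma,\ps)$ is simple by Thurston's theorem applied to $\Sigma\setminus\ps$, and $G$ is nontrivial, $G$ is the whole group. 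If you want a constructive proof you would still need to import a perfectness or simplicity input somewhere; as written, your argument is missing that ingredient at both of the points identified above.
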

\begin{proof} We consider the subgroup $G\subset \Diff_0(\Sigma,\ps)$ generated by the time 1 flows of such Hamiltonian vector fields. We observe that this is a normal subgroup by the following argument. Suppose $H$ is a Hamiltonian function and $\omega$ is a symplectic form on $\Sigma$. Then
\[
\Phi \circ \Theta^H_t\circ \Phi^{-1}
\]
is the time $t$ flow of the vector field $(\Phi^{-1})^*(X^H)$. Furthermore, $(\Phi^{-1})^*(X^H)$ is the Hamiltonian vector field for the function $H\circ \Phi^{-1}$ with respect to the symplectic form $(\Phi^{-1})^*(\omega)$. Therefore $G$ is a normal subgroup. It follows from Thurston's work \cite{Thurston_Simple}*{Theorem~1} applied to the manifold $\Sigma\setminus \ps$ that  $\Diff_0(\Sigma,\ps)$ is a simple group, and therefore $G=\Diff_0(\Sigma,\ps)$. See \cite{Banyaga_Diffeo_Book}*{Theorem~2.1.1} for a detailed account of Thurston's theorem.
\end{proof}

We can now prove Proposition~\ref{prop:continuity}: 

\begin{proof}[Proof of Proposition~\ref{prop:continuity}] Since the maps for diffeomorphisms of the Heegaard surface are functorial under composition, by using Lemma~\ref{lem:Hamiltonian-isotopy} we may reduce the claim to when $\phi$ is a small Hamiltonian isotopy for some symplectic structure on $\Sigma$. By definition, the map $\cX(\phi)$ is the composition of the canonical diffeomorphism map and a polygon counting map:
\[
\phi_*:=\Psi_{\a'\to \a}^{\scB_{\Lambda}}\circ \Psi_{\a'}^{\scB_{\Lambda}'\to \scB_{\Lambda}}\circ \cX^{\can}(\phi).
\]
By Lemmas~\ref{lem:Gamma-Psi-can}, ~\ref{lem:homotopy-Gamma-Gamma-a-b} and \ref{lem:continuation=polygons}, there is a chain homotopy
\[
\Psi_{\a'\to \a}^{\scB_{\Lambda}}\circ \Psi_{\a'}^{\scB_{\Lambda}'\to \scB_{\Lambda}}\simeq (\cX^{\can}(\phi))^{-1}
\]
so $\cX(\phi)\simeq (\cX(\phi)^{\can})^{-1}\circ \cX(\phi)^{\can}=\id$. This completes the proof.
\end{proof}

\subsection{Simple handleswaps}
\label{sec:handleswap}
We now address simple handleswap invariance. 

\begin{figure}[h]
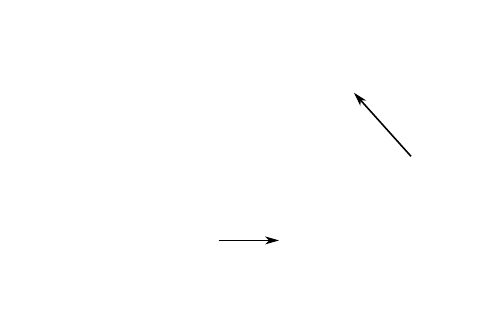
\caption{A simple handleswap, on the level of Heegaard diagrams.}
\label{fig_naturality_31}
\end{figure}

\begin{prop} Simple handleswap loops induce trivial monodromy on the link surgery complexes.
\end{prop}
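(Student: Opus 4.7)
The plan is to reduce the claim to the standard simple handleswap invariance for ordinary Heegaard Floer homology, proved by Juh\'asz, Thurston and the last author \cite{JTNaturality}, via a neck-stretching argument analogous to the one used for stabilization invariance in Lemma~\ref{lem:stabilization}.

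First, I would observe that a simple handleswap takes place in a genus $2$ subregion $\Sigma_2 \subset \Sigma$ of the Heegaard surface which is disjoint from the distinguished meridional disks $D_1, \dots, D_\ell$, the chosen knot shadows $\cS$, and all basepoints in $\ws \cup \zs$. Consequently, the handleswap neither modifies the local systems $E_\veps$ nor the special meridional curves $\b_{0,i}, \b_{1,i}$, and it preserves the Alexander multi-gradings of Section~\ref{sec:Alexander-grading}. In particular, the intersection pattern on $\Sigma_2$ is identical to the one appearing in the simple handleswap analyzed in \cite{JTNaturality}.

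Next, I would apply a neck-stretching argument at a connected sum circle decomposing $\Sigma = \Sigma' \# \Sigma_2$, where $\Sigma'$ contains the link $L$, the meridional disks, and the shadows. By stretching the neck sufficiently, or equivalently by working with infinitely stretched necks as in \cite{HHSZExact}*{Section~10} to control admissibility, every holomorphic polygon contributing to a transition map in the handleswap loop factors as a pair consisting of a polygon on $\Sigma'$ and a polygon on $\Sigma_2$. On $\Sigma'$ the attaching curves, shadows, and local systems are unchanged throughout the loop, so the $\Sigma'$-factor of each transition map counts only constant polygons and contributes the identity on its tensor factor. On $\Sigma_2$, the composition of the three transition maps comprising the loop reduces exactly to the model count considered in \cite{JTNaturality}, which is chain homotopic to the identity. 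Assembling these tensor factors using the fact that the transition maps $\Psi_{\scH\to \scH'}$ are themselves defined by holomorphic polygon counts then yields the desired chain homotopy to the identity.

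The main obstacle I anticipate is that the transition maps $\Psi_{\scH\to \scH'}$ here are polygon counts against hypercubes $\scB_\Lambda$ of Lagrangians \emph{with local systems}, rather than individual Lagrangians, so one must verify that the neck-stretching degeneration cleanly splits the higher length chains of these hypercubes. Because the handleswap region $\Sigma_2$ is disjoint from the shadows, however, the local systems restrict trivially to the $\Sigma_2$ factor, and the length-$\geq 2$ chains of $\scB_\Lambda$ are entirely supported on $\Sigma'$-side intersection points; this should make the required tensor factorization essentially automatic once the neck is stretched sufficiently. A secondary subtlety is ensuring weak admissibility with respect to every complete collection of basepoints $\qs \subset \ws \cup \zs$ throughout the degeneration, but this is handled by the now-standard infinitely-stretched-neck framework of \cite{HHSZExact}*{Section~10}.
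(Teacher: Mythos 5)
Your proposal is correct and follows essentially the same route as the paper: both arguments stretch the neck at the genus-$2$ handleswap region (which is disjoint from the meridional disks, shadows and basepoints, so the local systems and the higher-length chains of $\scB_{\Lambda}$ are unaffected) and reduce to the standard model computation there. The one point you elide is that the attaching curves away from the handleswap region are not literally unchanged during the loop but undergo small translations at each stage, so the residual factor is a small-isotopy/continuation map rather than a count of constant polygons; the paper closes exactly this step by destabilizing the polygon counts to the unstabilized diagram and then invoking the continuity result, Proposition~\ref{prop:continuity}.
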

\begin{proof} The proof for the link surgery formula is an extension of the proof in the setting of the ordinary Heegaard Floer complexes \cite{JTNaturality}*{Theorem~9.30}. To simplify the notation we consider a parallel situation involving general hypercubes of attaching curves, though the same analysis easily implies the analogous result for the link surgery formula. We consider two hypercubes of attaching curves $\cA$ and $\cB$  on a pointed Heegaard surface $(\Sigma,\ws)$ and a distinguished point $p\in \Sigma$, not on any of the attaching curves. We assume that the curves of $\cA$ are pairwise handleslide equivalent, and similarly for $\cB$. We stabilize $\Sigma$ with a genus 2 surface $\Sigma_2$. In $\Sigma_2$, we consider four sets of attaching curves, which we denote $\as_0,$ $\as_0'$, $\bs_0$ and $\bs_0'$, as shown in Figure~\ref{fig_naturality_31}. Write $\cA_0$, $\cA_0'$, $\cB_0$ and $\cB_0'$ for the hypercubes on $\Sigma \# \Sigma_0$ obtained by stabilizing. The handleswap loop gives a diffeomorphism map
\[
\phi_*\colon \ve{\CF}^-(\cA_0,\cB_0)\to \ve{\CF}^-(\cA_0,\cB_0).
\]
This map is the composition of a canonical diffeomorphism map, followed by two polygon counting maps
\begin{equation}
\begin{tikzcd}[column sep=2cm]
\ve{\CF}^-(\cA'_0,\cB_0')\ar[r, "{\mu_2^{\Tw}(-,\theta_{\cB_0',\cB_0})}"] &\ve{\CF}^-(\cA_0',\cB_0) \ar[r, "{\mu_2^{\Tw}(\theta_{\cA_0,\cA_0'},-)}"]& \ve{\CF}^-(\cA_0,\cB_0)
\end{tikzcd}
\label{eq:composition-b00-a00-handleswap}
\end{equation}
See \cite{ZemBordered}*{Section~13.1} for a related discussion of this construction in the context of basepoint moving diffeomorphisms.

We now stretch the neck on the stabilization region. Using the stabilization results from \cite{HHSZNaturality}*{Proposition~6.5}, we obtain identifications
\[
\ve{\CF}^-(\cA,\cB)\iso \ve{\CF}^-(\cA_0,\cB_0)\iso \ve{\CF}^-(\cA_0',\cB_0)\iso \ve{\CF}^-(\cA_0',\cB_0'),
\]
where here we are identifying attaching curves with their small translations in the various complexes.
Furthermore, the stabilization result of \cite{HHSZNaturality}*{Proposition~6.5} identifies the two maps in Equation~\eqref{eq:composition-b00-a00-handleswap} with the corresponding polygon counts on the unstabilized diagrams. The unstabilized Heegaard diagrams encode a small isotopy of the alpha and beta curves, so by continuity, Proposition~\ref{prop:continuity}, the induced map is chain homotopic to the identity.
\end{proof}

\section{Proof of the equivariant surgery formula}
\label{sec:proof-equivariant-surgery-formula}

In this section, we prove the equivariant surgery formula, which we restate below:

\begin{thm}
\label{thm:equivariant-surgery-theorem-text-body}
 Let $(Y,L,\Lambda)$ be a Morse framed link, and let $\phi\colon(Y,L)\to (Y,L)$ be a diffeomorphism of strongly framed links. Let $\Phi\colon Y_{\Lambda}(L)\to Y_{\Lambda}(L)$ denote the diffeomorphism induced by $\phi$. Then there is a homotopy equivalence
\[
\Gamma\colon \ve{\CF}^-(Y_{\Lambda}(L))\to \cC_{\Lambda}(Y,L)
\]
which intertwines $\CF(\Phi)$ with the the map $\cC(\phi)$, defined to be the composition of two maps:
\[
\begin{tikzcd}[column sep=.7cm]
\cX_{\Lambda}(Y,L)^{\cL_\ell}\boxtimes {}_{\cL_\ell} \cD^\ell
	 \ar[r, "\cX(\phi)\boxtimes \bI"] &
\cX_{\Lambda}(Y,L)^{\cL_\ell}\boxtimes {}_{\cL_\ell} [\scE_\phi]^{\cL_\ell}\boxtimes {}_{\cL_\ell} \cD^\ell
	\ar[r, "\bI\boxtimes \Omega_\phi"]
&\cX_{\Lambda}(Y,L)^{\cL_\ell}\boxtimes {}_{\cL_\ell} \cD^\ell.
\end{tikzcd}
\]
\end{thm}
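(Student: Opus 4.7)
The plan is to deduce Theorem~\ref{thm:equivariant-surgery-theorem-text-body} from the naturality of the link surgery complex (Theorem~\ref{thm:naturality-type-D}) combined with the basepoint-invariance results of Section~\ref{sec:pointed-diffeos}, reducing the statement to a compatibility between a canonical chain-level intertwining and a homology-level identification of multi-pointed and singly-pointed Floer theories.

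First I would work on the chain level on a single meridional surgery diagram. After an isotopy through strongly framed diffeomorphisms supported in a neighborhood of $L$, we may assume that $\phi$ preserves a meridional surgery diagram $\scH=(\Sigma,\as,\scB_\Lambda,\ws,\zs,\cS)$ for $(Y,L,\Lambda)$ up to a permutation of link components and, for components whose orientation is reversed, a swap of the corresponding pair $(w_i,z_i)$ together with the edge label $\sigma\leftrightarrow\tau$. (In the general case, the argument is the same, now applied to $\scH$ and its image $\phi(\scH)$ composed with a naturality transition map from Theorem~\ref{thm:naturality-type-D}.) By construction and by Lemma~\ref{lem:change-orientation}, the pullback action of $\phi$ on $\cC_\Lambda(\scH)$ is exactly the composition $(\bI\boxtimes\Omega_\phi)\circ(\cX(\phi)\boxtimes\bI)$ appearing in the statement: the elliptic bimodule $[E]$ records the $w_i\leftrightarrow z_i$ swaps and $[\bI_{\varrho^{-1}}]$ records the permutation, while the collapse $\Omega_\phi=\varpi_J\circ\varrho$ on $\cD^\ell$ is precisely the dualization along the core basepoint.

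Second I would identify the homotopy equivalence $\Gamma$ with the chain map induced by the canonical hypercube morphism $\Theta_\Gamma\colon \bs_\Lambda\to\scB_\Lambda$ from Section~\ref{sec:background-link-surgery}. The key observation is that $\Theta_\Gamma$ is natural for symmetries: since $\phi$ (after the above isotopy) preserves $\scH$ up to the decorations recorded by $[\scE_\phi]$, and since the length-one components of $\Theta_\Gamma$ are characterized by Proposition~\ref{prop:top-degree-cycle} as the unique top-graded cycles in specified Alexander gradings, one concludes $\phi_*\Theta_\Gamma\simeq \Theta_\Gamma$ after tensoring with ${}_{\cK}\cD^\ell$. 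Formally this produces a homotopy commutative square
\[
\begin{tikzcd}[column sep=1.5cm]
\ve{\CF}^-(\as,\bs_\Lambda,\qs) \ar[r, "\mu_2(-{,}\Theta_\Gamma)"] \ar[d, "\phi_*"] & \cC_\Lambda(\scH) \ar[d, "\cC(\phi)"]\\
\ve{\CF}^-(\as,\bs_\Lambda,\qs') \ar[r, "\mu_2(-{,}\Theta_\Gamma)"] & \cC_\Lambda(\scH),
\end{tikzcd}
\]
where $\qs$ is a complete collection of basepoints placed on the cores of the surgery solid tori and $\qs'$ is its image under $\phi$. The left vertical map is the multi-pointed Heegaard Floer diffeomorphism map of $\Phi\colon Y_\Lambda(L)\to Y_\Lambda(L)$.

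The main obstacle is passing from this multi-pointed diffeomorphism action to the singly pointed map $\ve{\CF}(\Phi)$ on homology; as anticipated by the remark following Theorem~\ref{thm:equivariant-surgery-links}, this identification is inherently a homological, not chain-level, statement because $\phi$ may shuffle the $U_i$ variables. Here I would invoke Theorem~\ref{thm:multi-pointed-diffeos} to trade the multi-pointed diagram for the singly pointed one on homology, and Theorem~\ref{thm:swap-diffeo-on-homology} to handle the basepoint swaps induced by the permutation $\varrho$ along arcs contained in the surgery solid tori. These results together identify the two vertical maps in the square above with $\ve{\HF}(\Phi)$ and $\cC(\phi)_*$ respectively, after passing to homology and setting all $U_i$ equal, completing the proof of the theorem.
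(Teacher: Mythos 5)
Your proposal follows essentially the same route as the paper's proof: realize $\Gamma$ as $\mu_2^{\Tw}(-,\Theta)$ for the canonical morphism $\Theta\colon \bs_{\Lambda}\to \scB_{\Lambda}$, observe that pushing the entire package forward under $\phi$ commutes tautologically, factor the resulting endomorphism through the bimodules $[E_J]$ and $[\bI_{\varrho^{-1}}]$, and close the loop using the naturality transition maps. Two points, however, deserve attention. First, your opening reduction --- that after an isotopy supported near $L$ the diffeomorphism preserves the meridional surgery diagram up to permutations and basepoint swaps --- is false as stated: no isotopy supported in a neighborhood of $L$ can make $\phi$ preserve the Heegaard surface, so the ``general case'' of your parenthetical is the only case that ever occurs. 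The paper handles it by noting that $r_J\phi\cH$ is again a meridional surgery diagram for $(Y,L,\Lambda)$, connecting it to $\cH$ via the moves of Proposition~\ref{prop:meridional-diagram-moves}, and recording for each move a homotopy-commuting square relating the transition maps on $\ve{\CF}^-(Y_{\Lambda}(L))$ and on $\cC_{\Lambda}$ through $\Gamma$; this compatibility of the transition maps with $\Gamma$ is a genuine (if mild) strengthening of Theorem~\ref{thm:naturality-type-D} that your parenthetical implicitly assumes, and it is obtained by running the hypercube machinery on $\Cone(\bs_{\Lambda}\to\scB_{\Lambda})$ rather than on $\scB_{\Lambda}$ alone. (Relatedly, Proposition~\ref{prop:top-degree-cycle} characterizes the edge chains between the $\bs_{\veps}$, not the morphism $\bs_{\Lambda}\to\bs_{0,\dots,0}$; the relevant characterization is the one in Section~\ref{sec:background-link-surgery}.) Second, your final paragraph invoking Theorems~\ref{thm:multi-pointed-diffeos} and~\ref{thm:swap-diffeo-on-homology} is not part of the proof of this statement: the theorem here is the chain-level assertion for the multi-pointed complex computed from $(\Sigma,\as,\bs_{\Lambda},\ws)$, and the passage to the singly pointed theory on homology is carried out separately (it is the content of Theorem~\ref{thm:equivariant-surgery-links} and the corollary closing Section~\ref{sec:pointed-diffeos}).
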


\begin{proof}
Given a meridional surgery diagram  
\[
\cH=(\Sigma, \as, \bs_{\Lambda}, (\bs_{\veps})_{\veps\in \bE_\ell}, \cS)
\] 
for $(Y,L,\Lambda)$, there is an induced Heegaard diagram \[
\cH_{Y_{\Lambda}(L)}:=(\Sigma,\as,\bs_\Lambda,\ws)
\]
 for $Y_{\Lambda}(L)$, as well as a cube of Heegaard diagrams 
\[
\cH_{(Y,L)}= (\Sigma,\as, \{\bs_{\veps}\}_{\veps\in \bE_\ell}, \ws, \zs, \cS).
\]
We can fill the cube of beta attaching curves $\cH_{(Y,L)}$ to construct a hypercube $\scB_{\Lambda}$ which can compute the link surgery formula.

Note that given a meridional surgery diagram $\cH$, there is a canonical morphism
\[
\Theta\colon \bs_{\Lambda}\to \scB_{\Lambda}
\]
which is given by a pair $\langle \theta_{\Lambda,0}, \Delta\rangle$. Here, $\theta_{\Lambda,0}$ denotes the canonical intersection point of $\bT_{\b_{\Lambda}}\cap \bT_{\b_{0,\dots, 0}}$ and $\Delta$ is the map from $\bF[U_1,\dots, U_\ell]$ to $E_0\otimes \cdots \otimes E_0$ which sends $1$ to $1\otimes \cdots \otimes 1$, and is $\bF[U_1,\dots, U_\ell]$-equivariant. 

We obtain a polygon counting map
\[
\Gamma:=\mu_2^{\Tw}(-,\Theta) \colon \ve{\CF}^-(\cH_{Y_{\Lambda}(L)})\to \cC_{\Lambda}(\cH_{(Y,L)}).
\]
It follows from \cite{ZemExactTriangle}*{Section~6} that $\Theta$ is a cycle, so that $\Gamma$ is a chain map, and furthermore that $\Gamma$ is homotopy equivalence of chain complexes.

Applying the diffeomorphism $\phi$, we obtain another meridional surgery diagram $\phi \cH$. We obtain, tautologically, a hypercube
\begin{equation}
\begin{tikzcd}[column sep=1.5cm]
\ve{\CF}^-(\cH_{Y_{\Lambda}(L)})
	\ar[r, "\Gamma"]
	\ar[d, "\Phi_{\can}"]
&
\cC_{\Lambda}(\cH_{(Y,L)})
	\ar[d, "\cC^{\can}(\phi)"]
\\
\ve{\CF}^-((\phi\cH)_{Y_{\Lambda}(L)})
	\ar[r, "\Gamma"]
&
\cC_{\Lambda}((\phi\cH)_{(Y,r_JL)}).
\end{tikzcd}
\label{eq:canonical-cube}
\end{equation}
(Recall that $J\subset L$ is the image of the components of $L$ on which $\phi$ is orientation reversing). 

Note that on the right hand side of Equation~\eqref{eq:canonical-cube}, the map $\Phi_{\can}$ permutes the indices of $L$ by a permutation $\varrho$. Therefore, if $\cX_{\Lambda}(\cH_{(Y,L)})^{\cL_\ell}$ and $\cX_{\Lambda}((\phi\cH)_{(Y,r_J L)})^{\cL_\ell}$ are the two associated type-$D$ modules, then we can write $\cC^{\can}(\phi)$ as a composition of the following type-$D$ module maps
\begin{equation}
\begin{tikzcd}[row sep=.4cm]
\cX(\cH_{(Y,L)})^{\cL_\ell}\boxtimes {}_{\cL_\ell} \cD^{\ell}
	\ar[d, "\cX^{\can}(\phi)\boxtimes \bI"] 
\\ \cX((\phi\cH)_{(Y,r_J L)})^{\cL_\ell}\boxtimes {}_{\cL_\ell} [\bI_{\varrho^{-1}}]^{\cL_\ell}\boxtimes {}_{\cL_\ell} \cD^{\ell}
	\ar[d, "\bI\boxtimes \varrho_*"]
\\\cX((\phi\cH)_{(Y,r_J L)})^{\cL_\ell}\boxtimes {}_{\cL_\ell} \cD^{\ell}.
\end{tikzcd}
\label{eq:decompose-phi-can}
\end{equation}

Next, we stack the hypercube in Equation~\eqref{eq:canonical-cube} with the following hypercube
\begin{equation}
\begin{tikzcd}[column sep=1.5cm]
\ve{\CF}^-((\phi\cH)_{Y_{\Lambda}(L)})
	\ar[r, "\Gamma"]
	\ar[d, equals]
&
\cC_{\Lambda}((\phi\cH)_{(Y,r_JL)})
	\ar[d, "\bI\boxtimes\varpi_J"]
\\
\ve{\CF}^-((r_J\phi\cH)_{Y_{\Lambda}(L)})
	\ar[r, "\Gamma"]
&
\cC_{\Lambda}((r_J\phi\cH)_{(Y,L)}).
\end{tikzcd}
\label{eq:hypercube-change-orientation}
\end{equation}
This hypercube is obtained by switching the roles of $w_i$ and $z_i$ on the top and bottom components, and switching the orientation of the shadows in the sublink $J\subset L$. Note that this has no effect on the left hand complexes, since $w_i$ and $z_i$ are immediately adjacent on these complexes (and disks are counted with factors of $U_i^{n_{w_i}(\psi)}$). The map $\Gamma$ on the top and bottom rows count the same holomorphic polygons. Compare Lemma~\ref{lem:change-orientation}. 

Next, we observe that $r_J \phi \cH$ is a meridional Heegaard surgery diagram for $(Y, L, \Lambda)$. Therefore, by Proposition~\ref{prop:meridional-diagram-moves}, there is a sequence of Heegaard moves for meridional surgery diagrams from $r_J \phi \cH$ to $\cH$.

We observe that if $\cH^1$ and $\cH^2$ are meridional systems of Heegaard diagrams which differ by one of the moves in Proposition~\ref{prop:meridional-diagram-moves}, then there is a hypercube of chain complexes
\[
\begin{tikzcd}[column sep=1.5cm]
\ve{\CF}^-(\cH^1_{Y_{\Lambda}(L)})
	\ar[r, "\Gamma"]
	\ar[dr,dashed]
	\ar[d, "\Psi_{\cH^1_{Y_{\Lambda}(L)}\to \cH^2_{Y_{\Lambda}(L)}}",swap]
&
\cC_{\Lambda}(\cH_{(Y,L)}^1)
	\ar[d, "\Psi_{\cH_{(Y,L)}^1\to \cH_{(Y,L)}^2}"]
\\
\ve{\CF}^-(\cH^2_{Y_{\Lambda}(L)})
	\ar[r, "\Gamma"]
&
\cC_{\Lambda}(\cH_{(Y,L)}^2).
\end{tikzcd}
\]
Stacking these hypercubes for a sequence of moves from $r_J\phi \cH$ to $\cH$ and then compressing gives a hypercube of the following form:
\begin{equation}
\begin{tikzcd}[column sep=1.5cm]
\ve{\CF}^-((r_J\phi\cH)_{Y_{\Lambda}(L)})
	\ar[r, "\Gamma"]
	\ar[dr,dashed]
	\ar[d,swap, "\Psi_{(r_J\phi\cH)_{Y_{\Lambda}(L)}\to \cH_{Y_{\Lambda}(L)}}"]
&
\cC_{\Lambda}((r_J \phi \cH)_{(Y,L)})
	\ar[d, "\Psi_{(r_J \phi \cH)_{(Y,L)}\to \cH_{(Y,L)}}"]
\\
\ve{\CF}^-(\cH_{Y_{\Lambda}(L)})
	\ar[r, "\Gamma"]
&
\cC_{\Lambda}(\cH_{(Y,L)}).
\end{tikzcd}
\label{eq:hypercube-system-moves}
\end{equation}

We stack the hypercubes in Equations~\eqref{eq:canonical-cube}, \eqref{eq:hypercube-change-orientation} and  \eqref{eq:hypercube-system-moves}. Along the left side of the diagram, we obtain the naturality map
\[
\CF^-(\Phi)\colon \ve{\CF}^-(Y_{\Lambda}(L))\to \ve{\CF}^-(Y_{\Lambda}(L)),
\]
and along the right side we obtain the map $\cC(\phi):=(\bI\boxtimes \Omega_\phi)\circ (\cX(\phi)\boxtimes \bI)$ in the statement.
\end{proof}

\bibliographystyle{custom}
\def\MR#1{}
\bibliography{biblio}

@article {DiPrisa:nonabelian,
    AUTHOR = {Di Prisa, Alessio},
     TITLE = {Equivariant algebraic concordance of strongly invertible
              knots},
   JOURNAL = {J. Topol.},
  FJOURNAL = {Journal of Topology},
    VOLUME = {17},
      YEAR = {2024},
    NUMBER = {4},
     PAGES = {Paper No. e70006, 44},
      ISSN = {1753-8416,1753-8424},
   MRCLASS = {57K10 (57M60 57R85)},
  MRNUMBER = {4822933},
       DOI = {10.1112/topo.70006},
       URL = {https://doi.org/10.1112/topo.70006},
}

@article{levine2023new,
  title={New constructions and invariants of closed exotic 4-manifolds},
  author={Levine, Adam Simon and Lidman, Tye and Piccirillo, Lisa},
  journal={arXiv preprint arXiv:2307.08130},
  year={2023}
}

@article{AKS,
  title={Connected {F}loer homology of covering involutions},
  author={Alfieri, Antonio and Kang, Sungkyung and Stipsicz, Andr{\'a}s I},
  journal={Mathematische Annalen},
  volume={377},
  number={3},
  pages={1427--1452},
  year={2020},
  publisher={Springer}
}

@article{dai20242,
  title={The $(2, 1)$-cable of the figure-eight knot is not smoothly slice},
  author={Dai, Irving and Kang, Sungkyung and Mallick, Abhishek and Park, JungHwan and Stoffregen, Matthew},
  journal={Inventiones mathematicae},
  volume={238},
  number={2},
  pages={371--390},
  year={2024},
  publisher={Springer}
}

@article{sakuma2001surgery,
  title={Surgery description of orientation-preserving periodic maps on compact orientable 3-manifolds},
  author={Sakuma, Makoto},
  journal={Rend. Istit. Mat. Univ. Trieste},
  volume={32},
  number={suppl. 1},
  year={2001}
}

@article {Mallick:surgery,
    AUTHOR = {Mallick, Abhishek},
     TITLE = {Knot {F}loer homology and surgery on equivariant knots},
   JOURNAL = {J. Topol.},
  FJOURNAL = {Journal of Topology},
    VOLUME = {17},
      YEAR = {2024},
    NUMBER = {4},
     PAGES = {Paper No. e70001, 58},
      ISSN = {1753-8416,1753-8424},
   MRCLASS = {57K18},
  MRNUMBER = {4822929},
       DOI = {10.1112/topo.70001},
       URL = {https://doi.org/10.1112/topo.70001},
}

@article {DMZ_Corks,
    AUTHOR = {Dai, Irving and Mallick, Abhishek and Zemke, Ian},
     TITLE = {Gompf's cork and {H}eegaard {F}loer homology},
   JOURNAL = {Int. Math. Res. Not. IMRN},
  FJOURNAL = {International Mathematics Research Notices. IMRN},
      YEAR = {2024},
    NUMBER = {18},
     PAGES = {12663--12682},
      ISSN = {1073-7928,1687-0247},
   MRCLASS = {57R58 (57R55)},
  MRNUMBER = {4798645},
       DOI = {10.1093/imrn/rnae180},
       URL = {https://doi.org/10.1093/imrn/rnae180},
}

@article {milnor-spin,
    AUTHOR = {Milnor, J.},
     TITLE = {Spin structures on manifolds},
   JOURNAL = {Enseign. Math. (2)},
  FJOURNAL = {L'Enseignement Math\'ematique. Revue Internationale. 2e
              S\'erie},
    VOLUME = {9},
      YEAR = {1963},
     PAGES = {198--203},
      ISSN = {0013-8584},
   MRCLASS = {57.10},
  MRNUMBER = {157388},
MRREVIEWER = {M.\ F.\ Atiyah},
}

@article {DMS:equivariant,
    AUTHOR = {Dai, Irving and Mallick, Abhishek and Stoffregen, Matthew},
     TITLE = {Equivariant knots and knot {F}loer homology},
   JOURNAL = {J. Topol.},
  FJOURNAL = {Journal of Topology},
    VOLUME = {16},
      YEAR = {2023},
    NUMBER = {3},
     PAGES = {1167--1236},
      ISSN = {1753-8416,1753-8424},
   MRCLASS = {57K18 (57R55 57R58)},
  MRNUMBER = {4638003},
       DOI = {10.1112/topo.12312},
       URL = {https://doi.org/10.1112/topo.12312},
}

@article {Atiyah-bordism,
    AUTHOR = {Atiyah, M. F.},
     TITLE = {Bordism and cobordism},
   JOURNAL = {Proc. Cambridge Philos. Soc.},
  FJOURNAL = {Proceedings of the Cambridge Philosophical Society},
    VOLUME = {57},
      YEAR = {1961},
     PAGES = {200--208},
      ISSN = {0008-1981},
   MRCLASS = {57.10},
  MRNUMBER = {126856},
MRREVIEWER = {A.\ Dold},
       DOI = {10.1017/s0305004100035064},
       URL = {https://doi.org/10.1017/s0305004100035064},
}

@article {DHSThomcob,
    AUTHOR = {Dai, Irving and Hom, Jennifer and Stoffregen, Matthew and
              Truong, Linh},
     TITLE = {An infinite-rank summand of the homology cobordism group},
   JOURNAL = {Duke Math. J.},
  FJOURNAL = {Duke Mathematical Journal},
    VOLUME = {172},
      YEAR = {2023},
    NUMBER = {12},
     PAGES = {2365--2432},
      ISSN = {0012-7094,1547-7398},
   MRCLASS = {57Q60 (57R58)},
  MRNUMBER = {4654053},
MRREVIEWER = {Greg\ Friedman},
       DOI = {10.1215/00127094-2022-0082},
       URL = {https://doi.org/10.1215/00127094-2022-0082},
}

@article {JTNaturality,
    AUTHOR = {Juh\'asz, Andr\'as and Thurston, Dylan and Zemke, Ian},
     TITLE = {Naturality and mapping class groups in {H}eegard {F}loer
              homology},
   JOURNAL = {Mem. Amer. Math. Soc.},
  FJOURNAL = {Memoirs of the American Mathematical Society},
    VOLUME = {273},
      YEAR = {2021},
    NUMBER = {1338},
     PAGES = {v+174},
      ISSN = {0065-9266,1947-6221},
      ISBN = {978-1-4704-4972-8; 978-1-4704-6805-7},
   MRCLASS = {57R58 (57K31 57R25 57R65)},
  MRNUMBER = {4337438},
       DOI = {10.1090/memo/1338},
       URL = {https://doi.org/10.1090/memo/1338},
}

@article {JDisks,
    AUTHOR = {Juh\'asz, Andr\'as},
     TITLE = {Holomorphic discs and sutured manifolds},
   JOURNAL = {Algebr. Geom. Topol.},
  FJOURNAL = {Algebraic \& Geometric Topology},
    VOLUME = {6},
      YEAR = {2006},
     PAGES = {1429--1457},
      ISSN = {1472-2747},
}

@article {LipshitzCylindrical,
    AUTHOR = {Lipshitz, Robert},
     TITLE = {A cylindrical reformulation of {H}eegaard {F}loer homology},
   JOURNAL = {Geom. Topol.},
  FJOURNAL = {Geometry and Topology},
    VOLUME = {10},
      YEAR = {2006},
     PAGES = {955--1097},
}

@article{HMInvolutive,
	title = {Involutive {H}eegaard {F}loer homology},
	author = {Hendricks, Kristen and Manolescu, Ciprian},
	journal = {Duke Math. J.},
    volume = {166},
    number = {7},
    pages = {1211-1299},
	year = {2017}
}

@article {HMZConnectedSum,
    AUTHOR = {Hendricks, Kristen and Manolescu, Ciprian and Zemke, Ian},
     TITLE = {A connected sum formula for involutive {H}eegaard {F}loer
              homology},
   JOURNAL = {Selecta Math. (N.S.)},
  FJOURNAL = {Selecta Mathematica. New Series},
    VOLUME = {24},
      YEAR = {2018},
    NUMBER = {2},
     PAGES = {1183--1245},
      ISSN = {1022-1824},
   MRCLASS = {57M27 (57R58)},
  MRNUMBER = {3782421},
       DOI = {10.1007/s00029-017-0332-8},
}

@UNPUBLISHED{MOIntegerSurgery,
	title = {Heegaard {F}loer homology and integer
	 surgeries on links},
	author = {Manolescu, Ciprian and Ozsv{\'a}th, Peter S.},
	note = {\url{arXiv:1011.1317}},
	year = {2010},
}

@incollection {Mdbc,
    AUTHOR = {Montesinos, Jos\'{e} M.},
     TITLE = {Surgery on links and double branched covers of {$S^{3}$}},
 BOOKTITLE = {Knots, groups, and {$3$}-manifolds ({P}apers dedicated to the
              memory of {R}. {H}. {F}ox)},
     PAGES = {227--259. Ann. of Math. Studies, No. 84},
      YEAR = {1975},
   MRCLASS = {57A10},
  MRNUMBER = {0380802},
MRREVIEWER = {R. J. Daverman},
}

@ARTICLE{OSProperties,
author="Ozsv{\'a}th, Peter S. and Szab{\'o}, Zolt{\'a}n",
     TITLE = {Holomorphic disks and three-manifold invariants: properties
              and applications},
   JOURNAL = {Ann. of Math. (2)},
  FJOURNAL = {Annals of Mathematics. Second Series},
    VOLUME = {159},
      YEAR = {2004},
    NUMBER = {3},
         PAGES = {1159--1245},
}

@ARTICLE{OSDisks,
title="Holomorphic disks and topological invariants for closed
three-manifolds",
author="Ozsv\'ath, Peter and Szab\'o, Zolt\'an",
   JOURNAL = {Ann. of Math. (2)},
  FJOURNAL = {Annals of Mathematics. Second Series},
    VOLUME = {159},
      YEAR = {2004},
    NUMBER = {3},
     PAGES = {1027--1158}
}

@article {OSKnots,
    AUTHOR = {Ozsv\'ath, Peter and Szab\'o, Zolt\'an},
     TITLE = {Holomorphic disks and knot invariants},
   JOURNAL = {Adv. Math.},
  FJOURNAL = {Advances in Mathematics},
    VOLUME = {186},
      YEAR = {2004},
    NUMBER = {1},
     PAGES = {58--116},
  }

@article {OSLinks,
    AUTHOR = {Ozsv\'ath, Peter and Szab\'o, Zolt\'an},
     TITLE = {Holomorphic disks, link invariants and the multi-variable
              {A}lexander polynomial},
   JOURNAL = {Algebr. Geom. Topol.},
  FJOURNAL = {Algebraic \& Geometric Topology},
    VOLUME = {8},
      YEAR = {2008},
    NUMBER = {2},
     PAGES = {615--692},
 }

@article {SarkarMaslov,
    AUTHOR = {Sarkar, Sucharit},
     TITLE = {Maslov index formulas for {W}hitney {$n$}-gons},
   JOURNAL = {J. Symplectic Geom.},
  FJOURNAL = {The Journal of Symplectic Geometry},
    VOLUME = {9},
      YEAR = {2011},
    NUMBER = {2},
     PAGES = {251--270},
}

@PHDTHESIS{RasmussenKnots,
author="Rasmussen, Jacob",
title="{F}loer homology and knot complements",
school="Harvard University",
year=2003,
note= "\url{arXiv:math/0306378}"}

@UNPUBLISHED{ZemGraphTQFT,
title={Graph cobordisms and {H}eegaard {F}loer homology},
author={Zemke, Ian},
note={\url{arXiv:1512.01184}},
year={2015},
}

@article {ZemQuasi,
    AUTHOR = {Zemke, Ian},
     TITLE = {Quasistabilization and basepoint moving maps in link {F}loer
              homology},
   JOURNAL = {Algebr. Geom. Topol.},
  FJOURNAL = {Algebraic \& Geometric Topology},
    VOLUME = {17},
      YEAR = {2017},
    NUMBER = {6},
     PAGES = {3461--3518},
      ISSN = {1472-2747},
   MRCLASS = {57M25 (57M27 57R58)},
  MRNUMBER = {3709653},
MRREVIEWER = {Jianfeng Lin},
       DOI = {10.2140/agt.2017.17.3461},
       URL = {https://doi.org/10.2140/agt.2017.17.3461},
}

@article {ZemCFLTQFT,
    AUTHOR = {Zemke, Ian},
     TITLE = {Link cobordisms and functoriality in link {F}loer homology},
   JOURNAL = {J. Topol.},
  FJOURNAL = {Journal of Topology},
    VOLUME = {12},
      YEAR = {2019},
    NUMBER = {1},
     PAGES = {94--220},
      ISSN = {1753-8416},
   MRCLASS = {57M27 (57M25 57R56)},
  MRNUMBER = {3905679},
       DOI = {10.1112/topo.12085},
       URL = {https://doi.org/10.1112/topo.12085},
}

@article {ZemAbsoluteGradings,
    AUTHOR = {Zemke, Ian},
     TITLE = {Link cobordisms and absolute gradings on link {F}loer
              homology},
   JOURNAL = {Quantum Topol.},
  FJOURNAL = {Quantum Topology},
    VOLUME = {10},
      YEAR = {2019},
    NUMBER = {2},
     PAGES = {207--323},
      ISSN = {1663-487X},
   MRCLASS = {57R58 (57M27)},
       DOI = {10.4171/QT/124},
       URL = {https://doi.org/10.4171/QT/124},
}

@article {JZStabilizationDistance,
    AUTHOR = {Juh\'asz, Andr\'as and Zemke, Ian},
     TITLE = {Stabilization distance bounds from link {F}loer homology},
   JOURNAL = {J. Topol.},
  FJOURNAL = {Journal of Topology},
    VOLUME = {17},
      YEAR = {2024},
    NUMBER = {2},
     PAGES = {Paper No. e12338, 112},
      ISSN = {1753-8416,1753-8424},
   MRCLASS = {57K18 (57K45 57R40 57R42)},
  MRNUMBER = {4821354},
       DOI = {10.1112/topo.12338},
       URL = {https://doi.org/10.1112/topo.12338},
}

@article {ZemConnectedSums,
    AUTHOR = {Zemke, Ian},
     TITLE = {Connected sums and involutive knot {F}loer homology},
   JOURNAL = {Proc. Lond. Math. Soc. (3)},
  FJOURNAL = {Proceedings of the London Mathematical Society. Third Series},
    VOLUME = {119},
      YEAR = {2019},
    NUMBER = {1},
     PAGES = {214--265},
      ISSN = {0024-6115},
   MRCLASS = {57M27 (57R56)},
  MRNUMBER = {3957835},
MRREVIEWER = {Sergiy Koshkin},
       DOI = {10.1112/plms.12227},
       URL = {https://doi.org/10.1112/plms.12227},
}

@incollection {Sakumainvertible,
    AUTHOR = {Sakuma, Makoto},
     TITLE = {On strongly invertible knots},
 BOOKTITLE = {Algebraic and topological theories ({K}inosaki, 1984)},
     PAGES = {176--196},
 PUBLISHER = {Kinokuniya, Tokyo},
      YEAR = {1986},
   MRCLASS = {57M25},
  MRNUMBER = {1102258},
}

@article{OSRationalSurgeries,
author = {Ozsv\'ath, Peter and Szab\'o, Zolt\'an},
year = {2011},
pages = {1-68},
title = {Knot {F}loer homology and rational surgeries},
volume = {11},
number={1},
journal = {Algebr. Geom. Topol.}
}

@article {OSIntegerSurgeries,
    AUTHOR = {Ozsv\'{a}th, Peter S. and Szab\'{o}, Zolt\'{a}n},
     TITLE = {Knot {F}loer homology and integer surgeries},
   JOURNAL = {Algebr. Geom. Topol.},
  FJOURNAL = {Algebraic \& Geometric Topology},
    VOLUME = {8},
      YEAR = {2008},
    NUMBER = {1},
     PAGES = {101--153},
      ISSN = {1472-2747},
   MRCLASS = {57R58 (57M27)},
  MRNUMBER = {2377279},
       DOI = {10.2140/agt.2008.8.101},
       URL = {https://doi.org/10.2140/agt.2008.8.101},
}

@article {LOTBordered,
    AUTHOR = {Lipshitz, Robert and Ozsvath, Peter S. and Thurston, Dylan P.},
     TITLE = {Bordered {H}eegaard {F}loer homology},
   JOURNAL = {Mem. Amer. Math. Soc.},
  FJOURNAL = {Memoirs of the American Mathematical Society},
    VOLUME = {254},
      YEAR = {2018},
    NUMBER = {1216},
     PAGES = {viii+279},
      ISSN = {0065-9266},
      ISBN = {978-1-4704-2888-4; 978-1-4704-4748-9},
   MRCLASS = {57R58 (53D40 57M27 57R57)},
  MRNUMBER = {3827056},
MRREVIEWER = {Paolo Ghiggini},
       DOI = {10.1090/memo/1216},
       URL = {https://doi.org/10.1090/memo/1216},
}

@article {NiWu,
    AUTHOR = {Ni, Yi and Wu, Zhongtao},
     TITLE = {Cosmetic surgeries on knots in {$S^3$}},
   JOURNAL = {J. Reine Angew. Math.},
  FJOURNAL = {Journal f\"{u}r die Reine und Angewandte Mathematik. [Crelle's
              Journal]},
    VOLUME = {706},
      YEAR = {2015},
     PAGES = {1--17},
      ISSN = {0075-4102},
   MRCLASS = {57M25 (57M27)},
  MRNUMBER = {3393360},
MRREVIEWER = {Bruno P. Zimmermann},
       DOI = {10.1515/crelle-2013-0067},
       URL = {https://doi-org.proxy.libraries.rutgers.edu/10.1515/crelle-2013-0067},
}

@unpublished{HHSZExact,
 title = {Surgery exact triangles and involutive {H}eegaard {F}loer homology},
 author = {Hendricks, Kristen and Hom, Jennifer and Stoffregen, Matthew and Zemke, Ian},
 year={2020},
 note = {\url{arXiv:2011.00113}},
 }

@article {LOTBimodules,
    AUTHOR = {Lipshitz, Robert and Ozsv\'{a}th, Peter S. and Thurston, Dylan P.},
     TITLE = {Bimodules in bordered {H}eegaard {F}loer homology},
   JOURNAL = {Geom. Topol.},
  FJOURNAL = {Geometry \& Topology},
    VOLUME = {19},
      YEAR = {2015},
    NUMBER = {2},
     PAGES = {525--724},
      ISSN = {1465-3060},
   MRCLASS = {57R57 (53D40)},
  MRNUMBER = {3336273},
MRREVIEWER = {Brendan E. Owens},
       DOI = {10.2140/gt.2015.19.525},
       URL = {https://doi.org/10.2140/gt.2015.19.525},
}

@book {SeidelFukaya,
    AUTHOR = {Seidel, Paul},
     TITLE = {Fukaya categories and {P}icard-{L}efschetz theory},
    SERIES = {Zurich Lectures in Advanced Mathematics},
 PUBLISHER = {European Mathematical Society (EMS), Z\"{u}rich},
      YEAR = {2008},
     PAGES = {viii+326},
      ISBN = {978-3-03719-063-0},
   MRCLASS = {53D40 (16E45 32Q65 53D12)},
  MRNUMBER = {2441780},
MRREVIEWER = {Timothy Perutz},
       DOI = {10.4171/063},
       URL = {https://doi.org/10.4171/063},
}

@article{HHSZNaturality,
author = {Hendricks, Kristen and Hom, Jennifer and Stoffregen, Matthew and Zemke, Ian},
title = {Naturality and functoriality in involutive {H}eegaard {F}loer homology},
journal = {Quantum Topol.},
year = {2024},
note = {DOI 10.4171/QT/210},
}

@unpublished{ZemBordered,
 title = {Bordered manifolds with torus boundary and the link surgery formula},
 author = {Zemke, Ian},
 year={2021},
 note = {\url{arXiv:2109.11520}},
 }

@unpublished{CZZSatellites,
   title = {L-space satellite operations and knot {F}loer homology},
   author = {Chen, Daren and Zemke, Ian and Zhou, Hugo},
	year={2024},
	note = {\url{arXiv 2412.05755}},
   }

@unpublished{ZemExactTriangle,
   title = {A general {H}eegaard {F}loer surgery formula},
   author = {Zemke, Ian},
	year={2023},
	note = {\url{arXiv 2308.15658}},
   }

@book {MilnorStasheff,
    AUTHOR = {Milnor, John W. and Stasheff, James D.},
     TITLE = {Characteristic classes},
    SERIES = {Annals of Mathematics Studies},
    VOLUME = {No. 76},
 PUBLISHER = {Princeton University Press, Princeton, NJ; University of Tokyo
              Press, Tokyo},
      YEAR = {1974},
     PAGES = {vii+331},
   MRCLASS = {57-01 (55-02 55F40 57D20)},
  MRNUMBER = {440554},
MRREVIEWER = {F.\ Hirzebruch},
}

@article {Bondal-Kapranov-TC,
    AUTHOR = {Bondal, A. I. and Kapranov, M. M.},
     TITLE = {Enhanced triangulated categories},
   JOURNAL = {Mat. Sb.},
  FJOURNAL = {Matematicheski\u i\ Sbornik},
    VOLUME = {181},
      YEAR = {1990},
    NUMBER = {5},
     PAGES = {669--683},
      ISSN = {0368-8666},
   MRCLASS = {18E30},
  MRNUMBER = {1055981},
MRREVIEWER = {L.\ N.\ Vaserstein and E.\ R.\ Wheland},
       DOI = {10.1070/SM1991v070n01ABEH001253},
       URL = {https://doi.org/10.1070/SM1991v070n01ABEH001253},
}

@inproceedings {KontsevichICM,
    AUTHOR = {Kontsevich, Maxim},
     TITLE = {Homological algebra of mirror symmetry},
 BOOKTITLE = {Proceedings of the {I}nternational {C}ongress of
              {M}athematicians, {V}ol.\ 1, 2 ({Z}\"urich, 1994)},
     PAGES = {120--139},
 PUBLISHER = {Birkh\"auser, Basel},
      YEAR = {1995},
      ISBN = {3-7643-5153-5},
   MRCLASS = {32J25 (14D07 14J32 18E30 32G05)},
  MRNUMBER = {1403918},
MRREVIEWER = {Claire\ Voisin},
}

@article {DHM-Corks,
    AUTHOR = {Dai, Irving and Hedden, Matthew and Mallick, Abhishek},
     TITLE = {Corks, involutions, and {H}eegaard {F}loer homology},
   JOURNAL = {J. Eur. Math. Soc. (JEMS)},
  FJOURNAL = {Journal of the European Mathematical Society (JEMS)},
    VOLUME = {25},
      YEAR = {2023},
    NUMBER = {6},
     PAGES = {2319--2389},
      ISSN = {1435-9855,1435-9863},
   MRCLASS = {57K40 (57R58)},
  MRNUMBER = {4592871},
       DOI = {10.4171/jems/1239},
       URL = {https://doi.org/10.4171/jems/1239},
}

@article {HHSZ-Infinite,
    AUTHOR = {Hendricks, Kristen and Hom, Jennifer and Stoffregen, Matthew
              and Zemke, Ian},
     TITLE = {On the quotient of the homology cobordism group by {S}eifert
              spaces},
   JOURNAL = {Trans. Amer. Math. Soc. Ser. B},
  FJOURNAL = {Transactions of the American Mathematical Society. Series B},
    VOLUME = {9},
      YEAR = {2022},
     PAGES = {757--781},
      ISSN = {2330-0000},
   MRCLASS = {57K18 (57K31 57R58)},
  MRNUMBER = {4480068},
MRREVIEWER = {Nikolai\ N.\ Saveliev},
       DOI = {10.1090/btran/110},
       URL = {https://doi.org/10.1090/btran/110},
}

@book {Banyaga_Diffeo_Book,
    AUTHOR = {Banyaga, Augustin},
     TITLE = {The structure of classical diffeomorphism groups},
    SERIES = {Mathematics and its Applications},
    VOLUME = {400},
 PUBLISHER = {Kluwer Academic Publishers Group, Dordrecht},
      YEAR = {1997},
     PAGES = {xii+197},
      ISBN = {0-7923-4475-8},
   MRCLASS = {22E65 (58B25 58D05)},
  MRNUMBER = {1445290},
MRREVIEWER = {Vladimir\ Pestov},
       DOI = {10.1007/978-1-4757-6800-8},
       URL = {https://doi.org/10.1007/978-1-4757-6800-8},
}

@article {Thurston_Simple,
    AUTHOR = {Thurston, William},
     TITLE = {Foliations and groups of diffeomorphisms},
   JOURNAL = {Bull. Amer. Math. Soc.},
  FJOURNAL = {Bulletin of the American Mathematical Society},
    VOLUME = {80},
      YEAR = {1974},
     PAGES = {304--307},
      ISSN = {0002-9904},
   MRCLASS = {58D05 (57D30)},
  MRNUMBER = {339267},
MRREVIEWER = {M.\ Craioveanu},
       DOI = {10.1090/S0002-9904-1974-13475-0},
       URL = {https://doi.org/10.1090/S0002-9904-1974-13475-0},
}

@article{GKNaturality,
  title={Invariant splitting principles for the {L}ipshitz--{O}zsv\'{a}th--{T}hurston correspondence},
  author={Guth, Gary and Kang, Sungkyung},
  journal={arXiv preprint arXiv:2404.06618},
  year={2024}
}

\end{document}